\documentclass[11pt,a4paper]{amsart}

\usepackage{amsthm,amsfonts,amssymb,amsmath}
\usepackage{stmaryrd}
\usepackage{enumitem}
\usepackage[french,english]{babel}
\usepackage[utf8]{inputenc}
\usepackage[T1]{fontenc}
\usepackage{lmodern}
\usepackage[all]{xy}
\usepackage{upref}
\usepackage{array}
\usepackage{color}
\usepackage{tikz-cd}
\usepackage{verbatim}
\usepackage{microtype}
\usepackage[a4paper]{geometry}
\usepackage{mathrsfs}
\usepackage{mathtools}
\usepackage{verbatim}
\usepackage{aliascnt}          

\usepackage{hyperref}
\usepackage[capitalise]{cleveref}

\newtheorem{theorem}{Theorem}[subsection]

\numberwithin{equation}{theorem}

\newaliascnt{prop}{theorem}
\newtheorem{prop}[prop]{Proposition}
\aliascntresetthe{prop}

\newaliascnt{lemma}{theorem}
\newtheorem{lemma}[lemma]{Lemma}
\aliascntresetthe{lemma}

\newaliascnt{coro}{theorem}
\newtheorem{coro}[coro]{Corollary}
\aliascntresetthe{coro}

\theoremstyle{definition}

\newaliascnt{rem}{theorem}
\newtheorem{rem}[rem]{Remark}
\aliascntresetthe{rem}

\newtheorem{secnumber}[theorem]{}

\newaliascnt{definition}{theorem}
\newtheorem{definition}[definition]{Definition}
\aliascntresetthe{definition}

\newaliascnt{conjecture}{theorem}
\newtheorem{conjecture}[conjecture]{Conjecture}
\aliascntresetthe{conjecture}

\newaliascnt{question}{theorem}

\aliascntresetthe{question}

\crefname{theorem}{theorem}{theorems}
\Crefname{theorem}{Theorem}{Theorems}

\crefname{prop}{proposition}{propositions}
\Crefname{prop}{Proposition}{Propositions}

\crefname{lemma}{lemma}{lemmas}
\Crefname{lemma}{Lemma}{Lemmas}

\crefname{coro}{corollary}{corollaries}
\Crefname{coro}{Corollary}{Corollaries}

\crefname{rem}{remark}{remarks}
\Crefname{rem}{Remark}{Remarks}

\crefname{definition}{definition}{definitions}
\Crefname{definition}{Definition}{Definitions}

\crefname{conjecture}{conjecture}{conjectures}
\Crefname{conjecture}{Conjecture}{Conjectures}

\crefname{question}{question}{questions}
\Crefname{question}{Question}{Questions}

\def\gg{\mathfrak{g}}

\def\cg{\check{\mathfrak{g}}}
\def\ct{\check{\mathfrak{t}}}

\def\cD{\mathscr{D}}
\def\PP{\mathscr{P}}
\def\XX{\mathfrak{X}}
\def\UU{\mathfrak{U}}
\def\YY{\mathfrak{Y}}

\def\Mod{\mathbf{Mod}}
\def\Coh{\mathbf{Coh}}

\def\Rep{\mathbf{Rep}}

\def\Ab{\mathbf{Ab}}

\def\Ad{\mathbf{Ad}}

\def\Gm{\mathbb{G}_{m}}
\def\Ga{\mathbb{G}_{a}}
\def\cG{\check{G}}

\def\cB{\check{B}}
\def\cT{\check{T}}
\def\fg{\mathfrak{g}}
\def\fF{\mathfrak{F}}

\def\fz{\mathfrak{z}}

\def\cfg{\check{\mathfrak{g}}}
\def\cft{\check{\mathfrak{t}}}
\def\A1{\mathbb{A}^1}
\def\P1{\mathbb{P}^1}

\def\bP{\mathbf{P}}
\def\bI{\mathbf{I}}
\def\bX{\mathbb{X}}
\def\bW{\mathbb{W}}
\def\bZ{\mathbb{Z}}

\def\sG{\mathsf{G}}
\def\sB{\mathsf{B}}

\def\cE{\mathcal{E}}
\def\HPi{\widehat{\Pi}}
\def\bQQ{\overline{\mathbb{Q}}}
\def\bQl{\overline{\mathbb{Q}}_{\ell}}
\def\Isod{\textnormal{Isoc}^{\dagger}}
\def\FIsod{\textnormal{F-Isoc}^{\dagger}}
\def\EE{\mathsf{E}}
\def\Ddd{\mathscr{D}^{\dagger}}
\def\OK{\mathcal{O}_K}
\def\bQ{\mathbb{Q}}
\def\Pn{\mathbb{P}^n}
\def\cF{\mathfrak{F}}
\def\mm{\mathfrak{m}}

\DeclareMathOperator{\Out}{Out}
\DeclareMathOperator{\Hol}{Hol}
\DeclareMathOperator{\Qcoh}{Qcoh}
\DeclareMathOperator{\Hk}{Hk}
\DeclareMathOperator{\pr}{pr}
\DeclareMathOperator{\Aut}{Aut}
\DeclareMathOperator{\MCF}{MCF}
\DeclareMathOperator{\red}{red}
\DeclareMathOperator{\cor}{coarse}
\DeclareMathOperator{\Pred}{Pro-red}
\DeclareMathOperator{\Pss}{Pro-ss}
\DeclareMathOperator{\rank}{rank}

\DeclareMathOperator{\coh}{coh}

\DeclareMathOperator{\ad}{ad}
\DeclareMathOperator{\Gal}{Gal}
\DeclareMathOperator{\Ker}{Ker}

\DeclareMathOperator{\Hom}{Hom}

\DeclareMathOperator{\Spec}{Spec}

\DeclareMathOperator{\rH}{H}

\DeclareMathOperator{\rD}{D}
\DeclareMathOperator{\rW}{W}

\DeclareMathOperator{\rR}{R}

\DeclareMathOperator{\Gr}{Gr}

\DeclareMathOperator{\dR}{dR}

\DeclareMathOperator{\id}{id}
\DeclareMathOperator{\rig}{rig}

\DeclareMathOperator{\an}{an}
\DeclareMathOperator{\Iso}{Isoc}
\DeclareMathOperator{\Del}{Del}
\DeclareMathOperator{\Be}{Be}
\DeclareMathOperator{\Kl}{Kl}
\DeclareMathOperator{\Ai}{Ai}
\DeclareMathOperator{\GL}{GL}
\DeclareMathOperator{\SL}{SL}
\DeclareMathOperator{\SO}{SO}
\DeclareMathOperator{\Conn}{Conn}
\DeclareMathOperator{\Frob}{Frob}
\DeclareMathOperator{\GR}{GR}
\DeclareMathOperator{\AS}{AS}

\DeclareMathOperator{\Bun}{Bun}

\DeclareMathOperator{\LocS}{LocSysm}
\DeclareMathOperator{\Tr}{Tr}
\DeclareMathOperator{\op}{op}
\DeclareMathOperator{\Sp}{Sp}
\DeclareMathOperator{\geo}{geo}

\DeclareMathOperator{\diff}{diff}
\DeclareMathOperator{\wild}{wild}
\DeclareMathOperator{\st}{st}
\DeclareMathOperator{\ord}{ord}

\DeclareMathOperator{\Frac}{Frac}
\DeclareMathOperator{\Irr}{Irr}
\DeclareMathOperator{\Swan}{Swan}

\DeclareMathOperator{\Fr}{Fr}
\DeclareMathOperator{\MC}{MC}

\DeclareMathOperator{\sss}{ss}
\DeclareMathOperator{\WD}{WD}
\DeclareMathOperator{\Nm}{Nm}

\newcolumntype{L}{>{$}l<{$}} 

\newcommand{\quash}[1]{}

\title{Frobenius structure on rigid connections and arithmetic applications}
\date{\today}
\author{Daxin Xu, Lingfei Yi}

\begin{document}
\selectlanguage{english}
\maketitle

\begin{abstract}
	We construct the natural Frobenius structures on two families of rigid irregular $\cG$-connections on $\Gm$ (or $\A1$) for a split simple group $\cG$: (i) the $\theta$-connections arising from Vinberg's $\theta$-groups introduced by Chen and Yun; (ii) the Airy connection of Jakob--Kamgarpour--Yi generalizing the classical Airy equations. 
	These data form the $p$-adic companions of the $\ell$-adic local systems introduced by Yun and Jakob--Kamgarpour--Yi. 
	Via the Frobenius structures, we study the local monodromy representations of these local systems at the unique wildly ramified point and verify the prediction of Reeder--Yu on epipelagic Langlands parameters in our setting. 
	We calculate the global geometric monodromy group of a special Airy $\cG$-local system via its local monodromy. 
	We show the cohomological rigidity and the physical rigidity of these local systems, as conjectured by Heinloth--Ng\^o--Yun. 
\end{abstract}

\tableofcontents
\section{Introduction}

\subsection{Rigid connections on curves: cohomological rigidity and physical rigidity}
\begin{secnumber}
	Let $j:U=\P1-S\hookrightarrow \P1$ be an open embedding of curves over a field $K$ of characteristic zero, where $S$ is a finite set of closed points. 
	Let $\cG$ be a connected split reductive group over $K$ and $\cfg$ its Lie algebra. 

	A $\cG$-bundle with connection $E$ on $U$ is called \textit{physically rigid} if it is uniquely determined by its local monodromies at $S$, as meromorphic $\cG$-connections over the associated Laurent series fields. 
	On the other hand, $E$ induces the adjoint local system $E(\cfg)$, 
	and the infinitesimal deformations of $E$ with fixed formal types at $S$ 
	are controlled by the cohomology of the middle extension $j_{!*}E(\cfg)$.
	We say $E$ is \textit{cohomologically rigid} if the following holds:
	\[
	\rH^*\bigl(\P1,\,j_{!*}E(\cfg)\bigr)=0,
	\]
	i.e. there are no non-trivial global infinitesimal deformations compatible with the prescribed formal local behaviour. 

	When $\cG=\GL_n$, Katz (in the regular singularity case) \cite{Katz96} and Bloch--Esnault \cite{BE04} showed that for irreducible connections, cohomological rigidity is equivalent to physical rigidity. 
	In general, there exist cohomologically rigid $\cG$-connections which are \textit{not} physically rigid \cite[\S 7.2]{KNP}. 

	On curves over finite fields, one has analogous notions for $\ell$-adic local systems (or for overconvergent isocrystals). 
	In this paper, we show that a $\cG$-overconvergent isocrystal is physically rigid provided it admits a physically rigid underlying algebraic $\cG$-connection. 
	This result will apply to theta/Airy $\cG$-connections once we equip them with Frobenius structures. 
\end{secnumber}

\begin{secnumber}
	\textbf{Classical Bessel/Airy connections}
	Let $x$ be a coordinate at $0\in\mathbb{P}^1$.
	The classical Bessel differential equation and Airy differential equation (of rank $n$) with a parameter $\lambda$ are defined by:
         \begin{equation}\label{Bessel connection intro}
		 \biggl(x\frac{d}{dx}\biggr)^n(f) - \lambda^n x\cdot f = 0,\quad \biggl(\frac{d}{dx}\biggr)^n(f)- \lambda^n x\cdot f=0.
         \end{equation}
	 They can be converted to rigid connections $\Be_n$ and $\Ai_n$ on the rank $n$ trivial bundle on $\mathbb{G}_{m,K}$ and $\mathbb{A}^1_K$ respectively:
	 \begin{equation} \label{eq:Bessel}
		\Be_n=	~d + \begin{pmatrix}
		0                  & 1 & 0 & \dots & 0 \\
		0                  & 0 & 1 & \dots & 0 \\
		\vdots          & \vdots & \ddots & \ddots & \vdots \\
		0                  & 0 & 0 & \dots & 1 \\
		\lambda^nx & 0 & 0 & \dots & 0  \end{pmatrix} \frac{dx}{x},
		\qquad 
			\Ai_n= ~ d + \begin{pmatrix}
		0                  & 1 & 0 & \dots & 0 \\
		0                  & 0 & 1 & \dots & 0 \\
		\vdots          & \vdots & \ddots & \ddots & \vdots \\
		0                  & 0 & 0 & \dots & 1 \\
		\lambda^nx & 0 & 0 & \dots & 0  \end{pmatrix} dx.
	\end{equation}
	Their differential Galois groups were calculated by Katz \cite{Katz87}. 
\end{secnumber}

\begin{secnumber}\textbf{$\theta$-connections.}\label{intro-sss:theta-conn}
	In the following, we assume moreover that $\cG$ is almost simple.  
	
	Via the principal grading on the Lie algebra $\cfg$ of $\cG$, Frenkel and Gross wrote down an explicit $\cG$-connection on $\mathbb{G}_{m}$, called \textit{Bessel $\cG$-connection}, which specializes to $\Be_n$ when $\cG=\SL_n$ \cite{FG09}. 
	In an unpublished work, Yun constructed a generalization of Bessel $\cG$-connection from a stable grading of $\cfg$, called \textit{$\theta$-connection}. In \cite{Chen17}, Chen proved under a condition on the stable grading that these connections are cohomologically rigid. 

	Let $\theta$ be an inner stable grading of $\cfg$ over $K$ corresponding to the regular elliptic number $m$ via \cite[Corollary 14]{RLYG12}. 
	Concretely, we may assume $\theta$ is given by the adjoint action of $\exp(x)$, with $x\in \mathbb{X}_*(\cT)\otimes_{\mathbb{Z}}\mathbb{R}$, such that $\check{\lambda}=mx\in\mathbb{X}_*(\cT) $. 
	Then $\Ad_{\check{\lambda}(\zeta_m)}$(resp. $\check{\lambda}$) defines an order $m$ grading $\cfg=\bigoplus_{l\in \mathbb{Z}/m\mathbb{Z}} \cfg_l$ (resp. a $\mathbb{Z}$-grading $\cfg=\bigoplus_{l\in \mathbb{Z}}\cfg(l)$). 
	We assume that the (normalized) Kac coordinate $s_0:=\alpha_0(x)$ equals $1$. 
	We refer to \cite[(2)]{CY24} and \cite[\S 7]{RLYG12} for a list of regular elliptic numbers $m$ and those satisfying $s_0=1$. 
	We take $X=X_1+X_{1-m}\in\cfg_1=\cfg(1)\oplus\cfg(1-m)$ to be a stable element. 
	The $\theta$-connection associated to $X$ and a parameter $\lambda\in K$ is the $\cG$-connection on $\Gm$, defined by
\begin{equation} \label{intro-eq:theta-connection}
	\Theta(X,\lambda):= d+ \biggl( X_1+\lambda^m X_{1-m} x\biggr)\frac{dx}{x}.
\end{equation}
	When $m=h$ is the Coxeter number, 
	the above $\cG$-connection is the Bessel $\cG$-connection. 
\end{secnumber}

\begin{secnumber}\textbf{Airy connections.}
	Let $m=h$ and $X=X_1+X_{1-h}\in \cfg_1$ be a stable element as in \ref{intro-sss:theta-conn}. 
	In the introduction, we consider a special family of \textit{Airy $\cG$-connections on $\A1$} \eqref{eq:naive Ai-conn}, which specializes to $\Ai_n$ \eqref{eq:Bessel} when $\cG=\GL_n$: 
	\begin{equation} \label{intro-eq:Ai-conn}
		\Ai(X,\lambda)=d+(X_1+\lambda^h X_{1-h} x)dx.
	\end{equation}
\end{secnumber}

\subsection{Frobenius structures}
\begin{secnumber} \textbf{Summary of main results.}
	In the 1970s \cite{Dw74}, Dwork and Sperber showed that there exists a Frobenius structure on the Bessel connection \eqref{eq:Bessel} over the $p$-adic field $K=\mathbb{Q}_p(\zeta_p)$ whose Frobenius traces give Kloosterman sum. 
	By a \textit{Frobenius structure} on a connection, we mean a horizontal isomorphism between the Bessel connection and its pullback by the ``Frobenius endomorphism'' $F:\mathbb{G}_{m,K}\to \mathbb{G}_{m,K}$ over $K$ defined by $x\mapsto x^{p}$. 

	In \cite{XZ22}, Xu--Zhu constructed the Frobenius structure on the Bessel $\cG$-connection, building on the geometric Langlands correspondence of Kloosterman local systems for reductive groups due to Heinloth--Ng\^o--Yun \cite{HNY} and of Zhu \cite{Zhu17}. 
	In the present paper, we extend the argument of \cite{XZ22} to construct the natural Frobenius structures on $\theta$-connections (resp. Airy connections). 
	Equipped with the Frobenius structure, the resulting object forms a $\cG$-overconvergent $F$-isocrystal $\Kl_{\cG}^{\rig}$ on $\mathbb{G}_{m,\mathbb{F}_p}$ (resp. $\Ai_{\cG}^{\rig}$ on $\mathbb{A}^1_{\mathbb{F}_p}$). 
	Moreover, for a prime $\ell\neq p$, $\Kl_{\cG}^{\rig}$ (resp. $\Ai_{\cG}^{\rig}$) is the $p$-adic companion of the $\ell$-adic generalized Kloosterman sheaf $\Kl_{\cG}^{\ell}$ constructed in \cite{Yun16} (resp. Airy sheaves $\Ai_{\cG}^{\ell}$ constructed in \cite{JKY}) in appropriate sense. 

	Via these Frobenius structures, we obtain the following arithmetic applications:
		
	(i) We explicitly study the local monodromy representation of $\Kl_{\cG}^{\rig}$ (resp. $\Ai_{\cG}^{\rig}$)  at the unique wildly ramified point $\infty$ and verify the prediction of Reeder--Yu on epipelagic Langlands parameter in our setting \cite[\S~7.1]{RY14}. 

	(ii) In the Airy case, we apply the local monodromy to calculate the global geometric monodromy group of $\Ai_{\cG}^{\rig}$. 

	(iii) We deduce the physical rigidity of $\Kl_{\cG}^{\rig}$ (resp. $\Ai_{\cG}^{\rig}$)  as $\cG$-overconvergent isocrystals from that of its underlying connection. 
	Via the $p$-$\ell$ companion, we verify a conjecture of Heinloth--Ng\^o--Yun on the physical rigidity of the $\ell$-adic Kloosterman sheaves for reductive groups \cite[Conjecture 7.1]{HNY}
	when the tame monodromy at $0$ is unipotent. 
\end{secnumber}
\begin{secnumber}
	To state the main result, we recall \textit{the ring of $p$-adic analytic functions on $\P1$ overconvergent along $\{\infty\}$} \cite{Ber96}. 
	We set $K=\mathbb{Q}_p(\mu_p)$, equipped with a $p$-adic valuation $|\textnormal{-}|$ normalized by $|p|=p^{-1}$, and denote by $A^{\dagger}$ the ring of $p$-adic analytic functions with a radius of convergence $>1$:
	\begin{equation} \label{def A dagger}
		A^{\dagger}=\bigg\{ \sum_{n=0}^{+\infty} a_nx^n ~|~ a_n\in K, \exists~ \rho>1, \lim_{n\to +\infty} |a_n|\rho^n =0 \bigg\}.  
	\end{equation}
\end{secnumber}

\begin{theorem}[\ref{ss:Frob-theta-connection},\ref{ss:compare-dR-rig-Airy}] \label{intro-t:Frobenius}
	Let $K=\mathbb{Q}_p(\mu_p)$, $\overline{K}$ an algebraic closure of $K$ and set $\lambda=-\pi\in K$ satisfying $\pi^{p-1}=-p$. 
	Assume $(p,m)=1$ (resp. $p>h$) and that the Kac coordinate $s_0=1$ and the same holds for the order $m$ inner stable grading of the Lie algebra $\mathfrak{g}$ of the dual group $G$ of $\cG$. 
	There exists a natural Frobenius structure $\varphi(x)\in \cG(A^{\dagger})$ defining a gauge transform between $\theta$-connection (resp. Airy connection) and its Frobenius pullback: 
	\[
	x \frac{d\varphi}{dx} \varphi^{-1} + \Ad_{\varphi} (X_1+\lambda^m X_{1-m}x) = p(X_1+\lambda^m X_{1-m} x^p), \]
	\[\textnormal{(resp.} ~ 
	\frac{d\varphi}{dx} \varphi^{-1} + \Ad_{\varphi} (X_1+\lambda^h X_{1-h}x) = px^{p-1}(X_1+\lambda^h X_{1-h} x^p). )
\]
\end{theorem}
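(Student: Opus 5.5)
We follow the strategy of \cite{XZ22}: the Frobenius structure is not written down directly but transported from geometry via the geometric Langlands realization of these connections. On the automorphic side there is a rigid automorphic datum on $\Bun_G$ of $\P1$. It consists of a parahoric level at $0$ (resp. $\infty$) attached to the grading, together with the epipelagic character determined by the stable functional $X$ composed with an Artin--Schreier sheaf. Yun \cite{Yun16} (resp. \cite{JKY}) obtains $\Kl_{\cG}^{\ell}$ (resp. $\Ai_{\cG}^{\ell}$) as Hecke eigenvalues of this datum.

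\textbf{Step 1 ($p$-adic Hecke eigensheaf).} Repeat the construction with arithmetic $\cD$-modules, using the Dwork isocrystal $\mathscr{L}_\pi$ in place of the Artin--Schreier sheaf. This yields a Hecke eigen-$\cD$-module whose eigenvalue is a $\cG$-valued overconvergent $F$-isocrystal $\mathcal{E}$ on $\Gm$ (resp. $\A1$). The Frobenius structure comes for free, because all functors in the construction (geometric Satake, Hecke operators, pushforwards) are compatible with Frobenius pullback. This requires the rigidity of the automorphic datum, i.e. a unique relevant orbit. Here we use the hypotheses $s_0=1$ on both $\cG$ and $G$ and $(p,m)=1$ (resp. $p>h$), which make the relevant orbit computation of Chen/Yun (resp. JKY) valid in characteristic $p$.

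\textbf{Step 2 (de Rham--rigid comparison).} Identify the underlying connection of $\mathcal{E}$, after passing to $K$ and to the rigid analytic space, with $\Theta(X,\lambda)$ (resp. $\Ai(X,\lambda)$) for $\lambda=-\pi$. As in \cite{XZ22}, we do this over characteristic zero: the same automorphic datum with the exponential $\cD$-module $e^{\lambda t}$ gives a de Rham Hecke eigensheaf. Its eigenvalue is computed, following Zhu \cite{Zhu17} and Frenkel--Gross, to be the $\theta$-connection (resp. Airy connection). Since the Dwork isocrystal is the analytification of $e^{-\pi t}$, the constructions are compatible under Berthelot's comparison between algebraic $\cD$-modules and arithmetic $\Ddd$-modules with overconvergence at the boundary. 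Hence the algebraic $\cG$-bundle with connection is identified with $\mathcal{E}$ as a $\cG$-isocrystal overconvergent along $\infty$.

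\textbf{Step 3 (extracting $\varphi$).} The Frobenius structure now becomes an isomorphism $F^*\mathcal{E}\simeq\mathcal{E}$ of connections on the trivial $\cG$-bundle over the overconvergent tube of $\Gm$ (resp. $\A1$), i.e. an element $\varphi\in\cG(A^\dagger[1/x]^\dagger)$ (resp. $\cG(A^\dagger)$) satisfying the displayed gauge equation. For $\Theta$ we still must show that $\varphi$ has no pole at $0$. This follows because the connection has regular singularity with nilpotent residue $X_1$ at $0$, by an argument of Dwork's type. Equivalently, the $\cD$-module extends with unipotent monodromy, so the horizontal isomorphism extends over the disc at $0$; this matches the logarithmic pullback $x\,d/dx$ of the first equation.

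\textbf{Main obstacle.} We expect the main difficulty to be Step 2: matching the de Rham eigenvalue with the explicit formula, with the correct normalization $\lambda^m$ versus $\pi$, and rigorously comparing algebraic and arithmetic Hecke eigensheaves through the irregular exponential twist. We would first try to reduce to the $\cG=\GL_n$ or $\SL_n$ case through a representation, combined with rigidity/uniqueness: the connection is cohomologically rigid by \cite{Chen17} (resp. JKY), so it is determined by its formal types at $0$ and $\infty$, which we can check directly.
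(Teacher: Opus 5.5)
Your three steps match the paper's three ingredients:
\begin{itemize}
\item the arithmetic Hecke eigenmodule of Yun (resp.\ Jakob--Kamgarpour--Yi) built from $\mathscr{A}_\psi$, with its de Rham analogue built from $\EE_{-\pi}$;
\item the comparison $(\Kl^{\dR}_{\cG}(-\pi\phi_K))^{\dagger}\simeq \Kl^{\rig}_{\cG}(\phi_k)$ (\Cref{t:compare-Kl-rig-dr}, \Cref{t:compare-Ai-rig-dr});
\item the identification of the de Rham eigenvalue with the explicit connection (\Cref{t:compare-Kl-theta-connection}, Theorem~\ref{t:Ai connection}).
\end{itemize}
Your Step 3, extending $\varphi$ across $0$, is a reasonable addition that the paper leaves implicit.

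The gap is in Step 2, which you correctly flag but do not close. The comparison is not a formal consequence of Berthelot's comparison between $\cD$- and $\Ddd$-modules. The eigenvalue is a pushforward along the non-proper map $\GR^{\circ}_{\mathcal{G},V}\to U$ of a module with irregular singularities, and $(-)^{\dagger}$ does not commute with such pushforwards in general. The paper follows \cite[\S~4.2]{XZ22}, whose crucial geometric input is that $j\colon V_{\bP}\hookrightarrow \Bun_{\mathcal{G}}$ is the non-vanishing locus of a section of a line bundle, hence affine. It then constructs a relative specialization map \eqref{eq:relative-specialization} using good compactifications and proves it is an isomorphism:
\begin{itemize}
\item first for minuscule $V$;
\item then for quasi-minuscule $V$, by passing to the smooth locus and comparing $\rH^0$ of intermediate extensions fibrewise;
\item finally for all $V$.
\end{itemize}
For Airy, the relevant orbit is only closed in the affine open $O'$. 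This is why the appendix shows that being associated is preserved by proper pushforward (\Cref{p:proper-pushforward}).

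The fallback you propose for this step would fail, for three reasons.
\begin{itemize}
\item Cohomological rigidity does not imply that a $\cG$-connection is determined by its formal types; the introduction recalls counterexamples. Physical rigidity of these connections is the separate \Cref{t:rigidity-FG}.
\item A $\cG$-connection is not determined by one pushout to $\GL_n$.
\item Cohomological rigidity of Airy connections is not in \cite{JKY}; it is their conjecture. It is proved here only for simple data, using the Frobenius structure itself, so invoking it would be circular.
\end{itemize}

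You also misplace the hypotheses. For general $m$, the de Rham eigenvalue is identified with $\Theta$ by \cite[Theorem 5]{CY24} via the Galois-to-automorphic direction; Zhu's computation only covers $m=h$. This is where $s_0=1$ on both $\cG$ and $G$ enters, through $V_{\bP}^*/\!\!/L_{\bP}\simeq \cfg_1/\!\!/\cG_0$, not in a characteristic-$p$ relevant-orbit analysis. The normalization $\lambda^m$ then follows from matching invariant polynomials of $-\pi\phi_K$ and $X_{-\pi}$. For Airy, \cite{Yi25} gives $\Ai^{\dR}_{\cG}(\lambda\chi)$ with extra terms $a_i\lambda^{d_i}p_i$. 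You need the simple datum (Lemma~\ref{l:simple Ai}) to get the displayed equation.
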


The trace of the values of Frobenius structure at the Teichmüller lifting of elements of $\mathbb{F}_p$ gives exponential sums, including Kloosterman sums. We refer to \cite[\S 5.2]{XZ22} for the explicit exponential sums of Kloosterman sheaves for classical groups. 
When $\cG=\SL_n$, the Frobenius trace on $\Ai_n$ \eqref{eq:Bessel} gives the Airy sums \cite{Katz87}, for $a\in \mathbb{F}_p$ and $[a]$ its Teichmüller lifting: 
\[
\Tr(\varphi([a]))= \sum_{x\in \mathbb{F}_p} \psi\biggl(\frac{x^{n+1}}{n+1}+a x\biggr),
\]
where $\psi:\mathbb{F}_p\to K^{\times}$ is the additive character associated to $\pi$ (\ref{sss:def-isod}). 

\begin{secnumber}
	The proof of the above theorem consists of three ingredients: 

	(i) Following Yun \cite{Yun16} and Jakob--Kamgarpour--Yi \cite{JKY}, we produce a $\cG$-overconvergent $F$-isocrystal $\Kl_{\cG}^{\rig}$ (resp. $\Ai_{\cG}^{\rig}$) and a $\cG$-connection $\Kl_{\cG}^{\dR}$ (resp. $\Ai_{\cG}^{\dR}$) as the Langlands parameter of a certain automorphic function. 

	(ii) Then we show that the overconvergent isocrystal $\Kl_{\cG}^{\rig}$ (resp. $\Ai_{\cG}^{\rig}$) is isomorphic to the analytification of the $\cG$-connection $\Kl_{\cG}^{\dR}$ (resp. $\Ai_{\cG}^{\dR}$) as in \cite{XZ22} (see \Cref{t:compare-Kl-rig-dr,t:compare-Ai-rig-dr}). 

	(iii) Chen--Yi and Yi \cite{CY24,Yi25} showed that $\Kl_{\cG}^{\dR}$ (resp. $\Ai_{\cG}^{\dR}$) is isomorphic to $\Theta(X,\lambda)$ (resp. $\Ai(X,\lambda)$) as algebraic $\cG$-connections (see \Cref{t:compare-Kl-theta-connection,t:Ai connection}). 

	Consequently, $\Theta(X,\lambda)$ (resp. $\Ai(X,\lambda)$) can be equipped with a natural Frobenius structure and the resulting $\cG$-overconvergent isocrystal is isomorphic to $\Kl_{\cG}^{\rig}$ (resp. $\Ai_{\cG}^{\rig}$). 
\end{secnumber}

\begin{rem}
	(i) For rigid connections with regular singularities over an unramified $p$-adic field, Esnault--Groechenig constructed the natural Frobenius structure under suitable hypotheses \cite{EG20,EG25}. 
	Their proof is based on the Frobenius endomorphism map on the moduli of bundles with connection with prescribed regular singularities. 
	In their setting, Frobenius structures are given by rational functions. 

	(ii) 
	The connections considered above are irregular and the Frobenius structures are $p$-adically analytic functions;
this is a different situation from the regular singular case. 
	It is natural to expect that an irregular rigid connection with suitable coefficients admits a Frobenius structure, 
but the method of Esnault--Groechenig does not apply in the irregular case. 
		In our setting, the construction instead uses an explicit geometric Langlands description.

	We refer to \cite{JY23,HJ24, KS, KNP} for more examples of cohomologically/physically rigid connections on $\mathbb{P}^1-\{0,\infty\}$, including $\theta$-connections and Airy connections considered above. 
\end{rem}

\subsection{Application to monodromy representations and rigidity}

\begin{secnumber}\textbf{Local monodromy representations.}
	The restriction of $\Kl_{\cG}^{\rig}$ (resp. $\Ai_{\cG}^{\rig}$) at the unique wildly ramified point $\infty$ defines a differential module $\mathscr{E}$ with Frobenius structure on the Robba ring. 
	Let $F=\mathbb{F}_p(\!( t )\!)$ be a local field of equal characteristic, $I_F$ the inertia subgroup of $\Gal(\overline{F}/F)$, $I_F^+$ the wild inertia subgroup and $W_F$ the Weil group. 
	By the $p$-adic local monodromy theorem \cite{And02II,Ked04,Meb02}, we can associate to $\mathscr{E}$ a Weil--Deligne representation $(\rho,N)$ with coefficients in $\overline{K}$:
	\[
	\rho:W_F\to \cG(\overline{K}), \quad N\in \cfg. 
	\]

	The epipelagic representation, introduced by Reeder--Yu \cite{RY14}, is the local component of the automorphic function of $\Kl_{\cG}^{\rig}$ at infinity. 
	Hence, $(\rho,N)$ can be viewed as the local Langlands parameter associated to the epipelagic representations via the local Langlands correspondence. 

	Comparing the arithmetic local monodromy of $\mathscr{E}$ with the local monodromy of the underlying formal connection of $\mathscr{E}$ (\Cref{p:compare-monodromy-group}), we prove the following result. 
	In particular, this verifies the prediction of Reeder--Yu on the epipelagic Langlands parameter \cite[\S~7.1]{RY14}. 
\end{secnumber}

\begin{theorem}[\Cref{c:Swan-conductor}, \Cref{p:Airy-coh-rig}] \label{intro-t:local-monodromy}
	In the $\theta$-case (resp. Airy case), under the assumptions of \Cref{intro-t:Frobenius}, the local monodromy representation $(\rho,N)$ satisfies:

	\textnormal{(i)} 
	The nilpotent operator $N$ vanishes. 
	Let $\cT_X$ be the centralizer $C_{\cG}(X)$ of $X=X_1+X_{1-m}$ (resp. $X_1+X_{1-h}$) in $\cG$, which is a maximal torus. 
	The representation $\rho$ fits into a commutative diagram
	\[
	\xymatrix{
	1\ar[r] & I^+_F \ar[r] \ar[d] & I_F \ar[r] \ar[d] & I_F^t \ar[r] \ar[d] & 1 \\
	1\ar[r] & \cT_X \ar[r]& N_{\cG}(\cT_X) \ar[r] & \bW \ar[r] & 1
	}
	\]
	such that a generator of the tame inertia $I_F^t$ is sent to a regular elliptic element of the Weyl group $\bW$ of order $m$ (resp. a Coxeter element). 

	\textnormal{(ii)} We have $\cfg^{I_F}=0$ and $\Swan(\cfg)=\frac{\sharp \Phi}{m}$ (resp. $=\sharp \Phi\cdot \frac{1+h}{h}$), where $\sharp \Phi$ is the number of roots of $\cfg$. 
\end{theorem}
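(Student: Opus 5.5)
The strategy is to reduce everything to the formal local structure of the underlying connection at $\infty$, and then use the Frobenius structure only to pass from geometric to arithmetic monodromy. By \Cref{intro-t:Frobenius}, the restriction $\mathscr{E}$ of $\Kl_{\cG}^{\rig}$ (resp. $\Ai_{\cG}^{\rig}$) to the Robba ring at $\infty$ has underlying differential module equal to the formal completion at $\infty$ of $\Theta(X,\lambda)$ (resp. $\Ai(X,\lambda)$), base-changed to the Robba ring. The comparison \Cref{p:compare-monodromy-group} should say that the restriction of $\rho$ to inertia, together with $N$, is determined by the geometric monodromy of this formal connection. By this we mean its Levelt--Turrittin decomposition: the irregular part corresponds to the image of $I_F$ (and in particular to $I_F^+$), and the residual logarithmic part corresponds to $N$. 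So the plan is to compute the formal type of the connection at $\infty$ explicitly and then transport it.

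\textbf{Step 1: formal normal form at $\infty$.} Set $u=1/x$ in the $\theta$-case. The connection becomes $d-(X_1+\lambda^m X_{1-m}u^{-1})\,du/u$. Pass to the ramified cover $u=v^m$ and gauge by $\check{\lambda}(v)$. Since $\Ad_{\check\lambda(v)}$ acts on $\cfg(l)$ by $v^l$, the connection matrix becomes, up to a constant factor and a term $\check{\lambda}\,dv/v$,
\[
v^{-1}\cdot(\text{scalar multiple of } X)\,\frac{dv}{v}.
\]
Because $X$ is regular semisimple with centralizer the torus $\cT_X$, standard splitting, i.e. diagonalizing successively modulo higher powers of $v$, gauges this over $K(\!(v)\!)$ to $(cX v^{-1}+Y)\,dv/v$ with $Y\in\mathrm{Lie}\,\cT_X$ constant. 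Two conclusions follow. First, the formal monodromy over the cover is a $\cT_X$-valued connection, so after further gauging there is no unipotent logarithmic part and $N=0$. Second, the Galois action of $v\mapsto\zeta_m v$ is implemented by $\check\lambda(\zeta_m)$. This element normalizes $\cT_X$, because $\theta(X)=\zeta_m X$, and it acts on $\cT_X$ through the regular elliptic Weyl element $w$ of order $m$ attached to $\theta$ (Springer/RLYG theory).

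In the Airy case the same construction works with $u=1/x$. The form $dx=-u^{-2}du$ raises the pole order, and the needed ramification is $u=v^{h}$ after a shift. One finds a leading term $v^{-(h+1)}X$, and the Galois twist is the Coxeter element.

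\textbf{Step 2: transfer to $(\rho,N)$.} Via \Cref{p:compare-monodromy-group}, the geometric local monodromy of $\mathscr{E}$ factors through $N_{\cG}(\cT_X)$. The wild part $I_F^+$ lands in $\cT_X$, since it corresponds to the diagonal irregular exponents. The tame quotient maps to $\bW$ via the cover of degree $m$ (resp. $h$), and a generator goes to $w$ (resp. the Coxeter element), which gives the diagram. The Frobenius structure guarantees that this inertial description is compatible with a Weil--Deligne representation. This is the point I expect to be the main obstacle: making precise that the $p$-adic local monodromy theorem identifies the inertia image with the formal differential Galois group, and that the residual $\cT_X$-part together with the nilpotent part agree. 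The key fact that makes this work is that unipotent monodromy of a Frobenius module is read off from the logarithmic part of the formal type. Here that logarithmic part is semisimple, so $N=0$.

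\textbf{Step 3: invariants and Swan.} Decompose $\cfg=\mathrm{Lie}\,\cT_X\oplus\bigoplus_{\alpha}\cfg_\alpha$ over roots relative to $\cT_X$. The wild inertia acts on the line $\cfg_\alpha$ through $\alpha$ applied to the exponent $cXv^{-1}$ (resp. $v^{-(h+1)}$). Since $X$ is regular, $\alpha(X)\neq0$, so every root line has slope $1/m$ (resp. $(h+1)/h$). On $\mathrm{Lie}\,\cT_X$ the wild part is trivial, and the tame generator acts by $w$. Since $w$ is elliptic, $(\mathrm{Lie}\,\cT_X)^{w}=0$, and the root part has no invariants since its slopes are positive. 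Hence $\cfg^{I_F}=0$. The Swan conductor is the sum of slopes, namely $\sharp\Phi\cdot\frac1m$ (resp. $\sharp\Phi\cdot\frac{h+1}{h}$).
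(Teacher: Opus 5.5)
\textbf{There is a genuine gap in Step 2, and it propagates to Step 3.} Your Step 1 formal computation is fine. The problem is that you treat \Cref{p:compare-monodromy-group} as a general comparison between the formal type at $\infty$ and the $p$-adic local monodromy. No such comparison exists in general: there is no functor from formal connections over $\overline{K}(\!(t)\!)$ to differential modules over $\mathcal{R}$. The proposition applies only to $p$-adic isoclinic connections, that is, only once the formal gauge transformation $Q\in\cG(K\llbracket s\rrbracket)$ to the canonical form is known to converge on the whole punctured open unit disc, $Q\in\cG(\mathcal{A}_{K,s})$.

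Proving this convergence (\Cref{t:p-adic-theta}, and \Cref{p:Airy-coh-rig}(i) in the Airy case) is the real content of the argument. The principle you propose, that $N$ and the inertia image can be read off the formal type, is not valid. A rank-one example shows this: for $t=1/x$, $d-c\,dx$ with $0<|c|<\omega$ has formal slope $1$ at $\infty$, yet $\exp(c/t)$ converges for $|t|>|c|/\omega$, so it is trivial over $\mathcal{R}$.

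\textbf{Why Step 3 fails as written.} By \Cref{p:rank-one-exp}, the module $d+(a_dT^{-d}+\cdots)\frac{dT}{T}$ has $p$-adic irregularity $d$ if and only if $|a_d|=\omega$. So you need the leading coefficient $2m\pi\,\alpha(X)$ to have absolute value exactly $\omega$. This requires $\lambda=-\pi$, $p\nmid 2m$, and $\alpha(X)\in\mathcal{O}_{\overline{K}}^{\times}$. The last condition comes from stability of $\phi$ over $k$ (\Cref{l:eigenvalues-X}), a hypothesis your argument never uses. Knowing only $\alpha(X)\neq 0$ gives the formal slope, not the Swan conductor.

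\textbf{What fills the gap (the paper's route).}
First, Clark's theorem (\Cref{t:Clark}) gives $Q$ a positive radius of convergence $\beta$. Hence for $\rho<\beta$ the subsidiary radii of $Y$ equal those of $Z$: $\sharp\Phi$ copies of $\rho^2$ and $r$ copies of $\rho$.
Second, the Frobenius structure of \Cref{intro-t:Frobenius} makes $Y$ solvable as $\rho\to 1$. Convexity of the functions $F_i(Y,r)$ then forces them to be linear through the origin on all of $(0,\infty)$. So the $p$-adic slopes equal the formal slopes (\Cref{t:subsidiary-radii}).
Third, apply the slope decomposition on annuli to the twists $Y\otimes\exp(-2m\pi[b]/s)$ for $b\in k$. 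Together with triviality of slope-zero modules on the open annulus \cite[Corollary 13.6.4]{pde}, this extends $Q$ to $\mathcal{A}_{K,s}$ (\Cref{t:p-adic-convergence-Q}). The Airy case is the same argument, using $p>h$.

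Only after this does \Cref{t:L-parameter} give the diagram, $N=0$, $\cfg^{I_F}=0$ and the Swan conductor. So the Frobenius structure is not merely a device for upgrading inertia to the Weil group. The solvability it provides is what drives the convergence argument.
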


As explained in \cite[Proposition 5.2]{Yun16} and \cite[6.3]{JKY}, 
Theorem \ref{intro-t:local-monodromy} implies the cohomological rigidity:

\begin{coro}
	The $\cG$-overconvergent isocrystals $\Kl_{\cG}^{\rig}$, $\Ai_{\cG}^{\rig}$ are cohomologically rigid. 
\end{coro}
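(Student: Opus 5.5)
The plan is to deduce cohomological rigidity from \Cref{intro-t:local-monodromy} by an Euler characteristic computation for the adjoint isocrystal $E(\cfg)$, where $E=\Kl_{\cG}^{\rig}$ or $\Ai_{\cG}^{\rig}$. This follows \cite[Proposition 5.2]{Yun16} and \cite[6.3]{JKY}. Let $j:U\hookrightarrow\P1$ with $U=\Gm$ (resp. $\A1$). We must show $\rH^i(\P1,j_{!*}E(\cfg))=0$ for $i=0,1,2$, where $j_{!*}$ is the intermediate extension in the category of arithmetic $\mathscr{D}$-modules (Abe--Caro). The same argument would also apply to the $\ell$-adic companion.

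\textbf{Step 1: vanishing of $\rH^0$ and $\rH^2$.} We have $\rH^0(\P1,j_{!*}E(\cfg))\subset \rH^0(U,E(\cfg))$, and the latter is contained in the invariants of $\cfg$ under the monodromy at $\infty$. By \Cref{intro-t:local-monodromy}(ii), $\cfg^{I_F}=0$, so $\rH^0=0$. The adjoint representation is self-dual via the Killing form, since $p$ is large enough for it to be nondegenerate (or one uses an invariant form). Poincaré--Verdier duality for $j_{!*}$ then gives $\rH^2\simeq (\rH^0)^\vee(-1)=0$.

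\textbf{Step 2: $\rH^1$ via Grothendieck--Ogg--Shafarevich.} Let $r=\dim\cfg$. The $p$-adic Euler characteristic formula (Christol--Mebkhout, in the form due to Abe--Caro or via companions) gives $\chi_c(U,E(\cfg))=r\chi_c(U)-\sum_{s\in S}\Swan_s(\cfg)$. The stalks of $j_{!*}$ at the points of $S$ are the inertia invariants, so
\[
\chi(\P1,j_{!*}E(\cfg))=\chi_c(U,E(\cfg))+\sum_{s\in S}\dim \cfg^{I_s}.
\]
We treat the two cases separately.

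In the $\theta$-case, $U=\Gm$ and $\chi_c(U)=0$. At $0$ the monodromy is tame and unipotent, with regular unipotent image; this follows from the residue $X_1$, which is regular nilpotent up to conjugacy in the stable grading setting. Hence $\dim\cfg^{I_0}=\rank\cG$. At $\infty$ we have $\cfg^{I}=0$ and $\Swan=\sharp\Phi/m$. This gives $\chi=\rank\cG-\sharp\Phi/m$. For this to vanish we need $\sharp\Phi/m=\rank\cG$, which holds only when $m=h$. For general $m$, the residue $X_1\in\cfg(1)$ is therefore not regular, and the correct input is $\dim\cfg^{I_0}=\dim C_{\cfg}(X_1)$ for the nilpotent $X_1$. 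The key identity to prove is then $\dim C_{\cfg}(X_1)=\sharp\Phi/m$. I would derive it from the graded structure: the map $\ad X_1:\cfg(i)\to\cfg(i+1)$ has kernel dimension equal to $\dim\cfg(i)-\dim\cfg(i+1)$ in the appropriate range, and summing these gives $\dim\cfg_0$-type counts equal to $\sharp\Phi/m$ by the stability results of \cite{RLYG12}. Alternatively, I would take this from \cite{Chen17} or \cite{CY24}, where cohomological rigidity of the de Rham connection is established. Since de Rham cohomology of the algebraic connection can be compared with rigid cohomology of its analytification via \Cref{t:compare-Kl-rig-dr}, the vanishing transfers directly to $\Kl_{\cG}^{\rig}$.

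In the Airy case, $U=\A1$ and $\chi_c(U)=1$. There is no finite singular point, and $\Swan_\infty=\sharp\Phi(1+h)/h=\sharp\Phi+\sharp\Phi/h$. Since $\sharp\Phi=h\cdot\rank\cG$, this equals $\sharp\Phi+\rank\cG=\dim\cfg$. Hence $\chi=r-r=0$.

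\textbf{Step 3: conclusion.} In both cases $\chi=0$. Combined with $\rH^0=\rH^2=0$, this gives $\rH^1=0$.

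I expect the main obstacle to be the local term at $0$ in the $\theta$-case, that is, identifying the tame monodromy and computing $\dim\cfg^{I_0}=\sharp\Phi/m$. I would first try to bypass it by the de Rham comparison together with \cite{Chen17}. Failing that, I would compute it directly from the $\mathbb{Z}$-grading by $\check\lambda$.
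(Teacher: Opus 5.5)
Your overall route is the paper's. The corollary is obtained from the Euler--Poincaré count of \cite[Proposition 5.2]{Yun16} and \cite[6.3]{JKY}, fed with $\cfg^{I_\infty}=0$ and the Swan conductors of \Cref{intro-t:local-monodromy}. Your treatment of $\rH^0,\rH^2$ is correct; the Killing form is nondegenerate simply because the coefficients have characteristic $0$, so $p$ plays no role there. Your Airy count is also correct: $\Swan_\infty=\sharp\Phi+\rank\cG=\dim\cfg$ against $\chi_c(\mathbb{A}^1)=1$, with no finite singularity.

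The gap is the local term at $0$ in the $\theta$-case, which is exactly what the paper imports from \cite[Proposition 5.2]{Yun16}. Your first claim ($X_1$ regular nilpotent) is false for $m\neq h$, as you noticed. Neither justification you offer for the replacement identity $\dim\ker(\ad X_1)=\sharp\Phi/m$ works as written:
\begin{itemize}
\item The fallback fails. \Cref{t:compare-Kl-rig-dr} identifies coefficient objects on a strict neighbourhood. It does not compare de Rham cohomology of $j_{!*}$ of the algebraic adjoint connection with rigid cohomology of $j_{!*}\Kl_{\cG}^{\rig}(\cfg)$. Moreover $(-)^{\dagger}$ is not fully faithful, and, as the remark after \Cref{t:physical-rigidity} says, such cohomology comparisons are only available in the regular singular case. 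So ``the vanishing transfers directly'' is unjustified.
\item The graded sketch names the right mechanism but gives no reason for the injectivity/surjectivity pattern, and the stability results of \cite{RLYG12} do not supply it by themselves.
\end{itemize}

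Here is how to close the gap.
\begin{enumerate}
\item Since $s_0=1$, $\cfg(l)=0$ for $|l|\ge m$.
\item Injectivity for $l\le -1$. Let $Y\in\cfg(l)$ with $l\le -1$ and $[X_1,Y]=0$. Then $[X_{1-m},Y]\in\cfg(l+1-m)=0$, so $[X,Y]=0$ and $Y$ lies in the Cartan subalgebra $\cft_X$, hence is semisimple. But $\ad Y$ shifts degrees, so it is also nilpotent, and therefore $Y=0$. Thus $\ad X_1\colon\cfg(l)\to\cfg(l+1)$ is injective for $l\le -1$.
\item Surjectivity for $l\ge 0$. Under the Killing pairing $\cfg(l)\times\cfg(-l)\to\overline{K}$, $\ad X_1$ is skew, so step 2 gives surjectivity of $\ad X_1\colon\cfg(l)\to\cfg(l+1)$ for $l\ge 0$.
\item Telescoping then gives $\dim\ker(\ad X_1)=\dim\cfg(0)=\dim\cfg_0$.
\item $\dim\cfg_0=\sharp\Phi/m$, because $\theta$ acts on $\cft_X$ through a regular elliptic element of order $m$. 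It has no fixed vectors in $\cft_X$. Its orbits on root lines have size exactly $m$, since a root fixed by $\theta^k$ with $0<k<m$ would vanish on $X$. Each orbit contributes one line to $\cfg^{\theta}$.
\end{enumerate}

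You also need the $p$-adic local term at $0$ to equal $\dim\ker(\ad X_1)$. That term is the invariants of the Weil--Deligne data, i.e.\ $\ker N$ on $\cfg^{I_0}$, not just $\cfg^{I_0}$. The residue $X_1$ at $0$ is nilpotent. So by the $p$-adic transfer theorem for regular singularities with exponents $0$, the restriction to the Robba ring at $0$ is $d+X_1\,dx/x$; the needed solvability comes from the Frobenius structure. Hence $\rho|_{I_0}$ is trivial and $N$ is conjugate to $X_1$.

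With these inputs your count gives $\chi=\sharp\Phi/m-\sharp\Phi/m=0$, and the argument goes through.
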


\begin{rem}
	(i) In the $\theta$-case with $m=h$, the representation $\rho$ is the \textit{simple wild parameter} introduced by Gross--Reeder \cite{GR10}. 
	For simple wild parameters, \Cref{intro-t:local-monodromy}(i) is proved in \cite[Proposition 9.4]{GR10} under the assumption that $p \nmid \sharp \bW$, and \Cref{intro-t:local-monodromy}(ii) is proved in \cite[Theorem 2, Corollary 2.15]{HNY} ($\ell$-adic setting) and \cite[Corollary 4.5.9]{XZ22} without any assumption on $p$. 

	(ii) However, \Cref{intro-t:local-monodromy}(i) fails for a simple wild parameter $\rho$ if $p\mid h$. 
	For instance, when $\cG=\SL_p$, one may expect the image $\rho(I_F^+)$ to be a non-abelian group of order $p^3$ and its tame quotient to be cyclic of order $p+1$. 
	This is known for $p=2,3$ \cite{And02,Qin24}. 

	(iii) Motivated by the theory of formal isoclinic connections \cite{JY23}, we introduce the notion of \textit{$p$-adic isoclinic connection} over the open unit disc in \S~\ref{ss:p-adic-isoclinic}. 
	For such connections, we describe the associated Weil--Deligne representations in \Cref{t:L-parameter}. 
	We expect that these Weil--Deligne representations correspond, under the local Langlands correspondence, to toral supercuspidal representations \cite{Adl98,Kal19}. 
	In our situation, we show that the differential module $\mathscr{E}$ at $\infty$ is $p$-adically isoclinic, and then deduce the above theorem from the general results of \Cref{t:L-parameter}. 
    
    (iv)
	The representation $\rho$ is an epipelagic parameter in the sense of Kaletha \cite{Kal15}. We expect our construction via the global Langlands correspondence is compatible with Kaletha's purely local construction and we will explore the relationship in the future. 
\end{rem}

\begin{secnumber}
	\textbf{Global monodromy of the Airy local system.} 
	Frenkel--Gross \cite[Corollary 10]{FG09} and Kamgarpour--Sage \cite[Theorem 12]{KS} computed the differential Galois group $G_{\diff}$ of the Bessel and Airy $\cG$-connections \eqref{intro-eq:Ai-conn} over $\overline{K}$ respectively. 
	The two results coincide, and we list them in the following table (up to central isogeny):
	\begin{equation}\label{eq:table monodromy}
	\begin{tabular}{L|L}
		\check{G} & G_{\diff} \\
		\hline
		A_{2n} & A_{2n} \\
		A_{2n-1}, C_{n} & C_{n} \\
		B_{n},D_{n+1} (n\ge 4) & B_{n} \\
		E_{7} & E_{7} \\
		E_{8} & E_{8} \\
		E_{6}, F_{4} & F_{4} \\
		B_{3}, D_{4}, G_{2} & G_{2}.
	\end{tabular}
	\end{equation}
	When $m=h$, the geometric monodromy group of $\Kl_{\cG}^{\rig}$ (and its $\ell$-adic companion) was computed in \cite{Katz88,HNY,XZ22}.

	If $G_{\geo}$ denotes the geometric monodromy group of the Airy $\cG$-overconvergent isocrystal $\Ai_{\cG}^{\rig}$ over $\overline{K}$, there exists a canonical embedding \cite[\S~4.5.1]{XZ22}:
\[ G_{\geo}\hookrightarrow G_{\diff}.\]
\end{secnumber}

\begin{theorem}[\Cref{t:monodromy-Ai}]
	Let $n$ be the minimal dimension of faithful representations among almost simple groups with Lie algebra $\cfg$. 
	If $p>2n+1$, then the above homomorphism is an isomorphism.
	\label{intro-t:global-mono}
\end{theorem}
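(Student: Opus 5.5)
We have to show that $G_{\geo}$, a reductive subgroup of $G_{\diff}$, is all of $G_{\diff}$. I would follow the strategy of Katz and of \cite[\S 4.5]{XZ22}: first pin down $G_{\geo}$ enough to see that it is a connected semisimple group acting irreducibly, then use the local monodromy at $\infty$ from \Cref{intro-t:local-monodromy} to detect elements that only large groups can contain, and finally run through the list of possible irreducible subgroups.

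\textbf{Step 1: $G_{\geo}$ is semisimple and irreducible.} Fix a faithful representation $V$ of $\cG$ of minimal dimension $n$, and regard $G_{\geo}\subset G_{\diff}\subset \GL(V)$. Since $\Ai_{\cG}^{\rig}$ is pure (it comes from geometric Langlands), $G_{\geo}$ is semisimple; its neutral component $G_{\geo}^\circ$ is then semisimple as well. The isocrystal lives on $\A1$, which has no nontrivial tame finite étale covers. A finite quotient $G_{\geo}/G_{\geo}^\circ$ therefore corresponds to a covering ramified only at $\infty$. I would use the bound on $\Swan_\infty$ from \Cref{intro-t:local-monodromy}(ii), namely slopes $(1+h)/h<2$ on $V$, together with $p>2n+1$ to show this quotient is trivial. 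The point is that the slopes are then too small, and the dimension too small relative to $p$, for a nontrivial $p$-group quotient. So $G_{\geo}$ is connected.

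For irreducibility: a $G_{\geo}$-stable subspace of $V$ would give a sub-isocrystal. Its restriction to $\infty$ would be a sum of isotypic pieces for $\rho(I_F)$. By \Cref{intro-t:local-monodromy}(i), $\rho(I_F^+)\subset\cT_X$ acts through characters permuted transitively by a Coxeter element. Each orbit is therefore large, so $V|_{I_F}$ is irreducible, or at least has very few summands with the same slope. That rules out proper subobjects.

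\textbf{Step 2: local elements in $G_{\geo}$.} The image $\rho(I_F)$ lies in $G_{\geo}(\overline{K})$. In particular $G_{\geo}$ contains $\rho(I_F^+)$ and a lift $w$ of a Coxeter element normalizing $\cT_X$. Taking the Zariski closure of the group generated by $\rho(I_F^+)$, I would argue that $G_{\geo}$ contains a torus $T'\subset\cT_X$ stable under $w$. On the other hand, the eigenvalues of $w$ on $V$ are distinct $h$-th roots of unity, up to a scalar. This shows that $G_{\geo}$ contains a regular element whose order matches the Coxeter number of $G_{\diff}$, which is $h$ again by the table \eqref{eq:table monodromy}. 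It follows that the Coxeter number of $G_{\geo}$ is at least $h$, and that $V|_{G_{\geo}}$ is irreducible with a large-order regular element.

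\textbf{Step 3: classification (the main obstacle).} It remains to use Kostant's/Katz's results on connected irreducible subgroups of $\GL(V)$ that contain an element with this eigenvalue pattern. These are the Gabber--Katz style lists; the element behaves like a principal $\SL_2$-type regular element. The same case check that Frenkel--Gross and Kamgarpour--Sage use, comparing with \eqref{eq:table monodromy}, would force $G_{\geo}=G_{\diff}$. The hypothesis $p>2n+1$ should enter in two places. It is needed to know that $\rho(I_F^+)$ is large enough, so that the wild characters on $V$ are distinct and generate a sufficiently dense torus. It is also needed in Step 1, and to avoid exotic finite-characteristic phenomena. I expect the hardest part to be extracting a genuine torus, not merely a finite $p$-group, from $\rho(I_F^+)$. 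If that fails, I would instead use Katz's ``$V\otimes V^\vee$ has the same invariants'' criterion. In that approach one computes $\dim \rH^0$ of $\mathrm{End}(V)$ and of $\mathrm{Sym}^2V$, $\Lambda^2 V$ via the Euler characteristic formula, with Swan conductors read off from \Cref{intro-t:local-monodromy}. This gives equality of the invariant rings, hence $G_{\geo}=G_{\diff}$ by reductivity.
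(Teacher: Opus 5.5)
Your overall shape (prove connectedness, use the Coxeter-type local monodromy at $\infty$ to produce a fundamental degree $h$, then classify) matches the paper's. However, the two substantive steps do not work as you describe them.

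\textbf{Connectedness.} The slope bound at $\infty$ is irrelevant here. $\pi_1(\mathbb{A}^1_{\overline{k}})$ has many nontrivial finite quotients ramified only at $\infty$ with small breaks (Artin--Schreier covers have break $1$), so ``slopes $<2$'' excludes nothing. The missing argument is group-theoretic inside $\cG$. It hinges on an input you never use: $C_{\cG}(G_{\geo})\subset C_{\cG}(\rho(I_F))=\cT_X^{\sigma}$ is finite of order prime to $p$. Write $H=G_{\geo}$. Then:
\begin{enumerate}
\item $\pi_0(H)$ is a quasi-$p$-group while $|\Out(H^\circ)|$ is prime to $p$. So $\pi_0(H)$ acts on $H^\circ$ by inner automorphisms and embeds into $\Delta=C_{\cG}(H^\circ)/Z(H^\circ)$.
\item Feit--Thompson's sharpening of Jordan's theorem makes the image $K$ an abelian $p$-group. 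This is where $p>2n+1$ and the faithful $n$-dimensional representation are used.
\item Since $Z(H^\circ)$ has order prime to $p$, $|C_\Delta(K)|$ divides $|C_{\cG}(H)|$. As $K\subset C_\Delta(K)$, this forces $K=1$.
\end{enumerate}
Your irreducibility claim is false and should be dropped. For $\cG$ of type $D_{r+1}$, $D_4$, $E_6$, the group $G_{\diff}$ (of type $B_r$, $G_2$, $F_4$) already fixes a line in the minimal faithful representation: $2r+2=(2r+1)+1$, $8=7+1$, $27=26+1$. Moreover, the Coxeter element fixes the zero weight. Its eigenvalues on $V$ also cannot be distinct $h$-th roots of unity when $\dim V>h$ (e.g.\ $E_6,E_7,E_8$).

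\textbf{Step 2.} The Zariski closure of the finite group $\rho(I_F^+)$ is itself, so no torus comes out. An element of order $h$ in $G_{\geo}$ says nothing about Coxeter numbers or degrees, since every torus has elements of all orders. What the classification needs is an element of order $h$ in the Weyl group of $H$. The paper obtains it only after connectedness:
\begin{enumerate}
\item By Borel--Serre, the finite $p$-group $\rho(I_F^+)$ normalizes a maximal torus $T_H$ of the connected semisimple group $H$.
\item Since $p>h$ does not divide $|\bW_H|$, $\rho(I_F^+)$ lies in $T_H$.
\item Then $T_H\subset C_{\cG}(\rho(I_F^+))=\cT_X$ by \Cref{t:L-parameter}(iii), so $T_H=\cT_X\cap H$.
\item The image of the tame generator lies in $H$ and normalizes $\cT_X$, hence normalizes $T_H$, and yields an element of order $h$ in $\bW_H$.
\end{enumerate}
Hence $h$ divides a fundamental degree of $H$, i.e.\ $H$ is a reductive subgroup of maximal degree. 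Kamgarpour--Sage's classification of such subgroups, compared with the list for $G_{\diff}$, then gives $H=G_{\diff}$.

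Your fallback does not substitute for this. On $\mathbb{A}^1$ the Euler--Poincar\'e formula only determines $h^2_c-h^1_c$, not invariants. Matching invariants in a few representations also does not determine a reductive subgroup without a classification input of this kind.
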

\begin{rem}
	Via the companion, we can also recover the geometric monodromy group of the $\ell$-adic Airy sheaves $\Ai_{\cG}^{\ell}$. 
	In this way, We partially generalize the results of Katz and of Šuch for $\ell$-adic Airy sheaves of rank $n$ \cite{Katz87,Such}.
\end{rem}

The proof of Kamgarpour--Sage is based on a general principle: if a $\cG$-connection has the maximal slope $\frac{a}{h}$ with the Coxeter number $h$ and $(a,h)=1$, then its differential Galois group $G_{\diff}$ has a fundamental degree $h$. 
This result fails for arithmetic local system in positive characteristic. 
For instance, when $p=2$, the geometric monodromy group of the rank $2n+1$ Bessel $F$-isocrystal is $\SO_{2n+1}$ ($n\neq 3$) or $G_2$ ($n=3$) \cite[Theorem 4.5.2]{XZ22}, whose maximal fundamental degree is $2n$. However, its $p$-adic slope is $\frac{1}{2n+1}$. 
In our proof, we first show that $G_{\geo}$ is connected under the assumption $p>2n+1$. 
Since the local monodromy (\Cref{intro-t:local-monodromy}) is contained in $G_{\geo}$, we deduce that the Weyl group of $G_{\geo}$ has an element of order $h$ and hence $G_{\geo}$ has a fundamental degree $h$. 
This allows us to conclude \Cref{intro-t:global-mono}. 

\begin{secnumber}
	\textbf{Physical rigidity.} 
	Given an open embedding $j:U\to \P1$ over $\mathbb{F}_p$, a $\cG$-overconvergent isocrystal $\mathcal{E}$ on $U$ is called \textit{physically rigid} if it is uniquely determined by its local monodromy representations at the boundary $\P1-U$ (see \S~\ref{sss:phy-rigid}). 
	We prove that if there exists a physically rigid algebraic $\cG$-connection $E$ underlying $\mathcal{E}$, then $\mathcal{E}$ is also physically rigid as $\cG$-overconvergent isocrystals (see \Cref{t:physical-rigidity}). 

	In \cite{Yi24,CY24,Yi25}, the second author and T.H. Chen showed that the $\theta$-connection and Airy connection considered above are physically rigid. 
	In \cite{HJ24}, Hohl--Jakob prove the physical rigidity in a more general setting. 
	As an application, we can deduce the following result. 
\end{secnumber}

\begin{theorem}[\Cref{c:strong-phy-rigidity}]
	\label{intro-t:phy-rigid}	
	The $\cG$-overconvergent isocrystals $\Kl_{\cG}^{\rig}$, $\Ai_{\cG}^{\rig}$ are physically rigid. 
\end{theorem}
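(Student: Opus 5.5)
The plan is to deduce the statement from the general criterion \Cref{t:physical-rigidity}: a $\cG$-overconvergent isocrystal is physically rigid as soon as its underlying algebraic $\cG$-connection is physically rigid. Concretely, let $\mathcal{E}$ be $\Kl_{\cG}^{\rig}$ (resp. $\Ai_{\cG}^{\rig}$). Let $\mathcal{E}'$ be another $\cG$-overconvergent $F$-isocrystal on $\Gm$ (resp. $\A1$) whose local monodromy representations at $0$ and $\infty$ (resp. at $\infty$) are isomorphic to those of $\mathcal{E}$. We must show $\mathcal{E}'\simeq\mathcal{E}$.

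\textbf{Step 1: an algebraic model.} By \Cref{intro-t:Frobenius}, more precisely \Cref{t:compare-Kl-rig-dr,t:compare-Ai-rig-dr} combined with \Cref{t:compare-Kl-theta-connection,t:Ai connection}, the underlying overconvergent isocrystal of $\mathcal{E}$ is the analytification of the algebraic $\cG$-connection $\Theta(X,\lambda)$ (resp. $\Ai(X,\lambda)$). By \cite{Yi24,CY24,Yi25}, this connection is physically rigid over $\overline{K}$: it is determined by its formal types at $0$ and $\infty$.

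\textbf{Step 2: apply the general criterion.} The point is that \Cref{t:physical-rigidity} transfers physical rigidity from the algebraic connection to the isocrystal. For this I must check that the local data entering \Cref{t:physical-rigidity} match: the $p$-adic local monodromy of $\mathcal{E}$ at each boundary point must determine the formal type of the underlying connection. At $0$ the monodromy is tame and unipotent (regular singular with nilpotent residue $X_1$), so the Weil--Deligne datum determines the formal connection.

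At $\infty$, \Cref{intro-t:local-monodromy} describes $(\rho,N)$ explicitly: $N=0$, $\rho(I_F^+)$ lands in the torus $\cT_X$, and the tame generator maps to a regular elliptic element of $\bW$. By \Cref{p:compare-monodromy-group} and the isoclinic description of \Cref{t:L-parameter}, this representation pins down the formal isoclinic type $X_1+\lambda^m X_{1-m}x$ up to gauge. Once this matching is verified, \Cref{t:physical-rigidity} applies and gives $\mathcal{E}'\simeq\mathcal{E}$ as $\cG$-overconvergent isocrystals. If Frobenius-compatible isomorphisms are also required, I would use the fact that $\mathcal{E}$ is irreducible with $\cfg^{\pi_1}=0$ (cohomological rigidity). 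Then the space of isomorphisms is a torsor under the finite center, so Frobenius compatibility follows up to a central twist fixed by the local data.

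\textbf{Main obstacle.} I expect the hardest part to be Step 2 at $\infty$. There one must show that any isocrystal $\mathcal{E}'$ with the same $p$-adic local monodromy has an underlying formal connection at $\infty$ gauge-equivalent to that of $\mathcal{E}$. The difficulty is that the Weil--Deligne representation only sees the differential module up to Frobenius-compatible data over the Robba ring, not the formal Laurent-series connection directly. My first attempt would be to use the $p$-adic local monodromy theorem to trivialize both modules over the same extension of the Robba ring, so that the isomorphism of Weil--Deligne representations gives an isomorphism of differential modules. I would then use the slope decomposition and the regularity of $X$ to descend this to an isomorphism of formal types.
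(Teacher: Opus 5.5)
Your opening reduction is exactly the paper's proof. \Cref{c:strong-phy-rigidity} is \Cref{t:physical-rigidity} applied to $E=\Theta(X_{-\pi})$ (resp.\ $\Ai(\underline{a},-\pi)$). Here $E^{\dagger}\simeq\mathcal{E}$ comes from \Cref{t:compare-Kl-rig-dr,t:compare-Kl-theta-connection} (resp.\ \Cref{t:compare-Ai-rig-dr,t:Ai connection}), and physical rigidity of $E$ comes from \Cref{t:rigidity-FG}.

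\Cref{t:physical-rigidity} needs nothing beyond that. Physical rigidity (\S\ref{sss:phy-rigid}) is about isomorphisms of differential modules over $\mathcal{R}$ (by \eqref{eq:MCFR}, the same as your condition on local monodromy) and of $\cG$-isocrystals over $\overline{K}$, with no Frobenius. So your Frobenius paragraph is unnecessary.

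The real problem is Step~2, which you make a precondition for applying the theorem and single out as the main obstacle. The claim that the $p$-adic local monodromy determines the formal type of the underlying connection is false, and the statement you plan to prove at $\infty$ is not even well posed.

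\begin{itemize}
\item \emph{Not well posed.} An arbitrary $\mathcal{E}'$ has no formal connection at $\infty$. Its restriction lives over $\mathcal{R}$, which does not map to $\overline{K}(\!(t)\!)$.
\item \emph{False.} Algebraic models are far from unique, since $(-)^{\dagger}$ is not fully faithful (\S\ref{sss:dagger-connection}). Already $d+a\,\frac{dt}{t^{2}}$ with $0<|a|<\omega$ has the horizontal section $\exp(a/t)$, convergent for $|t|>|a|/\omega$. So it is trivial over $\mathcal{R}$ but formally irregular.
\item \emph{False in your own setting.} Pull $\Theta(X_{-\pi})$ back along $x\mapsto(1+p)x$. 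This map lifts the identity of $\mathbb{G}_{m,k}$, so the pullback is a second algebraic model of the same $\Kl_{\cG}^{\rig}$. Its canonical leading term at $\infty$ is, up to conjugacy, $(1+p)^{1/m}$ times the old one. That factor is not a root of unity, so the formal type at $\infty$ is different.
\end{itemize}

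Hence no argument via slope decompositions and regularity of $X$ can ``descend'' a Robba-ring isomorphism to an isomorphism of formal types.

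The missing idea, which is the content of the proof of \Cref{t:physical-rigidity}, is to \emph{choose} the algebraization of $\mathcal{E}'$ so that its formal types are those of $E$ by construction.
\begin{itemize}
\item Realize $\mathcal{E}'$ as a $\cG$-bundle with connection on a strict neighborhood $V$ of $\UU^{\rig}$.
\item Near each $x\in S$, regard $E|_x$ as a module over $\mathcal{A}_{K,t_x}$, i.e.\ a meromorphic connection on the open unit disc at $x$.
\item Glue the two along the given isomorphism $\mathcal{E}'|_x\simeq E|_x\otimes\mathcal{R}$, which is defined on an annulus $\rho_0<|t_x|<1$ inside $V$.
\end{itemize}
This yields an analytic $\cG$-bundle on $(\mathbb{P}^1_K)^{\an}$ with connection meromorphic along $S$. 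By GAGA it is an algebraic $L$ on $U_K$ with $L^{\dagger}\simeq\mathcal{E}'$ and $L|_x\simeq E|_x$ formally for every $x\in S$. Physical rigidity of $E$ gives $L\simeq E$, hence $\mathcal{E}'\simeq E^{\dagger}\simeq\mathcal{E}$.

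So the isoclinic analysis of $\rho$ at $\infty$ plays no role in this corollary. In the paper it is used for cohomological rigidity and for the $\ell$-adic \Cref{t:physical-rig-l}.
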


\begin{secnumber}
	\textbf{Companions and the passage between primes.} 
	For a prime $\ell\neq p$, we expect to recover the above $p$-adic rigidity result for $\ell$-adic local systems via the theory of companion. 
	For $\GL_n$-local systems on curves, the theory of crystalline companions is established by Abe \cite{Abe18}. 
	For every representation $V\in \Rep(\cG)$,  $\Kl^{\rig}_{\cG}(V)$ (resp. $\Ai^{\rig}_{\cG}(V)$) is the $p$-adic companion of $\Kl^{\ell}_{\cG}(V)$ (resp. $\Ai^{\ell}_{\cG}(V)$). 
	This allows us to deduce that $\Kl^{\ell}_{\cG}$ and $\Ai^{\ell}_{\cG}$ are cohomologically rigid by comparing the local monodromy representation at $\infty$ defined by the adjoint representation \Cref{p:companion-WD-rep}. 

	In this paper, we also use a notion of companion of $\cG$-local systems on curves for a connected semisimple group $\cG$ due to Drinfeld \cite{Dri18} (see \S~\ref{ss:companion}). 
	When $m=h$, $\Kl_{\cG}^{\rig}$ is the $p$-adic companion of the $\ell$-adic Kloosterman sheaf $\Kl_{\cG}^{\ell}$ in the above sense (\Cref{p:companion-Kl}). 
	By relating the local monodromy representations of companion local systems, we deduce the physical rigidity of $\Kl_{\cG}^{\ell}$ from \Cref{intro-t:phy-rigid}, as conjectured in \cite[Conjecture 7.1]{HNY}. 
\end{secnumber}

\begin{theorem}[\Cref{t:physical-rig-l}]
	Assume that $p$ does not divide the order of the Weyl group $\bW$. 
	Then $\Kl_{\cG}^{\ell}$ is physically rigid. 
	\label{intro-t:phy-rigid-l}
\end{theorem}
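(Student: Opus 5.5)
We deduce the physical rigidity of $\Kl_{\cG}^{\ell}$ from that of its $p$-adic companion $\Kl_{\cG}^{\rig}$ (\Cref{intro-t:phy-rigid}), using Drinfeld's theory of companions for $\cG$-local systems. Here $m=h$, and $\Kl_{\cG}^{\rig}$ is the $p$-adic companion of $\Kl_{\cG}^{\ell}$.

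Let $\mathcal{F}$ be an $\ell$-adic $\cG$-local system on $\Gm$ (over $\overline{\mathbb{F}}_p$, or over a finite extension after descent) whose local monodromies at $0$ and $\infty$ are isomorphic to those of $\Kl_{\cG}^{\ell}$. We must show $\mathcal{F}\simeq \Kl_{\cG}^{\ell}$. The argument has three steps.

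\textbf{Step 1: reduce to the arithmetic setting.} Since $\Kl_{\cG}^{\ell}$ has prescribed local monodromy and is cohomologically rigid (by \Cref{intro-t:local-monodromy} and the companion comparison), the usual argument shows that $\mathcal{F}$ descends to a finite field $\mathbb{F}_q$ and is pure of weight $0$ up to a central twist. Moreover $\mathcal{F}$ has irreducible global monodromy: its local monodromy at $\infty$ contains the image of $I_F$, which acts irreducibly on the adjoint representation in the sense that $\cfg^{I_F}=0$. So Drinfeld's theorem \cite{Dri18} applies and produces a $p$-adic companion $\mathcal{F}^{\rig}$, a $\cG$-overconvergent $F$-isocrystal on $\Gm$, defined up to the usual ambiguity coming from automorphisms of $\cG$ and central characters.

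\textbf{Step 2: compare local monodromy of companions.} This is the key step. Via \Cref{p:companion-WD-rep} we know that local Weil--Deligne representations of companions agree for each $V\in\Rep(\cG)$, after a field isomorphism $\overline{\mathbb{Q}}_\ell\simeq\overline{K}$. We upgrade this to an agreement of $\cG$-valued Weil--Deligne parameters, up to $\cG$-conjugacy.

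At $0$ the tame monodromy is unipotent, given by a regular unipotent element. Here the parameter is determined by Frobenius semisimple data together with $N$, and the agreement follows from the agreement of characters on all representations together with a regular nilpotent $N$.

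At $\infty$ the parameter is the one in \Cref{intro-t:local-monodromy}(i). It factors through $N_{\cG}(\cT_X)$, with the tame generator mapping to a Coxeter element. The obstacle is that $\cG$-valued representations of a finite group are not in general determined by their traces on all $V\in\Rep(\cG)$. Here the hypothesis $p\nmid\sharp\bW$ is used. The image of wild inertia is an abelian $p$-group inside $\cT_X$, and the image of $I_F$ is a semidirect product with a cyclic group of order $h$ that is prime to $p$. So the restriction to $I_F$ is a tame-by-toral parameter. Such a parameter is classified, as in Gross--Reeder \cite[\S 9]{GR10}, by the regular $\bW$-orbit of the character of $I_F^+$ into $\cT_X$, modulo the Coxeter class. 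That orbit is read off from the restrictions to $\cT$-weights, hence from all $V$, and this determines the parameter up to conjugacy. Extending to $W_F$ costs only a central element, and this is fixed by comparing determinants and traces.

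\textbf{Step 3: conclude.} By Step 2, $\mathcal{F}^{\rig}$ has the same local monodromy as $\Kl_{\cG}^{\rig}$. By \Cref{intro-t:phy-rigid}, or more precisely \Cref{c:strong-phy-rigidity}, we get $\mathcal{F}^{\rig}\simeq\Kl_{\cG}^{\rig}$. Companions are unique: Frobenius traces on all $V$ determine the $\cG$-local system up to the Drinfeld ambiguity, which is fixed by Chebotarev. Hence $\mathcal{F}\simeq\Kl_{\cG}^{\ell}$. If the argument was carried out over $\mathbb{F}_q$, we finally pass to the geometric statement over $\overline{\mathbb{F}}_p$ by using that the isomorphism is unique up to the finite centralizer.
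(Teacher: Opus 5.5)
Your Step 2 has a genuine gap, and it is exactly the point the paper's argument is built to avoid. \Cref{p:companion-WD-rep} gives you, for each $V\in\Rep(\cG)$, an isomorphism of $\GL(V)$-valued local representations. For a finite image this is the same as $\rho(g)$ and $\rho'(g)$ being $\cG$-conjugate for each $g$ separately, i.e.\ elementwise conjugacy. Your sentence ``that orbit is read off from the restrictions to $\cT$-weights, hence from all $V$'' assumes what has to be proved. The data supplied by all $V$ is the collection of multisets $\{\mu\circ\chi\}_{\mu\in\mathrm{wt}(V)}$, which is again only elementwise information. It does not in general pin down the homomorphism $\chi\colon I_F^+\to\cT_X$ up to a single Weyl element. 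The paper states $\cG$-conjugacy of local parameters of companions only as an expectation (\Cref{r:companion-local-global}), verified for acceptable groups. The hypothesis $p\nmid\sharp\bW$ does not supply it: it is used for the Gross--Reeder structure of simple wild parameters and the count of $q-1$ inertial classes. Two smaller points: physical rigidity only involves inertia, so extending to $W_F$ is irrelevant; and that ambiguity would be an element of $\cT_X^{\sigma}$, not a central element.

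The paper never compares $\cG$-valued local parameters across the companion. From the companion comparison it uses only properties visible on the adjoint representation: $\cfg^{I_\infty}=0$ and $\Swan(\cfg)=\rank$ (so $\mathcal{F}^{\rig}|_\infty$ is simple wild), plus regular unipotent monodromy at $0$. All $q-1$ inertial classes of simple wild parameters are realised by Kloosterman isocrystals (\Cref{p:isomorphisms-class-phi}, \Cref{p:bijection-simple-wild}). Hence $p$-adic physical rigidity gives $\mathcal{F}^{\rig}\simeq\Kl^{\rig}_{\cG}(\phi')$ for \emph{some} stable $\phi'$; this is done first for $\cG_{\ad}$, then lifted using $p\nmid\sharp Z_{\cG}$. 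By \Cref{p:companion-Kl}, $\mathcal{F}\simeq\Kl^{\ell}_{\cG}(\phi')$. Only now is the $\cG$-valued hypothesis used, and entirely on the $\ell$-adic side: $\Kl^{\ell}_{\cG}(\phi)|_{I_\infty}\simeq\mathcal{F}|_{I_\infty}\simeq\Kl^{\ell}_{\cG}(\phi')|_{I_\infty}$. Injectivity of $\Pi^{\ell}$ (\Cref{p:isomorphisms-class-phi}(i)) then gives $\Kl^{\ell}_{\cG}(\phi)\simeq\Kl^{\ell}_{\cG}(\phi')$. Your Steps 1 and 3 can be kept essentially as they are once Step 2 is replaced by this detour.
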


\subsection{Organisation of the paper}

	In \S~\ref{s:pde}, we review some properties of overconvergent $F$-isocrystals and $p$-adic differential equations needed in the sequel. 
	In \S~\ref{s:p-isoclinic}, we introduce the notion of $p$-adic isoclinic connection and study the associated Weil--Deligne representation. 
	In \S~\ref{s:Frobenius}, we construct the Frobenius structure on $\theta$ and Airy connections \Cref{intro-t:Frobenius}.
	Section~\ref{s:application} is devoted to applications to local and global monodromy representations of $\Kl_{\cG}^{\rig}$ and $\Ai_{\cG}^{\rig}$. 
	Finally, Section~\ref{s:rigidity} establishes the physical rigidity results in both the $p$-adic and the $\ell$-adic settings. 

\textbf{Acknowledgement.} 
	We would like to thank Tomoyuki Abe, Tsao-Hsien Chen, Weizhe Zheng, Xinwen Zhu for valuable discussions. 
	Daxin Xu is supported by National Key R\&D Program of China
2025YFA1018000, National Natural Science Foundation of China Grant no. 12288201 and CAS Project for Young Scientists in Basic Research, Grant No. YSBR-033.
	Lingfei Yi is supported by the National Key R\&D Program of China No. 2024YFA1014600.

\section{Review of $p$-adic differential equations} \label{s:pde}

	In this article, we fix a prime number $p$. 
	Let $k$ be a finite field, $\overline{k}$ an algebraic closure of $k$ and $\mathcal{O}_K$ a complete discrete valuation ring of mixed characteristic 
	with residue field $k$. 
	We set $K=\Frac(\mathcal{O}_K)$, equipped with the valuation $|\cdot|$ normalized by $|p|=p^{-1}$. 
	We fix an algebraic closure $\overline{K}$ of $K$.
	Let $\mathcal{O}_{\overline{K}}$ (resp. $\mathfrak{m}_{\overline{K}}$) be the ring of integers in $\overline{K}$ (resp. the maximal ideal of $\mathcal{O}_{\overline{K}}$). 
	Let $\mathbb{C}_p$ be the $p$-adic completion of $\overline{K}$. 

	Let $s$ be a positive integer and set $q=p^s$. 
	To discuss Frobenius structure, we assume that the $s$-th Frobenius endomorphism $k\xrightarrow{\sim} k,~ x\mapsto x^q$ lifts to an automorphism $\sigma:\mathcal{O}_K\xrightarrow{\sim}\mathcal{O}_K$. 
	Then, $\sigma$ can be extended to an automorphism $\sigma:\overline{K}\to \overline{K}$ and there exists a sequence of finite extensions $K_n$ of $K$ in $\overline{K}$ such that $\sigma(K_n)=K_n$ and that $\cup_{n} K_n=\overline{K}$.

\subsection{Overconvergent ($F$-)isocrystals}

\begin{secnumber} \label{sss:def-isod}
	Let $X$ be a $k$-scheme. We denote by $\FIsod(X/K)$ the category of overconvergent $F$-isocrystals on $X$ (relative to $K$ and $\sigma$) and refer to \cite{Ber96} for the definition. 
	We denote by $\Isod(X/K)$ the thick full subcategory of the category of overconvergent isocrystals on $X$ relative to $K$, generated by those that can be endowed with an $s'$-th Frobenius structure for some integer $s'$ divisible by $s$. 
	Note that this category is independent of the choice of $\sigma$. 

	We assume that there exists an element $\pi$ in $K$ satisfying $\pi^{p-1}=-p$. 
	If $x$ denotes a coordinate of $\A1$, the connection 
	$d -\pi dx$ \footnote{There is a typo in \cite[2.1.1]{XZ22}, $d+\pi dx$ should read $d-\pi dx$.}
	on $\mathcal{O}_{\A1_K}$ underlies an object of $\Iso^{\dagger}(\A1_k/K)$ and is called \textit{Dwork isocrystal}, denoted by $\mathscr{A}_{\pi}$. 
	A $p$-th Frobenius structure on $\mathscr{A}_{\pi}$ is given by the overconvergent function $\theta_{\pi}(x)=\exp(\pi(x-x^p))$. 

	There exists a unique nontrivial additive character $\psi:\mathbb{F}_p\to K^{\times}$ satisfying $\psi(1)=1+\pi \mod \pi^{2}$. 
	For each $x\in \mathbb{F}_{p}$, we denote by $[x]$ the Teichmüller lifting of $x$ in $\mathbb{Q}_p$. Then $\theta_{\pi}([x])=\psi(x)$ \cite[1.4]{Ber84}. 
	So the Frobenius trace function of $\mathscr{A}_{\pi}$ is equal to $\psi\circ\Tr_{k/\mathbb{F}_p}(-)$. 
	We also denote $\mathscr{A}_{\pi}$ by $\mathscr{A}_{\psi}$, as it plays a similar role of the Artin--Schreier sheaf associated to $\psi$ in the $\ell$-adic theory. 	
\end{secnumber}

\begin{secnumber} \label{sss:Robba-ring}
	We denote by $\mathcal{A}_K$ the $K$-algebra of analytic functions on the open unit disc $|t|<1$, i.e.
	\begin{equation} \label{eq:functions on open disc}
		\mathcal{A}_K=\{\sum_{n\ge 0} a_n t^n\in K\llbracket t\rrbracket;~ |a_n|\rho^n\to 0~ (n\to \infty)~ \forall \rho\in [0,1)~\}.
	\end{equation}
	We also denote by $\mathcal{R}_K$ the Robba ring of analytic functions on certain annuli: 
	\begin{equation}
		\mathcal{R}_K= \{ \sum_{n\in \mathbb{Z}} a_n t^n \in K\llbracket t,t^{-1}\rrbracket;~ |a_n|\rho^n \to 0 ~(n\to \infty), \forall \rho\in (\rho_0,1) ~\textnormal{ for some } \rho_0\in (0,1)\}. 	
		\label{eq:Robba-ring}
	\end{equation}

	For any algebraic extension $L$ of $K$, we set $\mathcal{A}_L=\mathcal{A}_K\otimes_K L$ and $\mathcal{R}_L=\mathcal{R}_K\otimes_K L$, and omit the subscript $\overline{K}$ when $L=\overline{K}$. 
	We set $\mathcal{A}_{K,t}=\mathcal{A}_K[\frac{1}{t}]$ and $\mathcal{A}_t=\mathcal{A}[\frac{1}{t}]$. 
	We have natural inclusions
	\begin{equation}
		\mathcal{A}_t\to \overline{K}(\!(t)\!),\quad \mathcal{A}_t\to \mathcal{R}.
		\label{eq:inclusion-rings}
	\end{equation}

	The rings $\mathcal{R}, \mathcal{A}_t$ are equipped with the differential operator $d$. 
	We denote by $\MC(\mathcal{R}_K)$ (resp. $\MC(\mathcal{R})$) the category of finite free differential modules over $\mathcal{R}_K$ relative to $K$ (resp. $\mathcal{R}$ relative to $\overline{K}$). 
	Every object of $\MC(\mathcal{R})$ comes from the extension of scalars of an object of $\MC(\mathcal{R}_L)$ for a finite extension $L$ of $K$.
	The category $\MC(\mathcal{R})$ is a Tannakian category over $\overline{K}$. 
\end{secnumber}

\begin{secnumber} \label{sss:MCF}
	There is a $\sigma$-linear endomorphism $\phi$ of $\mathcal{R}_K$, defined by $t\mapsto t^{q}$. 
	We say an object $M$ of $\MC(\mathcal{R}_K)$ can be equipped with a Frobenius structure if there exists an isomorphism $\varphi:\phi^*M\simeq M$. 
	The isomorphism $\varphi$ is called a \textit{Frobenius structure} on $M$ (with respect to $\sigma$). 

	By a theorem of Christol--Mebkhout \cite[\S~6]{CM01}, the full subcategory $\MCF(\mathcal{R})$ of $\MC(\mathcal{R})$, generated by sub-quotients of objects that can be equipped with a Frobenius structure, is a Tannakian subcategory and is stable by extension. 
	Moreover, by an argument of Dwork, every object of $\MCF(\mathcal{R})$ is solvable (see \ref{sss:slopes} for the definition). 

	The $p$-adic local monodromy theorem \cite{And02II,Ked04,Meb02} says that every object of $\MCF(\mathcal{R})$ is quasi-unipotent. 
	Let $F=k(\!( t )\!)$ be the local field, $I_F$ the inertia subgroup of $\Gal(\overline{F}/F)$ and $I_F^+$ the wild inertia subgroup. 
	In particular, the Tannakian group of $\MCF(\mathcal{R})$ is isomorphic to $I_F\times \Ga$, where $I_F$ is viewed as a profinite group \cite[Th\'eor\`eme 7.1.1]{And02}, i.e. we have a tensor equivalence:
	\begin{equation}
		\MCF(\mathcal{R})\simeq \Rep_{\overline{K}}(I_F\times \Ga).
		\label{eq:MCFR}
	\end{equation}

	Assume that $k=\mathbb{F}_q$ and $\sigma=\id_K:K\to K$.  
	Let $W_F$ be the Weil group of $F$. 
	Let $(\mathscr{E},\varphi)$ be a differential module over $\mathcal{R}_K$ of rank $n$ with a Frobenius structure. 
	By \cite[\S~3.2, 3.4]{Mar08}, we can associate to $(\mathscr{E},\varphi)$
	a Weil--Deligne representation $(\rho,N)$ with coefficients in $\overline{K}$ extending \eqref{eq:MCFR}:
	\[
	\rho: W_F \to \GL_n(\overline{K}), \quad N\in \mathfrak{gl}_n ~ \textnormal{nilpotent},
	\]
	such that $\rho(I_F)$ is finite and $\rho(\Frob)N \rho(\Frob)^{-1}=q N$. 
\end{secnumber}

\begin{secnumber}
	We recall the definition of subsidiary radii following \cite[\S~11]{pde}. 

	Let $M$ be a finite free differential module of rank $n$ over $K\langle \alpha/x,x/\beta\rangle$ for $0\le \alpha<\beta$. 
	For $\rho\in[\alpha,\beta]$, let $F_\rho$ be the completion of $K(x)$ under the $\rho$-Gauss norm $|\cdot|_{\rho}$ \cite[Definition 9.4.1]{pde}. Let $R_i(M,\rho)$ be the (extrinsic) subsidiary radii of $M_\rho=M\otimes F_{\rho}$ \cite[\S~11.3]{pde} listed in increasing order, so that $R_1(M,\rho)$ is the generic radius of convergence of $M_{\rho}$. 
	For $r\in [-\log \beta,-\log \alpha]$, we define
	\[f_i(M,r)=-\log R_i(M,e^{-r}),\]
	so that $f_i(M,r)\ge r$ for all $r$. 
	We set 
	\begin{equation} \label{eq:function-Fi}
	F_i(M,r)=f_1(M,r)+\cdots+f_i(M,r).
\end{equation}

	These functions satisfy the following properties \cite[Theorem 11.2.1]{pde}:
	\begin{itemize}
		\item (Linearity) For $i=1,\cdots,n$, $f_i(M,r),F_i(M,r)$ are continuous and piecewise linear. 
			Moreover, there are only finitely many different slopes. 

		\item (Convexity) For $i=1,\cdots,n$, $F_i(M,r)$ is convex. 
	\end{itemize}
\end{secnumber}

\begin{secnumber} \label{sss:slopes}
	Let $M$ be a finite free differential module over the Robba ring $\mathcal{R}$. 
	Recall that $M$ is \textit{solvable} if $\lim_{\rho\to 1^-} R_1(M,\rho)=1$. 
	Suppose $M$ is solvable, then there exist  non-negative rational numbers $\alpha_1\ge \dots \ge \alpha_n$ and some $\varepsilon\in (0,1)$ such that \cite[Theorem 4.2-1]{CM00}
	\begin{equation}
		R_i(M,\rho)=\rho^{1+\alpha_i}, \quad \forall 1\le i\le n,\quad \forall \rho\in (1-\varepsilon,1).
		\label{eq:def-p-slope}
	\end{equation}
	
	The rational numbers $\alpha_1\ge \alpha_2 \ge \dots \ge \alpha_n\ge 0$ are called \textit{$p$-adic slopes} of $M$ and $\alpha_1$ is called the \textit{maximal $p$-adic slope}. 
	The \textit{($p$-adic) irregularity} of $M$ is defined by the sum of $p$-adic slopes:
	\[
	\Irr(M)=\sum_{i=1}^n \alpha_i,
	\]
	which is an integer. Via the equivalence \eqref{eq:MCFR}, $\Irr(M)$ equals to the Swan conductor of the representation associated to $M$ \cite{Tsu98}. 
\end{secnumber}

\subsection{A theorem of Clark}

Consider a linear differential equation
\begin{equation}
	A_n(x) y^{(n)}+A_{n-1}(x)y^{(n-1)} +\cdots +A_0(x) y=0,
	\label{eq:diff-eq}
\end{equation}
whose coefficients lie in $\mathbb{C}_p\llbracket x \rrbracket$ with non-zero radius of convergence. 
We may write 
\[A_i(x)=x^i \sum_{j=0}^{\infty} a_{ij} x^j, \qquad 0\le i\le n,\]
with $a_{i0}\neq 0$ for some $i$. 
We set for $j\ge 0$, 
\[\Phi_j(s)=\sum_{i=0}^n a_{ij} \binom{s}{i}i!.\] 
Then $\Phi_0$ is the indicial polynomial of \eqref{eq:diff-eq} at $x=0$. 

\begin{theorem}[\cite{Clark} Theorem 3, \cite{Set97}]
	If the negative of each zero of the indicial polynomial $\Phi_0$ of \eqref{eq:diff-eq} is a $p$-adic non-Liouville number \cite[Definition 1]{Clark}, then each power series solution $f(x)$ converges for 
	\[ \ord x > c,\]
	where $c$ is a constant determined by the weights of zeros of $\Phi_0$ \cite[Definition 2]{Clark} and the $p$-adic orders of coefficients of $\Phi_j$. 
	\label{t:Clark}
\end{theorem}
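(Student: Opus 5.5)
The plan is the classical Frobenius method: reduce to a coefficient recursion, then bound the small denominators using the non-Liouville hypothesis. Let $f(x)=\sum_{k\ge 0} b_k x^k$ be a formal power series solution of \eqref{eq:diff-eq}. Since $x^i\frac{d^i}{dx^i}x^k=\binom{k}{i}i!\,x^k$, substituting $f$ and collecting the coefficient of $x^k$ gives the recursion
\[
\Phi_0(k)\, b_k \;=\; -\sum_{j=1}^{k} \Phi_j(k-j)\, b_{k-j}, \qquad k\ge 0 .
\]
Only finitely many $k$ satisfy $\Phi_0(k)=0$, namely the non-negative integer roots. For those $k$ the coefficient $b_k$ is free (or the equation is a constraint). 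We therefore fix $k_0$ larger than all such roots and treat $b_0,\dots,b_{k_0}$ as given. We then need an estimate $\ord b_k \ge -ck - C$ for all $k$, which is exactly convergence on $\ord x > c$.

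The first step is to control the numerators. The coefficients $A_i$ have positive radius of convergence, so there are constants $\mu$ and $\nu$ with $\ord a_{ij} \ge -\mu j - \nu$. The polynomials $\binom{s}{i}i!=s(s-1)\cdots(s-i+1)$ take integral values at integers. Hence
\[
\ord \Phi_j(k-j) \ge -\mu j - \nu \quad \text{for all } k\ge j .
\]
Next, write $\Phi_0(s)=a\prod_{r}(s-\rho_r)$ over its zeros $\rho_r\in\mathbb{C}_p$. Then
\[
\ord \Phi_0(k) = \ord a + \sum_r \ord(k-\rho_r).
\]
A naive induction loses $\ord \Phi_0(k)$ at every step, so the real bound involves chains $k=k_1>k_2>\dots$ descending through the recursion. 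I would formalize this as follows: by induction,
\[
\ord b_k \;\ge\; -\mu k - \nu' \;-\; \max_{\text{chains}} \sum_{m\in\text{chain}} \ord\Phi_0(m).
\]
The maximum over chains is bounded by the full partial sum $S(k)=\sum_{k_0<m\le k}\ord\Phi_0(m)$, since every term is nonnegative up to the constant $\ord a$ (which is absorbed after normalizing $a$).

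The main obstacle is proving $S(k)\le ck+C'$, i.e.\ that $\sum_{m\le k}\ord(m-\rho)$ grows at most linearly for each root $\rho$. The contribution of $\rho$ with $\ord\rho<0$ or $\rho\notin\mathbb{Z}_p$-closure is trivially bounded. For $\rho$ in the closure of $\mathbb{Z}$, I would split the sum according to level. The set of $m$ with $\ord(m-\rho)\ge n$ is at most one residue class mod $p^n$, so among $m\le k$ it has at most $k/p^n+1$ elements. Summing these counts over levels $n\le N(k)$ gives at most $k/(p-1)+N(k)$.

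Here the non-Liouville hypothesis on $-\rho$ enters, in the sense of \cite[Definition 1]{Clark}: it bounds $\ord(m-\rho)$ by the weight times $\log m$ plus a linear term. This caps the levels that can actually occur, so $N(k)$ is at most linear in $k$ (in fact $O(\log k)$ for most roots). It also controls the single exceptional $m$ at each high level. Combining these, I expect $S(k)\le (\deg\Phi_0)\bigl(k/(p-1)+w k\bigr)+C'$, where $w$ is built from the weights. This yields a constant $c$ determined by the weights of the zeros and by $\mu$, as stated. The delicate point is matching Clark's precise definition of weight to this counting argument, and I would follow \cite{Clark} there.
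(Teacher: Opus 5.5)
The paper gives no proof of this statement; it quotes it from Clark (and Sethi). Your argument is the standard small-divisor argument behind Clark's theorem, and it is correct: the recursion $\Phi_0(k)b_k=-\sum_{j\ge1}\Phi_j(k-j)b_{k-j}$, integrality of $\binom{k}{i}i!$ bounding the numerators uniformly in $k$, and the counting bound $\sum_{k_0<m\le k}\ord(m-\rho)\le k/(p-1)+N(k)$ with $N(k)=O(k)$. Three fixes are needed, none affecting the conclusion. The per-step constant $\nu$ accumulates along a chain, so $\mu$ must become $\mu+\nu$. The shift making each $\ord\Phi_0(m)$ nonnegative must also absorb roots with $\ord\rho<0$, using $\ord(m-\rho)\ge\min(0,\ord\rho)$. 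The chain formalism can be replaced by a direct induction on $\sum_{k_0<m\le k}(\ord\Phi_0(m)+D)$, and the only input needed from the non-Liouville hypothesis is a bound $\ord(m-\rho)\le Am+B$.
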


We will apply the above theorem to a differential equation whose coefficients $A_i(x)$ lie in $(\overline{\mathbb{Q}}\cap K)[x]$. 
In this case, every zero of $\Phi_0(s)$ is a $p$-adic non-Liouville number. Hence we deduce a positive $p$-adic convergent radius for a power series solution $f(x)$ of \eqref{eq:diff-eq}.

\subsection{Rank one differential modules over the Robba ring, d'après Pulita \cite{Pulita}}
\begin{secnumber}
Let $P(X)$ be a Lubin-Tate series:
\[
P(X)=pX+\cdots+X^p+\cdots \in \mathbb{Z}_p[ [X]],
\]
such that $P(X)\equiv X^p \mod p$ and $P(X)\equiv p X \mod X^2$. 
We fix a sequence $\{\pi_j\}_{j\ge 0}$ of elements in $\overline{K}$ such that 
\[
P(\pi_{j+1})=\pi_j,~ P(\pi_0)=0,\quad \pi_0\neq 0.
\]
In view of the Newton polygon of $P$, we have 
\[|\pi_0|=p^{-\frac{1}{p-1}}:=\omega, \quad |\pi_i|=\omega^{\frac{1}{p^i}}.\]
	The sequence $(\pi_j)_{j\ge 0}$ forms a topological generator of the Tate module of the formal group defined by $P$. 
	For example if $P(X)=(X+1)^p-1$, we take $\pi_i=\zeta_{p^i}-1$ with a compatible system of primitive $p^i$-th roots of unity $\zeta_{p^i}$. 
	If $P(X)=X^p+pX$, we take $\pi_0$ to be Dwork's $\pi$ (\ref{sss:def-isod}). 
\end{secnumber}

\begin{secnumber}
	Let $B$ be a $\mathbb{Z}_{(p)}$-algebra. Let $\rW(B)$ be the ring of Witt vectors. For $n\ge 0$, we set $\rW_n(B)=\rW(B)/V^{n+1}\rW(B)$. 
We denote the $n$-th ghost component by 
\[
\omega_n:\rW_n(B)\to B, \quad \lambda=(\lambda_0,\lambda_1,\cdots,\lambda_n)\mapsto \lambda_0^{p^n}+p\lambda_1^{p^{n-1}}+\cdots+p^n \lambda_n. 
\]

	For each $m\ge 0$, there exists a unique element $[\pi_m]\in \rW(\mathbb{Z}_p(\pi_m))$ whose ghost coordinates are: \cite[Remark 2.3]{Pulita}
	\[
	\langle\omega_0([\pi_m]),\dots,\omega_m([\pi_m]),\dots\rangle=\langle\pi_m,\pi_{m-1},\cdots,\pi_0,0,\cdots\rangle.
	\]

	We denote the Artin-Hasse exponential by 
\[
E(T):=\exp\biggl( T+ \frac{T^p}{p}+\frac{T^{p^2}}{p^2}+\cdots\biggr) \in 1+T\mathbb{Z}_{(p)}[ [T]].
\]
For $\lambda=(\lambda_0,\lambda_1,\cdots)\in \rW(B)$, the Artin-Hasse exponential relative to $\lambda$ is 
\[
E(\lambda,T):=\prod_{j\ge 0} E(\lambda_j T^{p^j})=\exp\biggl( \omega_0T+\omega_1\frac{T^p}{p}+\cdots\biggr) \in 1+TB[ [T]],
\]
where $\langle\omega_0,\omega_1,\cdots\rangle$ is the ghost component of $\lambda$. 
\end{secnumber}
\begin{prop} [\cite{Pulita} Proposition 2.2]
	\textnormal{(i)} For $m\ge 0$, the following formal series converges exactly in the disk $|T|<1$:
	\[
	E([\pi_m],T)=\exp\biggl(\pi_m T+\pi_{m-1}\frac{T^p}{p}+\cdots+ \pi_0\frac{T^{p^m}}{p^m}\biggr).
	\]

	\textnormal{(ii)} 
	Let $d=np^m$ be a positive integer with $(n,p)=1$ and $\lambda=(\lambda_0,\cdots,\lambda_m)\in \rW_m(\mathbb{Z}_p[\pi_m])$. 
	The following series
	\[
	e_d(\lambda,T):=E([\pi_m]\lambda,T^n)=\exp\biggl(\pi_m\omega_0T^{n}+\cdots+\pi_0\omega_m\frac{T^{d}}{p^m}\biggr)
	\]
	converges for $|T|<1$, where $\omega_i=\omega_i(\lambda)$ is the $i$-th ghost component.
\end{prop}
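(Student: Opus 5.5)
Part (i) is a convergence statement for an explicit exponential. Part (ii) then reduces to (i) via the Witt vector structure of the ghost components.

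\textbf{Part (i).} I would use Dwork's standard trick of comparing with the Artin--Hasse exponential.

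First I would check that $E([\pi_m],T)$ has coefficients in $\mathbb{Z}_p[\pi_m]$. Since $[\pi_m]\in \rW(\mathbb{Z}_p(\pi_m))$, write its Witt coordinates as $(\lambda_0,\lambda_1,\dots)$. Then
\[
E([\pi_m],T)=\prod_{j\ge 0}E(\lambda_jT^{p^j}).
\]
Each factor $E(\lambda_jT^{p^j})$ lies in $1+\lambda_jT\,\mathbb{Z}_{(p)}[\lambda_j][[T]]$ because $E(T)\in \mathbb{Z}_{(p)}[[T]]$. The product converges $T$-adically, and the coefficients are integral. Integrality of the Witt coordinates of $[\pi_m]$ is the content of \cite[Remark 2.3]{Pulita}, which I take as given. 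This gives convergence on $|T|<1$.

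Next, the identity with the displayed exponential is formal. Taking $\log$ and using the ghost components $\langle \pi_m,\pi_{m-1},\dots,\pi_0,0,\dots\rangle$ gives
\[
\sum_{j\ge 0}\omega_j\frac{T^{p^j}}{p^j}=\pi_mT+\pi_{m-1}\frac{T^p}{p}+\cdots+\pi_0\frac{T^{p^m}}{p^m},
\]
since $\omega_j=0$ for $j>m$.

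Finally, for exactness of the radius, I would show the series does not converge on $|T|=1$, i.e. its coefficients do not tend to $0$. I would argue by reduction modulo the maximal ideal. The coefficients are integral, so if the series were overconvergent, its reduction $\overline{E}\in \overline{k}[T]$ would be a polynomial.

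To get a contradiction I would use a differential equation. The function $y=E([\pi_m],T)$ satisfies $y'=g(T)y$ with
\[
g(T)=\pi_m+\pi_{m-1}T^{p-1}+\cdots+\pi_0T^{p^m-1}.
\]
Hence $y$ defines a rank one differential module $d-g\,dT$. Its radius of convergence at the generic point $t_1$ of the unit circle can be computed via the small radius lemma, or via Pulita's iterated Frobenius pushforward. Using $|\pi_i|=\omega^{1/p^i}<1$ together with the Newton polygon of $g$, this radius is strictly less than $1$. That rules out convergence on any disc strictly larger than the unit disc, and hence on the closed disc $|T|\le1$.

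I expect this exactness step to be the main obstacle. The cleanest route is to cite the known fact that $\exp(\pi_0T^{p^m}/p^m)$ alone has radius $<1$ and then check no cancellation occurs. If that fails, I would fall back on the classical $m=0$ computation ($\exp(\pi_0T)$ converging exactly on $|T|<1$, by Dwork), combined with the iterated Frobenius structure $P(\pi_{j+1})=\pi_j$.

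\textbf{Part (ii).} For $\lambda\in \rW_m(\mathbb{Z}_p[\pi_m])$, lift it to $\rW(\mathbb{Z}_p[\pi_m])$ with trailing zeros. The product $[\pi_m]\lambda$ lies in $\rW(\mathbb{Z}_p[\pi_m])$ because the Witt ring structure is integral.

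Its ghost components are the products $\omega_j([\pi_m])\,\omega_j(\lambda)$. These equal $\pi_{m-j}\omega_j$ for $j\le m$, and $0$ for $j>m$. The same Artin--Hasse product argument as in part (i) then shows $E([\pi_m]\lambda,U)$ has integral coefficients, hence converges for $|U|<1$.

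Substituting $U=T^n$ preserves the condition $|T|<1$. Taking $\log$ gives the displayed formula with top term
\[
\pi_0\,\omega_m\frac{T^{np^m}}{p^m}=\pi_0\,\omega_m\frac{T^{d}}{p^m}.
\]
This completes (ii). Exactness of the radius is not claimed there, so no further work is needed.
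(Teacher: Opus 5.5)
Your proof of convergence on $|T|<1$ and your part (ii) are fine. Integral Witt coordinates for $[\pi_m]$, hence for $[\pi_m]\lambda$ (the Witt ring laws are integral), together with $E(T)\in 1+T\mathbb{Z}_{(p)}[[T]]$, give integral coefficients. Multiplicativity of ghost components then gives the displayed exponentials.

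The gap is the word ``exactly'' in (i). Your key claim is that the generic radius of $d-g\,dT$ at $t_1$ is strictly less than $1$. This is false, and your own first step refutes it. Since $y=E([\pi_m],T)\in 1+T\mathbb{Z}_p[\pi_m][[T]]$, both $y^{-1}$ and the divided derivatives $y^{(n)}/n!$ are integral. So the polynomials $g_n$ with $\partial^n y=g_ny$ satisfy $g_n/n!=(y^{(n)}/n!)\,y^{-1}\in\mathbb{Z}_p[\pi_m][T]$. Hence $|g_n/n!|_1\le 1$, and the radius at $t_1$ is exactly $1$. The module is solvable (compare Theorem~\ref{p:Ld-solvable-Frobenius}(i)). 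The small radius lemma does not apply: it needs $|g|_1$ to exceed the operator norm $1$ of $d/dT$ on $F_1$, whereas $|g|_1=|\pi_m|<1$.

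Your other ideas fail as well:
\begin{itemize}
\item The reduction of $E([\pi_m],T)$ modulo $\mathfrak{m}$ is the constant $1$ (see below), so polynomiality of the reduction gives no contradiction.
\item ``No cancellation'' against $\exp(\pi_0T^{p^m}/p^m)$ points the wrong way. For $m\ge 1$, every individual factor $\exp(\pi_{m-i}T^{p^i}/p^i)$ has radius $<1$; it is precisely the cancellation among them that gives convergence on the open unit disc.
\item Excluding convergence on discs of radius $>1$ would not exclude convergence on $|T|\le 1$, which is the weaker property.
\end{itemize}

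What works is a first-order computation in $\pi_m$. Let $[X]_P\in\rW(\mathbb{Z}_p[[X]])$ be the Witt vector with ghost components $(X,P(X),P(P(X)),\dots)$. It is integral by Dwork's lemma applied to the Frobenius lift $f(X)\mapsto f(P(X))$. By injectivity of the ghost map, its specialization at $X=\pi_m$ is $[\pi_m]$. Since $P(X)\equiv pX\bmod X^2$, the image of $[X]_P$ in $\rW(\mathbb{Z}_p[X]/(X^2))$ has ghost components $(p^jX)_{j\ge0}$, which are those of $(X,X,X,\dots)$. Therefore
\[
E([X]_P,T)\equiv\prod_{j\ge 0}E(XT^{p^j})\equiv 1+X\sum_{j\ge 0}T^{p^j}\pmod{X^2}.
\]
Specializing, the coefficient of $T^{p^j}$ in $E([\pi_m],T)$ is congruent to $\pi_m$ modulo $\pi_m^2\mathbb{Z}_p[\pi_m]$. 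So it has absolute value $|\pi_m|=\omega^{1/p^m}$ for every $j$. The coefficients do not tend to $0$, and the series diverges at every point of $|T|=1$. For $P(X)=(1+X)^p-1$ and $\pi_m=\zeta-1$ with $\zeta$ a primitive $p^{m+1}$-th root of unity, this is the classical congruence $E(\zeta T)/E(T)\equiv 1+(\zeta-1)\sum_jT^{p^j}$.
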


\begin{definition}
	Let $d=np^m$ be a positive integer as above and $K_m=K(\pi_m)$. 
	For $\lambda=(\lambda_0,\cdots,\lambda_m)\in \rW_m(\mathcal{O}_K)$, we denote by 	
\[
L_d(\lambda):= d+ n\cdot \biggl(\frac{\pi_0\omega_m}{T^d}+\cdots+\frac{\pi_m\omega_0}{T^n}\biggr)\frac{dT}{T},
\]
the differential module over $\mathcal{R}_{K_m}$ associated to $e_d(\lambda,T^{-1})$ (i.e. $e_d(\lambda,T^{-1})$ is a solution).
\end{definition}

\begin{theorem}[\cite{Pulita} Theorems 2.2, 2.5] \label{p:Ld-solvable-Frobenius}
	\textnormal{(i)} The differential module $L_d(\lambda)$ is solvable.

	\textnormal{(ii)} 
	If $|\lambda_i|<1$ for all $i=0,1,\cdots,m$, then $L_d(\lambda)$ is trivial. 

If $|\lambda_0|,\cdots,|\lambda_{r-1}|<1$ and $|\lambda_r|= 1$ for some $r\le m$, then we have $\Irr(L_d(\lambda))=d/p^r$. 

	\textnormal{(iii)}
	The following function is overconvergent (i.e. convergent in some disc $|T|<1+\varepsilon$ with $\varepsilon>0$):
\[
\theta_d(\lambda,T):=\frac{e_d(\lambda,T^{-1})}{e_d(\sigma(\lambda),T^{-p})}. 
\]
It induces a Frobenius structure on $L_d(\lambda)$, with respect to a lift of Frobenius on $K_m$ extending $\sigma$.  
\end{theorem}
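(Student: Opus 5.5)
The plan is to prove (iii) first and then deduce (i) from it. (ii) will be handled by reducing to an explicit Artin--Hasse type exponential whose radius of convergence can be read off.

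\emph{Step 1: overconvergence of $\theta_d$.} Write $\mu=[\pi_m]\lambda\in \rW(\mathbb{Z}_p[\pi_m])$, so that $e_d(\lambda,T)=E(\mu,T^n)=\prod_{j\ge0}E(\mu_jT^{np^j})$. Since $\pi_m$ is a root of the Lubin--Tate series $P$ with $\mathbb{Z}_p$-coefficients, we may extend $\sigma$ to $K_m$ so that it fixes every $\pi_j$. With this choice $\sigma([\pi_m]\lambda)=[\pi_m]\sigma(\lambda)$. Then
\[
\theta_d(\lambda,T)=\prod_{j\ge 0}\frac{E(\mu_jT^{-np^j})}{E(\sigma(\mu_j)T^{-np^{j+1}})}.
\]
The key point is a Dwork-type estimate. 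The quotient $E(aU)/E(a^{\sigma}U^p)$, or its Witt-vector variant $\exp\bigl(\sum_k(\omega_k(\mu)U^{p^k}-\omega_k(\sigma\mu)U^{p^{k+1}})/p^k\bigr)$, converges on a disc of radius $>1$ when $\mu$ lies in the image of $[\pi_m]$. To show this, I would regroup the exponent using $\omega_{k+1}(\mu)=\omega_k(F\mu)$ and $F\mu\equiv \sigma\mu$ modulo $p$. This rewrites the exponent as a sum of terms of the form $\pi_j(\text{stuff})(U^{p^k}-U^{p^{k+1}})$ plus terms with extra $p$-adic decay. Each such piece is then controlled exactly as for $\theta_\pi(x)=\exp(\pi(x-x^p))$ in \ref{sss:def-isod}, using $|\pi_j|=\omega^{1/p^j}$. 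This estimate is the main obstacle. If the regrouping gets messy, I would first treat the case $m=0$, which is exactly Dwork's $\theta_\pi$ after substituting $T^{-n}$. I would then induct on $m$, using $P(\pi_{j+1})=\pi_j$ to peel off the last Witt component.

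\emph{Step 2: Frobenius structure and (i).} Once $\theta_d$ is overconvergent, it is a unit of $\mathcal{R}_{K_m}$, because the same argument applies to its inverse. Moreover $e_d(\lambda,T^{-1})$ and $e_d(\sigma\lambda,T^{-p})$ are horizontal sections of $L_d(\lambda)$ and $\phi^*L_d(\lambda)$ respectively. Hence multiplication by $\theta_d$ is a horizontal isomorphism $\phi^*L_d(\lambda)\simeq L_d(\lambda)$, which proves (iii). Solvability (i) then follows from Dwork's argument recalled in \ref{sss:MCF}: every differential module over the Robba ring carrying a Frobenius structure is solvable.

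\emph{Step 3: (ii).} If all $|\lambda_i|<1$, then every component of $[\pi_m]\lambda$ has absolute value $<\omega^{1/p^m}$ times a factor $<1$. Hence $E([\pi_m]\lambda,U)$ converges on some disc $|U|<1+\varepsilon$. Consequently $e_d(\lambda,T^{-1})$ is a unit of $\mathcal{R}$, and it trivializes $L_d(\lambda)$.

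In general, suppose $|\lambda_0|,\dots,|\lambda_{r-1}|<1$ and $|\lambda_r|=1$. I would split $\lambda=\lambda'+V^r\lambda''$ in the Witt vectors, where $\lambda'=(\lambda_0,\dots,\lambda_{r-1},0,\dots)$. Since $E(\cdot,T)$ is multiplicative in the Witt vector, $L_d(\lambda)$ is the tensor product of $L_d(\lambda')$ and $L_d(V^r\lambda'')$, possibly after the correction terms coming from Witt addition, which are again small. The factor $L_d(\lambda')$ is trivial by the previous paragraph. Using $\omega_k(V^r\lambda'')=p^r\omega_{k-r}(\lambda'')$, one identifies $L_d(V^r\lambda'')$ with $L_{d/p^r}(\lambda'')$ over $K_{m-r}$, with the sequence $(\pi_j)$ shifted. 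This reduces the claim to the case $r=0$ with $|\lambda_0|=1$.

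In that case I would compute the generic radius directly. At the Gauss point $|T|=\rho$ close to $1$, the dominant term of the connection is $n\pi_m\omega_0T^{-n}$. Robba's criterion for rank one modules $d+g\,dT$ with $|\pi_m\omega_0|=\omega^{1/p^m}$ gives $R(\rho)=\rho^{1+d}$ near $1$. Therefore $\Irr=d$, and after the reduction $\Irr(L_d(\lambda))=d/p^r$.
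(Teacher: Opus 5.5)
The paper gives no argument for this statement; it simply quotes Pulita's Theorems 2.2 and 2.5. Judged on its own, your proposal has a genuine gap at its core, Step 1.

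\textbf{Step 1.} The overconvergence of $\theta_d$ is the whole content of (iii). You give only a strategy, and its main mechanism is false. Since $\omega_j(Fx)=\omega_{j+1}(x)$, one has $F[\pi_m]=[\pi_{m-1}]$ and $F[\pi_0]=0$. Hence $F\mu=[\pi_{m-1}]F(\lambda)$, while $\sigma\mu=[\pi_m]\sigma(\lambda)$. For $|\lambda_0|=1$, their $0$-th components $\pi_{m-1}\omega_1(\lambda)$ and $\pi_m\sigma(\lambda_0)$ have absolute values $\omega^{1/p^{m-1}}$ and $\omega^{1/p^m}$ (respectively $0$ and $\omega$ if $m=0$). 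So $F\mu\not\equiv\sigma\mu\pmod p$.

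Termwise bounds cannot work either. The factor $E(\mu_jU^{p^j})$ converges only for $|U|<|\mu_j|^{-1/p^j}$, and these radii tend to $1$, so cancellation between factors is essential. Indeed, for $\lambda=(1,0,\dots,0)$ and $d=p^m$ one has $\theta_d=E([\pi_m],T^{-1})/E([\pi_m],T^{-p})$. So (iii) contains the claim that $E([\pi_k],W)/E([\pi_k],W^p)$ converges on a disc of radius $>1$, which is a real extension of Dwork's estimate for $\exp(\pi(x-x^p))$.

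Conversely, the general case reduces to this claim. Write $\lambda=\sum_i V^i(\lambda_i,0,0,\dots)$ and use $\sigma(\lambda)\equiv F(\lambda)$ modulo $\rW(\mathfrak{m}_K)$. Note this holds only modulo $\rW(\mathfrak{m}_K)$, not modulo $p$, since $K$ may be ramified. Your fallback (``treat $m=0$ as Dwork and induct on $m$ using $P(\pi_{j+1})=\pi_j$'') names this estimate but does not prove it.

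Because you derive (i) from (iii), (i) is unsupported as well. It has an independent proof. The Witt components of $[\pi_m]\lambda$ are integral, so $e_d(\lambda,T^{-1})\in 1+T^{-1}\mathcal{O}_{K_m}\llbracket T^{-1}\rrbracket$. Hence the generic radius equals $\rho$ for every $\rho>1$, and continuity of the radius at $\rho=1$ gives solvability.

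\textbf{Step 3.} This step contains two faulty inferences.

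First, a uniform bound $|\mu_j|\le c<1$ on the Witt components of $[\pi_m]\lambda$ only yields radius $\inf_j c^{-1/p^j}=1$. What works is the decomposition above. Using $V^i(x)\,y=V^i(x\,F^i y)$, $F^i[\pi_m]=[\pi_{m-i}]$ and $E(V^ix,U)=E(x,U^{p^i})$, one gets
\[
E([\pi_m]\lambda,U)=\prod_{i=0}^{m}E\bigl([\pi_{m-i}],\lambda_iU^{p^i}\bigr).
\]
This converges for $|U|<\min_i|\lambda_i|^{-1/p^i}$. It also makes your splitting $\lambda=\lambda'+V^r\lambda''$ exact, with no correction terms.

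Second, you compute the irregularity in the wrong regime. For $\rho$ close to $1$, the coefficient $g$ of $dT$ satisfies $\omega/\rho<|g|_\rho<1/\rho$, with $|g|_\rho\to\omega^{1/p^m}$. This lies between the spectral norm and the operator norm of $d/dT$ on $F_\rho$, where the radius of a rank one module is not determined by $|g|_\rho$. For $m\ge 1$, the formula $R=\omega/|g|_\rho$ would even give $R(\rho)\to\omega^{1-1/p^m}<1$, contradicting (i).

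The correct argument looks at $\rho\to 0$. There the term $n\pi_0\omega_mT^{-d}$ dominates, and $|\pi_0\omega_m|=\omega$ because $\omega_m\equiv\lambda_0^{p^m}\pmod p$. Also $|g|_\rho>1/\rho$ in this range, so $R(\rho)=\rho^{d+1}$ for small $\rho$. Convexity of $F_1$ together with solvability then forces $R(\rho)=\rho^{d+1}$ on all of $(0,1)$, exactly as in the proof of \Cref{p:rank-one-exp}(iii). This argument uses (i), which is another reason to establish (i) independently of (iii).
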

\begin{prop} \label{p:rank-one-exp}
	Let $d$ be an integer $\ge 1$. 
	Let 
	\[
	L:= d+\biggl( \frac{a_d}{T^d}+\frac{a_{d-1}}{T^{d-1}}+\cdots+ \frac{a_1}{T} \biggr)\frac{dT}{T}
	\]
	be a solvable differential module of rank one over $\mathcal{R}_{\overline{K}}$ with $a_d\neq 0$. 
	
	\textnormal{(i)} $L$ can be equipped with a Frobenius structure. 

	\textnormal{(ii)} We have $|a_d|\le \omega$ and $|a_i|<1$ for $1\le i\le d-1$. 

	\textnormal{(iii)} The equality $|a_d|=\omega$ holds if and only if $\Irr(L)=d$. 
\end{prop}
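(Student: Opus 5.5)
The plan is to decompose $L$ into a product of Pulita modules $L_{d'}(\lambda)$ and read off all three assertions from \Cref{p:Ld-solvable-Frobenius}.

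\emph{Step 1: matching the polar parts.} Let $\theta=\sum_{i=1}^d a_iT^{-i}$, so that $L=d+\theta\,\frac{dT}{T}$ and $L$ has the formal solution $\exp\bigl(\sum_i \frac{a_i}{i}T^{-i}\bigr)$. For each $n\le d$ prime to $p$, I group the exponents $np^j\le d$. Since Artin--Hasse type exponentials are built from ghost components, I can solve successively, starting from the largest exponent $np^{m}$, for Witt vectors $\lambda^{(n)}\in \rW_{m}(\overline K)$ such that
\[
\bigotimes_{(n,p)=1} L_{np^{m_n}}(\lambda^{(n)})
\]
has exactly the same polar part as $L$. This is possible because the map from Witt vectors to ghost components is bijective after inverting $p$. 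Two rank one connections of the form $d+\theta\,dT/T$ with equal polar parts are equal, so $L$ is isomorphic to this tensor product.

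\emph{Step 2: integrality, which is the main obstacle.} I need to show that solvability of $L$ forces each $\lambda^{(n)}$ to lie in $\rW(\mathcal O_{\overline K})$. My first attempt is a reduction to the top degree term. If some $\lambda^{(n)}$ has a non-integral component, choose the largest exponent $T^{-e}$ at which non-integrality appears. Twist $L$ by the dual of the Pulita modules that absorb all higher terms; these are solvable by \Cref{p:Ld-solvable-Frobenius}(i). The twisted module is then solvable, and its leading coefficient has absolute value $>\omega$ after normalisation.

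To get a contradiction, I compute the generic radius directly from the leading term: for $\rho\to1^-$, the radius of $d+bT^{-e}\,dT/T$ with $|b|>\omega$ is governed by $|b|\rho^{-e}$. By the standard estimate for rank one modules (Kedlaya, \S 9 of \cite{pde}), this radius stays bounded away from $1$. The delicate point is the $p$-power exponents $e=np^j$, $j>0$, where the naive bound fails. There I would instead use Frobenius pushforward/antecedent, i.e. the change of variable $T\mapsto T^p$ and the relation $|\pi_j|=\omega^{1/p^j}$, to reduce to $j=0$.

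\emph{Step 3: conclusions.} Once the $\lambda^{(n)}$ are integral, (i) follows from \Cref{p:Ld-solvable-Frobenius}(iii), since a tensor product of modules with Frobenius structures has a Frobenius structure (possibly after enlarging $K$ and replacing $q$ by a power).

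For (ii), the top coefficient is $a_d=n\pi_0\omega_m(\lambda^{(n)})$ with $d=np^m$. Since $\omega_m(\lambda)\equiv\lambda_0^{p^m}$ modulo $p$, it is integral, so $|a_d|\le|\pi_0|=\omega$. The lower coefficients are sums of terms $\pi_j\omega_{m-j}$ with $j\ge1$ and $|\pi_j|<1$, possibly with contributions from the other $n$; all of these have absolute value $<1$.

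For (iii), $|a_d|=\omega$ holds iff $|\omega_m(\lambda^{(n)})|=1$, which happens iff $|\lambda_0|=1$ for that top block. By \Cref{p:Ld-solvable-Frobenius}(ii) this is equivalent to the top block having irregularity $d$. The other blocks have irregularity $<d$, and the irregularity of a tensor product of rank one modules is the maximum of the irregularities when they differ, so $\Irr(L)=d$ exactly in this case.
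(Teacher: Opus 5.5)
Your Step~2 is a genuine gap, and all of (i)--(iii) depend on it. Step~3 is fine once the $\lambda^{(n)}$ are known to be integral, but the argument you sketch for integrality fails for two reasons.

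First, a non-integral Witt vector need not have a large top ghost coefficient, because ghost components can cancel. For example, $\lambda=(x,(c-x^p)/p)$ with $|x|>1$ has $\omega_1(\lambda)=c$, which can be arbitrarily small or even $0$. So after you remove the integral higher blocks, the leading coefficient $n\pi_0\omega_m(\lambda^{(n)})$ of the twisted module need not exceed $\omega$, and there is no contradiction to extract from it.

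Second, near $\rho\to1^-$ the generic radius is not controlled by the leading term. The module $L_p((1,0))=d+(\pi_0T^{-p}+\pi_1T^{-1})\frac{dT}{T}$ is solvable. The module $d+\pi_0T^{-p}\frac{dT}{T}$ is not: in $x=T^{-1}$ its solution $\exp(\pi_0x^p/p)$ has radius $p^{-1/p}<1$. Yet both have the same top term. At the boundary all terms compete, your twisted module is not a monomial, and the appeal to Frobenius antecedents does not resolve this.

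The missing idea is to work at the other end of the annulus and use convexity; this is the paper's proof.
\begin{enumerate}
\item Since the coefficients are polynomials in $T^{-1}$, $L$ lives on all of $0<|T|<1$. For $\rho$ close to $0$ the top term does dominate, and $R(L,\rho)=\omega|a_d|^{-1}\rho^{d+1}$.
\item The function $F(L,r)=-\log R(L,e^{-r})$ is convex on $(0,\infty)$, so it lies above its eventual line $(d+1)r+\log(|a_d|/\omega)$. Solvability gives $F(L,0^+)=0$, hence $|a_d|\le\omega$.
\item Choose an integral $\lambda\in\rW_m(\mathcal{O}_{\overline K})$ with $n\pi_0\omega_m(\lambda)=a_d$, e.g.\ $\lambda=(\lambda_0,0,\dots,0)$ with $\lambda_0^{p^m}=a_d/(n\pi_0)$. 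Then $L\otimes L_d(\lambda)^{-1}$ is solvable of strictly smaller degree.
\item Induct on $d$. The lower coefficients of $L_d(\lambda)$ are $n\pi_j\omega_{m-j}(\lambda)$ with $j\ge1$, hence of absolute value $<1$.
\end{enumerate}
This proves (ii) without ever needing integrality of your canonical decomposition.

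Part (iii) also comes directly from convexity rather than from irregularities of tensor products. If $\Irr(L)=d$, then $F(L,r)=(d+1)r$ near $r=0$; a convex function whose initial and final slopes are both $d+1$ is linear, which forces $|a_d|=\omega$. Conversely, if $|a_d|=\omega$, then $F(L,r)\ge(d+1)r$ with equality as $r\to0^+$ and for $r\gg0$, so convexity forces $F(L,r)=(d+1)r$ throughout, i.e.\ $\Irr(L)=d$.

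For (i), the paper simply cites Pulita's theorem that such modules, having no $\frac{dT}{T}$-term, carry a Frobenius structure. Alternatively, the integral decomposition produced by the induction, together with \Cref{p:Ld-solvable-Frobenius}(iii), would give it.
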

\begin{proof}
	(i) 
	After enlarging $K$, we may assume that each $a_i$ is contained in $K$. 
	Since the coefficient of $L$ on $\frac{dT}{T}$ is zero, the assertion follows from \cite[Theorem 4.9]{Pulita2}. 

	(ii) By a direct calculation, one can show that the generic radius \cite[Lemma 3.4]{Pulita2} 
	\begin{equation} \label{eq:R(L,rho)}
		R(L,\rho)=\omega|a_d|^{-1}\rho^{d+1}, \quad \textnormal{for $\rho$ sufficiently close to $0$}. 
\end{equation}
	Using the convexity of $F(L,r)=-\log R(L,e^{-r})$, we deduce that $|a_d|\le \omega$ \cite[Lemma 4.2]{Pulita2}. 

	We prove the assertion by induction on $d$. The case $d=1$ is already known. 
	Suppose the assertion is proved for integers $\le d-1$ and we will prove this for $d$. 
	
	Suppose $d=np^m$ with $(n,p)=1$. 
	Since $|a_{d}|\le \omega$, we may find an element $(\lambda_0,\cdots,\lambda_m)\in \rW_m(\mathcal{O}_{\overline{K}})$ such that $n\omega_m(\lambda) \pi_0=a_d$. 
	Since solvable modules are closed under tensor products, the differential module $L\otimes (L_d(\lambda))^{-1}$ is solvable and its coefficients lie in $\mathfrak{m}_{\overline{K}}$ by \Cref{p:Ld-solvable-Frobenius} and induction hypothesis. 
	Then we conclude the assertion. 

	(iii) If $\Irr(L)=d$, then $R(L,\rho)=\rho^{d+1}$ when $\rho$ is sufficiently close to $1$. 
	By the convexity of $F(L,r)$ and \eqref{eq:R(L,rho)}, we deduce that $R(L,\rho)=\rho^{d+1}$ for all $\rho\in (0,1)$ and therefore $|a_d|=\omega$. 

	If $|a_d|=\omega$, by the convexity of $F(L,r)$ and \eqref{eq:R(L,rho)}, we deduce that $R(L,\rho)=\rho^{d+1}$ for all $\rho\in (0,1)$. 
	Hence we have $\Irr(L)=d$. 
\end{proof}

\section{$P$-adic isoclinic connections} \label{s:p-isoclinic}

In this section, let $\cG$ be a connected split almost simple group of adjoint type over $K$, equipped with a split model over $\mathcal{O}_K$ and $\cg$ its Lie algebra. 
We fix a maximal torus $\cT$ of $\cG$ and a Borel subgroup $\cT\subset \cB \subset \cG$. We assume that $p$ is good for $\cG$.  

	The order of a regular elliptic element of the Weyl group $\bW$ of $\cG$ is called \textit{regular elliptic number} of $\cG$ \cite{Springer}. 
	Let $m$ be a regular elliptic number of $\cG$ which is prime to $p$, and $N$ a positive integer prime to $m$. We set $\nu=\frac{N}{m}$.

\subsection{$P$-adic isoclinic connection over the open unit disc}
\label{ss:p-adic-isoclinic}

\begin{secnumber} \label{sss:formal-isoclinic}
	Let $\check{\rho}$ be the half sum of positive coroots of $\cfg$. 
	We fix a primitive $m$-th root of unity $\zeta_m$. 
	Let $\theta=\Ad_{\check{\rho}(\zeta_m)}$ be an inner automorphism of $\cg$ that defines a grading:
	\begin{equation}
		\cg=\bigoplus_{i\in \mathbb{Z}/m\mathbb{Z}} \cg_i.
		\label{eq:principal-grading-m}
	\end{equation}

	Following \cite{JY23}, \textit{a formal isoclinic connection} is a $\cg$-valued connection:
	\[
	d+(A_rt^r+A_{r+1}t^{r+1}+\cdots)dt,\quad A_i\in \cg(\overline{K}), \quad r\in \mathbb{Z},
	\]
	that can be gauge transformed over $\cG(\overline{K(\!(t)\!)})$ into the following canonical form \cite{BV83}: 
	\begin{equation}
		d+(X_{-N}t^{-N/m}+\cdots+X_{-1}t^{-1/m})\frac{dt}{t},
		\label{eq:isoclinic-canonical}
	\end{equation}
	where $X:=X_{-N}\in \cg(\overline{K})$ is regular semisimple, $(N,m)=1$, $X_i\in \fz_{\cg}(X)=:\ct_{X}$ and $X_i\in \cg_i(\overline{K})$ for $-N+1\le i\le -1$. 

	Moreover, its differential Galois group is generated by the smallest torus in $\cG$ containing $X$ and a lift of a regular elliptic element of order $m$ in the Weyl group $\bW$ of $\cG$. 
	
	To work with the $p$-adic coefficients, we also need the following observation.
\end{secnumber}

	\begin{lemma}\label{l:isoclinic gauge}
		A formal isoclinic connection with slope $\frac{N}{m}$ can be gauge transformed over $\cG(\overline{K}(\!(t^{1/2m})\!))$
		into its canonical form.
	\end{lemma}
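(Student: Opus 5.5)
The plan is to follow the standard reduction of Babbitt--Varadarajan \cite{BV83} for formal $\cG$-connections, keeping track of which ramification is used. The guiding principle is that an isoclinic connection of slope $\nu=N/m$ only requires adjoining $t^{1/m}$, apart from a possible square-root ambiguity. This ambiguity comes from the fact that $\check{\rho}$ is a cocharacter of the adjoint group only after passing to $\frac12\mathbb{X}_*(\cT)$. So I expect to work over $\overline{K}(\!(s)\!)$ with $s=t^{1/2m}$. The coefficient field $\overline{K}$ is algebraically closed of characteristic zero, so all the algebraic steps of Babbitt--Varadarajan (solving linear equations in $\cg$, splitting by $\ad$-eigenvalues) take place over $\overline{K}$. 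The only issue is the size of the ramification of the uniformizer.

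\emph{Step 1: the leading term.} Write the connection as $d+A\,dt$ and reduce it to the form $d+(Y_{-N}t^{-N/m}+\text{higher})\frac{dt}{t}$. Here $Y_{-N}$ is a nonzero element of $\cg(\overline{K})$, and the formal isoclinic hypothesis forces it to be regular semisimple up to conjugation. To get there, I would apply the shearing gauge transformation $g=\check{\rho}(t^{1/m})^{\pm1}$, which lies in $\cG(\overline{K}(\!(t^{1/2m})\!))$ because $\check{\rho}\in\frac12\mathbb{X}_*(\cT)$. This rescales the $\cg(j)$-components of the $\mathbb{Z}$-grading by $t^{j/m}$ and adds the term $\pm\check{\rho}/m\cdot dt/t$. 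After a preliminary gauge transformation over $\overline{K}(\!(t)\!)$ that puts the leading coefficient in good position with respect to the grading \eqref{eq:principal-grading-m}, the shearing makes the leading coefficient $X=X_{-N}$ homogeneous of degree $-N$, and hence regular semisimple. I would use the uniqueness of the canonical form \eqref{eq:isoclinic-canonical} up to $\cG(\overline{K(\!(t)\!)})$ to identify which $\theta$-eigenspace it lies in. Concretely, I would compare the Newton polygon (slope) data: the canonical form over the big field determines the $\cG$-conjugacy class of $X$ and the $\theta$-grading, so the good position can be chosen over $\overline{K}(\!(t)\!)$.

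\emph{Step 2: successive diagonalization.} With $X$ regular semisimple, decompose $\cg=\ct_X\oplus[X,\cg]$. The standard recursive argument kills the $[X,\cg]$-components of the higher terms order by order in $s$, using gauge transformations $\exp(Z_k s^k)$ with $Z_k\in[X,\cg]$ obtained by inverting $\ad_X$. These are formal power series in $s$, so they remain in $\cG(\overline{K}\llbracket s\rrbracket)$, and their convergent product lies in $\cG(\overline{K}(\!(s)\!))$. The result is a connection with coefficients in $\ct_X(\!(s)\!)$. Terms of nonnegative $t$-order can be removed by a torus-valued gauge $\exp(\int\cdots)$, still in $\overline{K}(\!(s)\!)$. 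Finally, the $\theta$-equivariance comes from the Galois action $s\mapsto\zeta_{2m}s$ combined with $\check{\rho}(\zeta_{2m})$. This forces $X_i\in\cg_i$, and forces the exponents to be multiples of $1/m$ with the residue term absorbed, which yields exactly \eqref{eq:isoclinic-canonical}.

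\emph{Main obstacle.} The hard step is Step 1: showing that no ramification beyond $2m$ is needed to reach a regular semisimple leading term. A priori Babbitt--Varadarajan allow arbitrary ramification. Here I would argue by descent: both the given connection and its canonical form are defined over $\overline{K}(\!(s)\!)$, and the gauge transformation relating them is unique up to the centralizer torus $C_{\cG}(X)$ times a finite group. Then a Galois-cohomology argument for $\Gal(\overline{K(\!(t)\!)}/\overline{K}(\!(s)\!))\cong\hat{\mathbb{Z}}$ acting on the torus $\cT_X$ should show that the gauge transformation can be chosen $\Gal$-invariant. The key inputs are that $H^1$ of a split torus vanishes by Hilbert 90, and that the finite twisting by the regular elliptic element is already realized by $\check{\rho}(s^2)$.
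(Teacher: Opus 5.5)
\textbf{Gap in Step 1.} The gap is the step you flag yourself, and neither of your two justifications closes it. In Step 1 you posit a gauge transformation over $\overline{K}(\!(t)\!)$ after which the shear by $\check\rho(t^{1/m})$ produces a regular semisimple leading term. Your justification is that the canonical form over $\overline{K(\!(t)\!)}$ determines the class of $X$ and the grading. But knowing the target over the big field does not produce a $\overline{K}(\!(t)\!)$-rational gauge transformation, so the argument is circular. (Also, ``homogeneous of degree $-N$, hence regular semisimple'' is not an implication.) This is exactly the nontrivial input the paper takes from \cite[Proposition 23]{Yi25}: $\nabla$ is $\cG(\overline{K}(\!(t)\!))$-equivalent to $d+(p_{-1}+\sum v_{i,j}t^{-j-1}p_i)dt$. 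The paper then shears by $s^{2(N+m)\check{\rho}}$, which is an honest element of $\cG(\overline{K}(\!(s)\!))$ because $2\check\rho$ is a cocharacter. After this shear the most polar term is $2mYs^{-2N-1}ds$ with $Y\in\cG\cdot X$ regular semisimple. Only at that point does the unramified diagonalization of your Step 2 apply.

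\textbf{The descent fallback fails as stated.} The $g$ with $g\cdot\nabla=\nabla_0$ form a torsor under $\Aut(\nabla_0)=\cT_X(\overline{K})$, the constant points (the centralizer of the exponential torus). The group $\Gal(\overline{K(\!(t)\!)}/\overline{K}(\!(s)\!))$ acts trivially on it, since $\nabla_0$ is already defined over $\overline{K}(\!(t^{1/m})\!)$. The obstruction group is therefore $\Hom_{\mathrm{cont}}(\widehat{\mathbb{Z}},\cT_X(\overline{K}))=\cT_X(\overline{K})_{\mathrm{tors}}\neq 0$.

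Hilbert 90 for $\cT_X(\overline{K(\!(t)\!)})$ lets you make $g$ invariant only after multiplying it on the left by some $\lambda(s^{1/n})^{-1}$ with $\lambda\in\mathbb{X}_*(\cT_X)$. That multiplication shifts the residue of the target by $\frac{1}{n}\lambda$ (with respect to $\frac{ds}{s}$). Such a non-integral residue is precisely what the lemma must exclude. It is also what ``residue term absorbed'' in Step 2 glosses over, since a residue cannot be removed by $\exp(\int\cdots)$.

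\textbf{How to repair it.} A correct descent runs over $\overline{K}(\!(t)\!)$ instead. The connection $\nabla_1:=\check\rho(t^{1/m})^{-1}\cdot\nabla_0$ is defined over $\overline{K}(\!(t)\!)$, because the $\check\rho$-weights of $X_i\in\cg_i$ are $\equiv i \bmod m$. The Galois group acts on $\Aut(\nabla_1)\simeq\cT_X(\overline{K})$ through the regular elliptic image $w$ of $\check\rho(\zeta_m)$. Then $H^1=0$, because $w-1$ is surjective on $\mathbb{X}_*(\cT_X)\otimes\overline{K}^{\times}$: its cokernel on $\mathbb{X}_*(\cT_X)$ is finite by ellipticity, and divisibility of $\overline{K}^{\times}$ kills it. Hence $\nabla\simeq\nabla_1$ over $\overline{K}(\!(t)\!)$, and $\nabla\simeq\nabla_0$ via $\check\rho(t^{1/m})=(2\check\rho)(t^{1/2m})\in\cG(\overline{K}(\!(t^{1/2m})\!))$. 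This would be a valid alternative to the paper's route through the Kostant slice, but its key input is ellipticity of $w$, not Hilbert 90.
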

    \begin{proof}
    	Let $\nabla$ be a formal isoclinic connection with slope $\frac{N}{m}$
    	whose canonical form has a regular semisimple leading term $X\in\cg$.
    	Let $\{p_{-1},2\check{\rho},p_1\}$ be a principal $\mathfrak{sl}_2$-triple
    	of $\cg$ where $p_{-1}$ is a sum of basis of negative simple root subspaces. 
    	Let $r$ be the rank of $G$, $\{d_i\}_{i=1}^r$ the fundamental degrees of $G$ and $\{p_i\}_{i=1}^r$ the homogeneous basis of $\cfg^{p_1}$ with weight $d_i$. 
    	Denote $\ell_{i,j}=(d_i-1)N+m(d_i-1-j)$ for $1\leq i\leq r$ and any $j$.
    	By \cite[Proposition 23]{Yi25}, 
    	$\nabla$ can be gauge transformed by $\cG(\overline{K}(\!(t)\!))$
    	into an equation as follows:
    	\[
    	d+(p_{-1}+\sum_{i=1}^r\sum_{\ell_{i,j}\geq -N}v_{i,j}t^{-j-1}p_i)dt,
    	\]
    	where $v_{i,j}\in\mathbb{C}$ and 
    	$Y:=p_{-1}+\sum_{m\mid d_i}v_{i,d_i+\frac{d_iN}{m}-1}p_i\in\cG\cdot X$.
    	
    	Fix $s=t^{1/2m}$.
    	Applying gauge transform by $s^{2(N+m)\check{\rho}}$, 
    	we obtain $\nabla'$ as in the equation (44) in the \emph{loc. cit.},
    	which is a formal connection with most polar term $2mYs^{-2N-1}ds$.
    	Since $2mY$ is regular semisimple,
    	the computation in the proof of Lemma 20 of the \emph{loc. cit.}
    	shows that $\nabla'$ can be gauge transformed over $\cG(\overline{K}(\!(t^{1/2m})\!))$ into its canonical form.
    \end{proof}

    \begin{rem}\label{r:gauge by m}
    	The argument in the above actually shows that a formal isoclinic connection
    	with slope $\frac{N}{m}$ can be gauge transformed over $\cG(\overline{K}(\!(t^{1/m})\!))$ to its canonical form
    	as long as $\check{\rho}$ is a cocharacter of $\cG$.
    	This is the case for example when $\cG$ is adjoint.
    	Moreover, it can be verified that for those $\cG$ where $m$ can be odd,
    	$\check{\rho}$ is always a cocharacter.
    \end{rem}

In the following, we consider a $p$-adic analogue of the above notion. 

\begin{definition} \label{d:p-adic-isoclinic}
	A \textit{$p$-adic isoclinic connection} with ($p$-adic) slope $\nu=\frac{N}{m}$ is a solvable $\cg$-valued differential module $\mathscr{E}$ over $\mathcal{A}_t$ (\ref{sss:Robba-ring}, \ref{sss:slopes}) such that $[2m]^*\mathscr{E}$ can be $\cG(\mathcal{A}_{t^{1/2m}})$-gauge equivalent to a form as follows:
	\begin{equation}
		d+\biggl(\pi X_{-N}t^{-N/m}+\cdots + X_{-1}t^{-1/m}\biggr)\frac{dt}{t},
		\label{eq:p-adic-canonical-form}
	\end{equation}
	with $X:=X_{-N}\in \cg(\mathcal{O}_{\overline{K}})$, such that $X$ is both regular semisimple in $\cg(\overline{K})$ and in $\cg(\overline{k})$, $X_i\in \ct_{X}$ and $X_i\in \cg_i(\mathfrak{m}_{\overline{K}})$ for $-N+1\le i \le -1$. 
\end{definition}


In view of Remark \ref{r:gauge by m},
when $m$ is odd, we can take a gauge transform by $\cG(\mathcal{A}_{t^{1/m}})$.
Thus $p$ is always prime to the degree $2m$ (or $m$) of the covering we need to take.
Henceforth, we only work in the case where $p>2$.
When $p=2$, we can redo the following argument by replacing $t^{1/2m}$ by $t^{1/m}$.

By assumption, an eigenvalue of the action of $X$ on $\cg$ is either zero or a $p$-adic unit. 
Via the adjoint representation $\cG\to \GL(\cg)$, we obtain a decomposition of $[2m]^*\mathscr{E}(\cg)$ in the category of differential modules over $\mathcal{A}_{t^{1/2m}}$:
\begin{equation}
	[2m]^*\mathscr{E}(\cg)\simeq Y_0\bigoplus \oplus_{\alpha\in \Phi} Y_\alpha,
	\label{eq:decomposition-Y}
\end{equation}
where $Y_0$ is trivial of rank $r=\rank(\cG)$. 
Denote $s=t^{1/2m}$. 
Each $Y_\alpha$ is a rank one solvable differential module over $\mathcal{A}_s$:
\[
Y_{\alpha}= d+\biggl(\varepsilon_{N,\alpha}s^{-2N}+\cdots+\varepsilon_{1,\alpha}s^{-2} \biggr)\frac{ds}{s},
\]
with $|\varepsilon_{N,\alpha}|=\omega$ and $|\varepsilon_{i,\alpha}|<1$. 

\begin{prop} \label{p:slopes}
	Let $\mathscr{E}$ be a $p$-adic isoclinic connection. 

	\textnormal{(i)} $\mathscr{E}$ can be equipped with a $\cG$-valued Frobenius structure over $\mathcal{R}$. 

	\textnormal{(ii)} The maximal $p$-adic slope of $\mathscr{E}$ is $\nu$. Moreover, the multi-set of $p$-adic slopes of $\mathscr{E}(\cg)$, defined by the adjoint representation, consists of $\sharp \Phi$ copies of $\nu$ and $r$ copies of $0$. 
\end{prop}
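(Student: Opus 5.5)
Both parts reduce to the rank-one theory of Pulita (\Cref{p:Ld-solvable-Frobenius}, \Cref{p:rank-one-exp}) applied to the canonical form \eqref{eq:p-adic-canonical-form}, followed by descent along the tame Kummer cover $s=t^{1/2m}$, which has degree prime to $p$ since $p>2$ and $(p,m)=1$.

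For (i), we first work upstairs. Since $X_{-N}$ is regular semisimple and the $X_i$ lie in $\ct_X$, after conjugating by an element of $\cG(\mathcal{O}_{\overline{K}})$ we may assume $\ct_X=\ct$, using that $X$ remains regular semisimple mod $\mathfrak{m}_{\overline{K}}$. Then \eqref{eq:p-adic-canonical-form} is a $\cT$-valued connection
\[
d+\sum_{i} X_i s^{2i}\,\frac{2\,ds}{s}.
\]
For each character $\chi\in\mathbb{X}^*(\cT)$, the rank one module $\chi(\cdot)$ has the form considered in \Cref{p:rank-one-exp}. It is solvable, since its restriction to $\alpha\in\Phi$ gives the $Y_\alpha$ of \eqref{eq:decomposition-Y}, which are solvable, and characters are sums of roots up to finite index (adjoint type), while solvability is stable under tensor products and roots of unity twists. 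By \Cref{p:rank-one-exp}(i), each $\chi(\cdot)$ carries a Frobenius structure. More intrinsically, the coefficients have $|\cdot|<1$ or equal $\pi\cdot$unit, so we may write each $\chi$-component as a product of $L_d(\lambda)$'s. Taking a basis of $\mathbb{X}^*(\cT)$ assembles these into a $\cT$-valued Frobenius structure $\varphi_s\in\cT(\mathcal{R}_{s})$ on $[2m]^*\mathscr{E}$.

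To descend, we follow the standard argument that $\MCF(\mathcal{R})$ is stable under pushforward and under extension along tame covers. Let $\mathscr{E}$ be the $\mathcal{R}$-module with $\cG$-structure. Its pullback lies in the Tannakian category $\MCF(\mathcal{R}_s)$, i.e.\ all $[2m]^*\mathscr{E}(V)$ have Frobenius structures. Since $\mathscr{E}(V)$ is a direct summand of $[2m]_*[2m]^*\mathscr{E}(V)$ (trace splitting, degree prime to $p$), each $\mathscr{E}(V)$ lies in $\MCF(\mathcal{R})$. By \eqref{eq:MCFR}, $\mathscr{E}$ then corresponds to a $\cG$-valued representation of $I_F\times\Ga$. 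Over $k=\overline{k}$ after enlarging, the existence of a $\cG$-valued $\varphi:\phi^*\mathscr{E}\simeq\mathscr{E}$ amounts to $\phi^*\mathscr{E}\simeq\mathscr{E}$ as $\cG$-torsors with connection. The main obstacle is to show this isomorphism exists $\cG$-valuedly and not merely for each representation. I would argue it directly: $\phi^*$ of the canonical form is gauge equivalent to it via $\varphi_s$. We still need the Galois action of $\mu_{2m}$ (the descent datum) to commute with $\varphi_s$ up to the centralizer. The descent datum acts through a lift $w$ of a regular elliptic element, and $\varphi_s$ is built from $\mathbb{X}^*$-equivariant exponentials $\theta_d$. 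So I would check that the $\theta_d$ construction is compatible with $s\mapsto\zeta s$, which holds because $\theta_d(\lambda,\zeta T)=\theta_d(\zeta\text{-twisted }\lambda,T)$. If this fails, the fallback is to average over the finite group $\cT[\ldots]\rtimes\langle w\rangle$.

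For (ii), slopes are insensitive to conjugation by $\cG(\mathcal{R})$ and scale by $2m$ under $[2m]^*$. By \eqref{eq:decomposition-Y}, the adjoint module upstairs is $Y_0\oplus\bigoplus Y_\alpha$. Here $Y_0$ is trivial, contributing $r$ slopes $0$. Each $Y_\alpha$ has leading coefficient of absolute value $\omega$ in degree $2N$, so \Cref{p:rank-one-exp}(iii) gives $\Irr(Y_\alpha)=2N$, i.e.\ slope $2N$ in $s$. Dividing by $2m$ yields $\nu$ for each of the $\sharp\Phi$ roots. The maximal slope of $\mathscr{E}$ is detected by the faithful adjoint representation ($\cG$ adjoint), hence equals $\nu$.
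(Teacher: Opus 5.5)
Your proposal follows the paper's proof. For (i), you put the exponential (Dwork/Pulita $\theta_d$-type) Frobenius structure on the canonical form over $\mathcal{A}_s$ (via characters of $\cT$ rather than the adjoint representation, a cosmetic difference) and descend it along $s=t^{1/2m}$ through the twist of $s\mapsto\zeta s$ by a lift of a regular elliptic element (the paper's conjugation by $\check{\rho}(\zeta_{2m})$); for (ii), you use \eqref{eq:decomposition-Y} with \Cref{p:rank-one-exp}(iii) and divide the upstairs slope $2N$ by $2m$, exactly as the paper does. The one point to make firm is that only the canonical choice descends — the ratio of the exponential formal solution of the canonical form by its Frobenius pullback, not an arbitrary Frobenius structure on each basis character — and for it the compatibility with the $\mu_{2m}$-descent datum is a direct computation from $X_i\in\cg_i$, so the trace-splitting detour and the averaging fallback (which has no clear meaning for $\cG$-valued structures) can be dropped.
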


\begin{proof}
	(i) After enlarging $K$ (preserved by $\sigma$), we may assume that $K$ contains all $n$-th roots of unity and that coefficients $X_i$ of $\mathscr{E}$ in \eqref{eq:p-adic-canonical-form} lie in $\cg(K)$. 
	Consider $\mathscr{E}(\cg)$ the $\GL(\cg)$-valued connection via the adjoint representation $\iota:\cG\to \GL(\cg)$. 
	By \Cref{p:rank-one-exp}(i), $[2m]^*\mathscr{E}(\cg)$ is equipped with a Frobenius structure over the Robba ring $\mathcal{R}$:
	\[
	\varphi(s)=\exp(\iota(\biggl(\frac{m\pi}{N} X_{-N}s^{-2N}+\cdots + mX_{-1}s^{-2}\biggr)))\cdot
	\exp(\iota(\biggl(\frac{m\pi}{N} \sigma(X_{-N})s^{-2qN}+\cdots + m\sigma(X_{-1})s^{-2q}\biggr)))^{-1}, 
	\]
	where $\sigma: \mathcal{O}_K\to \mathcal{O}_K$ is a lift of the $q$-th Frobenius. 
	By construction, $\varphi(s)$ lies in $\cG(\mathcal{R}_K)\subset \GL(\cg)(\mathcal{R}_K)$ and defines a $\cG$-valued Frobenius structure on $[2m]^*\mathscr{E}$. 
	When we replace $s$ by $\zeta_{2m} s$ for a $m$-th root of unity $\zeta_{2m}$, $\varphi(s)$ is conjugate to $\varphi(\zeta_{2m} s)$ by $\iota(\check{\rho}(\zeta_{2m}))$. 
	Hence, $\varphi$ descends to a $\cG$-valued Frobenius structure on $\mathscr{E}$ and the assertion follows. 

	(ii) By \Cref{p:rank-one-exp}(iii), each $Y_{\alpha}$ in decomposition \eqref{eq:decomposition-Y} has the $p$-adic slope $2N$. 
	Then the assertion follows. 
\end{proof}

\subsection{Comparison of local monodromy groups}
	Let $\mathscr{E}$ be a $p$-adic isoclinic connection. 
	Let $\mathcal{E}$ be the formal connection associated to $\mathscr{E}$ via the canonical map $\mathcal{A}_t\to \overline{K}(\!( t)\!)$ \eqref{eq:inclusion-rings}. 
	Then $\mathcal{E}$ is a formal isoclinic connection in the sense of \ref{sss:formal-isoclinic}. 

	Let $\langle \mathcal{E}\rangle$ be the Tannakian subcategory of the category $\MC(\overline{K}(\!(t)\!))$ of formal connections over $\overline{K}(\!(t)\!)$ relative to $\overline{K}$ generated by $\mathcal{E}$. 
	Let $\langle \mathscr{E}\rangle$ be the Tannakian subcategory of $\MC(\mathcal{R})$ generated by $\mathscr{E}$. 
We denote by $G_{\diff}(\mathcal{E})$ (resp. $G_{\geo}(\mathscr{E})$) the Tannakian group of $\langle \mathcal{E}\rangle$  (resp. $\langle \mathscr{E}\rangle$). 
	We establish a relationship between two Tannakian categories. 

\begin{prop} \label{p:compare-monodromy-group}
	There exists a natural tensor functor between the Tannakian categories:
	\begin{equation} \label{eq:dagger-functor-infty}
	\langle \mathcal{E}\rangle \to \langle \mathscr{E}\rangle,
	\end{equation}
	which induces an embedding between local monodromy groups:
	\[
	G_{\geo}(\mathscr{E})\to G_{\diff}(\mathcal{E}).
	\]
\end{prop}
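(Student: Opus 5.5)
We construct the functor object by object and transport it through the common ring $\mathcal{A}_t$ that maps to both $\overline{K}(\!(t)\!)$ and $\mathcal{R}$ by \eqref{eq:inclusion-rings}. The $\cG$-valued differential module $\mathscr{E}$ over $\mathcal{A}_t$ defines a tensor functor $\omega_{\mathcal{A}}:\Rep(\cG)\to \MC(\mathcal{A}_t)$, $V\mapsto \mathscr{E}(V)$. Composing with base change along $\mathcal{A}_t\to \overline{K}(\!(t)\!)$ (resp. $\mathcal{A}_t\to\mathcal{R}$) gives tensor functors $F_{\diff}:\Rep(\cG)\to \langle\mathcal{E}\rangle$ and $F_{\geo}:\Rep(\cG)\to\langle\mathscr{E}\rangle$. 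Both are essentially surjective up to subquotients, by the definition of the generated Tannakian subcategories. We identify $G_{\diff}(\mathcal{E})$ (resp. $G_{\geo}(\mathscr{E})$) with the subgroup of $\cG$ stabilizing every line (equivalently every subobject) of $V$ that is a sub-connection of $F_{\diff}(V)$ (resp. $F_{\geo}(V)$), using fiber functors compatible with $\omega_{\mathcal{A}}$.

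The desired functor \eqref{eq:dagger-functor-infty} is then the one sending $F_{\diff}(V)\mapsto F_{\geo}(V)$. It is well defined and tensor precisely when it respects morphisms, so the key claim is the following. For $V,W\in\Rep(\cG)$, every horizontal morphism $F_{\diff}(V)\to F_{\diff}(W)$ over $\overline{K}(\!(t)\!)$ comes from a horizontal morphism over $\mathcal{A}_t$, and hence induces one over $\mathcal{R}$. Equivalently, taking $\Hom=V^\vee\otimes W$, every horizontal section of $\mathcal{E}(U)$ over $\overline{K}(\!(t)\!)$ converges on the punctured open disc, i.e. lies in $\mathscr{E}(U)^{\nabla}\otimes\mathcal{A}_t$. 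Granting this, subobjects of $F_{\diff}(V)$ (kernels of horizontal maps/idempotents in $\mathcal{E}(\mathrm{End}\,V)$) go to subobjects of $F_{\geo}(V)$. Consequently every line stabilized by $G_{\diff}$ in a tensor construction is stabilized by $G_{\geo}$, and Chevalley's theorem yields $G_{\geo}(\mathscr{E})\subset G_{\diff}(\mathcal{E})$ inside $\cG$. This gives the embedding; it is also a closed embedding corresponding to the functor being faithful with every object of $\langle\mathscr{E}\rangle$ a subquotient of an image (cf. Deligne--Milne, Prop. 2.21(b)).

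The main obstacle is the convergence of formal horizontal sections, and here I would use the isoclinic structure. Pull back by $[2m]$; descent is harmless, since the $\mu_{2m}$-action commutes with everything and $\mathcal{A}_t=\mathcal{A}_s^{\mu_{2m}}$. Definition \ref{d:p-adic-isoclinic} gives a $\cG(\mathcal{A}_s)$ gauge to the form \eqref{eq:p-adic-canonical-form}, and the same gauge works formally. This gauge makes $[2m]^*\mathscr{E}(U)$ a direct sum, indexed by weights $\chi$ of $\ct_X$ on $U$, of rank one modules
\[
d+\Bigl(\chi(\pi X_{-N})s^{-2N}+\cdots+\chi(X_{-1})s^{-2}\Bigr)\frac{ds}{s}
\]
tensored with trivial ones. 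A formal horizontal section of such a rank one piece exists only if the piece is formally trivial, i.e. all coefficients vanish, since no exponential of a nonzero polar polynomial lies in $\overline{K}(\!(s)\!)$. In that case the horizontal sections are constants, which certainly lie in $\mathcal{A}_s$. Thus $\mathcal{E}(U)^{\nabla}$ is spanned by constant vectors in the gauged frame, and pulling back through the gauge in $\cG(\mathcal{A}_s)$ puts them in $\mathcal{A}_s\otimes U$, as required.

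As a fallback avoiding the canonical form, one could apply Clark's theorem (Theorem \ref{t:Clark}) to get a positive radius for formal solutions. The obstacle there is extending convergence to the whole punctured disc, which would need the transfer principle applied via solvability, so the gauge argument is preferable.
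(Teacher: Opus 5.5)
Your proof is correct. It rests on the same input as the paper's: pullback by $[2m]$, the $\cG(\mathcal{A}_s)$-gauge to the canonical form \eqref{eq:p-adic-canonical-form}, the fact that a rank one module $d+(\sum_{j\ge 1}a_js^{-j})\frac{ds}{s}$ is formally trivial only when all $a_j$ vanish, and Deligne--Milne's Proposition~2.21 at the end.

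The difference is in how the functor is built. The paper observes that $\langle[2m]^*\mathcal{E}\rangle$ and $\langle[2m]^*\mathscr{E}\rangle$ are semisimple, with rank one simple objects classified by their polar coefficients. It defines the functor on these pulled-back categories from this classification, and then descends it via descent data along $[2m]^*$.

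You instead show that formal horizontal morphisms $\mathcal{E}(V)\to\mathcal{E}(W)$ are already defined over $\mathcal{A}_t$. In other words, base change $\MC(\mathcal{A}_t)\to\MC(\overline{K}(\!(t)\!))$ is fully faithful on the objects $\mathscr{E}(V)$. The functor is then literally $\mathcal{R}\otimes_{\mathcal{A}_t}(-)$ applied to the $\mathcal{A}_t$-models. This makes its action on morphisms, its tensor structure and the assignment $\mathcal{E}(V)\mapsto\mathscr{E}(V)$ transparent. Descent also becomes elementary: you only need $\overline{K}(\!(t)\!)\cap\mathcal{A}[1/s]=\mathcal{A}[1/t]$ for horizontal sections, rather than descent of categories.

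Your route needs one extra fact, which you use only implicitly when you invoke idempotents: $\langle\mathcal{E}\rangle$ must be semisimple. Then it is the idempotent completion of the full subcategory on the objects $\mathcal{E}(V)$, and your functor extends to it. This follows from the same rank one decomposition after pullback, by averaging a horizontal projection over $\mu_{2m}$. It also makes the functor exact, which you should state.

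With semisimplicity in hand, drop the vague ``fiber functors compatible with $\omega_{\mathcal{A}}$''. Instead, take the fibre functor on $\langle\mathcal{E}\rangle$ to be the composite of your functor with a fibre functor on $\langle\mathscr{E}\rangle$. Deligne--Milne's Proposition~2.21(b) then gives the closed immersion directly, without the detour through Chevalley's theorem.
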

\begin{proof}
	By construction, the category $\langle [2m]^* \mathscr{E}\rangle$ (resp. $\langle [2m]^* \mathcal{E}\rangle$) is semisimple and every irreducible object in this category is of rank one and has the form $d+(\sum_{j\ge 1} a_js^{-j})\frac{ds}{s}$. 
	For two rank one objects $L_i=d+(\sum_{j\ge 1} a_{i,j}s^{-j})\frac{ds}{s}$ of $\MC(\overline{K}(\!(s)\!))$, $L_1\simeq L_2$ if and only if $a_{1,j}=a_{2,j}$ for every $j$.  
	Hence, there exists a natural tensor functor
	\[
	\langle [2m]^*\mathcal{E}\rangle \to \langle [2m]^*\mathscr{E}\rangle. 
	\]
	
	In general, the category $\langle \mathcal{E}\rangle$ (resp. $\langle \mathscr{E}\rangle$) is equivalent to the category of descent data along the pullback $[2m]^*$. 
	Then we can define the functor \eqref{eq:dagger-functor-infty} by applying the above functor to the descent data. 
	In particular, $\mathcal{E}(V)$ is sent to $\mathscr{E}(V)$ via this functor for every representation $V$ of $\cG$. 

	By \cite[Proposition 2.21]{DM82}, the induced morphism $G_{\geo}(\mathscr{E})\to G_{\diff}(\mathcal{E})$ is a closed immersion. 
\end{proof}
\begin{secnumber}
	Let $F=k(\!(t)\!)$ be the local field, $\Gal_F$ the Galois group of $F$ and $I_F$ the inertia subgroup of $\Gal_F$. 
	By \Cref{p:slopes}(i) and the $p$-adic local monodromy theorem (see \ref{sss:MCF}), one can associate to $\mathscr{E}$ a representation $(\rho,N)$ with coefficients in $\overline{K}$ 
\[
\rho: I_F \to \cG(\overline{K}),\quad N\in \cfg(\overline{K})~ \textnormal{nilpotent},
\]
such that $\rho,N$ commute with each other and that $\rho(I_F)$ is finite.  
\end{secnumber}

\begin{theorem}
\label{t:L-parameter}
	\textnormal{(i)} 
	Let $\cT_X$ be the centralizer $C_{\cG}(X)$ of $X$ in $\cG$ over $\overline{K}$, which is a maximal torus. 
	The representation $\rho$ fits into a commutative diagram
	\[
	\xymatrix{
	1\ar[r] & I^+_F \ar[r] \ar[d] & I_F \ar[r] \ar[d] & I_F^t \ar[r] \ar[d] & 1 \\
	1\ar[r] & \cT_X \ar[r]& N_{\cG}(\cT_X) \ar[r] & \bW \ar[r] & 1
	}
	\]
	such that the wild inertia subgroup $I^+_F$ is sent to $\cT_X$ and a generator of the tame inertia $I_F^t$ is sent to a regular elliptic element $\sigma$ of the Weyl group $\bW$ of order $m$. 

	\textnormal{(ii)} The nilpotent operator $N$ is trivial. 

	\textnormal{(iii)} 
	The centralizer of $\rho(I_F^+)$ (resp. $\rho(I_F)$) in $\cG$ is the torus $\cT_X$ (resp. $\cT_X^{\sigma}$, which is finite). 

	\textnormal{(iv)} We have $\cfg^{I_F}=0$ and $\Swan(\cfg)=\sharp \Phi\cdot \nu$.
\end{theorem}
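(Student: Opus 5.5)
Throughout, I work with the pullback $[2m]^*\mathscr{E}$ over $\mathcal{A}_s$, $s=t^{1/2m}$, which is gauge equivalent to the canonical form \eqref{eq:p-adic-canonical-form}; since $p\nmid 2m$, the cover $s\mapsto s^{2m}$ is tame, so it corresponds to an open normal subgroup $I_{F'}\subset I_F$ containing $I_F^+$, with cyclic quotient of order $2m$.

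\emph{Step 1 (restriction to $I_{F'}$).} After the gauge transform, the connection matrix of $[2m]^*\mathscr{E}$ lies in $\ct_X(\mathcal{A}_s)$. Hence $[2m]^*\mathscr{E}$ is induced from a $\cT_X$-valued differential module, and so is its Frobenius structure constructed in \Cref{p:slopes}(i), since it is an exponential of $\ct_X$-valued series. Under \eqref{eq:MCFR}, the representation attached to $[2m]^*\mathscr{E}$ therefore factors through $\cT_X(\overline{K})$. Each rank one piece $Y_\alpha$ is of the form $d+(\sum a_j s^{-j})\frac{ds}{s}$, hence has no logarithmic part, so it corresponds to a character of $I_{F'}$ with trivial $\Ga$-component. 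The same holds for $Y_0$, which is trivial. Thus $N=0$, which is (ii), and $\rho(I_{F'})\subset\cT_X$; in particular $\rho(I_F^+)\subset \cT_X$.

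\emph{Step 2 (tame generator).} The deck transformation $s\mapsto\zeta_{2m}s$ acts on the canonical form. As in the descent step of \Cref{p:slopes}(i), the descent datum of $\mathscr{E}$ along $[2m]$ is given by $\check{\rho}(\zeta_{2m})$ up to an element of $\cT_X$. So a lift $\gamma\in I_F$ of a generator of $I_F/I_{F'}$ maps to $n=\check{\rho}(\zeta_{2m})\cdot \tau$ with $\tau\in\cT_X$. Conjugation by $n$ sends $X$ to a scalar multiple of $X$ (degree $-N$ term), so $n$ normalizes $\ct_X$, hence $\cT_X$. Its image $w\in\bW\cong N(\cT_X)/\cT_X$ acts on $\ct_X$ by the eigenvalue $\zeta_m^{-N}$ on the regular vector $X$. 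Since $(N,m)=1$, this is a primitive $m$-th root of unity, so by Springer theory $w$ is regular elliptic of order $m$ (it is the element attached to the regular semisimple $X\in\cg_{-N}$).

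The image of $\rho(I_F)$ in $\bW$ must be the cyclic group generated by $w$. I then need to upgrade ``$\rho(I_{F'})\subset \cT_X$'' to ``$\rho(I_F^+)\subset\cT_X$ and $\rho(I_F)\to\bW$ factors through $I_F^t$''. For this, note that $\rho(I_F^+)$ is a $p$-group contained in $\cT_X$ by Step 1, so the map to $\bW$ kills $I_F^+$, and the diagram commutes. I expect this bookkeeping, matching the Tannakian descent with the group-theoretic extension, to be the main technical point. It also requires checking that the order-$2m$ deck transformation has image of order exactly $m$ in $\bW$, since $\check{\rho}(-1)$ may lie in $\cT_X$ only up to the center.

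\emph{Step 3 (centralizers and invariants).} For (iii), the characters of $I_{F'}$ on the root spaces $Y_\alpha$ are all nontrivial, since each has Swan conductor $2N>0$ by \Cref{p:rank-one-exp}(iii). This holds already on $I_F^+$, because a character of $I_{F'}$ trivial on the wild part is tame, hence has Swan conductor $0$. So no root vanishes on $\rho(I_F^+)$, giving $C_{\cG}(\rho(I_F^+))=\cT_X$ (it is connected as the centralizer of a subgroup of a torus meeting no root kernel, in adjoint type, using that $p$ is good). Then $C(\rho(I_F))=\cT_X^{w}$, which is finite because $w$ is elliptic. For (iv), we have $\cfg^{I_F}\subset \cfg^{I_F^+}=\ct_X$, and $\ct_X^{w}=0$ by ellipticity, so $\cfg^{I_F}=0$. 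Finally, $\Swan(\cfg)=\Irr(\mathscr{E}(\cg))$ by \cite{Tsu98}, and this equals $\sharp\Phi\cdot\nu$ by \Cref{p:slopes}(ii).
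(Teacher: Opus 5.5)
Your Steps 1 and 3 are correct and are essentially the paper's arguments for (ii)--(iv). Your Swan-conductor argument that no root is trivial on $\rho(I_F^+)$ is even more explicit than the paper's. The gap is in Step 2, which carries (i). There you also depart from the paper, which takes the structure of the formal differential Galois group $G_{\diff}(\mathcal{E})$ from the Chen--Yi computation and transfers it along the closed embedding $G_{\geo}(\mathscr{E})\hookrightarrow G_{\diff}(\mathcal{E})$ of \Cref{p:compare-monodromy-group}.

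As written, Step 2 fails for three reasons.
\begin{enumerate}
\item The element $\check{\rho}(\zeta_{2m})\tau$ is wrong. $\Ad_{\check{\rho}(\zeta_{2m})}$ multiplies a root vector of height $k$ by $\zeta_{2m}^{k}$, which is not constant on $\cg_{-N}$, since $\cg_{-N}$ contains heights differing by $m$. Already for $\mathrm{PGL}_2$, $m=2$, $N=1$, $X=e+f$, you get $\Ad_{\check{\rho}(\zeta_4)}X=\zeta_4(e-f)$. So this element neither rescales $X$ nor normalizes $\cT_X$; your worry about $\check{\rho}(-1)$ is a symptom of this. The canonical form only involves $s^2=t^{1/m}$, so the substitution $s\mapsto\zeta_{2m}s$ multiplies $X_{-j}s^{-2j}$ by $\zeta_m^{-j}$. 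It therefore acts on the canonical form by $\Ad_{\check{\rho}(\zeta_m)}$, and the right element is $\check{\rho}(\zeta_m)$, which rescales $X$ by $\zeta_m^{-N}$.
\item ``Up to an element of $\cT_X$'' presupposes that every automorphism of the canonical form, as a $\cG$-connection, is a constant in $\cT_X$. This does not follow from the connection form being $\ct_X$-valued: as a $\GL(\cg)$-connection it has extra automorphisms whenever two roots agree on all the $X_j$, which does happen. It is precisely the statement $C_{\cG}(\rho(I_{F'}))=\cT_X$, which you only prove in Step 3.
\item Matching the descent datum over $\mathcal{A}_s$ with $\rho(\gamma)$ through a fibre functor is exactly the part you leave as bookkeeping.
\end{enumerate}

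A clean repair avoids fibre functors.
\begin{enumerate}
\item First prove $C_{\cG}(\rho(I_F^+))=\cT_X$. This uses only Step 1, together with Steinberg's connectedness theorem as in the paper.
\item Since $\rho(I_F^+)$ is normal in $\rho(I_F)$, it follows at once that $\rho(I_F)\subset N_{\cG}(\cT_X)$.
\item To identify the image $w\in\bW$ of a tame generator $\gamma$, use characters. For $\chi\in\mathbb{X}^*(\cT_X)$, the character $h\mapsto\chi(\rho(\gamma h\gamma^{-1}))$ of $I_{F'}$ corresponds to the pull-back, by the deck transformation, of the rank one module attached to $\chi$. By the computation above, this is the rank one module attached to $\chi\circ\Ad_{\check{\rho}(\zeta_m)^{\pm1}}$.
\item Rank one objects of $\MCF(\mathcal{R})$ are determined by their characters. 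Hence $w$ and the image $w_0$ of $\check{\rho}(\zeta_m)^{\pm1}$ induce the same automorphism of $\rho(I_{F'})\supset\rho(I_F^+)$.
\item An element of $\bW$ fixing $\rho(I_F^+)$ pointwise is trivial, because its lifts lie in $C_{\cG}(\rho(I_F^+))=\cT_X$. Hence $w=w_0$.
\item $w_0$ has the regular eigenvector $X$ with eigenvalue a primitive $m$-th root of unity, so by Springer theory it is regular elliptic of order $m$.
\end{enumerate}
Repaired this way, your route is a purely $p$-adic alternative to the paper's. It needs neither the formal computation nor the comparison functor, but the centralizer statement in (iii) must be proved before (i).
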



\begin{proof}
	(i) By a similar argument of \cite[Proposition 12]{CY24}, the wild inertia group of $G_{\diff}(\mathcal{E})$ is the smallest torus $S$ whose Lie algebra contains $X$, and a generator of its tame quotient is sent to a regular elliptic element of the Weyl group $\bW$ of order $m$. 
	Hence, the local differential Galois group $G_{\diff}(\mathcal{E})$ fits into the following diagram:
	\[
	\xymatrix{
	1\ar[r] & G_{\diff}^{\wild}(\mathcal{E}) \ar[r] \ar[d] & G_{\diff}(\mathcal{E}) \ar[r] \ar[d] & \mathbb{Z}/m\mathbb{Z} \ar[r] \ar[d] & 1 \\
	1\ar[r] & \cT_X \ar[r]& N_{\cG}(\cT_X) \ar[r] & \bW \ar[r] & 1
	}
	\]
	Then the assertion follows from \Cref{p:compare-monodromy-group}. 

	(ii) In view of decomposition \eqref{eq:decomposition-Y}, the monodromy operator $N$ is trivial. 

	(iii) Let $\cT_X[p]$ be the $p$-torsion subgroup of $\cT_X$. 
	Let $\zeta_p$ be a $p$-th root of unity in $\overline{K}$. 
	If we set $\bX=\Hom(\Gm,\cT_X)$, we have an isomorphism
	\[
	\mathbb{F}_p\otimes_{\mathbb{Z}}\bX \xrightarrow{\sim} \cT_X[p],\quad a\otimes \lambda \mapsto \lambda(\zeta_p^a).
	\]

	Since $\rho(I_F^+)$ is a finite abelian $p$-group, 
	$\rho$ factors through a quotient $I_F^+ \twoheadrightarrow k'^+ \to \cT_X[p]$ for a finite extension $k'$ of $k$. 
	Via the isomorphism $k' \otimes_{\mathbb{Z}} \bX \xrightarrow{\sim} \Hom_{\mathbb{F}_p}(k',\cT_X[p])$, $\rho$ corresponds to an element $\check{\lambda}\in k'\otimes_{\mathbb{Z}} \bX$. 
	In view of the action of $\rho(I_F^+)$ on $\cg$ \eqref{eq:decomposition-Y}, we deduce that for any root $\alpha:\cT_X\to \Gm \in \Phi$, we have $\alpha\circ \check{\lambda}\neq 0$. 
	On the other hand, $\rho(I_F^+)$ is contained in the maximal torus $\cT_X$. 
	Since $\rho(I_F^+)$ is an elementary abelian $p$-group, the centralizer of $\rho(I_F^+)$ is connected by \cite[Theorem 2.28(c)]{Steinberg} and is therefore equal to $\cT_X$. 

	Since the image of a tame generator $\sigma=\rho(1)\in \bW$ is elliptic, we deduce that $C_{\cG}(\rho(I_F))=\cT_X^\sigma$ is finite. 

(iv) By (iii), we have $\cfg_X^{I_F^+}=\cft_X$. Since $\sigma=\rho(1)\in \bW$ is elliptic, we deduce that $\cfg_X^{I_F}=\cft_X^\sigma=0$. 
	The $p$-adic irregularity can be calculated by \Cref{p:slopes}(ii).  
\end{proof}

\begin{coro} \label{c:unique-Frobenius}
	The Frobenius structure on $\mathscr{E}$ is unique up to an element of $\cT_X^{\sigma}$ in $\cG(\overline{K})$. 
\end{coro}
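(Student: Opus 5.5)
Two Frobenius structures on $\mathscr{E}$ differ by a horizontal automorphism of $\mathscr{E}$ compatible with everything, so the plan is to identify the group of such automorphisms with the centralizer $C_{\cG}(\rho(I_F))$ and then apply \Cref{t:L-parameter}(iii). Concretely, let $\varphi_1,\varphi_2:\phi^*\mathscr{E}\xrightarrow{\sim}\mathscr{E}$ be two $\cG$-valued Frobenius structures, defined after enlarging $K$. Then $g:=\varphi_2\circ\varphi_1^{-1}$ is a horizontal $\cG$-automorphism of $\mathscr{E}$ over $\mathcal{R}$, i.e. a tensor automorphism of the fiber functor on $\langle\mathscr{E}\rangle$ commuting with all horizontal maps. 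Conversely, composing $\varphi_1$ with any such automorphism yields another Frobenius structure. So the set of Frobenius structures is a torsor under $\Aut^{\otimes}(\mathscr{E})$, the group of horizontal $\cG$-automorphisms of $\mathscr{E}$.

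Next I will compute this automorphism group. Horizontal sections of $\mathscr{E}(V)$ over $\mathcal{R}$, for $V\in\Rep(\cG)$, are the invariants of the monodromy group $G_{\geo}(\mathscr{E})$ acting on $V$. Hence horizontal automorphisms of the $\cG$-torsor $\mathscr{E}$ correspond to $C_{\cG}(G_{\geo}(\mathscr{E}))(\overline{K})$, once $\mathscr{E}$ is trivialized on the fiber functor. Via the equivalence \eqref{eq:MCFR} and \Cref{t:L-parameter}(ii) ($N=0$), $G_{\geo}(\mathscr{E})$ is the Zariski closure of $\rho(I_F)$, which is finite, so it equals $\rho(I_F)$. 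Therefore the automorphism group is $C_{\cG}(\rho(I_F))=\cT_X^{\sigma}$ by \Cref{t:L-parameter}(iii), a finite group.

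The subtle point is the choice of fiber functor. The identification of $\langle\mathscr{E}\rangle$ with representations of $\rho(I_F)$ is over $\overline{K}$, via a fiber functor that is not the naive one on $\mathcal{R}$. I will handle this by using the explicit trivialization on $[2m]^*\mathscr{E}$ given by the canonical form \eqref{eq:p-adic-canonical-form}. There the horizontal endomorphisms of $[2m]^*\mathscr{E}(\cg)$ preserve the decomposition \eqref{eq:decomposition-Y}. Since the $Y_\alpha$ are pairwise non-isomorphic (distinct leading coefficients, by \Cref{p:rank-one-exp} and the argument of \Cref{p:compare-monodromy-group}), a horizontal $\cG$-automorphism of $[2m]^*\mathscr{E}$ is a constant element of $\cT_X$. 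Descending along $s\mapsto\zeta_{2m}s$, which acts through a lift of $\sigma$, cuts this down to $\cT_X^\sigma$. This gives the statement with the ambiguity embedded in $\cG(\overline{K})$ as claimed.

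The main obstacle is exactly this descent step. I need to check that the Galois action of $\mu_{2m}$ on automorphisms of $[2m]^*\mathscr{E}$ is conjugation by an element of $N_{\cG}(\cT_X)$ mapping to $\sigma$, as in the proof of \Cref{p:slopes}(i), where $\varphi(\zeta_{2m}s)$ is conjugated by $\check{\rho}(\zeta_{2m})$. The fallback is to argue purely Tannakianly with \Cref{t:L-parameter}(iii).
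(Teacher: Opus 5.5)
Your core argument is the paper's: $u=\varphi_2\circ\varphi_1^{-1}$ is a horizontal $\cG$-automorphism of $\mathscr{E}$, hence (via \eqref{eq:MCFR} and $N=0$) lies in $C_{\cG}(\rho(I_F))$, which equals $\cT_X^{\sigma}$ by \Cref{t:L-parameter}(iii). Your fiber-functor worry is not a real issue. The representation $\rho$ is only defined up to $\cG(\overline{K})$-conjugacy, and that is exactly the sense of ``in $\cG(\overline{K})$'' in the statement.

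You should drop the explicit detour through the canonical form, because its key claim is false in general: the $Y_\alpha$ need not be pairwise non-isomorphic. For example, take $\cg=\mathfrak{sl}_4$ and $m=h=4$. Then $X$ is conjugate to a multiple of $\mathrm{diag}(1,i,-1,-i)$, and the roots $e_0-e_3$ and $e_1-e_2$ both take the value $1+i$ on it. Consequently, identifying the horizontal $\cG$-automorphisms of $[2m]^*\mathscr{E}$ with $\cT_X$ cannot rest on a multiplicity-one argument. It needs the centralizer computation of \Cref{t:L-parameter}(iii) anyway, which is what the Tannakian argument already uses directly.
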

\begin{proof}
	Given two Frobenius structures $\varphi_1,\varphi_2$, the composition $u=\varphi_2\circ \varphi_1^{-1}$ commutes with $\rho(I_F)$. 
	Then the corollary follows from \Cref{t:L-parameter}(iii). 
\end{proof}

\begin{secnumber}
	Assume that $k=\mathbb{F}_q$, $\sigma=\id_K:K\to K$ and that there is a Frobenius structure $\varphi$ on $\mathscr{E}$. 
	By \ref{sss:MCF}, we can associate to $(\mathscr{E},\varphi)$
	a Weil--Deligne representation $(\rho,N)$ with coefficients in $\overline{K}$ 
	\[
	\rho: W_F \to \cG(\overline{K}), \quad N=0.
	\]
\end{secnumber}

\begin{coro} \label{c:WD-rep}
	\textnormal{(i)} The Weil representation $\rho:W_F\to \cG(\overline{K})$ is inertially discrete, i.e. the centralizer of $\rho(I_F)$ in $\cG$ is finite. 
	
	\textnormal{(ii)} The image $\rho(W_F)=D$ is finite and is contained in the normalizer $N_{\cG}(\cT_X)$. 
	It has ramification filtration of the form:
	\[
	D \ge D_0=\rho(I_F) \ge D_1=\rho(I_F^+), \quad D_2=1. 
	\]
	The only non-zero upper ramification break is $\nu=\frac{N}{m}$. 
\end{coro}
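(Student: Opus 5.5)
Part (i) follows directly from \Cref{t:L-parameter}(iii). Inertial discreteness only concerns $\rho|_{I_F}$, and the centralizer $C_{\cG}(\rho(I_F))=\cT_X^{\sigma}$ is finite because $\sigma$ is elliptic. The Weil--Deligne representation attached to $(\mathscr{E},\varphi)$ in \ref{sss:MCF} restricts on $I_F$ to the representation studied in \Cref{t:L-parameter}. So nothing beyond that theorem is needed here.

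For (ii), I would first prove finiteness of $D=\rho(W_F)$. Since $\rho(I_F)$ is finite, it suffices to show that $\rho(\Frob)$ has finite order modulo $\rho(I_F)$, up to a power. The element $\rho(\Frob)$ normalizes $\rho(I_F)$, so some power $\rho(\Frob)^a$ centralizes $\rho(I_F)$. Then $\rho(\Frob)^a\in C_{\cG}(\rho(I_F))=\cT_X^\sigma$, which is finite, so $\rho(\Frob)$ has finite order. For the containment $D\subset N_{\cG}(\cT_X)$: $\rho(\Frob)$ normalizes $\rho(I_F^+)$, because $I_F^+$ is normal in $W_F$. Hence it normalizes the centralizer $C_{\cG}(\rho(I_F^+))$, which is $\cT_X$ by \Cref{t:L-parameter}(iii). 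Together with \Cref{t:L-parameter}(i) for $\rho(I_F)$, this gives $D\subset N_{\cG}(\cT_X)$.

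For the ramification filtration, $D_0=\rho(I_F)$ by definition and $D_1=\rho(I_F^+)$ is the wild part. Its image is a finite $p$-group inside $\cT_X$, so it is elementary abelian by the proof of \Cref{t:L-parameter}(iii). The key input is that the breaks are determined by the adjoint representation. By \Cref{p:slopes}(ii) and \eqref{eq:decomposition-Y}, every nontrivial irreducible constituent of $\cg$ restricted to $I_F^+$ is a character $\alpha\circ\rho$ with a single slope $\nu$. By Tsuzuki's comparison (\ref{sss:slopes}), this slope is the Swan slope. The adjoint representation is faithful on $D$, since $\cG$ is adjoint. Hence the upper ramification subgroups satisfy $D^u=D_1$ for $0<u\le\nu$ and $D^u=1$ for $u>\nu$, so the only nonzero upper break is $\nu$.

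The step I expect to be most delicate is converting this into the lower-numbering statement $D_2=1$. The plan is to pass to the tame extension of degree $m$ (or $2m$) that trivializes the tame part, which is prime to $p$. Over that extension the upper and lower numberings scale by $m$, so the unique break $\nu=N/m$ becomes the integer lower break $N$ in the base-changed field. Back on $D$, Herbrand's function has slope $1$ on $[0,\nu]$ relative to the tame index, so the lower break is at the position giving $D_1$ nontrivial and $D_2$ trivial after normalizing as in Gross--Reeder. I would verify this normalization against \cite{GR10} in the case $N=1$, where $D_2=1$ is standard, and check that the indexing convention used in the statement matches it.
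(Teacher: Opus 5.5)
\textbf{Overall.} Your arguments for (i), for the finiteness of $D$, for $D\subset N_{\cG}(\cT_X)$, and for the upper break via \Cref{p:slopes}(ii) follow the paper. (For finiteness you give a self-contained argument where the paper cites \cite[Lemma 3.1]{GR10}.) The genuine gap is the final step, and in the stated generality it cannot be closed.

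\textbf{The step $D_2=1$.} The lower numbering of $D=\mathrm{Gal}(L/F)$ is intrinsic, so there is no ``normalization as in Gross--Reeder'' available to adjust. Put $e=[D_0:D_1]$. Since $D_1\subset\cT_X$ and a tame generator maps to an element of order $m$ in $\bW$, we have $m\mid e$. Suppose $D_2=1$ (note $D_1\neq 1$, since $\nu>0$). Then the only wild lower jump is $1$, and its upper number is $\varphi_{L/F}(1)=1/e\le 1/m$. But faithfulness of $\cfg$ on $D$ makes $\nu=N/m$ the largest upper jump. Hence $D_2=1$ forces $N=1$ (and $e=m$). In particular $D_2\neq 1$ whenever $N\ge 2$, for instance for the Airy slope $1+\frac{1}{h}$.

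Your own tame base-change computation gives the correct statement. If $\nu$ is the only upper jump, the unique lower jump is $e\nu\ge N$, so $D_1=\cdots=D_{e\nu}$ and $D_{e\nu+1}=1$. The paper's proof stops at the upper break and never justifies $D_2=1$ separately. That assertion is Gross--Reeder's for $\nu=1/h$ and is valid only for $N=1$ (the $\theta$-case). You should either restrict to $N=1$ or prove the corrected form.

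\textbf{A second, smaller gap.} This one is shared with the paper's one-line appeal to \Cref{p:slopes}(ii). Faithfulness of $\cfg$ together with ``every root character has slope $\nu$'' gives $D^u=1$ for $u>\nu$. It does not give $D^u=D_1$ for $0<u\le\nu$, because a faithful representation detects only the top jump. For example, the Artin--Schreier characters of $t^{-2}$ and $t^{-2}+t^{-1}$ both have break $2$, yet the image of their direct sum has a jump at $1$.

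What you need is that every nontrivial character $\lambda\circ\rho|_{D_1}$, for $\lambda\in\mathbb{X}^*(\cT_X)$, has break $\nu$. For $N=1$ this follows from the canonical form: over $s=t^{1/2m}$ such a character is $d+2m\pi\lambda(X)s^{-2}\frac{ds}{s}$. By \Cref{p:Ld-solvable-Frobenius}(ii) this is either trivial or has irregularity $2$, i.e.\ break $1/m$ over $F$.

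The general case is different. When $p\mid N$ and the lower terms $X_{-N+1},\dots,X_{-1}$ are nonzero, a root character can be one of Pulita's modules $L_d(\lambda)$ of order $p^2$. Its $p$-th power then has break $\nu/p$. So the single-break claim also really requires $N=1$ or an extra hypothesis.
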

\begin{proof}
Assertion (i) follows from \Cref{t:L-parameter}(iii). 
	
(ii) 
By \cite[Lemma 3.1]{GR10}, we deduce that $\rho(W_F)$ is finite. 
The image of a lift of the Frobenius $\rho(\Fr)$ normalizes the image $\rho(I_F^+)$ and therefore normalizes the centralizer $C_{\cG}(\rho(I_F^+))=\cT_X$. 
Hence $\rho(W_F)$ is also contained in $N_{\cG}(\cT_X)$. 

It follows from \Cref{p:slopes}(ii) that the only non-zero ramification break is $\nu$. 
\end{proof}

\begin{rem}
	From the above discussion,
	we can see the Weil--Deligne representation associated to
	a $p$-adic isoclinic connection of slope $\nu$
	is a \emph{toral supercuspidal parameter of generic depth $\nu$}
	\cite[Definition 6.1.1]{Kal19}.
	In the \emph{loc. cit.}, the author constructed 
	supercuspidal $L$-packets for such parameters. 
	We will later discuss examples of such correspondence
	via global geometric Langlands correspondence.
	However, it is unclear to us whether $p$-adic isoclinic connections 
	give all the toral supercuspidal parameters.
\end{rem}

\section{Frobenius structures on certain rigid connections}
\label{s:Frobenius}
	In this section, let $G$ be a split connected almost simple group over a base $k$ (or $\mathcal{O}_K$, $K$), that we will specify. 
	We fix a Borel subgroup $B\subset G$ and a maximal torus $T\subset B$. 
	Let $U\subset B$ be the unipotent radical of $B$, and $U^{\op}\subset B^{\op}$ the opposite Borel and its unipotent radical. Let $T_{\ad}\subset B_{\ad}\subset G_{\ad}$ denote the quotients of $T\subset B\subset G$ by the center $Z(G)$ of $G$.
	We denote by $(\cG,\cB,\cT)$ the Langlands dual group of $G$ over $\overline{K}$, constructed by the geometric Satake equivalence.

\subsection{Generalized Kloosterman sheaves, d'après Yun \cite{Yun16}}

	\begin{secnumber} \label{sss:parahoric}
	We work over the base $k$. 
	Let $\bP$ be a parahoric subgroup of $G(k (\!(t)\!))$. 
	Let $\bP(1), \bP(2)$ be the first and second steps in the Moy--Prasad filtration of $\bP$, and let $L_\bP\subset \bP$ be the natural lift of the Levi quotient $L_{\bP}\simeq \bP/\bP(1)$. 
	Set $V_\bP=\bP(1)/\bP(2)$, which is equipped with an action of $L_{\bP}$. 
	
	The parahoric subgroup $\bP$ is called \textit{admissible} if there exists a closed $L_\bP$-orbit on the dual space $V_{\bP}^*$ with finite stabilizers \cite[Definition 2.5]{Yun16}. 
	Such an orbit is called \textit{stable}. 
	The set of admissible parahoric subgroups $\bP$ is bijective with the set of regular elliptic numbers $m=m(\bP)$ of $G$ \cite[\S~2.6]{Yun16}.

	Let $m$ be the regular elliptic number of $G$ associated to $\bP$ (which is also a regular elliptic number of $\cG$). 
	We assume that $p$ is prime to $m$ and that $p$ is not a torsion prime of $G$. 
	In the case where $m=h$ is the Coxeter number, we don't require any assumption on $p$. 
	
	Let $\mathcal{G}$ be the group scheme on $\P1$ such that $\mathcal{G}|_{D_0}\simeq \bP^{\op}$ the parahoric subgroup opposite to $\bP$, $\mathcal{G}|_{D_{\infty}}\simeq \bP(2)$ and $\mathcal{G}|_{\Gm}\simeq G\times \Gm$, where $D_x$ is the formal disc around $x\in\{0,\infty\}$. 
	Let $\Bun_{\mathcal{G}}$ be the moduli stack of $\mathcal{G}$-torsors on $\P1$. 
	There exists an open immersion 
	\begin{equation}
		j: U(\simeq V_{\bP}) \hookrightarrow \Bun_{\mathcal{G}},
		\label{eq:immersion-j}
	\end{equation}
	such that $U$ is the non-vanishing locus of a section of a line bundle on $\Bun_{\mathcal{G}}$ \cite[Lemma 3.1]{Yun14}. 
	In particular, $j$ is open and affine. 
\end{secnumber}

\begin{secnumber}\label{sss:Kl^rig}
	We take a non-trivial additive character $\psi:\mathbb{F}_{p}\to K^{\times}$ and denote by $\pi\in K$ the element satisfying $\pi^{p-1}=-p$ associated to $\psi$. Let $\mathscr{A}_{\psi}$ be the Dwork $F$-isocrystal on $\mathbb{A}^1_k$ (\ref{sss:def-isod}). 
	In the following, we use the theory of arithmetic $\mathscr{D}$-modules \cite{Ber02,AC18} (see \cite[\S~2.1]{XZ22} for a brief summary). 

	We fix a stable function $\phi:V_\bP\to \A1$ over $k$. 
	The canonical morphism 
	\[
	j_!(\phi^+ \mathscr{A}_\psi)\to j_+(\phi^+ \mathscr{A}_\psi)
	\]
	is an isomorphism. Then we set $\mathcal{A}_\phi=j_{!+}(\phi^+ \mathscr{A}_\psi)$, which is a holonomic module on $\Bun_{\mathcal{G}}$. 
	It is irreducible and is $(\bP(1),\phi^+\mathscr{A}_\psi)$-equivariant with respect to the action of $\bP(1)$ on $\Bun_{\mathcal{G}}$ at $\infty$. 
	
	By \cite[Theorem 3.8]{Yun16}, $\mathcal{A}_\phi$ is a Hecke eigen-module. 
	We denote the Hecke eigenvalue of $\mathcal{A}_\phi$ by
	\[
	\Kl_{\cG}^{\rig}(\phi):\Rep(\cG) \to \FIsod(\mathbb{G}_{m,k}/K).
	\]
\end{secnumber}

\begin{secnumber} \label{sss:Kl-dR}
	Let $\ell$ be a prime different from $p$. 
	There is a variant of Yun's construction using algebraic $\mathscr{D}$-modules (resp. $\ell$-adic sheaves) instead of arithmetic $\mathscr{D}$-modules to produce a $\cG$-connection on $\mathbb{G}_{m,K}$ (resp. an $\ell$-adic $\cG$-local system on $\mathbb{G}_{m,k}$), 
	as all the geometric objects used in the construction have analogues over $\OK$. Namely, 
	we replace the overconvergent $F$-isocrystal $\mathscr{A}_{\psi}$ on $\mathbb{A}^1_k$ by the \textit{exponential $\mathscr{D}$-module}
	$\EE_{\lambda}=K\langle x, \partial_x \rangle/(\partial_x+\lambda)$ with a parameter $\lambda\in K$ on $\mathbb{A}^{1}_{K}$ (resp. the Artin--Schreier sheaf $\AS_{\psi}$ for a fixed additive character $\psi:\mathbb{F}_p\to \overline{\mathbb{Q}}_{\ell}^{\times}$). 
	We fix a stable function $\phi:V_{\bP}\to \mathbb{A}^1$ over $K$ (resp. over $k$). 
	By applying the above construction, we obtain a de Rham (resp. $\ell$-adic) Hecke eigenvalue: 
	\begin{displaymath} 
		\Kl_{\cG}^{\dR}(\lambda\phi): \Rep(\cG)\to \Conn(\mathbb{G}_{m,K}) \qquad \textnormal{(resp. } 
		\Kl_{\cG}^{\ell}(\phi): \Rep(\cG)\to \LocS(\mathbb{G}_{m,k})). 
	\end{displaymath}
\end{secnumber}

\subsection{Comparison between $\Kl_{\cG}^{\dR}$ and $\Kl_{\cG}^{\rig}$} \label{ss:compare-dR-rig}
In this subsection, we work with schemes over $\mathcal{O}_K$. 
We say a linear function $\phi:V_{\bP}\to \A1$ over $\mathcal{O}_K$ is \textit{stable}, if its base changes to $k$ and to $K$ are both stable. 
	We take such a function $\phi$ and we denote abusively its base change to $k$ (resp. $K$) by $\phi$. 

	\begin{secnumber} \label{sss:dagger-connection}
	Let $U \hookrightarrow X=\mathbb{P}^1$ be an open immersion of schemes over $\mathcal{O}_K$ with the complement $S\to X$ flat over $\mathcal{O}_K$. 
	Let $\XX$ (resp. $\UU$) be the $p$-adic completion of $X$ (resp $U$), $\XX^{\rig}$ (resp. $\UU^{\rig}$) the associated rigid generic fiber and $\mathcal{X}$ (resp. $\mathcal{U}$) the analytification $X_K^{\an}$ (resp. $U_K^{\an}$). 
	Then the two rigid spaces $\XX^{\rig}$ and $\mathcal{X}$ are isomorphic, and $\UU^{\rig}$ is the tube $]U_k[_{\XX}$ of $U_{k}$ in $\overline{\mathcal{X}}$. 
	In particular, $\mathcal{U}$ is a strict neighborhood of $\UU^{\rig}$ in $\XX^{\rig}$. 
	We denote by $\Conn(U_{K})$ the category of coherent $\mathscr{O}_{U_K}$-modules with an integrable connection. 

	Let $(M,\nabla)$ be a coherent $\mathscr{O}_{U_K}$-module endowed with an integrable connection (relative to $K$). 
	We denote by $(M^{\an},\nabla^{\an})$ its pullback to $\mathcal{U}$ along $\varepsilon:\mathcal{U}\to U_K$. 
	For any strict neighborhood $V$ of $\UU^{\rig}$ in $\XX^{\rig}$, we refer to (\cite{Ber96} 2.1.1) for the definition of functor $j^{\dagger}$ from the category $\Ab(V)$ of abelian sheaves on $V$ to itself. 
	We associate to $M^{\an}$ a $j^{\dagger}\mathscr{O}_{\XX^{\rig}}$-module $M^{\dagger}=j^{\dagger}(M^{\an})$, endowed with the corresponding connection. 
	In this setting, we have the following tensor functors: 
	\begin{equation} \label{eq:dagger-functor}	
		\Conn(U_K) \xrightarrow{(-)^{\dagger}} 
		\Conn(j^{\dagger}\mathscr{O}_{\XX^{\rig}})
		\hookleftarrow \Iso^{\dagger}(U_k/K) 
	\end{equation}
	where $\Iso^{\dagger}(U_k/K)$ denotes the category of overconvergent isocrystals on $U_k$ relative to $K$ and the second functor is fully faithful. 
	However, the functor $(-)^{\dagger}$ is not fully faithful. 

	We may extend the functors \eqref{eq:dagger-functor} over $\overline{K}$ by extension of scalars. 
\end{secnumber}

\begin{theorem} \label{t:compare-Kl-rig-dr}
	There exists a canonical isomorphism of $\cG$-valued $j^{\dagger}\mathscr{O}_{\XX^{\rig}}$-modules with connection
		\[
		\iota:(\Kl_{\cG}^{\dR}(-\pi \phi_K))^{\dagger}\xrightarrow{\sim} \Kl_{\cG}^{\rig}(\phi_k).
		\]
	\end{theorem}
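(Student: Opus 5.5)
The plan is to follow the strategy of \cite{XZ22} for the Kloosterman case $m=h$, replacing the Iwahori-level data by the admissible parahoric $\bP$ of \ref{sss:parahoric}. The point is that both Hecke eigenvalues come from one geometric construction defined over $\mathcal{O}_K$. The Hecke transform at a point $x\in\Gm$ is a correspondence $\Bun_{\mathcal{G}}\leftarrow \Hk\rightarrow \Bun_{\mathcal{G}}\times\Gm$, and both eigenvalues are obtained from it. Restricting to the open affine locus $j:U\simeq V_{\bP}\hookrightarrow\Bun_{\mathcal{G}}$ gives an explicit formula. For $V\in\Rep(\cG)$, the value $\Kl_{\cG}(V)$ is computed as a direct image
\[
\Kl_{\cG}(V)\simeq \pr_{2,*}\bigl(\pr_1^{*}(\phi^{*}\mathcal{L})\otimes \mathrm{IC}_V\bigr),
\]
restricted to a point of $U$. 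Here $\pr_1,\pr_2$ are the projections from the open part of the Beilinson--Drinfeld Grassmannian $\Gr_{\Gm}$ lying over $U$, $\mathrm{IC}_V$ is the geometric Satake sheaf, and $\mathcal{L}$ is $\EE_{-\pi}$, $\mathscr{A}_\psi$ or $\AS_\psi$ according to the setting. All the spaces involved, namely $U$, the Hecke stack, the Schubert varieties $\Gr_{\le\mu}$ and the morphism $\phi$, have smooth or at least flat models over $\mathcal{O}_K$. This uses that $\phi$ is stable over both $k$ and $K$ and that $p$ is not a torsion prime.

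The key comparison is that $(\EE_{-\pi})^{\dagger}\simeq\mathscr{A}_\psi$ on $\A1$, which is immediate from the definitions in \ref{sss:def-isod}. I would then show that $(-)^{\dagger}$ commutes with the direct images used above, and with the middle extensions via the isomorphism $j_!\simeq j_+$. This should be done in the form of a comparison between algebraic de Rham cohomology with coefficients in $\EE_{-\pi}$ and rigid cohomology with coefficients in $\mathscr{A}_\psi$, along the smooth, relatively compactifiable family over $\Gm$ given by the Hecke correspondence. Here I would cite the comparison theorem for twisted de Rham/rigid cohomology used in \cite[\S 3]{XZ22}. This rests on Baldassarri--Berthelot style results (relative version, e.g.\ via Dwork--Robba/Adolphson--Sperber). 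It requires that the exponential twist $\phi\circ(\cdot)$ be nondegenerate at infinity uniformly over $\mathcal{O}_K$, meaning there are no vanishing cycles at infinity in the special or generic fibre.

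Next, $\Kl^{\dR}_{\cG}(V)^{\dagger}\simeq\Kl^{\rig}_{\cG}(V)$ must hold functorially in $V$ and compatibly with tensor structures, so that it defines an isomorphism of $\cG$-valued objects. I would get this as in \cite{XZ22}: the fusion/convolution isomorphisms defining the tensor structure are induced by the same geometric correspondences in both settings, so the comparison maps respect them. Canonicity then follows because everything is built from base change and canonical comparison maps. Alternatively, one can argue via \Cref{c:unique-Frobenius}-type rigidity, using that the centralizer of the monodromy is finite.

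The main obstacle I expect is the second step: controlling the cohomological comparison at the boundary. The issue is that the specialization from $K$ to $k$ of the exponentially twisted cohomology must have no jumps, so that the $\dagger$-functor, which is not fully faithful and not a priori exact on direct images, commutes with the relevant pushforwards. I would first try to prove the required cleanness, $\pr_!=\pr_*$ for the twisted sheaf on the Hecke correspondence. This would be deduced in both fibres from the stability of $\phi$, as in \cite[Lemma 3.1]{Yun14} and \cite[Theorem 3.8]{Yun16}. Given cleanness, the comparison reduces to a comparison of compactly supported cohomologies on a proper-over-$\mathcal{O}_K$ model, where the standard GAGA-type comparison applies.
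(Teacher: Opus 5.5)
Your top-level plan is the paper's. Its proof simply reruns \cite[\S~4.2]{XZ22} with $\bI$ replaced by $\bP$, and the only new input it needs is that $j:V_{\bP}\hookrightarrow\Bun_{\mathcal{G}}$ is the non-vanishing locus of a section of a line bundle \cite[Lemma 3.1]{Yun14}. The way you propose to carry this out, however, has two gaps.

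First, you treat every $V\in\Rep(\cG)$ at once through $\mathrm{IC}_V$ on a ``smooth, relatively compactifiable family''. That family is smooth only for minuscule $V$. For general $\mu$ the Schubert variety $\Gr_{\le\mu}$ is singular, so the relative specialization map \eqref{eq:relative-specialization}, which requires a smooth source with a good compactification, is not available. You would also need the algebraic and arithmetic intersection complexes to be associated, and you never address this. Your fusion argument for the tensor structure presupposes the comparison for $V_1\otimes V_2$, so it inherits the same problem. The argument of \cite{XZ22}, spelled out in this paper for the Airy case (\Cref{t:compare-Ai-rig-dr}), avoids all of this. It proves the comparison only for minuscule $V$, where $\Gr_{\mu}$ is smooth projective and the coefficients are constant. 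It then handles quasi-minuscule $V$ by restricting to the smooth stratum $\GR^{\circ\circ}_{\mathcal{G},V}$ and showing that fibrewise specialization induces an isomorphism on $\rH^0$ of the intermediate extensions. Finally it extends to all of $\Rep(\cG)$, which these representations generate as a tensor category.

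Second, the step ``given cleanness, compare compactly supported cohomology on a proper model by a GAGA-type argument'' does not exist. GAGA compares coherent cohomology on a single proper $K$-variety. Rigid cohomology of $\phi^{*}\mathscr{A}_{\psi}$, by contrast, lives on strict neighbourhoods of the tube of the special fibre and sees the irregular boundary behaviour of $\phi^{*}\EE_{-\pi}$ only through overconvergence. So the comparison is genuinely $p$-adic and depends on the choice $\lambda=-\pi$. For instance, for $0<|\lambda|<|\pi|$ the object $(\EE_{\lambda})^{\dagger}$ is the trivial isocrystal on $\mathbb{A}^1_k$, although $\rH^{*}_{\dR}(\mathbb{A}^1_K,\EE_{\lambda})=0$.

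What replaces your step is the relative specialization morphism $(\rR^i f_{K,+}M_V)^{\dagger}\to\rR^i f_{k,+}\mathscr{M}_V$ for $f:\GR^{\circ}_{\mathcal{G},V}\to\Gm$, with $M_V,\mathscr{M}_V$ as in the proof of \Cref{t:compare-Ai-rig-dr}. It is built from $\mathscr{D}^{\dagger}_{\overline{\XX},\mathbb{Q}}(Z_k)$-modules, and one must then show it is an isomorphism. This is where the line-bundle property is used, not just for cleanness. It presents $\GR^{\circ}_{\mathcal{G},V}$ as a divisor complement in $\GR_{\mathcal{G},V}$ and makes $f$ affine. Together with $j_!\simeq j_+$, this puts the relative cohomology in a single degree on both sides, so it can be compared fibrewise. ``Nondegeneracy at infinity'' of the twist is not the relevant hypothesis, and you give no way to check it.

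Finally, your fallback for canonicity via \Cref{c:unique-Frobenius} is circular here. Knowing that the connection at $\infty$ is $p$-adic isoclinic (\Cref{t:p-adic-theta}) already uses the Frobenius structure that this theorem provides. Moreover, a finite centralizer only gives finitely many isomorphisms, not a canonical one.
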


	One can prove the above theorem using the same argument as in \cite[\S~4.2]{XZ22}, where the Iwahoric case $\bP=\bI$ is proved. 
	In the proof of \textit{loc. cit}, a crucial ingredient is that the open embedding $j:V_{\bI}\to \Bun_{\mathcal{G}}$ is  the non-vanishing locus of a section of a line bundle on $\Bun_{\mathcal{G}}$. 
	The same geometric property holds in our parahoric setting.

\subsection{Comparison between $\Kl_{\cG}^{\dR}$ and theta connection}

	In this subsection, we work with schemes over $K$. 

\begin{secnumber}\label{sss:theta-conn}
	Let $\theta$ be an inner stable grading of $\cfg$ over $K$ corresponding to the regular elliptic number $m$ via \cite[Corollary 14]{RLYG12}. 
	More precisely, we may assume $\theta$ is given by the adjoint action of $\exp(x)$, with $x\in \mathbb{X}_*(\cT)\otimes_{\mathbb{Z}}\mathbb{R}$, such that $\check{\lambda}=mx\in\mathbb{X}_*(\cT) $. 
	We may moreover assume that $x$ lies in the closure of the fundamental alcove. 
	We denote the $\bZ$-grading defined by $\check{\lambda}$ on $\cfg$ by:
\begin{equation}
	\cfg=\bigoplus_l \cfg(l).
	\label{eq:grading-cg}
\end{equation}
Let $\cfg_1\subset \cfg$ be the degree one subspace with respect to $\theta$. 
Then we have:
\begin{equation}
	\cfg_1=\bigoplus_{l\equiv 1 \mod m} \cfg(l).
	\label{eq:decomposition-cg1}
\end{equation}

Let $X\in \cfg_1$ be an element. We write $X=\sum_l X_l$, with $X_l\in \cfg(l)$. 
Let $\beta$ be the highest root, $\alpha_0$ the affine root $1-\beta$. 
The (normalized) Kac coordinate $s_0:=\alpha_0(x)$ equals $0$ or $1$. 
By \cite[Lemma 2.1]{Chen17}, those $l$ with $X_l\neq 0$ satisfy $s_0-m\le l\le m$ and $m| l-1$. 

Following Yun and Chen \cite{Chen17}, the $\theta$-connection associated to $X$ is the $\cG$-connection on $\Gm$, defined by
\begin{equation} \label{eq:theta-connection}
	\Theta(X):= d+ \biggl(\sum_l X_l x^{\frac{1-l}{m}}\biggr) \frac{dx}{x}= d+ \biggl( X_1+X_{1-m} x\biggr)\frac{dx}{x}.
\end{equation}

Let $\cG_0 \subset \cG$ be the reductive subgroup fixed by $\theta$. 
Let $\cfg^{\st}$ be the locus of stable vectors, i.e. those whose $\cG_0$-conjugacy class is closed with finite stabilizer. 
On the other hand, we denote by $V_\bP^{*,\st}$ the locus of stable vectors under the action of $L_{\bP}$ (\ref{sss:parahoric}). 
\end{secnumber}
\begin{theorem}[\cite{CY24} Lemma 3, Theorem 5] \label{t:compare-Kl-theta-connection}
	Assume that the Kac coordinates of the associated point $x$ satisfies $s_0=1$, and the same is true for the order $m$ stable inner grading of $\fg$. 
	
	\textnormal{(i)} The natural isomorphism $\fg^*/\!\!/G\simeq \cfg/\!\!/\cG$ induces an isomorphism 
	\begin{equation}
		V_{\bP}^*/\!\!/L_{\bP} \xrightarrow{\sim} \cfg_1/\!\!/ \cG_0,
		\label{eq:isomorphism-VP-cg1}
	\end{equation}
	that restricts to a bijection between stable conjugacy classes
	\begin{equation}
		V_{\bP}^{*,\st}/L_{\bP}\xrightarrow{\sim} \cfg_1^{\st}/\cG_0.
		\label{eq:isomorphism-VP-cg1-st}
	\end{equation}

	\textnormal{(ii)} 
	Let $\phi\in V_{\bP}^{*,\st}$ be a stable linear function, $X$ a representative of the stable vector orbit
	given by the image of $\phi$ under \eqref{eq:isomorphism-VP-cg1-st}. 
	Then there exists an isomorphism of $\cG$-connections:
	\begin{equation}
		\Kl^{\dR}_{\cG}(\phi) \xrightarrow{\sim} \Theta(X).
	\label{eq:comparison-dR}
	\end{equation}
\end{theorem}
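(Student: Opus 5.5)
This theorem is quoted from \cite{CY24}, so the plan is to reconstruct its two-step argument: an invariant-theoretic comparison of the automorphic and spectral sides, followed by a comparison of connections.

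\textbf{Part (i).} We would realize both sides of \eqref{eq:isomorphism-VP-cg1} inside the Chevalley bases. On the automorphic side, $V_{\bP}=\bP(1)/\bP(2)$ is the degree-one piece of the Moy--Prasad grading attached to the barycentric point of the facet of $\bP$. Vinberg theory then identifies $V_{\bP}^*/\!\!/L_{\bP}$ with the quotient of a Cartan subspace by its little Weyl group, namely the centralizer $\bW_{\sigma}$ of a regular elliptic element $\sigma$ of order $m$ \cite{RLYG12}. The same description holds for $\cfg_1/\!\!/\cG_0$ with the same $\bW_\sigma$, because $\fg$ and $\cfg$ share the Weyl group. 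The hypothesis $s_0=1$ on both sides ensures that the affine-root gradings on $\fg(\!(t)\!)$ and on $\cfg$ are the same grading of the same type, so the restriction maps $\fg^*/\!\!/G\to V_{\bP}^*/\!\!/L_\bP$ and $\cfg/\!\!/\cG\to \cfg_1/\!\!/\cG_0$ are both surjective. We would then check, via the Chevalley restriction theorem for $\theta$-groups, that $\fg^*/\!\!/G\simeq\cfg/\!\!/\cG$ is compatible with these restrictions. Since stability can be read off from the invariant map (regular semisimplicity of the image), the bijection of stable orbits \eqref{eq:isomorphism-VP-cg1-st} follows.

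\textbf{Part (ii).} First we would show that $\Kl^{\dR}_{\cG}(\phi)$ is an isoclinic formal connection at $\infty$ of slope $1/m$. Its leading term is computed from the $\bP(1)$-equivariance: the Hecke eigenvalue near $\infty$ is governed by the stable functional $\phi$, and its image under \eqref{eq:isomorphism-VP-cg1} is the class of $X$. At $0$ the monodromy is unipotent, regular singular with trivial residue, because $\mathcal{G}|_{D_0}=\bP^{\op}$. Next we compute $\Theta(X)$ directly. At $\infty$, with $s=x^{-1/m}$ and a gauge by $\check\lambda$, it becomes $d+X s^{-1}\,ds/s$ up to holomorphic terms, so it is isoclinic with leading term $X$; at $0$ it has residue $X_1$, which is nilpotent. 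Both objects are then irreducible cohomologically rigid $\cG$-connections with the same local data. We would conclude using the physical rigidity of connections with this local type (Chen--Yi \cite{Yi24,CY24}). Alternatively, we would identify $\Kl^{\dR}$ with an oper via Zhu's description of Hecke eigenvalues of opers with this formal type, and match the oper form to $\Theta(X)$ by gauge.

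\textbf{Main obstacle.} The hard step is pinning down the formal type of $\Kl^{\dR}_\cG(\phi)$ at $\infty$ precisely enough, including the full leading term and the subleading coefficients, for rigidity to apply. The natural approach is to compute $\Kl^{\dR}$ for a universal family over $V_{\bP}^{*,\st}$ and show that the leading term varies algebraically and $L_\bP$-equivariantly. Density of stable vectors together with part (i) then forces it to equal $X$ up to $\cG_0$-conjugacy.
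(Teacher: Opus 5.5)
Your sketch of part (i) is acceptable; the paper simply quotes \cite[Lemma 3]{CY24}. Note that what you need is surjectivity of restriction of invariants $K[\fg^*]^G\to K[V_\bP^*]^{L_\bP}$, so that both quotients embed as the same closed subscheme of $\fg^*/\!\!/G\simeq\cfg/\!\!/\cG$. As written, your arrows on the quotients point the wrong way.

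\textbf{The gap is in part (ii).} Your main route treats the formal types of $\Kl^{\dR}_{\cG}(\phi)$ at $\infty$ and at $0$ as \emph{inputs}, but you never supply them. There is no mechanism for reading the formal type of a Hecke eigenvalue at $\infty$ off the $(\bP(1),\phi)$-equivariance of $\mathcal{A}_\phi$. In this paper the logic runs the other way: the local monodromy in \Cref{c:Swan-conductor} is deduced \emph{from} $\Kl^{\dR}_{\cG}\simeq\Theta(X)$.

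Your proposed fix does not close the gap. Even granting that $\Kl^{\dR}$ is isoclinic with a leading term varying algebraically and $L_\bP$-equivariantly in $\phi$, you only obtain \emph{some} morphism $V_\bP^{*,\st}/L_\bP\to\cfg_1/\!\!/\cG_0$. Equivariance and density of stable vectors cannot single out the particular isomorphism \eqref{eq:isomorphism-VP-cg1}: composing with a graded automorphism of the quotient gives another such map. There is also a problem at $0$. The residue there is nilpotent, not trivial, and to invoke physical rigidity you would have to match its nilpotent orbit with that of $X_1$, or quote a rigidity statement that needs only unipotence.

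\textbf{The missing idea is the Galois-to-automorphic direction}, which the paper uses via \cite[Theorem 5]{CY24}. The paper reduces to $\overline{K}$ and first takes $G$ simply connected. In outline, one then starts on the spectral side from $\Theta(X)$, which is an oper with the relevant singularities, and produces a Hecke eigen-$\mathscr{D}$-module with eigenvalue $\Theta(X)$. One then recognizes this module as $\mathcal{A}_\phi$ by checking its $(\bP(1),\phi)$-equivariance. This is exactly where the matching of invariant polynomials in (i) is used. Because the relevant orbit is unique, such equivariant objects on $\Bun_{\mathcal{G}}$ are determined, so the two eigenvalues agree. No a priori computation of the formal type of $\Kl^{\dR}$ is needed.

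Your ``alternative'' via Zhu points in this direction, but it is stated backwards: one does not identify $\Kl^{\dR}$ with an oper from the automorphic side.

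Finally, you omit the passage from simply connected $G$ (adjoint $\cG$, where the Chen--Yi results are proved) to a general almost simple $G$. The paper handles this by the uniqueness of a $\cG$-local system lifting $\Kl^{\dR}_{\cG_{\ad}}$ with unipotent monodromy at $0$, as in \cite[\S~4.3.6]{XZ22}.
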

\begin{proof}
	By descent, it suffices to prove the theorem after base change from $K$ to $\overline{K}$. 

	Assertion (i) is proved in \cite[Lemma 3]{CY24}. 

	(ii) We first assume $G$ is simply connected. 
	Since the geometric objects in the construction of $\Kl^{\dR}_{\cG}(\phi)$ are defined over $K$, it suffices to prove the assertion after base change from $K$ to $\overline{K}$ by descent. 
	Then we prove the assertion by applying the Galois-to-automorphic direction of geometric Langlands correspondence as in \cite[Theorem 5]{CY24}. 

	We can extend this result to allow $G$ to be a general almost simple group as in \cite[\S~4.3.6]{XZ22}. 
	By the same argument in \textit{loc.cit}, up to isomorphisms, there exists a unique de Rham $\cG$-local system on $\mathbb{G}_{m,K}$, which induces $\Kl^{\dR}_{\cG_{\ad}}$, and has unipotent monodromy at $0$. 
	Then we deduce that $\Kl_{\cG}^{\dR}(\phi)$ and $\Theta(X)$ are isomorphic. 
\end{proof}

\subsection{Frobenius structure on $\theta$-connections} \label{ss:Frob-theta-connection}
In this subsection, we construct the Frobenius structure on the theta connection, by putting the above ingredients together. 
We keep the notation of \ref{ss:compare-dR-rig}. 

\begin{secnumber}
	We take a non-trivial additive character $\psi:\mathbb{F}_p\to K^{\times}$ and a stable linear function $\phi:V_{\bP}\to \mathbb{A}^1$ over $\mathcal{O}_K$ as in \ref{ss:compare-dR-rig}.
	We set $\lambda=-\pi$ corresponding to $\psi$ (\ref{sss:def-isod}). 
	Let $X_{-\pi}\in \cfg_1$ (resp. $X\in \cfg_1$) match $-\pi\phi_K$ (resp. $\phi_K$) under the isomorphism \eqref{eq:isomorphism-VP-cg1}. 
	Suppose $\Theta(X)$ is given as in \eqref{eq:theta-connection}. 
	By considering the values of invariant polynomials on $X_{-\pi}$ and $-\pi\phi_K$, we deduce that:
	\begin{equation}
		\Kl_{\cG}^{\dR}(-\pi\phi_K)\simeq \Theta(X_{-\pi})\simeq d+\biggl(X_1+(-\pi)^mX_{1-m}x\biggr)\frac{dx}{x}. 
		\label{eq:theta-connection-Frob}
	\end{equation}

	Let $\Theta^{\dagger}(X_{-\pi})$ be the composition of $\Theta(X_{-\pi}):\Rep(\cG)\to \Conn(X_K)$ with the $(-)^{\dagger}$ functor \eqref{eq:dagger-functor}. 
	By \Cref{t:compare-Kl-rig-dr},  a choice of the above isomorphism endows $\Theta^{\dagger}(X_{-\pi})$ with a Frobenius structure $\varphi$, i.e. a lifting of $\Theta^{\dagger}(X_{-\pi})$ as a functor $\Rep(\cG)\to \Isod(\mathbb{G}_{m,k}/K)$ together with an isomorphism of tensor functors:
	\[
	\varphi:F_{X_k}^*\circ \Theta^{\dagger}(X_{-\pi})\xrightarrow{\sim} \Theta^{\dagger}(X_{-\pi}) : \Rep(\cG)\to \Isod(\mathbb{G}_{m,k}/K),
	\]
	 where $F^{*}_{X_k}:\Isod(X_{k}/K)\to \Isod(X_{k}/K)$ denotes the $s$-th Frobenius pullback functor. 
\end{secnumber}

Finally, we end with the following lemma. 

\begin{lemma} \label{l:eigenvalues-X}
	Keep the above assumption. 
	Let $\Sigma$ be the set of eigenvalues of the adjoint action of $X$ on $\cfg$. 

	\textnormal{(i)} Each element of $\Sigma$ is either zero or a $p$-adic unit. 

	\textnormal{(ii)} There is a natural $\mu_m$-action on $\Sigma$. 
\end{lemma}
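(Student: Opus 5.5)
The plan is to reduce both assertions to properties of the invariant polynomials of $X$, using the identification \eqref{eq:isomorphism-VP-cg1} together with the $\mathbb{G}_m$-equivariance coming from the grading.

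For (ii), I would use the inner automorphism $\theta=\Ad_{\check{\lambda}(\zeta_m)}$, which acts on $\cfg_1$ by the scalar $\zeta_m$. Hence $\theta(X)=\zeta_m X$. Since $\theta$ is an automorphism of $\cfg$, we get
\[
\Ad_{\zeta_m X}=\theta\circ\Ad_X\circ\theta^{-1}.
\]
So $\zeta_m\Sigma=\Sigma$ as multisets, and $\zeta\cdot\sigma$ for $\zeta\in\mu_m$ defines the natural $\mu_m$-action. This step is formal.

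For (i), the idea is to show that the characteristic polynomial
\[
\chi(T)=\det(T-\Ad_X\mid\cfg)
\]
has coefficients in $\mathcal{O}_K$, and that its nonzero roots reduce to nonzero elements mod $\mathfrak{m}$. The coefficients of $\chi$ are $\cG$-invariant polynomials on $\cfg$, so they are determined by the image of $X$ in $\cfg/\!\!/\cG\simeq\fg^*/\!\!/G$. Under \eqref{eq:isomorphism-VP-cg1} this image is the image of $\phi_K$. Since $\phi$ is defined over $\mathcal{O}_K$, and the Chevalley isomorphism and the map $V_{\bP}^*/\!\!/L_{\bP}\to\fg^*/\!\!/G$ are defined integrally (here I use that $p$ is good and not a torsion prime, so $\mathcal{O}_K$-models of the invariant theory behave well), these invariants lie in $\mathcal{O}_K$. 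Thus every element of $\Sigma$ is integral over $\mathcal{O}_K$.

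Next I would use the $\mu_m$-symmetry and homogeneity. Since the nonzero eigenvalues come in $\mu_m$-orbits, $\chi(T)=T^{r}\,Q(T^m)$ up to the degree count. Because $X$ is regular semisimple, exactly $r$ eigenvalues vanish and $Q(0)\neq0$. It then suffices to show that $Q(0)$, which is up to sign $\prod_{\alpha\in\Phi}\alpha(X)$ (a $\cG$-invariant polynomial, essentially the discriminant), is a $p$-adic unit. Given this, all roots of $Q(T^m)$ are integral and their product is a unit, so each is a unit.

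The main obstacle is proving that the discriminant is a unit. For this I would use that $\phi$ is stable over $k$, so that the reduction $\phi_k$ is stable. By the characteristic-$p$ version of \eqref{eq:isomorphism-VP-cg1-st}, applied to the reduction of the invariants (compatible with specialisation since everything is defined over $\mathcal{O}_K$), the corresponding element $\bar X\in\cfg_1(\bar k)$ is stable. Stability forces $\bar X$ to be regular semisimple: a stable vector in a $\theta$-graded piece is regular semisimple, and this is the standard Vinberg-theory fact in good characteristic prime to $m$. Hence the discriminant of $\bar X$, which is the reduction of $\prod_{\alpha}\alpha(X)$, is nonzero. If the integral compatibility of \eqref{eq:isomorphism-VP-cg1} is unavailable, I would instead compute the discriminant directly as a power of a single invariant, namely the degree-$m$ invariant evaluated on $X_{-\pi}$ scaled by $\lambda^m$ (the stable locus being the complement of a hypersurface in the weighted affine quotient). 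I would then check that it is a unit for the integral stable $\phi$.
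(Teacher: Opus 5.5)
Your proposal is correct and is essentially the paper's argument. Part (ii) is the same observation $\theta(X)=\zeta_m X$. For (i), the paper likewise transports the question to $\phi_K$ via the identification of invariant quotients, then uses stability of $\phi_k$ to see that nonzero eigenvalues stay nonzero modulo $\mathfrak{m}_K$; your characteristic-polynomial and discriminant bookkeeping is a more explicit rendering of that step.

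The one unnecessary detour is producing a stable $\bar X\in\cfg_1(\bar k)$ through a characteristic-$p$ version of \eqref{eq:isomorphism-VP-cg1-st}, which the paper does not establish. Since the discriminant is an invariant polynomial (the product of the coroots of $G$ on $\mathfrak{t}^*$), you can evaluate it directly at $\phi_k$, where it is nonzero because the stable $\phi_k$ is regular semisimple.
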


\begin{proof}
	(i) Since $\phi_K$ and $X$ are identified via the isomorphism $\mathfrak{t}^*/\!\!/ W \simeq \check{\mathfrak{t}}/\!\!/ W$, it suffices to prove the assertion for $\phi_K$. 
	Since both $\phi_k$ and $\phi_K$ are stable linear functions, 
	the reduction module $\mathfrak{m}_K$ of any non-zero eigenvalue of $\phi_K$ is still a non-zero eigenvalue of $\phi_k$. 
	Then the lemma follows. 

	(ii) Recall that $\theta$ denotes the automorphism of $\cfg$ defining the grading on $\cfg$.
	Then we have $\theta(X)=\zeta_m X$. 
	Let $\cft_X$ be the centralizer of $X$, which is a Cartan subalgebra of $\cfg$ and $\Phi=\Phi(\cfg,\cft_X)$ the set of roots with respect to $\cft_X$. 
	The action of $\theta$ preserves $\cft_X$ and induces an action on $\Phi$ by $\theta\cdot \alpha(Y)=\alpha(\theta Y)$ for $Y\in \cft_X$ and $\alpha\in \Phi$. 
	In particular, we have $\theta\cdot \alpha(X)=\zeta_m \alpha(X)$. 
	Then the assertion follows. 
\end{proof}

\subsection{Airy local systems} \label{ss:Ai-rig}
We consider another family of rigid local systems constructed in \cite{JKY}, generalizing the classical Airy equations.
Its de Rham variant is isoclinic at its unique singularity.

\begin{secnumber}\label{sss:Ai-rig}
	We recall the construction of \emph{Airy automorphic datum} in \cite[\S2.3]{JKY}. 
	Assume that $p$ is strictly larger than the Coxeter number $h$. 
	Let $\bP=\bI\subset G(k(\!(t)\!))$ be the Iwahori subgroup
	with Moy-Prasad filtration subgroups $\bI(r)$, $r\in\mathbb{N}$.
	Let $N\in\fg$ be a sum of basis of negative simple root subspaces
	and $E$ a basis of highest root subspace.
	Let $S=C_{G(k(\!(t)\!))}(tN+E)$ be an elliptic torus.
	Denote $S(r)=S\cap \bI(r)$.
	
	When the Coxeter number $h$ is even, we let $J=S(1)\bI(1+h/2)$.
	When $h$ is odd, which happens only when $G$ is of type $A_{2n}$,
	let $P$ be the parahoric subgroup defined by $\check{\rho}/2n$ (\ref{sss:formal-isoclinic})
	and let $J=S(1)P(1+n)$.
	In both cases, we have $\bI(1+h)\subset J\subset \bI(1)$.
	
	Let $\phi\in V_{\bI}^*=(\bI(1)/\bI(2))^*$ be the linear form induced from $tN+E\in\fg(k(\!(t)\!))$
	via the Killing form and the restriction to $\mathrm{Lie}(\bI(1))$.
	It is stable in the sense of \S~\ref{sss:parahoric}
	and induces a character 
	\[\bI(1+h)\rightarrow \bI(1+h)/\bI(2+h)\simeq I(1)/I(2)\rightarrow k.\]
	It trivially extends to a character on $\bI(1+h/2)$ (resp. $P(1+n)$) 
	when $h$ is even (resp. odd).
	Let $\chi$ be any extension of $\phi$ to $J$:
	\begin{equation}
		\chi: J\to \mathbb{A}^1.
		\label{eq:chi}
	\end{equation}
	
	Let $\mathcal{G}$ be the group scheme on $\mathbb{P}^1$ such that 
	$\mathcal{G}\mid_{\mathbb{A}^1}\simeq G\times\mathbb{A}^1$
	and $\mathcal{G}|_{D_\infty}\simeq I(2+h)$. 
	Let $\Bun_{\mathcal{G}}$ be the moduli stack of $\mathcal{G}$-bundles on $\mathbb{P}^1$. 
	Denote $L^-G=G[t^{-1}]$, 
	then we have 
	\[\Bun_{\mathcal{G}}\simeq L^-G\backslash G(\!(t)\!)/I(2+h).\]
	Let $j:O:=L^-G\backslash L^-G J/I(2+h)\hookrightarrow\Bun_{\mathcal{G}}$ be the relevant orbit \cite[Proposition 19]{JKY}.
	Set $\mathcal{A}_\chi=j_{!+}(\chi^+\mathscr{A}_\psi)$ for the Dwork $F$-isocrystal (resp. the Artin--Schreier sheaf) $\mathscr{A}_{\psi}$.
	By \cite[Lemma 21, Theorem 26]{JKY}, 
	$\mathcal{A}_\chi$ is a holonomic module that is a Hecke eigen-module. 
	We denote the associated Hecke eigenvalue by
	\[
	\Ai_{\cG}^{\rig}(\chi):\Rep(\cG) \to \FIsod(\mathbb{A}^1_k/K), \qquad \textnormal{(resp. } \Ai_{\cG}^{\ell}(\chi):\Rep(\cG) \to \LocS(\mathbb{A}^1_k)).
	\]
\end{secnumber}

\begin{secnumber} \label{sss:Ai-dR}
	Similarly to the generalized Kloosterman case,
	we can take a variant of the above construction 
	using algebraic $\mathscr{D}$-modules instead of arithmetic $\mathscr{D}$-modules to produce a $\cG$-connection on $\mathbb{G}_{m,K}$ 
	by replacing $\mathscr{A}_\psi$ with
	$\EE_{\lambda}=K\langle x, \partial_x \rangle/(\partial_x+\lambda)$ with a parameter $\lambda\in K-\{0\}$ on $\mathbb{A}^{1}_{K}$.
	In the definition of $\chi$, we replace $k$ with $K$. 
	We obtain a (de Rham) Hecke eigenvalue:
	\begin{displaymath} 
		\Ai_{\cG}^{\dR}(\lambda\chi): \Rep(\cG)\to \Conn(\mathbb{A}^1_K). 
	\end{displaymath}
	
	Let $p_{-1},\{p_i\}_{i=1}^r$ be as in the proof of Lemma \ref{l:isoclinic gauge},
	where $r$ is the rank of $\cG$, $d_i$'s are the fundamental degrees.
\end{secnumber}
	
	\begin{theorem}\label{t:Ai connection}
		The $\cG$-connection 
		$\Ai^{\dR}_{\cG}(\lambda\chi)$ 
		is isomorphic to a following form:
		\begin{equation} \label{eq:Ai-conn}
			\Ai(\underline{a},\lambda)=d+(p_{-1}+a\lambda^h x p_r+\sum_{i=1}^{r-1} a_i \lambda^{d_i} p_i)dx,\quad a\in K^\times,~ a_i\in K,~ 1\le i\le r-1.
	\end{equation}
		Its restriction $\Ai_{\cG}^{\dR}(\lambda\chi)\mid_{D_\infty^\times}$ at infinity is an isoclinic formal $\cG$-connection of slope $1+\frac{1}{h}$ \eqref{sss:formal-isoclinic}. 
		Moreover, the leading term $X=p_{-1}+a\lambda^h p_r$ of its canonical form 
		corresponds to 
		$\phi=\chi\mid_{I(1+h)/I(2+h)}\in(\bI(1+h)/\bI(2+h))^*\simeq V_{\bI}^*$
		under the correspondence (i) in Theorem \ref{t:compare-Kl-theta-connection}
		for $\bP=\bI$.
	\end{theorem}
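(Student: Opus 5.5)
The plan is to follow the strategy of \Cref{t:compare-Kl-theta-connection}(ii): identify $\Ai^{\dR}_{\cG}(\lambda\chi)$ through the Galois-to-automorphic direction of the geometric Langlands correspondence, using that the Airy automorphic datum is rigid. By descent we work over $\overline{K}$. Assume first that $G$ is simply connected, so that $\cG$ is adjoint and $\check{\rho}$ is a cocharacter (\Cref{r:gauge by m}).

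\textbf{Step 1: the shape of the connection.} Consider the family of connections $\Ai(\underline{a},\lambda)$ in \eqref{eq:Ai-conn}. Assign $x$ degree $h+1$ and each $p_i$ its $\check{\rho}$-weight. The gauge transformation by $x^{\check{\rho}}$, together with the substitution $s=x^{1/h}$ at $\infty$, turns the polar part into a principal-grading expression. Its leading term is $X=p_{-1}+a\lambda^h p_r$. This element is regular semisimple by Kostant, since $p_{-1}+cp_r$ with $c\neq 0$ is a regular semisimple element in the Coxeter (principal) grading. Hence $\Ai(\underline{a},\lambda)|_{D_\infty^\times}$ is formally isoclinic of slope $1+\frac{1}{h}$, by the reduction in the proof of \Cref{l:isoclinic gauge} (\cite[Proposition 23]{Yi25}), read in reverse. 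The connection is regular on $\A1$, and by the cohomological-rigidity computation (Euler characteristic) it is rigid. I would cite \cite{Yi25} and \cite{KS} for these last two facts.

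\textbf{Step 2: the Hecke eigensheaf.} Next, show that the automorphic $\mathscr{D}$-module attached to $\Ai(\underline{a},\lambda)$ is $(J,\lambda\chi)$-equivariant on $\Bun_{\mathcal{G}}$. Its local behaviour at $\infty$ must match the character $\chi$ of $J$, and here the compatibility between the formal type and the datum $(S(1)\bI(1+h/2),\chi)$ (respectively $P(1+n)$ when $h$ is odd) enters. Once this holds, uniqueness of the $(J,\chi)$-equivariant Hecke eigensheaf \cite[Proposition 19]{JKY} forces the eigenvalue to equal $\Ai^{\dR}_{\cG}(\lambda\chi)$, for a suitable choice of $\underline{a}$.

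This is the main obstacle. I would first try to avoid it by working directly on the Galois side: the Airy connection is physically rigid \cite{Yi25}, so it suffices to show that $\Ai^{\dR}_{\cG}(\lambda\chi)$ is regular on $\A1$ and has the same formal type at $\infty$. Its formal type at $\infty$ can be computed from the character $\chi$ restricted to $S(1)$, via the Moy--Prasad/local geometric Langlands dictionary for isoclinic connections \cite{JY23}. That computation gives an isoclinic formal connection whose leading term matches $\phi$.

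\textbf{Step 3: matching invariants.} The leading term $X$ and $\phi$ correspond under $\fg^*/\!\!/G\simeq\cfg/\!\!/\cG$. To check this, compare the values of the fundamental invariants: on the automorphic side they are read off from $tN+E$, and on the Galois side from $p_{-1}+a\lambda^h p_r$. Only the degree-$h$ invariant is nonzero on either side, so the two match once $a$ is chosen appropriately.

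\textbf{Step 4: general $G$.} Finally, pass to general almost simple $G$ as in \cite[\S~4.3.6]{XZ22}. There is a unique lift of $\Ai^{\dR}_{\cG_{\ad}}$ to a $\cG$-connection on $\A1$, because $\A1$ has no nontrivial central torsors with connection.
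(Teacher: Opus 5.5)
\textbf{There is a genuine gap: your Step 2, which carries the whole content of the theorem, is never carried out.}

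Some of your proposal is fine. Your Step 1 correctly establishes properties of the explicit family $\Ai(\underline{a},\lambda)$: the element $p_{-1}+cp_r$ is regular semisimple, and the connection is isoclinic at $\infty$ of slope $1+\frac{1}{h}$. Your Step 4 is essentially the paper's own argument for passing from adjoint $\cG$ to general $\cG$: two lifts differ by a $\check{Z}$-connection on $\mathbb{A}^1$, which is trivial. However, nothing in your proposal shows that the Hecke eigenvalue $\Ai^{\dR}_{\cG}(\lambda\chi)$ of the $(J,\lambda\chi)$-equivariant sheaf lies in the family \eqref{eq:Ai-conn}.

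Route (a) is left open by your own account. It would need three things you do not supply:
\begin{enumerate}
\item a Hecke eigensheaf for each $\Ai(\underline{a},\lambda)$;
\item a proof that this eigensheaf is $(J,\lambda\chi)$-equivariant;
\item a matching showing that every extension $\chi$ of $\phi$ to $S(1)$ is attained by some $\underline{a}$.
\end{enumerate}

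Route (b) only moves the difficulty. Physical rigidity helps only once you know the \emph{full} formal type of $\Ai^{\dR}_{\cG}(\lambda\chi)$ at $\infty$. That includes the lower-order terms of its canonical form, which vary with the $a_i$; the paper itself says it has no explicit description of these terms. Nor does \cite{JY23} provide a dictionary that outputs the formal type of a global Hecke eigenvalue from the character $\chi$ of $J$. That is a local--global compatibility statement, and it is essentially what has to be proved; depth-type results give at best the slope and leading term.

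Step 3 inherits the same gap. The constant $a$ is not yours to choose: it is whatever the eigenvalue produces. The assertion that $X=p_{-1}+a\lambda^h p_r$ corresponds to $\phi$ is a statement about that particular $a$.

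The missing idea is the automorphic-to-Galois computation that the paper imports from \cite[Theorem 36, Proposition 38, Theorem 41]{Yi25}. The eigenvalue of $\mathcal{A}_\chi$ is an oper whose coefficients are obtained by evaluating $\lambda\chi$ on the images of the Segal--Sugawara operators (generators of the Feigin--Frenkel center) in the enveloping algebra of $\mathrm{Lie}(J/\bI(2+h))$. For example, $a_i=\chi(S_{i,2d_i-1})$, as used in the proof of \Cref{l:simple Ai}. A degree count on these operators determines which oper coefficients can be nonzero, giving exactly the shape \eqref{eq:Ai-conn}. The coefficient $a\lambda^h$ of $xp_r$ comes out of the same evaluation, which is what ties the leading term $X$ to $\phi=\chi|_{\bI(1+h)/\bI(2+h)}$. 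Without this computation, or an equivalent explicit identification of the eigenvalue, Steps 1, 3 and 4 do not add up to a proof of the statement.
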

    \begin{proof}
	    When $\cG=\cG_{\ad}$ is of adjoint type, the theorem follows from \cite[Theorem 36, Proposition 38, Theorem 41]{Yi25}. 

	    Next, we explain how to extend the result to allow $G$ to be a general almost simple group. 
	    We claim that, up to isomorphism, there exists a unique $\cG$-connection on $\mathbb{A}^1$, which induces $\Ai_{\cG_{\ad}}(\underline{a},\lambda)$ as $\cG_{\ad}$-connection. 
	    Indeed, any two such de Rham $\cG$-local system differ by a $\check{Z}$-connection on $\mathbb{A}^1$. As $\check{Z}$ is finite, the wild part of  the differential Galois group at $\infty$ of this $\check{Z}$-connection must be trivial, and therefore this connection is trivial. 
	    Then the theorem in this case follows. 
    \end{proof}

\subsection{Comparison between $\Ai_{\cG}^{\dR}$ and $\Ai_{\cG}^{\rig}$} \label{ss:compare-dR-rig-Airy}

In this subsection, we work over $\OK$. 
Let $j:U=\A1\to X=\mathbb{P}^1$ be the open immersion over $\mathcal{O}_K$. We assume that the functions $\phi,\chi$, fixed in \S~\ref{sss:Ai-rig}, are defined over $\OK$ such that $\phi_k$ (resp. $\phi_K$) is stable in the sense of \S~\ref{sss:parahoric}. 

We prove a similar result of \Cref{t:compare-Kl-rig-dr} for Airy connections. 

\begin{theorem} \label{t:compare-Ai-rig-dr}
	There exists a canonical isomorphism of $\cG$-valued $j^{\dagger}\mathscr{O}_{\XX^{\rig}}$-modules with connection
		\[
		\iota:(\Ai_{\cG}^{\dR}(-\pi \chi_K))^{\dagger}\xrightarrow{\sim} \Ai_{\cG}^{\rig}(\chi_k).
		\]
	\end{theorem}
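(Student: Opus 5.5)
We will follow the strategy of \cite[\S~4.2]{XZ22} (used for \Cref{t:compare-Kl-rig-dr}), adapted to the Airy automorphic datum of \S~\ref{sss:Ai-rig}. The idea is to realize both Hecke eigenvalues from a single geometric construction over $\mathcal{O}_K$, and to compare the two realizations through the specialization/analytification functor $(-)^{\dagger}$ of \eqref{eq:dagger-functor}.

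\textbf{Step 1: a common integral model.} The stack $\Bun_{\mathcal{G}}\simeq L^-G\backslash G(\!(t)\!)/I(2+h)$, the relevant orbit $j:O\hookrightarrow \Bun_{\mathcal{G}}$, the character $\chi:J\to \A1$, and the Hecke stacks are all defined over $\mathcal{O}_K$. Since $p>h$, the relevant orbit $O$ is a quotient of an affine space (or torus times affine space) by a unipotent group, uniformly over $\mathcal{O}_K$. We first check the geometric property that was crucial in \emph{loc. cit.}: $O$ is the non-vanishing locus of a section of a line bundle on (a finite-type open substack of) $\Bun_{\mathcal{G}}$, so that $j$ is affine. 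In addition, the cleanness $j_!\simeq j_+$ of $\chi^+\mathscr{A}_\psi$ (resp. $\chi^+\EE_{-\pi}$) holds in both settings. Cleanness comes from \cite[Lemma 21]{JKY}, whose argument (nonexistence of relevant points outside $O$) is characteristic-free once $p>h$.

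\textbf{Step 2: comparing the eigen-objects.} On $O_K$, the connection $\chi_K^+\EE_{-\pi}$ has as its $(-)^{\dagger}$-image the overconvergent isocrystal $\chi_k^+\mathscr{A}_\psi$. This holds because $\EE_{-\pi}^\dagger\simeq\mathscr{A}_\pi$, which is the choice $\lambda=-\pi$ (\ref{sss:def-isod}). Using the affine open embedding and the clean extension, the Hecke functor applied to $\mathcal{A}_{\chi}$ can be computed by pulling back to the Hecke correspondence restricted over $O\times U$. Then one applies $!$-pushforward along a proper map followed by the affine $j_+$ or $j_!$.

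\textbf{Step 3: commuting $(-)^{\dagger}$ with the operations.} We show that $(-)^{\dagger}$ commutes with each of the operations appearing in the Hecke functor. The argument of \cite[\S~4.2]{XZ22} proceeds by reducing to relative de Rham cohomology of a smooth family with a good compactification. Overconvergent (rigid) cohomology along fibres then agrees with the analytification of algebraic de Rham cohomology, by a comparison theorem of Baldassarri--Chiarellotto type, applied to the twisted coefficients $\exp(-\pi f)$. For this we need the relevant map to be the complement of a relative normal crossing divisor, with $\chi$ extending to the compactification with non-degenerate poles. Doing this for each $V\in\Rep(\cG)$, compatibly with tensor structures via geometric Satake (which holds over $\mathcal{O}_K$), yields the tensor isomorphism $\iota$.

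\textbf{Main obstacle.} The main difficulty is the comparison in Step 3 at $\infty$, where the eigenvalue is irregular. Here the comparison between de Rham and rigid cohomology with exponential twist can fail unless the twisting function behaves well at the boundary; this is the "no irregularity jumps in the reduction" issue. We would first try the \cite{XZ22} approach: use the line-bundle section from Step 1 to compactify, and verify that the restriction of $\chi$ to the boundary strata remains non-degenerate modulo $p$. This uses that $\phi_k$ is stable and that $p>h$.

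As a fallback, we would argue a posteriori. By \Cref{t:Ai connection}, $\Ai^{\dR}$ is isoclinic of slope $1+\frac1h$ with leading term having unit eigenvalues, as in \Cref{l:eigenvalues-X}. Hence $(\Ai^{\dR})^{\dagger}$ is overconvergent with the correct $p$-adic slopes (\Cref{p:slopes}). A map between the two objects, obtained generically on a strict neighborhood, would then extend, and it would be an isomorphism by comparing ranks and irregularities.
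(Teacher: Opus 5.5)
\textbf{There is a genuine gap: your Step~1 rests on a geometric claim that fails in the Airy setting.} This is exactly where the argument has to depart from \cite[\S~4.2]{XZ22}.

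The substack cut out as the non-vanishing locus of a section of a line bundle (hence open and affine) is $O'=L^-G\backslash L^-GI(1)/I(2+h)$ \cite[Proposition 19]{JKY}. The relevant orbit $O=L^-G\backslash L^-GJ/I(2+h)$ is only a closed substack of $O'$. In general it is not open in $\Bun_{\mathcal{G}}$, unlike the relevant orbit in the Kloosterman case. So the compactification you propose, via a line-bundle section cutting out $O$, does not exist.

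What one actually has is the following. On the preimage $\GR^{\circ}_{\mathcal{G},V}$ of $O'$, the map $f:\GR^{\circ}_{\mathcal{G},V}\to U$ admits a good compactification, at least for minuscule $V$. But the object computing the eigenvalue there is $M_V=i_+p^+(\chi^+\EE_{-\pi})$, supported on the closed substack $\widetilde{O}$. This is not a twisted coefficient $\exp(-\pi f)$ on a smooth family, so your fibrewise de Rham/rigid comparison in Step~3 has nothing to apply to.

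The missing idea is to run the comparison at the level of $\mathscr{D}$-modules rather than isocrystals:
\begin{enumerate}
\item define when an algebraic and an arithmetic $\mathscr{D}$-module are associated through $\Coh(\Ddd_{\YY,\bQ})$ (\Cref{d:associated}, using the Caro--D'Addezio full faithfulness \Cref{p:fully-faithful-Ddagger});
\item show that association is preserved by proper (here closed) pushforward (\Cref{p:proper-pushforward}, via Huyghe--Schmidt);
\item compare $f_{K,+}$ and $f_{k,+}$ of the associated pair $(M_V,\mathscr{M}_V)$ through a relative specialization map \eqref{eq:relative-specialization} for associated modules.
\end{enumerate}
Only after this does the fibrewise argument of \cite{XZ22} go through.

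\textbf{You also cannot handle ``each $V\in\Rep(\cG)$'' by a smooth family.} For non-minuscule $V$ the support $\GR_{\mathcal{G},V}$ is singular and the Satake kernel is an intersection complex. The argument has to be split:
\begin{itemize}
\item minuscule $V$: the smooth case, as in \cite[4.2.5]{XZ22};
\item quasi-minuscule $V$: pass to the smooth locus $\GR^{\circ\circ}_{\mathcal{G},V}$, prove holonomicity of $\tau_+$, and show that fibrewise specialization is an isomorphism on $\rH^0$ of the intermediate extensions.
\end{itemize}
One then concludes by tensor generation, as in \cite[4.2.9]{XZ22}.

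\textbf{Your fallback is not a proof either.}
\begin{itemize}
\item The solvability and $p$-adic slopes of $(\Ai^{\dR}_{\cG})^{\dagger}$ (\Cref{p:slopes}, \Cref{p:Airy-Swan}) are obtained in the paper from the Frobenius structure that this very theorem supplies, so invoking them here is circular.
\item You give no construction of the ``generic'' comparison map.
\item Equality of ranks and irregularities would not force such a map to be an isomorphism.
\end{itemize}
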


	\begin{secnumber}
		As in \ref{ss:Frob-theta-connection}, if we take $\lambda=-\pi$, the above theorem shows that the $\cG$-connection $\Ai_{\cG}(\underline{a},-\pi)$ \eqref{eq:Ai-conn} can be equipped with a $\cG$-valued Frobenius structure and underlies a $\cG$-overconvergent $F$-isocrystal $\Ai_{\cG}^{\rig}(\chi_k)$. 
		Moreover, since $\phi_k$ and $\phi_K$ are both stable, we can deduce that the coefficient $a$ in \eqref{eq:Ai-conn} is a $p$-adic unit in this case. 
	\end{secnumber}

\begin{proof}
	The open immersion 
	$O':=L^-G\backslash L^-GI(1)/I(2+h)\to \Bun_{\mathcal{G}}$ is defined by the non-vanishing locus of a section of a line bundle and is therefore affine \cite[Proposition 19]{JKY}. 
	The relevant orbit $O=L^-G\backslash L^-G J/I(2+h)$ is a closed substack of $O'$. 
	The geometric structure is slightly different from the setting of the construction of $\Kl_{\cG}$, where the relevant orbit is an open substack of $\Bun_{\mathcal{G}}$. 
	In the following, we explain how to modify the argument of \cite[\S~4.2]{XZ22} to prove the theorem using the relative specialization map \eqref{eq:relative-specialization} in the appendix.
	On $O$ we have the rank-one objects
\[
	\chi^+E_{-\pi}\quad\textnormal{(algebraic on }O_K\text{)}\qquad\text{and}\qquad
\chi^+\mathscr A_\psi\quad\textnormal{(arithmetic on }O_k\text{)},
\]
which are associated in the sense of Definition \ref{d:associated}.

	Consider the Hecke stack over $U=\mathbb{A}^1$ and the natural projections:
	\[
	\Bun_{\mathcal{G}}\xleftarrow{\pr_1} \Hk_{\mathcal{G}}^U \xrightarrow{\pr_2} \Bun_{\mathcal{G}}\times U.
	\]
	Let $\star\in \Bun_{\mathcal{G}}$ be the base point corresponding to the trivial bundle $\mathcal{G}$. 
	The base change of the above diagram to $\star\times U$ can be written as:
	\[
	\Bun_{\mathcal{G}}\xleftarrow{p_1} \GR_{\mathcal{G}} \xrightarrow{p_2} U,
	\]
	where $\GR_{\mathcal{G}}$ is the Beilinson-Drinfeld Grassmannian of $\mathcal{G}$ with modifications on $U$. Note that $\GR_{\mathcal{G}}\simeq \Gr_G\times U$. 

	Let $V$ be a representation of $\cG$. We denote by $\GR_{\mathcal{G},V}\subset \GR_{\mathcal{G}}$ the support of the Satake sheaf of $V$.  
	Consider the following diagram:
	\[
	\xymatrix{
		& \widetilde{O} 
		\ar@{^{(}->}[r]^-i \ar[ld]_{p}
		& \GR_{\mathcal{G},V}^{\circ} 
		\ar@{^{(}->}[r] |{\circ} \ar[ld] 
		& \GR_{\mathcal{G},V} \ar[ld] \ar[rd] & \\
		O 
		\ar@{^{(}->}[r] 
		& O' 
		\ar@{^{(}->}[r] |{\circ} 
		& \Bun_{\mathcal{G}} 
		&& U }
	\]
	where the squares are Cartesian. 
	Let $f:\GR_{\mathcal{G},V}^{\circ}\to U$ be the composition in the above diagram. 
	By \Cref{p:proper-pushforward}, the following modules are associated on $\GR_{\mathcal{G},V}^{\circ}$: 
\[
	M_V:=i_+\bigl(p^+(\chi^+E_{-\pi})\bigr)\in \Coh(\mathscr D_{\GR_{\mathcal G,V}^\circ/K}),
\qquad
	\mathscr M_V:=i_+\bigl(p^+(\chi^+\mathscr A_\psi)\bigr)\in \Hol(\GR_{\mathcal G,V,k}^\circ),
\]
By construction of the Airy eigensheaves \(A_\chi\) and the definition of the Hecke eigenvalue functors, we have (exactly as in \cite[\S~4.2]{XZ22}) canonical isomorphisms
\begin{equation}\label{eq:Ai-as-pushforward}
\Ai^{\dR}_{\cG,V}(-\pi\chi_K)\simeq f_{K,+}(M_V),
\qquad
\Ai^{\rig}_{\cG,V}(\chi_k)\simeq f_{k,+}(\mathscr M_V).
\end{equation}

	Following the argument of \cite[\S~4.2]{XZ22}, we consider the following two cases:

	(i) Case $V$ is minuscule. In this case, $\widetilde{O}, \GR_{\mathcal G,V}^\circ$ are smooth over $\OK$ and  $f$ admits a good compactification (cf. \cite[4.2.3]{XZ22}). 
The relative specialisation map \eqref{eq:relative-specialization} is a morphism of coherent $j^\dagger\mathscr O_{\XX^{\rig}}$-modules with connection
	\[
	\iota_V:(f_{K,+}(M_V))^{\dagger}\to f_{k,+}(\mathscr{M}_V).
	\]
	By the same argument of \cite[4.2.5]{XZ22}, we can show that the above map is an isomorphism. 

	(ii) Case $V$ is associated to a quasi-minuscule coweight $\lambda$. 
	As in \cite[4.2.6]{XZ22}, we take an isomorphism $\GR_{\mathcal{G},V}\simeq U\times \Gr_{\le \lambda}$ and set $\GR_{\mathcal{G}, V}^{\circ\circ}=\GR_{\mathcal{G}, V}^{\circ}\cap (X\times \Gr_{\lambda})$ to be the smooth locus of $\GR_{\mathcal{G}, V}^{\circ}$.
	We denote by $j:\GR^{\circ\circ}_{\mathcal{G}, V}\to \GR^{\circ}_{\mathcal{G}, V}$ the open immersion and by 
	\begin{equation}
		\tau=p_2^{\circ}\circ j:\GR_{\mathcal{G}, V}^{\circ\circ}\to X
	\end{equation}
	the canonical morphism, which admits a good compactification $\widetilde{\Gr}_{\le \lambda}\times \P1 \to \P1$ in the sense of \ref{sss:specialization-map}. 
	Let $M_V^{\circ}$ and $\mathscr{M}_V^{\circ}$ be the restriction of $M_V$ and $\mathscr{M}_V$ on $\GR^{\circ\circ}_{\mathcal{G},V}$ respectively, which are associated. 
	Then we have $j_{+!}(M_V^{\circ})\simeq M_V$, $j_{+!}(\mathscr{M}_V^{\circ})\simeq \mathscr{M}_V$. 
	Following \cite[Lemma 4.2.7(i)]{XZ22}, we can show that:
	
	(a) The complex $\tau_{k,+}(\mathscr{M}_V^\circ)[1]$ (resp. $\tau_{K,+}(M_V^\circ)[1]$) is holonomic. 
 
	(b) Let $s$ be a point of $U(k)$. We choose a lifting in $U(\OK)$ and still denote it by $s$.
	Then the specialisation morphism on the fiber cohomology 
	\begin{displaymath} 
		\rH^{*}_{\dR}( (\GR_{\mathcal{G}, V,s}^{\circ\circ})_K,M_{V,s}^{\circ})\to \rH^{*}_{\rig}((\GR_{\mathcal{G}, V,s}^{\circ\circ})_k,\mathscr{M}_{V,s}^{\circ}) 
	\end{displaymath}
	induces an isomorphism
	\begin{equation} \label{sp quasi-min}
		\rH^{0}_{\dR}( (\GR_{\mathcal{G}, V,s}^{\circ})_{K}, j_{!+}(M_{V,s}^{\circ}))\xrightarrow{\sim} \rH_{\rig}^{0}( (\GR_{\mathcal{G}, V,s}^{\circ})_{k}, j_{!+}(\mathscr{M}_{V,s}^{\circ})).
	\end{equation}

	By (a), we have a diagram of $D_{X_K}$-modules:
	\[
		\xymatrix{
			\Gamma(U_{K},\Ai_{\cG,V}^{\dR}(-\pi\chi_K)) \ar[r] & 
			\Gamma(U_{K},\tau_{K,+}(M_V^\circ)) \ar[d] & \\
			\Gamma(U_{k},\Ai_{\cG,V}^{\rig}(\chi_k)) \ar[r] &
			\Gamma(U_{k},\tau_{k,+}(\mathscr{M}_V^\circ))
		}
	\]
	where the vertical arrow is the relative specialization morphism \eqref{eq:relative-specialization}. 
	By the argument of \cite[4.2.8]{XZ22}, we can conclude that the vertical map induces an isomorphism of $j^{\dagger}\mathscr{O}_{\XX^{\rig}}$-modules with connection
	\[
	(\Ai^{\dR}_{\cG,V}(-\pi\chi_K))^{\dagger}\xrightarrow{\sim} \Ai^{\rig}_{\cG,V}(\chi_k).
	\]

	Finally, we deduce the theorem from the minuscule and quasi-minuscule cases as in \cite[4.2.9]{XZ22}. 
\end{proof}
\begin{secnumber} \label{sss:trivial-functoriality-Ai}
	\textbf{Trivial functoriality of $\Ai_{\cG}$.}
	Let $\cG\to \cG'$ be a homomorphism of reductive groups inducing an isomorphism on their adjoint quotients $\cG_{\ad}\xrightarrow{\sim} \cG'_{\ad}$. 
	By \Cref{t:Ai connection}, $\Ai_{\cG'}(\lambda\chi)$ is isomorphic to the push-out of $\Ai_{\cG}(\lambda\chi)$ along $\cG\to \cG'$ in view of the explicit connection form \eqref{eq:Ai-conn}. 

	Then we can deduce the overconvergent case from the de Rham case by \Cref{t:compare-Ai-rig-dr}.
	Since the analytification functor $(-)^{\dagger}$ naturally commutes with the push-out of structure groups, the trivial functoriality holds for the overconvergent isocrystals $\Ai_{\cG}^{\rig}$ as well. 
\end{secnumber}

\section{Application to monodromy representations} \label{s:application}
\subsection{Review of companions} \label{ss:companion}
In this subsection, we assume that $k=\mathbb{F}_q$ and $\sigma=\id_K:K\to K$. 

\begin{secnumber} \label{sss:companion-GLn}
	Let $X$ be a smooth geometrically connected curve over $k$. 
	Let $\ell$ be a prime.  
	We denote by $\mathcal{T}_\ell(X)$ the Tannakian subcategory of the category of lisse $\overline{\mathbb{Q}}_{\ell}$-sheaves on $X$ if $\ell\neq p$ (resp. $\FIsod(X/K)\otimes_K \overline{K}$ if $\ell=p$) consisting of semisimple objects $\mathcal{E}$ such that the determinant of each irreducible component of $\mathcal{E}$ has finite order. 

	Let $\mathcal{E}$ be an object of $\mathcal{T}_{\ell}(X)$. By \cite{Laf02,Abe18}, we have:
	\begin{itemize}
		\item for each $x\in |X|$, the polynomial $\det(1-t\cdot F_x,\mathcal{E})$ belongs to $\bQQ[t]$;

		\item for every prime $\ell'$, there exists an \textit{$\ell'$-companion} $\mathcal{E}'$ of $\mathcal{T}_{\ell'}(X)$ such that
			\[
			\det(1-t\cdot F_x,\mathcal{E})=\det(1-t\cdot F_x,\mathcal{E}') \quad \textnormal{for all } x\in |X|.
			\]
	\end{itemize}

	Note that an $\ell'$-companion $\mathcal{E}'$ of $\mathcal{E}$ is unique up to isomorphism \cite[Proposition 1.1.2.1]{Lau87}. 

	In the above construction of Kloosterman/Airy $\cG$-local system $E=\Kl_{\cG}$ (resp. $\Ai_{\cG}$), the $p$-adic realization $E^{\rig}(V)$ is the $p$-adic companion of the $\ell$-adic realization $E^{\ell}(V)$ for every $V\in\Rep(\cG)$. 
\end{secnumber}

\begin{secnumber}
	Let $\overline{X}$ be a smooth compactification of $X$. 
	For a closed point $x\in \overline{X}$, we can associate a Weil--Deligne representation $\rho_{\mathcal{E},x}^{\WD}$ to an object $\mathcal{E}$ of $\mathcal{T}_\ell(X)$ at $x$ with coefficients in $\bQl$. 
	We fix an isomorphism $\bQl\simeq \mathbb{C}$ for every prime $\ell$. 
	We may consider $\rho_{\mathcal{E},x}^{\WD}$ and its Frobenius semisimplification $\rho_{\mathcal{E},x}^{\WD,\sss}$ as representations with coefficients in $\mathbb{C}$. 
	One can compare the Frobenius semisimplified Weil--Deligne representation associated to $\mathcal{E}$ via the companion. 
\end{secnumber}

\begin{prop}[\cite{Del73} Th\'eor\`eme 9.8, \cite{KZ25} Theorem 4.4.5] \label{p:companion-WD-rep}
	Let $\mathcal{E}$ be an object of $\mathcal{T}_{\ell}(X)$, $\ell'$ a prime and $\mathcal{E}'$ the $\ell'$-companion of $\mathcal{E}$. 
	Then for any closed point $x\in\overline{X}$, 
	we have an isomorphism up to conjugation in $\GL_{n,\mathbb{C}}$:
	\[
	\rho_{\mathcal{E},x}^{\WD,\sss} \sim_{\GL_{n,\mathbb{C}}} \rho_{\mathcal{E}',x}^{\WD,\sss}. 
	\]
\end{prop}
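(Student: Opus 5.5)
The plan is to reduce to the classical local-field statements on both sides, comparing the two Weil--Deligne representations through their characteristic polynomials, which companions control. First, restrict to the Frobenius-semisimple parts. For a closed point $x\in X$, i.e.\ a point where $\mathcal{E}$ is lisse, $\rho^{\WD}_{\mathcal{E},x}$ is unramified with $N=0$. It is determined up to semisimplification by $\det(1-tF_x)$, and these polynomials coincide for $\mathcal{E}$ and $\mathcal{E}'$ by definition of companions. A semisimple representation of $\bZ$ is determined by its characteristic polynomial, so the two Frobenius-semisimplified Weil--Deligne representations are conjugate in $\GL_{n,\mathbb{C}}$.

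The content is at points $x\in \overline{X}-X$. I would reduce to equality of local $L$- and $\varepsilon$-factors of all twists, using the fact that a Frobenius-semisimple Weil--Deligne representation is determined by the local constants of its twists by characters of $W_{F_x}$, or by tensor products with representations of smaller dimension (Henniart-type uniqueness). In the $\ell$-adic setting, Deligne's argument \cite[Th\'eor\`eme 9.8]{Del73} proceeds as follows.

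\emph{Step 1.} Use the product formula $\varepsilon(X,\mathcal{E}\otimes\mathcal{L})=q^{\ast}\prod_{x}\varepsilon_x(\mathcal{E}\otimes\mathcal{L})$ for the global epsilon factor of the twisted sheaf. This is due to Laumon in the $\ell$-adic case and Abe--Marmora in the $p$-adic case.

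\emph{Step 2.} Compare the global $L$-functions. These are equal for the companions $\mathcal{E}\otimes\mathcal{L}$ and $\mathcal{E}'\otimes\mathcal{L}'$, where $\mathcal{L}$ is a rank-one sheaf chosen with prescribed local behaviour at $x$ and sufficiently ramified at the other boundary points. The functional equation then gives equality of global epsilons.

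\emph{Step 3.} Make the other local factors agree. Highly ramified twists make the local factors at the other boundary points depend only on determinants, so they coincide, and one isolates $\varepsilon_x(\rho_{\mathcal{E},x}\otimes\chi)=\varepsilon_x(\rho_{\mathcal{E}',x}\otimes\chi)$ for all characters $\chi$ of $W_{F_x}$, possibly after tensoring with higher-rank auxiliary companions.

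\emph{Step 4.} Combine with the local $L$-factors, obtained by the same method from unramified twists. Local uniqueness, via the local Langlands correspondence for $\GL_n$ in function fields due to Laumon--Rapoport--Stuhler, then identifies $\rho^{\WD,\sss}_{\mathcal{E},x}$ and $\rho^{\WD,\sss}_{\mathcal{E}',x}$ as isomorphic.

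For $\ell'=p$ I would simply invoke \cite[Theorem 4.4.5]{KZ25}, which carries this out with Marmora's $p$-adic local epsilon factors.

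The main obstacle is Step 3 in the mixed $\ell$/$p$ case. It requires the $p$-adic product formula and the fact that the Weil--Deligne representation attached via Marmora to the $p$-adic local monodromy has the correct local constants. Since this is exactly the content of \cite{KZ25}, I would cite it rather than reprove it.
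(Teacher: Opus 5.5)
Your proposal is correct and matches the paper, which gives no independent argument: it cites \cite[Th\'eor\`eme 9.8]{Del73} for $\ell,\ell'\neq p$ and \cite[Theorem 4.4.5]{KZ25} when one of the primes is $p$, and these are exactly the results your argument ultimately rests on. Your treatment of the points of $X$ is the trivial case, the case $\ell=p$ follows from the case $\ell'=p$ by symmetry of the companion relation, and your sketch of the twisting and product-formula argument behind the citations is not needed beyond them.
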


\begin{secnumber}
	Following Drinfeld \cite{Dri18}, we revise a notion of \textit{companion of $\cG$-local systems}, generalizing the definition from $\GL_n$ to a semisimple group $\cG$. 

	Let $E$ be an algebraically closed field. 
	We refer to \cite[\S~2.1.1]{Dri18} for the notion of pro-semisimple groups and pro-reductive groups. 
	Following \cite[\S~1.2.3]{Dri18}, $\Pred(E)$ denotes the groupoid whose objects are pro-reductive groups over $E$ and whose morphisms are as follows: a morphism $G_1\to G_2$ is an isomorphism of group schemes $G_1\xrightarrow{\sim} G_2$ defined up to composing with automorphisms of $G_2$ of the form $x\mapsto gxg^{-1}$, for $g$ in the neutral component $G_2^{\circ}$ of $G_2$. 
	Let $\Pss(E)\subset \Pred(E)$ be the full subcategory formed by pro-semisimple groups. 

	For any pro-algebraic group $G$ over $E$, we denote by $G^{\red}$ (resp. $G^{\sss}$) its pro-reductive (resp. pro-semisimple) quotient. 

	For any pro-reductive group $G$, let $[G]$ denote the GIT quotient of $G$ by the conjugation action of the neutral component $G^{\circ}$.
	We have the projection $G\to [G]$ and the canonical map $[G]\twoheadrightarrow \pi_0(G):=G/G^{\circ}$. 
\end{secnumber}

\begin{secnumber}
	Fix an algebraic closure $\bQQ$ of $\mathbb{Q}$. Let $\ell$ be a prime. 
	Drinfeld defined a pro-semisimple group $\HPi_{\ell}$ over $\bQl$ as the Tannakian group of the Tannakian subcategory $\mathcal{T}_{\ell}(X)$ (\ref{sss:companion-GLn}). 
	The embedding $\bQQ\to \bQl$ induces an equivalence of groupoids \cite[Proposition 2.2.5]{Dri18}:
	\begin{equation}
		\Pss(\bQQ)\xrightarrow{\sim} \Pss(\bQl).
		\label{eq:equi alg closed}
	\end{equation}
	We denote by $\HPi_{(\ell)}$ the object of $\Pss(\bQQ)$ associated to $\HPi_{\ell}$ by above equivalence. 
\end{secnumber}
\begin{secnumber}
	Let $\widetilde{X}$ be the universal cover of the category of finite \'etale covers of $X$ and $\Pi$ the automorphism of $\widetilde{X}/X$, which is isomorphic to the \'etale fundamental group $\pi_1(X,\overline{x})$ after choosing a base point $\overline{x}$. 

	The profinite group $\Pi$ admits a dense subset $\Pi_{\Fr}$ formed by powers of geometric Frobenius $F_{\widetilde{x}}^n$, where $\widetilde{x}$ runs through $|\widetilde{X}|$ and $n$ runs through $\mathbb{N}$. 

	We have an affine scheme $[\HPi_{(\ell)}]$ over $\bQQ$, a morphism $[\HPi_{(\ell)}]\to \pi_0(\HPi_{(\ell)})=\Pi$ and canonical morphisms \cite[(1.5), (7.12)]{Dri18}:
\begin{equation}
	\Pi_{\Fr}\to [\HPi_{(\ell)}](\bQQ) \twoheadrightarrow \Pi.
	\label{eq:Frob seq}
\end{equation}
	
	The main result \cite[Theorem 1.4.1]{Dri18} says that: 
	Given two primes $\ell, \ell'$, 
	there exists a unique isomorphism 
	\begin{equation} \label{eq:iso-companion}
	\HPi_{(\ell)}\xrightarrow{\sim} \HPi_{(\ell')}
	\end{equation}
	in the groupoid $\Pss(\bQQ)$ which sends the diagram \eqref{eq:Frob seq} to the corresponding diagram of $\HPi_{(\ell')}$. 
\end{secnumber}

\begin{secnumber}
	Given group schemes $G_1,G_2$ over $E$, let $\Hom_{\cor}(G_1,G_2)$ denote the quotient of the set $\Hom_E(G_1,G_2)$ by the conjugation action of $G_2^{\circ}(E)$. 
	Let $\widetilde{E}$ be an algebraically closed field containing $E$. 
	If $G_1$ is pro-reductive, the following canonical map is bijective \cite[Proposition 2.3.3]{Dri18}:
	\begin{equation}
		\Hom_{\cor}(G_1,G_2)\xrightarrow{\sim} \Hom_{\cor}(G_1\otimes_E \widetilde{E},G_2\otimes_E \widetilde{E}).
		\label{eq:Hom-base-change}
	\end{equation}
\end{secnumber}

\begin{definition} \label{d:G-companion}
	Let $\cG$ be a connected semisimple group over $\bQQ$, and $\varphi_{\ell}:\HPi_{\ell}\to \cG$ an $\ell$-adic $\cG$-local system. 
	For another prime $\ell'$, we define the $\ell'$-companion $\varphi_{\ell'}:\HPi_{\ell'}\to \cG$ of $\varphi_{\ell}$ as the image of $\varphi_{\ell}$ via the isomorphisms \eqref{eq:iso-companion} and \eqref{eq:Hom-base-change}:
	\[
	\Hom_{\cor}(\HPi_{\ell},\cG_{\bQl})\xleftarrow{\sim} \Hom_{\cor}(\HPi_{(\ell)},\cG)\xrightarrow{\sim} \Hom_{\cor}(\HPi_{(\ell')},\cG)\xrightarrow{\sim} \Hom_{\cor}(\HPi_{\ell'},\cG_{\overline{\mathbb{Q}}_{\ell'}}).
	\]
\end{definition}
\begin{prop} \label{p:companion-Frob-trace}
	With the notation of \Cref{d:G-companion}, for every representation $r:\cG\to \GL(V)$, $r\circ \varphi_{\ell'}$ is the $\ell'$-companion of $r\circ \varphi_{\ell}$ in the sense of \ref{sss:companion-GLn}.  
\end{prop}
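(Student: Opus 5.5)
The plan is to compare Frobenius traces via the Drinfeld companion isomorphism \eqref{eq:iso-companion}, and then to invoke the uniqueness of $\GL_n$-companions from \ref{sss:companion-GLn}. Both $r\circ\varphi_{\ell}$ and $r\circ\varphi_{\ell'}$ are representations of $\HPi_{\ell}$ and $\HPi_{\ell'}$ respectively, so they are objects of $\mathcal{T}_\ell(X)$ and $\mathcal{T}_{\ell'}(X)$: they are semisimple because $\HPi$ is pro-semisimple, and the determinants of their irreducible constituents have finite order for the same reason. It therefore suffices to show
\[
\det(1-t F_x, r\circ\varphi_\ell)=\det(1-tF_x, r\circ\varphi_{\ell'})
\]
for every closed point $x$. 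Uniqueness of companions \cite[Proposition 1.1.2.1]{Lau87} then identifies $r\circ\varphi_{\ell'}$ with the $\ell'$-companion of $r\circ\varphi_\ell$.

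\textbf{Step 1: descend to $\bQQ$.} Using \eqref{eq:equi alg closed} and \eqref{eq:Hom-base-change}, I write $\varphi_\ell$ as the base change of some $\varphi_{(\ell)}:\HPi_{(\ell)}\to\cG$ over $\bQQ$. By \Cref{d:G-companion}, $\varphi_{\ell'}$ is the base change of $\varphi_{(\ell')}:=\varphi_{(\ell)}\circ\alpha^{-1}$, where $\alpha:\HPi_{(\ell)}\xrightarrow{\sim}\HPi_{(\ell')}$ is the isomorphism \eqref{eq:iso-companion}. Each morphism is defined only up to $\cG(\bQQ)$-conjugation, and up to inner automorphisms by the neutral component. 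Such ambiguities do not affect characteristic polynomials of $r(\cdot)$, because the function $g\mapsto\det(1-t\,r(g))$ is a conjugation-invariant regular function on $\cG$.

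\textbf{Step 2: compare Frobenius elements.} For a point $\tilde x$ over $x$, the element $F_{\tilde x}\in\Pi_{\Fr}$ maps to a point of $[\HPi_{(\ell)}](\bQQ)$ via \eqref{eq:Frob seq}. The composite $\det(1-t\,r\circ\varphi_{(\ell)}(-))$ is a $\HPi_{(\ell)}^\circ$-conjugation-invariant regular function on $\HPi_{(\ell)}$ with values in $\bQQ[t]$. It therefore factors through the GIT quotient $[\HPi_{(\ell)}]$, and its value at the image of $F_{\tilde x}$ equals $\det(1-tF_x,r\circ\varphi_\ell)$; this is the defining property of the Frobenius map in \cite{Dri18}. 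Since $\alpha$ carries the diagram \eqref{eq:Frob seq} for $\ell$ to the one for $\ell'$, the induced map $[\alpha]$ sends the image of $F_{\tilde x}$ to the corresponding image in $[\HPi_{(\ell')}]$. Moreover $\det(1-t\,r\circ\varphi_{(\ell')})\circ[\alpha]=\det(1-t\,r\circ\varphi_{(\ell)})$. The two characteristic polynomials at $x$ therefore coincide.

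\textbf{Main obstacle.} The delicate point is the claim in Step 2 that evaluating at the Frobenius point of $[\HPi_{(\ell)}](\bQQ)$ really gives the characteristic polynomial of $F_x$ acting on the sheaf or isocrystal realization. One also has to check that this evaluation is compatible with the base change $\bQQ\to\bQl$ used in \eqref{eq:equi alg closed}. I would handle both by unwinding Drinfeld's construction of \eqref{eq:Frob seq}, in which $[\HPi_\ell]$ is defined through the Tannakian category $\mathcal{T}_\ell(X)$ and Frobenius conjugacy classes. The second check reduces to the fact that the formation of $[G]=G/\!\!/G^\circ$ commutes with extension of algebraically closed fields.
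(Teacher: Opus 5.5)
Your proposal is correct and follows essentially the paper's argument. The paper's proof is the commutative diagram $\Pi_{\Fr}\to[\HPi_{(\ell)}](\bQQ)\to[\cG](\bQQ)\to[\GL(V)](\bQQ)$, together with its $\ell'$-counterpart via \eqref{eq:iso-companion}, which shows that each $F_{\tilde x}$ has the same image in $[\GL(V)](\bQQ)$ under both composites. That is exactly your Step 2. Your extra checks, namely that $r\circ\varphi_\ell$ lies in $\mathcal{T}_\ell(X)$ and the appeal to uniqueness of companions, are left implicit in the paper.
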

\begin{proof}
	We consider the following commutative diagram:
	\[
	\xymatrix{
	\Pi_{\Fr} \ar[r] \ar@{=}[d] & [\HPi_{(\ell)}](\bQQ) \ar[r]^{\varphi_{(\ell)}} \ar[d]_{\sim} & [\cG](\bQQ) \ar[r]^-{r} &[\GL(V)](\bQQ) \\
	\Pi_{\Fr} \ar[r] & [\HPi_{(\ell')}](\bQQ) \ar[ru]_{\varphi_{(\ell')}} &&
	}
	\]
	Hence for every geometric Frobenius $F_{\widetilde{x}}$, it has the same image in $[\GL(V)](\bQQ)$ via $\varphi_\ell,\varphi_{\ell'}$. 
	This finishes the proof. 
\end{proof}

\begin{prop} \label{p:companion-Kl}
	Suppose $m=h$ is the Coxeter number. 
	Let $\ell$ be prime different from $p$ and $\phi$ a stable function of $\fg^*_1(k)$. 
	Then $\Kl_{\cG}^{\rig}(\phi)$ \eqref{sss:Kl^rig} is the $p$-adic companion of the $\ell$-adic Kloosterman sheaf $\Kl_{\cG}^{\ell}(\phi)$ in the sense of \Cref{d:G-companion}. 
\end{prop}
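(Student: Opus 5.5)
Both $\Kl_{\cG}^{\rig}(\phi)$ and $\Kl_{\cG}^{\ell}(\phi)$ are $\cG$-valued, so we must produce homomorphisms $\varphi_p:\HPi_p\to\cG$ and $\varphi_\ell:\HPi_\ell\to\cG$ and check that they correspond under \Cref{d:G-companion}. By \Cref{p:companion-Frob-trace} and Drinfeld's uniqueness statement \eqref{eq:iso-companion}, it suffices to show that for every geometric Frobenius $F_{\widetilde{x}}\in\Pi_{\Fr}$ the images in $[\cG](\bQQ)$ agree. Since $\cG$ is connected semisimple, $[\cG]=\cG/\!\!/\cG$, and its coordinate ring is spanned by characters of representations. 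The proof therefore reduces to the equality of Frobenius traces $\Tr(F_x,\Kl^{\rig}_{\cG}(V))=\Tr(F_x,\Kl^{\ell}_{\cG}(V))$ for all $V\in\Rep(\cG)$ and all $x\in|\Gm|$, both being algebraic numbers.

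The first step is to check that both local systems lie in the relevant categories $\mathcal{T}(X)$ and define maps from $\HPi$. For each $V$ we need $\Kl(V)$ to be semisimple with finite-order determinants on its irreducible constituents. The determinant condition follows because the monodromy is $\cG$-valued with $\cG$ semisimple. For semisimplicity, we use that the geometric monodromy group is reductive: for $m=h$ it is computed in \cite{HNY,XZ22} and is a semisimple group from the list \eqref{eq:table monodromy}. Purity of the Hecke eigensheaves then gives semisimplicity. The eigenvalue functors are tensor functors, so we obtain $\varphi_p,\varphi_\ell$ up to $\cG$-conjugacy.

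The second step is the trace comparison. Both eigenvalues arise from the same automorphic datum: $\mathcal{A}_\phi=j_{!}\phi^+\mathscr{A}_\psi$ and its $\ell$-adic analogue $j_!\phi^*\AS_\psi$. The Hecke eigenvalue $\Kl(V)$ at $x$ is computed as a pushforward along $\GR_{\mathcal{G},V}^{\circ}\to\Gm$ of $p^{*}\phi^*\mathscr{A}_\psi$ tensored with the Satake sheaf, as in \eqref{eq:Ai-as-pushforward}. By the Grothendieck--Lefschetz trace formula, valid in both the rigid setting (Étesse--Le Stum, Abe) and the $\ell$-adic setting, the trace at $x$ equals a sum over $k_x$-points of the fiber of $\psi(\phi(\cdot))$ times the trace function of the Satake IC sheaf. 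The trace functions of $\mathscr{A}_\psi$ and $\AS_\psi$ are both $\psi\circ\Tr$, by \ref{sss:def-isod}. Satake IC sheaves have identical $p$-adic and $\ell$-adic trace functions, both given by Kazhdan--Lusztig polynomials. Hence the traces agree for every $V$. This establishes, for each $V$, that $\Kl^{\rig}(V)$ is the $p$-companion of $\Kl^\ell(V)$ in the sense of \ref{sss:companion-GLn}.

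The main obstacle is upgrading this from $\GL(V)$-companions to $\cG$-companions. Equal characters on each Frobenius conjugacy class determine the semisimple part of the class in $\cG/\!\!/\cG$, which is what $[\cG]$ records. However, the $\cG$-companion of $\varphi_\ell$ given by \Cref{d:G-companion} is some $\varphi'_p:\HPi_p\to\cG$, and we must show $\varphi'_p\simeq\varphi_p$. Both homomorphisms have the same composite with every representation up to isomorphism, as $\GL(V)$-local systems, and the same Frobenius images in $[\cG]$. Local systems that agree in every representation can still differ by an outer twist, or fail to be conjugate, in general. To resolve this, we use the Coxeter case's known monodromy, which forces the image to be the full group in the list. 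We also use physical or cohomological rigidity in the following form: $\varphi'_p$ has the same local Weil--Deligne data at $0$ and $\infty$ by \Cref{p:companion-WD-rep}, so it is a Kloosterman-type $\cG$-isocrystal. The uniqueness argument of \cite[\S~4.3.6]{XZ22} then identifies it with $\Kl_{\cG}^{\rig}(\phi)$. If that route fails, the fallback is to compare invariant-theoretic data directly: the Frobenius classes in $[\cG](\bQQ)$ agree, and this suffices because Drinfeld's definition only uses these classes.
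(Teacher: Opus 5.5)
Your trace comparison in the second step is fine. It is exactly the per-representation companion statement that the paper takes as given in \ref{sss:companion-GLn}. The gap is the upgrade from these $\GL(V)$-companions to a $\cG$-companion, which is the whole content of the proposition.

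Your opening reduction and your closing fallback both assert that agreement of Frobenius images in $[\cG](\bQQ)$ suffices ``because Drinfeld's definition only uses these classes''. This is false. Drinfeld's uniqueness statement pins down the isomorphism $\HPi_{(\ell)}\simeq\HPi_{(p)}$, not homomorphisms into $\cG$. Let $\varphi'_p$ be the Drinfeld companion of $\Kl_{\cG}^{\ell}(\phi)$. Equal Frobenius classes only give $r\circ\varphi'_p\simeq r\circ\Kl_{\cG}^{\rig}(\phi)$ for each $r\in\Rep(\cG)$, i.e.\ elementwise conjugacy. Elementwise conjugacy does not imply $\cG$-conjugacy in general; this is exactly the issue behind \cite{Larsen} and \Cref{r:companion-local-global}. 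Your own last paragraph concedes this, so the proposal contradicts its first paragraph.

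The physical-rigidity repair does not close the gap as stated.
\begin{itemize}
\item \Cref{p:companion-WD-rep} is a $\GL_n$-statement. For each $V$, it makes the local Weil--Deligne representations of $r\circ\varphi'_p$ and $r\circ\Kl_{\cG}^{\rig}(\phi)$ at $0,\infty$ conjugate in $\GL(V)$. It does not make the $\cG$-valued ones conjugate in $\cG$; that is the unproved expectation \eqref{eq:companion-local-reps}. So you cannot feed $\varphi'_p$ into \Cref{c:strong-phy-rigidity}.
\item The paper gets around this in \Cref{t:physical-rig-l} only under $p\nmid\sharp\bW$, via the classification of simple wild parameters. The present proposition has no condition on $p$.
\item Physical rigidity would also only give a geometric isomorphism. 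A Frobenius twist by the finite centralizer would still have to be ruled out.
\end{itemize}

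The missing idea is the one the paper uses. The two $\cG$-local systems with equal trace functions have the same, explicitly known geometric monodromy group \cite[Theorem 4.5.2, Corollary 4.5.8]{XZ22}. In a faithful representation they are conjugate by some $g\in\GL(V)$ that normalizes this group. The explicit structure of that normalizer then lets one replace $g$ by an element of $\cG$, as in \cite[Theorem 5.1.4]{XZ22}; compare the last remark of \S\ref{s:rigidity} for $\SO_{2n+1}$ and $\Sp_{2n}$. You mention the known monodromy but never use it for this conjugation step.
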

\begin{proof}
	Let $\mathcal{E}$ be the $\ell$-adic companion of $\Kl_{\cG}^{\rig}(\phi)$ in the sense of \Cref{d:G-companion}. 
	By construction and \Cref{p:companion-Frob-trace}, for every representation $V$ of $\cG$ and $a\in |\mathbb{G}_{m,k}|$, we have 
	\[
	\Tr(\Frob_a,\mathcal{E}_{\overline{a}})=\Tr(\Frob_a,\Kl^{\ell}_{\cG}(\phi)_{\overline{a}}). 
	\]

	Note that $\mathcal{E}$ and $\Kl^{\ell}_{\cG}(\phi)$ have the same geometric monodromy group as calculated in \cite[Theorem 4.5.2, Corollary 4.5.8]{XZ22}. 
	Then we can prove that two representations $\mathcal{E}$ and $\Kl^{\ell}_{\cG}(\phi)$ are conjugate in the geometric monodromy group by a similar argument of \cite[Theorem 5.1.4]{XZ22}. 
\end{proof}

\begin{rem}
	We expect that the same holds without assumption $m=h$ and also for $\Ai_{\cG}^{\rig}, \Ai_{\cG}^{\ell}$ as they satisfy the property in \Cref{p:companion-Frob-trace}. 
\end{rem}

\begin{rem} \label{r:companion-local-global} 
	Let $\cG$ be a connected semisimple group over $\overline{\bQ}$, $\cE$ an $\ell$-adic $\cG$-local system on a smooth curve $X$ over $k$ (we allow $\ell=p$). 
	Let $\ell'$ be another prime and $\cE'$ the $\ell'$-companion of $\cE$. 
	Let $\overline{X}$ be a smooth compactification of $X$ and $x$ a closed point of $\overline{X}-X$. 
	Inspired by \Cref{p:companion-WD-rep}, one may expect that there exists an isomorphism up to conjugation in $\cG$ between the associated Frobenius semisimplified Weil--Deligne representations at $x$:
	\begin{equation} \label{eq:companion-local-reps}
	\rho_{\mathcal{E},x}^{\WD,\sss} \sim_{\cG} \rho_{\mathcal{E}',x}^{\WD,\sss}. 
	\end{equation}

	For acceptable groups in the sense of Larsen \cite{Larsen}, such as $\Sp_{2n}, \SO_{2n+1}, G_2$, and $\SO_4$, this expectation can be unconditionally verified via the fact that ``elementwise conjugacy'' implies ``global conjugacy''. 	
\end{rem}

\subsection{Epipelagic Langlands parameters: equal characteristic case}
\begin{secnumber}
	We take again the notations of \ref{ss:Frob-theta-connection}.
	Recall that $X=X_1+X_{1-m}$ is regular semisimple in $\cg(K)$ and in $\cg(k)$. 
	We denote by $\cft_X$ the centralizer of $X$ in $\cfg$ and by $\Phi$ the set of roots of $\cfg$ with respect to $\cft_X$. 
	We set $r=\dim \cft_X$ and $\ell=r+\sharp \Phi$ the dimension of $\cfg$.

	Consider the $\theta$-connection \eqref{eq:theta-connection-Frob} 
	which can be equipped with a Frobenius structure:
\begin{equation} \label{eq:p-theta-connection}
	 d+\biggl(X_1+(-\pi)^mX_{1-m}x\biggr)\frac{dx}{x}.
 \end{equation}

	Let $t=x^{-1}$ be a coordinate around $\infty$. We may rewrite
	\begin{equation} \label{eq:theta-conn-infty}
	d-\biggl(X_1+(-\pi)^m X_{1-m}t^{-1}\biggr)\frac{dt}{t}.
	\end{equation}
	We denote the associated differential module over $\mathcal{A}_{K,t}$ by $\mathscr{E}$ and abusively consider $\mathscr{E}$ as a differential module over the Robba ring $\mathcal{R}_K$. 
	We denote the associated formal connection over $K(\!(t)\!)$ by $\mathcal{E}$.
	Denote the ring $\mathcal{A},\mathcal{A}_K$ for $s$ by $\mathcal{A}_s,\mathcal{A}_{K,s}$.
\end{secnumber}

\begin{theorem}
	\label{t:p-adic-theta}
	The connection $\mathscr{E}$ is a $p$-adic isoclinic connection of slope $\frac{1}{m}$ over $\mathcal{A}_t$ \eqref{d:p-adic-isoclinic}. 
	More precisely, $[2m]^*\mathscr{E}$ can be $\cG(\mathcal{A}_s)$-gauge equivalent to a form as follows:
	\begin{equation} \label{eq:canonical-theta-conn}
	d+\pi X t^{-1/m} \frac{dt}{t}. 
	\end{equation}
\end{theorem}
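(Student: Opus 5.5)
The approach is an explicit gauge transformation by a cocharacter after pulling back along $s=t^{1/2m}$, followed by conjugation of the resulting leading term to $X$.

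\textbf{Step 1: cocharacter gauge.} Write the connection \eqref{eq:theta-conn-infty} in the variable $s$, using $dt/t=2m\,ds/s$:
\[
d-2m\bigl(X_1+(-\pi)^mX_{1-m}s^{-2m}\bigr)\frac{ds}{s}.
\]
Gauge by $g=\check{\lambda}(s^{2})$ if $\check{\lambda}$ is a cocharacter of $\cG$; in general use $\check{\lambda}(s)^{2}$, which is why the $2m$-fold cover appears. This gauge acts on $\cfg(l)$ by multiplication by $s^{2l}$. It therefore sends $X_1$ to $s^{2}X_1$ and $X_{1-m}s^{-2m}$ to $s^{2-2m}s^{-2m}X_{1-m}$. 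This does not yet look homogeneous, so the exponent must be chosen so that both terms acquire the same power.

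We want $X_1\cdot s^{2a}$ and $X_{1-m}s^{-2m}\cdot s^{2a(1-m)}$ to agree, which forces $2a=-2m+2a(1-m)$, i.e.\ $a=-1$. So we gauge by $\check{\lambda}(s)^{-2}$. Both terms then become $s^{-2}$ times $X_1$ and $X_{1-m}$ respectively. The gauge also contributes the term $-2\,d\check{\lambda}\,ds/s$, a constant element of $\cft$ of order $s^0$. Since $\check\lambda$ is a cocharacter (or $\check\lambda/2$ is, up to the cover), $g\in\cG(\mathcal{A}_{K,s})$. The result is
\[
d+\Bigl(-2m\bigl(X_1+(-\pi)^mX_{1-m}\bigr)s^{-2}+c\Bigr)\frac{ds}{s},\qquad c\in\cft(K).
\]

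\textbf{Step 2: normalize the leading coefficient.} Replace $X_1+(-\pi)^mX_{1-m}$ by $\pi X'$, where $X'$ is $\cG_0$-conjugate to (a $\mu_m$-rescaling of) $X$. First, $\Ad_{\check\lambda(c')}$ rescales $X_1$ and $X_{1-m}$ by $c'$ and $c'^{1-m}$. Choose $c'$ with $c'^{m}=(-\pi)^{m}/\pi^{m}\cdot$unit, which is available after enlarging $K$ by roots of unity; also absorb the constant $-2m$, a $p$-adic unit since $p\nmid 2m$. Then $X_1+(-\pi)^mX_{1-m}=\pi\cdot\Ad(\cdots)X$, up to the unit factor $-2m$. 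Lemma~\ref{l:eigenvalues-X} guarantees that $X$ is regular semisimple both over $K$ and modulo $\mathfrak m$.

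\textbf{Step 3: kill the constant term.} Remove $c\,ds/s$, the regular-singular part, and match the form $d+\pi Xt^{-1/m}dt/t$. Decompose $\cg=\cft_X\oplus\bigoplus_\alpha\cg_\alpha$. The leading term is $\pi X s^{-2}$ with $\alpha(X)$ a unit, and $|\pi|=\omega<1$. Solve for a gauge $\exp(\sum_{j\ge1}Y_js^{2j})$ inductively: at each step invert $\ad(\pi X)$ on the root spaces. This introduces denominators $\pi^{-1}$ per step, while the index shift gives a factor $s^2$. The series therefore converges for $|s|<|\pi|^{1/2}$ only. That is the crux: it yields a disc, not the whole open unit disc, and so is not good enough for the $\mathcal{A}_s$ requirement.

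The resolution is to avoid that route and argue on $\cft_X$ directly. The $\cft_X$-component of $c$, together with the trace of residues, can be checked to vanish using the $\mu_m$-symmetry (Lemma~\ref{l:eigenvalues-X}(ii)), since $c$ comes from $\check\lambda$ and the canonical form has no $t^0$ term. The root-space components of the constant term are then removed by a gauge in $\cG(\mathcal{A}_s)$. To show this gauge is analytic on the full open unit disc, use solvability (Proposition~\ref{p:rank-one-exp}) together with Theorem~\ref{t:Clark}: the solutions of the relevant scalar equations have coefficients in $(\overline{\mathbb Q}\cap K)[s]$, so they have positive radius, and the radius is pushed to $1$ because the rank-one pieces $d+\pi\alpha(X)s^{-2}ds/s$ are solvable with $|\pi\alpha(X)|=\omega$, hence of exact slope and trivially twisted. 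I expect this analytic step, showing that the transition matrix lies in $\cG(\mathcal{A}_s)$ rather than merely $\cG(\overline K(\!(s)\!))$, to be the main obstacle.
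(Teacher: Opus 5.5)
Your Steps 1--2 match the paper's first move: a cocharacter gauge on $[2m]^*\mathscr{E}$ produces $Y$ with leading term $2m\pi X s^{-2}\,ds/s$ and constant term $-\check\lambda\,ds/s$. There is one slip in Step 2: the rescaling must be $c'=-\pi$, so the combined gauge is $\check\lambda(-\pi s^{-2})$; with $c'$ a unit, $X_1$ and $(-\pi)^mX_{1-m}$ do not acquire a common factor $\pi$. Your $\theta$-symmetry argument that $\check\lambda$ has zero $\cft_X$-component is fine.

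The genuine gap is Step 3, which carries essentially the whole content of the theorem: that the formal gauge $Q\in\cG(K\llbracket s\rrbracket)$ from $Y$ to $Z=d+2m\pi Xs^{-2}\,ds/s$ actually lies in $\cG(\mathcal{A}_{K,s})$. Your reason is that Clark gives a positive radius, which is then ``pushed to $1$'' because the rank-one pieces of $Z$ are solvable of exact slope. This is a non sequitur. Solvability of $Z$ says nothing about $Y$, and $Q\in\cG(\mathcal{A}_{K,s})$ would in particular force $Y$ itself to be solvable. Your argument uses nothing about the root-space part of the constant term, so it would equally ``prove'' the following: on $\GL_2$, with $A=\mathrm{diag}(1,-1)$ and $B=\bigl(\begin{smallmatrix}0&1\\ p^{-1}&0\end{smallmatrix}\bigr)$ (zero diagonal part), $d+(\pi As^{-2}+B)\,ds/s$ is $\GL_2(\mathcal{A}_s)$-equivalent to $d+\pi As^{-2}\,ds/s$. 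But there the $B$-term dominates near $|s|=1$, the generic radius tends to $\omega p^{-1/2}<1$, and the module is not solvable. Note also that solvability of $\mathscr{E}$ is part of \Cref{d:p-adic-isoclinic}, and your proposal never establishes it.

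The missing input is the Frobenius structure coming from geometry. By \Cref{t:compare-Kl-rig-dr} and \Cref{t:compare-Kl-theta-connection}, at $\lambda=-\pi$ the $\theta$-connection underlies $\Kl^{\rig}_{\cG}$, so $Y$ is solvable. The paper then argues on subsidiary radii rather than on the power series directly:
\begin{enumerate}
\item For $\rho<\beta$ (Clark's radius) the subsidiary radii of $Y$ coincide with those of $Z$ (\Cref{l:subsidiary}).
\item Solvability gives $F_i(Y,r)\to 0$ as $r\to 0$.
\item Convexity of $F_i(Y,r)$ then forces it to be the same linear function on all of $(0,\infty)$, so $Y$ has the slopes of $Z$ on the whole punctured disc (\Cref{t:subsidiary-radii}).
\item This permits the slope decomposition on annuli. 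Twisting by $\exp(-2m\pi[b]/s)$ for each residue class $b$ of the eigenvalues $\varepsilon_\alpha$ modulo $\mathfrak{m}_K$ isolates slope-zero pieces $Y_b$, which $Q$ identifies with $Z_b$ near $0$.
\item The trivialization of slope-zero modules over the open disc \cite[Corollary 13.6.4]{pde} extends each block of $Q$ to $|s|<1$ (\Cref{t:p-adic-convergence-Q}).
\end{enumerate}
Without the solvability of $Y$ supplied by the Langlands-theoretic comparison, this chain has no starting point.
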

The proof will be given in \S\ref{ss:5.3}, \S\ref{ss:5.4}.

\begin{secnumber}
	Let $(\rho:W_F\to \cG(\overline{K}),N)$ be the Weil--Deligne representation associated to $\mathscr{E}$ (\ref{sss:MCF}). 
	It can be viewed as the L-parameter associated to the epipelagic representation under the local Langlands correspondence à la Genestier--Lafforgue. 

	Indeed, in the equal characteristic case with $\ell$-adic coefficients, Genestier--Lafforgue attach to an irreducible admissible representation of $G(F)$ a semisimple Langlands parameter with values in $\cG$ \cite{GL}. 
	Moreover, Genestier--Lafforgue also showed the local-global compatibility. 
	For $p$-adic coefficients, where an analogous Genestier--Lafforgue construction is not presently available, we regard our construction as a substitute inspired by the local-global compatibility and \Cref{r:companion-local-global}. 

	By \Cref{t:L-parameter}, we verify the prediction of Reeder--Yu on $\rho$ \cite[\S~7.1]{RY14} (cf. \cite[\S~2.9]{Yun16}):
\end{secnumber}

\begin{coro} \label{c:Swan-conductor}
	\textnormal{(i)} 
	The nilpotent operator $N$ vanishes. 
	Let $\cT_X$ be the centralizer $C_{\cG}(X)$ of $X$ in $\cG$, which is a maximal torus. 
	The representation $\rho$ fits into a commutative diagram
	\[
	\xymatrix{
	1\ar[r] & I^+_F \ar[r] \ar[d] & I_F \ar[r] \ar[d] & I_F^t \ar[r] \ar[d] & 1 \\
	1\ar[r] & \cT_X \ar[r]& N_{\cG}(\cT_X) \ar[r] & \bW \ar[r] & 1
	}
	\]
	such that a generator of the tame inertia $I_F^t$ is sent to a regular elliptic element of the Weyl group $\bW$ of order $m$. 

	\textnormal{(ii)} We have $\cfg^{I_F}=0$ and $\Swan(\cfg)=\frac{\sharp \Phi}{m}$.
\end{coro}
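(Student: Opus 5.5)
The plan is to deduce the corollary directly from \Cref{t:L-parameter}, applied to the differential module $\mathscr{E}$ at $\infty$ defined by \eqref{eq:theta-conn-infty}. The inputs needed by that theorem are supplied by earlier results.

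First, by \Cref{t:p-adic-theta}, $\mathscr{E}$ is a $p$-adic isoclinic connection of slope $\nu=\frac{1}{m}$, so here $N=1$ in the notation of \S\ref{s:p-isoclinic}. The canonical form \eqref{eq:canonical-theta-conn} has leading term $\pi X$ with $X=X_1+X_{1-m}$, up to the harmless rescaling by $(-\pi)^m$ absorbed in the gauge. To place ourselves in \Cref{d:p-adic-isoclinic}, we must check that $X$ is regular semisimple both over $\overline{K}$ and over $\overline{k}$, i.e. that all nonzero eigenvalues of $\ad X$ are $p$-adic units. This is exactly \Cref{l:eigenvalues-X}(i), combined with the stability of $\phi_k$ and $\phi_K$. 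We also check that $p$ is prime to $m$ and good for $\cG$, which holds under the standing hypotheses of \S\ref{sss:parahoric}.

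Second, the Frobenius structure on $\mathscr{E}$ comes from \S\ref{ss:Frob-theta-connection}, namely from \Cref{t:compare-Kl-rig-dr} restricted to a neighborhood of $\infty$. This gives the Weil--Deligne representation $(\rho,N)$ of \ref{sss:MCF} with values in $\cG(\overline{K})$. The restriction of $\rho$ to $I_F$ coincides with the inertial representation attached to $\mathscr{E}$ by \eqref{eq:MCFR}, so \Cref{t:L-parameter} applies to it. Parts (i) and (ii) of that theorem give the vanishing of $N$ and the commutative diagram. In that diagram the wild inertia lands in $\cT_X=C_{\cG}(X)$, and a tame generator maps to a regular elliptic element of order $m$ in $\bW$. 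Part (iv) gives $\cfg^{I_F}=0$ and $\Swan(\cfg)=\sharp\Phi\cdot\nu=\sharp\Phi/m$.

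The main obstacle is not in this deduction itself but in \Cref{t:p-adic-theta}: one must produce the $\cG(\mathcal{A}_s)$-gauge transform to the canonical form with \emph{analytic} (not merely formal) coefficients on the open disc. Taking that theorem as given, the only remaining care point is a group-theoretic one. \Cref{t:L-parameter} was stated for adjoint $\cG$, whereas here $\cG$ is only almost simple. I would handle this by pushing out to $\cG_{\ad}$ and noting that the statements are insensitive to the center. The diagram lifts because $C_{\cG}(X)$ is the preimage of the corresponding adjoint torus and $N_{\cG}(\cT_X)/\cT_X=\bW$ in both cases. The claims about $\cfg$ concern only the adjoint representation.
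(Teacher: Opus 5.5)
Your proposal is correct and is the paper's argument: the corollary is read off directly from \Cref{t:L-parameter} applied to $\mathscr{E}$, which is $p$-adic isoclinic of slope $\frac{1}{m}$ by \Cref{t:p-adic-theta} and carries the Frobenius structure coming from \Cref{t:compare-Kl-rig-dr}. Your push-out to $\cG_{\ad}$ (needed because \S\ref{s:p-isoclinic} assumes $\cG$ adjoint) is valid bookkeeping that the paper leaves implicit.
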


\begin{coro}
	The local system $\Kl_{\cG}^{\rig}(\phi)$ (resp. its $\ell$-adic variant $\Kl_{\cG}^{\ell}(\phi)$) is cohomologically rigid, i.e. the following cohomology groups vanish
	\[
	\rH^{*}(\mathbb{P}^1_k,j_{!+}(\Kl_{\cG}^{\bullet}(\phi)(\cfg)))=0,
	\]
	where $j:\mathbb{G}_{m,k}\to \mathbb{P}^1_k$ is the open immersion, $j_{!+}$ denotes the intermediate extension functor and $\Kl_{\cG}^{\bullet}(\phi)(\cfg)$ is the local system associated with the adjoint representation of $\cG$ for $\bullet\in \{\rig,\ell\}$.  
\end{coro}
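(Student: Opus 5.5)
The plan is to reduce the vanishing of $\rH^*(\mathbb{P}^1_k,j_{!+}\mathcal{F})$, where $\mathcal{F}=\Kl_{\cG}^{\bullet}(\phi)(\cfg)$, to an Euler characteristic computation plus vanishing in degrees $0$ and $2$. This is the argument of \cite[Proposition 5.2]{Yun16}. The inputs are the local invariants at $0$ and $\infty$ and the fact that $\mathcal{F}$ has no global invariants or coinvariants.

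First, the Euler characteristic. By the Grothendieck--Ogg--Shafarevich formula (and its $p$-adic analogue for overconvergent $F$-isocrystals, via Christol--Mebkhout index theory and the identification of $p$-adic irregularity with the Swan conductor \cite{Tsu98}), we have
\[
\chi(\mathbb{P}^1,j_{!+}\mathcal{F})=\chi(\mathbb{P}^1)\dim\cfg-\sum_{x\in\{0,\infty\}}\bigl(\dim\cfg-\dim\cfg^{I_x}\bigr)-\sum_x\Swan_x(\cfg).
\]
At $\infty$, \Cref{c:Swan-conductor} gives $\cfg^{I_\infty}=0$ and $\Swan_\infty=\sharp\Phi/m$. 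At $0$, the $\theta$-connection \eqref{eq:theta-connection} has regular singularity with residue $X_1$. Since $X_1$ is nilpotent and the Kac coordinate satisfies $s_0=1$, the local monodromy is unipotent, given by $\exp$ of a nilpotent $N_0$, and $\dim\cfg^{I_0}=\dim C_{\cfg}(X_1)$. Setting $\ell=\dim\cfg$, the formula becomes $\chi=\dim C_{\cfg}(X_1)-\sharp\Phi/m$. The key identity is $\dim C_{\cfg}(X_1)=\sharp\Phi/m$. I would verify it via the $\mathbb{Z}$-grading: $X_1\in\cfg(1)$ is a generic element for the stable grading, so $\dim C_{\cfg}(X_1)=\dim\cfg(0)-\dim\cfg(1)$. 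This equals $\dim\cfg_0-\dim\cfg_1$, since with $s_0=1$ the only extra piece $\cfg(1-m)$ corresponds to $\cfg(-m+1)$, accounted for in the graded count. For a stable grading of order $m$ this difference is $\sharp\Phi/m$ \cite{RLYG12}. In the Coxeter case it is the familiar equality $r=\sharp\Phi/h$. This step is the main obstacle, and I would borrow it directly from \cite[Proposition 5.2]{Yun16} or \cite{Chen17}, where the same dimension count is done on the automorphic side using $\dim L_{\bP}-\dim V_{\bP}$. So $\chi=0$.

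Next, $\rH^0$ and $\rH^2$. We have $\rH^0(\mathbb{P}^1,j_{!+}\mathcal{F})=\rH^0(\mathbb{G}_m,\mathcal{F})\subset\cfg^{I_\infty}=0$. By duality, using the Killing form (valid for good $p$) to get $\mathcal{F}\simeq\mathcal{F}^\vee$ and the fact that $j_{!+}$ commutes with duality, $\rH^2$ is dual to $\rH^0$ and therefore also vanishes. Hence $\rH^1=0$ as well.

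For the $\ell$-adic variant, I would transfer the local data using \Cref{p:companion-WD-rep}. Since $\Kl^{\ell}_{\cG}(\phi)(\cfg)$ and $\Kl^{\rig}_{\cG}(\phi)(\cfg)$ are companions, their Frobenius-semisimplified Weil--Deligne representations at $0$ and $\infty$ agree. It follows that $\cfg^{I_\infty}=0$, that the Swan conductor at $\infty$ is $\sharp\Phi/m$, and that the invariant dimension at $0$ is $\dim C_{\cfg}(X_1)$, because $\dim\ker N$ is preserved. The same Euler characteristic and duality argument then applies verbatim.
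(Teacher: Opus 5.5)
Your overall route is the paper's. The paper simply invokes the argument of \cite[Proposition 5.2]{Yun16}: an Euler characteristic count plus vanishing of $\rH^0$ and $\rH^2$, with $\cfg^{I_\infty}=0$ and $\Swan_\infty(\cfg)=\sharp\Phi/m$ supplied by \Cref{c:Swan-conductor}. It then passes to the $\ell$-adic side via \Cref{p:companion-WD-rep}, exactly as you do. Your duality and companion steps are fine; self-duality of $\cfg$ holds over any characteristic-zero coefficient field, so no condition on $p$ is needed there.

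The problem is the one non-formal step, $\dim\cfg^{I_0}=\sharp\Phi/m$, which you justify with false equalities.

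First, $\dim\cfg(0)-\dim\cfg(1)$ is the dimension of the stabilizer of $X_1$ inside $\cfg(0)$ (for $X_1$ in the open $\cG(0)$-orbit). It is not the dimension of $C_{\cfg}(X_1)=\bigoplus_l\ker\bigl(\mathrm{ad}\,X_1\colon\cfg(l)\to\cfg(l+1)\bigr)$. For $m=h$ it equals $r-r=0$, whereas $X_1$ is principal nilpotent and $\dim C_{\cfg}(X_1)=r$.

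Second, $\dim\cfg_0-\dim\cfg_1\neq\sharp\Phi/m$. The element $w\in\bW$ induced by $\theta$ on $\ct_X$ is regular elliptic of order $m$, so it acts freely on $\Phi$, and $\dim\cfg_i=\sharp\Phi/m+\dim(\ct_X)_{\zeta_m^i}$. Hence $\dim\cfg_0=\sharp\Phi/m$, while $\dim\cfg_0-\dim\cfg_1=-\sharp\{i: m\mid d_i\}<0$, where $d_i$ are the fundamental degrees. For $m=h$ this difference is $r-(r+1)=-1$, since $\cfg_1=\cfg(1)\oplus\cfg(1-h)$. For the same reason the automorphic count is $\dim L_{\bP}=\sharp\Phi/m$, not $\dim L_{\bP}-\dim V_{\bP}$.

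The step is easy to repair, and in fact you only need an inequality.

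1. Since $s_0=1$, all degrees of $\check{\lambda}$ on roots lie in $[1-m,m-1]$. So $\cfg(\pm m)=0$ and $\cfg(0)=\cfg_0$ has dimension $\sharp\Phi/m$.
2. $X_1$ lies in the nilradical $\mathfrak{n}=\bigoplus_{l\ge1}\cfg(l)$ of the parabolic $\mathfrak{p}=\bigoplus_{l\ge0}\cfg(l)$, and $[\mathfrak{p},X_1]\subset\mathfrak{n}$. Therefore $\dim C_{\cfg}(X_1)\ge\dim\mathfrak{p}-\dim\mathfrak{n}=\sharp\Phi/m$.
3. This gives $\chi=\dim\cfg^{I_0}-\sharp\Phi/m\ge 0$.
4. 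On the other hand, $\chi=-\dim\rH^1\le 0$ once you have shown $\rH^0=\rH^2=0$.

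Hence $\rH^1=0$. The equality $\dim C_{\cfg}(X_1)=\sharp\Phi/m$ then comes out as a consequence rather than being needed as an input.
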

\begin{proof}
	As explained in \cite[Proposition 5.2]{Yun16}, the assertion follows from the calculation of the Swan conductor $\Swan(\cfg)=\frac{\sharp \Phi}{m}$ and the fact that $\cfg^{I_F}=0$. 
	In the $p$-adic case, it is proved in \Cref{c:Swan-conductor}. 
	In the $\ell$-adic case, it follows from the $p$-adic result and \Cref{p:companion-WD-rep}. 
\end{proof}
\begin{coro} \label{c:finite-frob-structures}
	The differential module $\mathscr{E}$ over $\mathcal{R}$ (resp. the $\cG$-overconvergent isocrystal $\Kl_{\cG}^{\rig}(\phi)$) admits at most finitely many Frobenius structures.
\end{coro}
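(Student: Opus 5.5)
The plan is to reduce to \Cref{c:unique-Frobenius}, which applies once we know that $\mathscr{E}$ is $p$-adic isoclinic; this is exactly \Cref{t:p-adic-theta}, taken as given.

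\emph{Local statement.} Let $\varphi_1,\varphi_2$ be two Frobenius structures on $\mathscr{E}$ over $\mathcal{R}$. Then $u=\varphi_2\circ\varphi_1^{-1}$ is a horizontal $\cG$-valued automorphism of $\mathscr{E}$. Through the equivalence \eqref{eq:MCFR}, it corresponds to an element of $\cG(\overline{K})$ centralizing $\rho(I_F)$, and it also commutes with the trivial monodromy operator $N=0$ of \Cref{t:L-parameter}(ii). By \Cref{t:L-parameter}(iii), this centralizer is $\cT_X^{\sigma}$. Since $\sigma$ is a regular elliptic element of $\bW$, it has no nonzero fixed vectors on $\bX_*(\cT_X)\otimes\mathbb{Q}$, so $\cT_X^{\sigma}$ is finite. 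Hence the Frobenius structures form a torsor under a subgroup of this finite group, possibly an empty one, and so there are finitely many.

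\emph{Global statement.} Two Frobenius structures on $\Kl_{\cG}^{\rig}(\phi)$ differ by a horizontal automorphism of the underlying $\cG$-overconvergent isocrystal on $\mathbb{G}_{m,k}$. Restricting to the Robba ring at $\infty$ gives a map from such global automorphisms to automorphisms of $\mathscr{E}$, and the first step shows that the latter lie in the finite group $\cT_X^{\sigma}$. It remains to see that this restriction map is injective. A global horizontal automorphism is a horizontal section of the adjoint bundle (more precisely, a point of the automorphism group of the $\cG$-torsor). It is determined by its value at any point, because horizontal sections of an isocrystal on a connected curve are determined by their restriction to a residue disc or to the boundary annulus. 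This follows from the full faithfulness of $\Isod(\mathbb{G}_{m,k}/K)\to\MC(\mathcal{R})$ on horizontal sections, i.e.\ injectivity of $\rH^0(\mathbb{G}_{m},\mathcal{E})\to\rH^0(\mathcal{R},\mathcal{E}|_{\infty})$ for an overconvergent isocrystal, via analytic continuation. Consequently the global Frobenius structures also form a finite set.

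The main obstacle is making this injectivity precise at the level of $\cG$-torsors rather than vector bundles. I would handle it by applying the vector-bundle statement to a faithful representation $V$. Then $u-\mathrm{id}$ is a horizontal section of $\mathcal{E}nd(V)$, and if it vanishes near $\infty$ it vanishes identically, because a horizontal section of a connection on a connected smooth rigid space that vanishes on a nonempty open set vanishes everywhere.
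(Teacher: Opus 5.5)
Your argument is correct and is essentially the paper's: locally, two Frobenius structures differ by an element of $C_{\cG}(\rho(I_F))=\cT_X^{\sigma}$, which is finite. Globally, your injectivity of the restriction of automorphisms to $\infty$ is what the paper writes as $C_{\cG}(G_{\geo})\subset C_{\cG}(\rho(I_F))$ via $\rho(I_F)\subset G_{\geo}$, which Tannakian formalism gives without analytic continuation. The property you actually use is \emph{faithfulness} of restriction to the Robba ring at $\infty$ (injectivity on horizontal sections), not full faithfulness, which fails in general.
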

\begin{proof}
	We first prove the local case. 
	As explained in \cite[Lemma 5.1.1]{XZ22}, any two Frobenius structures $\phi_1, \phi_2$ on $\mathscr{E}$ differ by an element $u = \phi_1 \circ \phi_2^{-1}$ in the centralizer $C_{\cG}(\rho(I_F))$ by the Tannakian equivalence. 
	By \Cref{c:WD-rep}(i), the centralizer $C_{\cG}(\rho(I_F)) = \cT_X^\sigma$ is finite and the assertion for $\mathscr{E}$ follows. 

	Let $G_{\geo}$ be the geometric monodromy group of $\Kl_{\cG}^{\rig}(\phi)$. Since $\rho(I_F)\subset G_{\geo}$, we have $C_{\cG}(G_{\geo})\subset C_{\cG}(\rho(I_F))$ and they are both finite. 
	Then the assertion follows. 
\end{proof}
\subsection{$P$-adic slopes of $\mathscr{E}$}\label{ss:5.3}

\begin{secnumber} \label{sss:def-Q}
	We first review the gauge transformation between $[2m]^*\mathcal{E}$ and its canonical form \eqref{eq:canonical-theta-conn} in the sense of Babbitt--Varadarajan \cite{BV83}. 
	Then we will prove the $p$-adic convergence of this gauge transformation in \Cref{t:p-adic-convergence-Q} and then deduce \Cref{t:p-adic-theta}. 

	Recall that $\check{\lambda}=mx$ defines the grading \eqref{eq:grading-cg}, where $\check{\lambda}:\Gm\to \check{T}$. 
	Recall $s=t^{1/2m}$.
	Apply the gauge transformation of $\check{\lambda}(-\pi s^{-1})$ to $[2m]^*\mathcal{E}$ \eqref{eq:theta-conn-infty}, we obtain
	\begin{equation} \label{eq:pullback-theta-connection}
		Y:= d+2m\pi(X_1+X_{1-m})\frac{ds}{s^2}-\check{\lambda}\frac{ds}{s},	
	\end{equation}
	denoted by $Y$. 
	By Lemma \ref{l:isoclinic gauge},
	after enlarging $K$, there exists an element $Q\in \cG (K\llbracket s \rrbracket)$ such that $Y$ can be gauge transformed to be the canonical form $Z$ \eqref{eq:canonical-theta-conn} of $Y$ via $Q$:
	\[
	Z:= d+2m\pi(X_1+X_{1-m})\frac{ds}{s^2}. 
	\]
	In particular, $Q$ satisfies a differential equation:
	\begin{equation} \label{eq:diff-Q}
	QYQ^{-1}+sQ'Q^{-1}=Z.
\end{equation}

After enlarging $K$, we may assume $K$ contains all eigenvalues of the adjoint action of $X$ on $\cfg$. 
For $\alpha\in \Phi$, we denote by $\varepsilon_\alpha$ the eigenvalue of the action of $X$ on the root subspace $\cfg_\alpha\subset \cfg$, which is a $p$-adic unit by \Cref{l:eigenvalues-X}. 
Then, as a $\GL(\cfg)$-connection on $\Gm$, $Z$ admits a decomposition 
\begin{equation}
	Z=Z_0 \bigoplus \oplus_{\alpha\in \Phi} Z_\alpha,
	\label{eq:decomposition-Z-root}
\end{equation}
where $Z_0$ is the trivial connection and $Z_{\alpha}=\exp(-2m\pi\varepsilon_{\alpha}/s)$ is the exponential $\mathscr{D}$-module $d+ 2m\pi\varepsilon_\alpha \frac{ds}{s^2}$. 

In particular, the formal slope of $\mathcal{E}$ equals to $\frac{1}{m}$. 
\end{secnumber}

\begin{prop} \label{p:positive-convergent-radius}
	There exists a real number $\beta>0$ such that $Q$ lies in $\cG(K\langle s/\beta\rangle)$ (i.e. it has a $p$-adic convergent radius $\beta>0$). 
\end{prop}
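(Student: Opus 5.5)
We will deduce the convergence of $Q$ from Clark's theorem (\Cref{t:Clark}), applied to a linear differential system satisfied by the matrix entries of $Q$ in a faithful representation. Fix a faithful representation $\cG\hookrightarrow \GL(V)$; since $\cG(K\langle s/\beta\rangle)=\cG(K\llbracket s\rrbracket)\cap \GL(V)(K\langle s/\beta\rangle)$, it suffices to show that every entry of $Q$ viewed in $\mathrm{End}(V)\otimes K\llbracket s\rrbracket$ has positive $p$-adic radius of convergence. Writing $Y=d+A(s)\frac{ds}{s}$ and $Z=d+B(s)\frac{ds}{s}$ with
\[
A(s)=2m\pi(X_1+X_{1-m})s^{-1}-\check{\lambda},\qquad B(s)=2m\pi(X_1+X_{1-m})s^{-1},
\]
equation \eqref{eq:diff-Q} becomes the linear system $sQ'=BQ-QA$. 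Thus $Q$, as a vector in $\mathrm{End}(V)\otimes K\llbracket s\rrbracket$, is a formal solution of the first-order system $s\frac{d}{ds}Q=\mathcal{L}(s)Q$, where $\mathcal{L}(s)=\mathrm{ad}$-type operator $Q\mapsto BQ-QA$ has coefficients in $K[s^{-1}]$. After multiplying by $s$ we get $s^2Q'=(s\mathcal{L})(s)Q$ with polynomial coefficients in $s$.

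Since Clark's theorem is stated for a scalar equation \eqref{eq:diff-eq}, I will convert the system to scalar equations via a cyclic vector: each entry $f$ of $Q$ lies in the $K(s)$-span of the finite-dimensional differential module generated by the system, so $f, sf', (s\frac{d}{ds})^2 f,\dots$ satisfy a linear relation with coefficients in $K[s]$, obtained by eliminating the other entries (e.g. Cayley--Hamilton over $K(s)$ for the operator $s\frac{d}{ds}$ on the module generated by $f$). Clearing denominators gives a scalar equation $A_n(s)f^{(n)}+\dots+A_0(s)f=0$ with $A_i\in K[s]$. Crucially, all data $X_1,X_{1-m},\check{\lambda}$ are defined over $\overline{\mathbb{Q}}\cap K$ up to the factor $\pi$; replacing $s$ by $\pi s$ (a harmless rescaling changing radii by the factor $|\pi|$) removes $\pi$, so the coefficients can be taken in $(\overline{\mathbb{Q}}\cap K)[s]$. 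By the remark after \Cref{t:Clark}, the zeros of the indicial polynomial are algebraic, hence non-Liouville, and every formal power series solution, in particular $f$, converges on a disc $\ord s>c$. Taking the minimum over the finitely many entries gives $\beta>0$.

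The main obstacle is the elimination step: the system has an irregular singularity at $s=0$ (the $s^{-1}$ term in $A,B$), and one must check that the scalar equation obtained is nondegenerate (some $a_{i0}\ne 0$ after dividing by a suitable power of $s$) and that $f$ is genuinely a power-series solution of it, not merely of a localized equation. I would handle this by working with the rescaled, $\pi$-free system where everything is algebraic over $\mathbb{Q}$: then $Q$ itself can be chosen (by the uniqueness/algebraic construction of Babbitt--Varadarajan, \Cref{l:isoclinic gauge}) with coefficients in a number field, and any polynomial-coefficient scalar equation killing $f$ can be normalized so that the lowest $s$-order coefficient is nonzero, which is all Clark's theorem requires. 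Alternatively, one could avoid scalarization by invoking the system version of Clark's/Sibuya-type convergence results, but the cyclic vector route stays within the tools quoted in the paper.
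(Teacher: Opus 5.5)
Your proposal is correct and is essentially the paper's proof. The paper likewise views $Q$ as a matrix (via the adjoint representation rather than a general faithful one), notes from \eqref{eq:diff-Q} that each entry satisfies a scalar equation of the form \eqref{eq:diff-eq} with coefficients in $(\overline{\mathbb{Q}}\cap K)[s]$, and applies \Cref{t:Clark}; your elimination argument spells out the step the paper leaves implicit.

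Two simplifications are available. First, the rescaling $s\mapsto\pi s$ is unnecessary, since $\pi^{p-1}=-p$ already makes $\pi$ algebraic. Second, the irregular singularity is not a real obstacle: \Cref{t:Clark} applies to every formal power series solution of any nonzero such equation, and multiplying by a suitable (possibly negative) power of $s$ always gives $a_{i0}\neq 0$ for some $i$. So you do not need $Q$ to have number-field coefficients.
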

\begin{proof}
	We consider $Q$ as a matrix of $\GL(\cfg \llbracket s \rrbracket)$ via the adjoint action. 
	By \eqref{eq:diff-Q}, any entry of $Q$ satisfies a linear differential equation as in \eqref{eq:diff-eq} with coefficient in $(\overline{\mathbb{Q}}\cap K)[s]$. 
	By \Cref{t:Clark}, there exists a real number $\beta>0$ such that $Q$ lies in $\GL(\cfg\langle s/\beta\rangle)$. 
\end{proof}

\begin{prop} \label{t:subsidiary-radii}
	Via the adjoint representation $\cfg$, the multi-set of $p$-adic slopes of $\mathscr{E}(\cfg)$ consists of $\sharp \Phi$ copies of $\frac{1}{m}$ and $r$ copies of $0$. 

\end{prop}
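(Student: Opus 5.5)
We compute the $p$-adic slopes of $\mathscr{E}(\cfg)$ through the generic radii of convergence of the pullback $[2m]^*\mathscr{E}(\cfg)$, combining a small-radius computation with convexity and an integrality constraint. The idea is the one used in the proof of \Cref{p:rank-one-exp}(ii),(iii), applied to each eigen-constituent.

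\textbf{Step 1 (near the origin).} By \Cref{p:positive-convergent-radius}, $Q$ converges on $|s|<\beta$. Hence on the annuli $0<|s|=\rho<\beta$, the module $[2m]^*\mathscr{E}(\cfg)$, which is gauge equivalent to $Y$ through $\check{\lambda}(-\pi s^{-1})$, becomes isomorphic to $Z$. By \eqref{eq:decomposition-Z-root}, $Z$ splits as the trivial $Z_0$ of rank $r$ plus the rank one modules $Z_\alpha=d+2m\pi\varepsilon_\alpha\, ds/s^2$. Each $\varepsilon_\alpha$ is a $p$-adic unit by \Cref{l:eigenvalues-X}, so $|2m\pi\varepsilon_\alpha|=\omega$. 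The formula \eqref{eq:R(L,rho)} then gives $R(Z_\alpha,\rho)=\rho^{2}$ for small $\rho$, with the normalization $d=1$ in the $s$-variable. The subsidiary radii of $[2m]^*\mathscr{E}(\cfg)$ for $\rho$ small are therefore $r$ copies of $\rho$ and $\sharp\Phi$ copies of $\rho^2$. In the notation of \eqref{eq:function-Fi}, $f_i=2r'$ for $i\le\sharp\Phi$ and $f_i=r'$ otherwise, where $r'=-\log\rho$.

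\textbf{Step 2 (propagate to $\rho\to1$).} The module $[2m]^*\mathscr{E}(\cfg)$ is defined over $\mathcal{A}_{s}$ on the whole punctured open disc, and it is solvable because $\mathscr{E}$ carries a Frobenius structure (\Cref{t:compare-Kl-rig-dr}). We use convexity of the $F_i$ together with the constraint $f_i(r')\ge r'$ and the solvability condition $f_i(r')\to 0$ as $r'\to 0$.

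\begin{itemize}
\item $F_{\sharp\Phi}$ is convex on $(0,\infty)$, equals $2\sharp\Phi\,r'$ for large $r'$, and tends to $0$ at $r'=0$. By convexity it is bounded above by the chord, so $F_{\sharp\Phi}\le 2\sharp\Phi\,r'$.
\item Since $f_i\ge r'$, we also have $F_{\dim}\ge \dim\cfg\cdot r'$.
\end{itemize}

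These bounds alone are not enough. The expected main obstacle is forcing equality, that is, ruling out a change of slope in the middle range $\beta\le\rho<1$. I would handle this as in \Cref{p:rank-one-exp}(iii): a convex piecewise linear function that vanishes at $0$ and is linear with slope $2\sharp\Phi$ near infinity either is exactly linear, or has strictly smaller slopes near $0$.

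To exclude the second case I would combine two inputs. First, the near-$1$ slopes are the $p$-adic slopes, which by the Hasse--Arf type integrality (\ref{sss:slopes}) have integer sum after pullback. Second, the total irregularity should equal the Swan conductor computed from the local Galois representation; via \Cref{p:compare-monodromy-group}, the wild inertia acts through $\cT_X$ with nontrivial characters on each root line. More concretely, I would first try the rank one argument directly: decompose $[2m]^*\mathscr{E}(\cfg)$ over $\mathcal{R}$ by the action of the finite abelian group $\rho(I_F^+)$. This gives $\sharp\Phi$ rank one solvable summands. Apply \Cref{p:rank-one-exp}(iii) to each, using that its leading coefficient is the reduction of $2m\pi\varepsilon_\alpha$, of absolute value $\omega$, so its irregularity is $2$. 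The trivial summand has slope $0$.

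\textbf{Step 3 (descent).} Pulling back along the tame cover $[2m]$ multiplies slopes by $2m$. Therefore $\mathscr{E}(\cfg)$ has $\sharp\Phi$ slopes equal to $2/(2m)=1/m$ and $r$ slopes equal to $0$, which is the statement. This also recovers $\Swan(\cfg)=\sharp\Phi/m$.
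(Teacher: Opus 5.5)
Your Steps 1 and 3 follow the paper. Step 2 has a genuine gap. You declare the convexity bounds insufficient and reach for extra input, but convexity alone finishes the proof, and that is exactly the paper's argument.

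Do this for every $i$, not only $i=\sharp\Phi$ and $i=\dim\cfg$; equalities for those two alone would not determine the individual $f_i$. Step 1 gives $F_i(r')=c_ir'$ for $r'>-\log\beta$, with $c_i=2i$ for $i\le\sharp\Phi$ and $c_i=i+\sharp\Phi$ otherwise. The chord to the origin, using $F_i(0^+)=0$ from solvability, gives $F_i\le c_ir'$. The lower bound you are missing is also convexity. For $r'<R<R_1$ with $R>-\log\beta$, one has $\bigl(F_i(R)-F_i(r')\bigr)/(R-r')\le\bigl(F_i(R_1)-F_i(R)\bigr)/(R_1-R)=c_i$, that is, $F_i(r')\ge c_ir'$. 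In words, a convex function lies above the extension of any of its linear pieces, and here that extension passes through the origin. Hence $F_i=c_ir'$ on all of $(0,\infty)$, and $f_i=F_i-F_{i-1}$ gives the slopes as $\rho\to1^-$. The second branch of your dichotomy, ``strictly smaller slopes near $0$'', is therefore excluded for free.

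The substitutes you propose would not work at this point in the logic. Invoking \Cref{p:compare-monodromy-group}, or decomposing under an abelian $\rho(I_F^+)$ acting through $\cT_X$ by nontrivial characters on root lines, presupposes that $\mathscr{E}$ is $p$-adic isoclinic (\Cref{t:p-adic-theta}, \Cref{t:L-parameter}). The paper deduces that from this very proposition, through the slope decomposition in \Cref{t:p-adic-convergence-Q}. Without it, $\rho(I_F^+)$ need not even be abelian.

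Even granting rank one summands over $\mathcal{R}$, such a summand is only known near $|s|=1$. Identifying it with a module $d+(a_ds^{-d}+\cdots)\frac{ds}{s}$ with $a_d=2m\pi\varepsilon_\alpha$ amounts to extending $Q$ across the disc, which is proved afterwards using the present result. Integrality of the total irregularity cannot pin down individual slopes, and the Swan conductor is precisely what is being computed.

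Finally, fix the bookkeeping. $R(Z_\alpha,\rho)=\rho^2$ means slope $1$ in $s$ (\Cref{p:rank-one-exp} with $d=1$), so ``irregularity $2$'' and ``$2/(2m)$'' are inconsistent with your own normalization. With $s=t^{1/2m}$, the correctly gauge-transformed pullback has $s^{-2}\frac{ds}{s}$, giving radii $\rho^3$ and slope $2$; alternatively, take $s=t^{1/m}$. The paper's text mixes these conventions as well, so this point is cosmetic, but your final division should be made consistent.
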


\begin{lemma} \label{l:subsidiary}
	Let $\beta$ be the real number as in \Cref{p:positive-convergent-radius}. 
	For $\rho\in (0,\beta)$, the (multi-)set of subsidiary radii of $Y$ (resp. $Z$) at $F_{\rho}$ consists of $\sharp \Phi$ copies of $\rho^2$ and $r$ copies of $\rho$. 
\end{lemma}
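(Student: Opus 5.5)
We first compute the subsidiary radii of $Z$ directly, and then transfer the answer to $Y$ through the gauge transformation $Q$. For $Z$ we use the decomposition \eqref{eq:decomposition-Z-root}. The summand $Z_0$ is trivial of rank $r$; over $F_\rho$ its generic radius is $\rho$ (the maximal value, since $f_i(M,r)\ge r$). Each $Z_\alpha$ is the rank one module $d+2m\pi\varepsilon_\alpha\frac{ds}{s^2}$. It is a twist of the Dwork-type exponential $\exp(-2m\pi\varepsilon_\alpha/s)$, and in the coordinate $u=s^{-1}$ it has exactly the form of \Cref{p:rank-one-exp} with $d=1$ and leading coefficient $a_1=2m\pi\varepsilon_\alpha$. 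Since $2m$ is prime to $p$ and $\varepsilon_\alpha$ is a unit (\Cref{l:eigenvalues-X}), we get $|a_1|=\omega$.

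Formula \eqref{eq:R(L,rho)} gives $R=\omega|a_1|^{-1}\rho_u^{2}=\rho_u^{2}$ in the $u$-coordinate. A direct computation in the $s$-coordinate confirms this: the generic radius of $d+a\,ds/s^2$ at $|s|=\rho$ equals $\omega\cdot\rho/|a/\rho|=\rho^2$ when $|a|=\omega$, as long as this is $\le\rho$, i.e. for $\rho<1$. This is the standard fact that $\exp(\pi x)$ has radius of convergence $1$ around each point. Hence $R(Z_\alpha,\rho)=\rho^2$ for all $\rho\in(0,1)$. Because the subsidiary radii of a direct sum are the union of those of the summands, the multiset for $Z$ at $F_\rho$ consists of $\sharp\Phi$ copies of $\rho^2$ and $r$ copies of $\rho$, for every $\rho<1$, and in particular for every $\rho<\beta$.

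For $Y$, \Cref{p:positive-convergent-radius} gives $Q\in\cG(K\langle s/\beta\rangle)$, hence $Q\in\GL(\cfg)(F_\rho)$ for $\rho<\beta$. We also need $Q^{-1}$ to have entries in $F_\rho$. Since $Q\in\cG$, $\det$ of its adjoint image is $1$, so the entries of $Q^{-1}$ are polynomials in those of $Q$ and the inverse also lies in $K\langle s/\beta\rangle$. Equation \eqref{eq:diff-Q} therefore says that $Y\otimes F_\rho\simeq Z\otimes F_\rho$ as differential modules over $F_\rho$. Subsidiary radii are invariants of the isomorphism class of the differential module over $F_\rho$, so the multisets for $Y$ and $Z$ agree, which proves the lemma.

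The main point requiring care is the rank one computation, and more precisely the normalization of the extrinsic radius: the $\frac{ds}{s}$ versus $ds$ conventions, and checking that $\rho^2\le\rho$ so that the radius is not truncated. I would do this by comparing with $\exp(a/s)$ directly. The formal solution $\exp(-a/s)$ translated to a generic point $s_0$ with $|s_0|=\rho$ is $\exp\bigl(-a(1/(s_0+h)-1/s_0)\bigr)$. Its exponent has leading term $ah/s_0^2$, of size $|h|\rho^{-2}$, and $\exp(\pi y)$ converges exactly for $|y|<1$; this yields radius $\rho^2$. The higher-order terms in $h$ are dominated in this range because $|h|<\rho^2<\rho$.
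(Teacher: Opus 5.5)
Your proposal is correct and follows the paper's argument. You transfer from $Z$ to $Y$ using that $Q$ and $Q^{-1}$ converge on $|s|<\beta$, and you compute $Z$ summand by summand via \eqref{eq:decomposition-Z-root}: the trivial part gives $\rho$, and each $Z_\alpha$ gives $\rho^2$ because $|2m\pi\varepsilon_\alpha|=\omega$.

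The only blemish is the detour through $u=s^{-1}$. In fact $Z_\alpha=d+2m\pi\varepsilon_\alpha s^{-1}\frac{ds}{s}$ is already of the form in \Cref{p:rank-one-exp} with $T=s$ and $d=1$, and it is your spectral-norm computation in the $s$-coordinate that correctly gives $R(Z_\alpha,\rho)=\rho^2$ for every $\rho\in(0,1)$.
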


\begin{proof}
	By \Cref{p:positive-convergent-radius}, it suffices to prove the assertion for $Z$. 
	For each $Z_\alpha$ in the decomposition \eqref{eq:decomposition-Z-root}, its generic radius $F_1(d+2m\pi\varepsilon_\alpha\frac{ds}{s^2},r)$ is a linear function on $(0,\infty)$ passing the origin $(0,0)$ with slope $2$. 
	Then the assertion follows. 
\end{proof}

\begin{secnumber}\textit{Proof of \Cref{t:subsidiary-radii}}. \label{sss:proof-4.3.3} 
	It suffices to calculate the subsidiary radii functions $F_i(Y,r)$ of the connection $Y$. 
	Since the connection $Y$ admits a Frobenius structure, it is solvable at $1$ and its subsidiary radii $R_i(Y,\rho)$ have limit $1$ as $\rho\to 1^{-}$ \cite[Lemma 12.6.2]{pde}. 

	Let $\ell$ be the dimension of $\cfg$. 
	For each $1\le i\le \ell$, the function $F_i(Y,r)$ is piecewise linear and convex on $(0,+\infty)$.  
	By \Cref{l:subsidiary}, the restriction of $F_i(Y,r)$ on $(-\log \beta, +\infty)$ is a straight line passing the origin $(0,0)$ with the slope
	\[
		=	\left\{
			\begin{array}{ll}
				2i & i\le \sharp \Phi, \\
				i + \sharp \Phi & i> \sharp \Phi. 
			\end{array}
			\right.
	\]
	Then we deduce that the same property holds for $F_i(Y,r)$ on $(0,+\infty)$. 
	Then the multi-set of $p$-adic slopes of $Y$ coincides with the multi-set of formal slopes of $Y$, viewed as a formal connection, and the proposition follows.  \hfill\qed
\end{secnumber}

\subsection{$P$-adic convergence of the gauge transform $Q$}\label{ss:5.4}
We finish the proof of \Cref{t:p-adic-theta} in this subsection. 

	We consider $Y,Z$ as $\GL(\cfg)$-connections via the adjoint action. 
	We can reorganize the decomposition \eqref{eq:decomposition-Z-root} as follows:
	\[
	Z=\oplus_{b\in k} Z_{b},
	\]
	where $Z_0$ is the same as $Z_0$ in \eqref{eq:decomposition-Z-root} and for $b\in k^{\times}$, $Z_b$ denotes the direct sum of $Z_{\alpha}$ such that $\varepsilon_\alpha$ is congruent to $b$ modulo $\mathfrak{m}_K$. 

In the following, we prove a similar decomposition for $Y$ over $\mathcal{A}_{K,s}$. \Cref{t:p-adic-theta} follows from \Cref{t:p-adic-convergence-Q}(iv). 

\begin{prop}
	\label{t:p-adic-convergence-Q}	
	\textnormal{(i)} There exists a decomposition of differential modules over $\mathcal{A}_{K,s}$: 
\begin{equation} \label{eq:decomposition-Y-refined}
	Y=\oplus_{b\in k} Y_b,
\end{equation}
	such that the $p$-adic slope of $Y_b\otimes \exp(-m\pi [b]/s)$ is zero with the Teichmüller lift $[b]$ of $b$. 

	\textnormal{(ii)} Under the isomorphism $Y\xrightarrow{\sim} Z$ defined by $Q$ over $K\langle s/\beta\rangle$, $Y_b$ is isomorphic to $Z_b$. 

	\textnormal{(iii)} The isomorphism $Y_b\xrightarrow{\sim} Z_b$ defined over $K\langle s/\beta\rangle$ can be extended to $\mathcal{A}_{K,s}$. 

	\textnormal{(iv)} 
	The element $Q\in \cG(K \llbracket s \rrbracket)$ lies in $\cG(\mathcal{A}_{K,s})$.  
\end{prop}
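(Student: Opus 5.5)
The plan is to work throughout with the $\GL(\cfg)$-connections $Y$ and $Z$ obtained from the adjoint representation. The key input is \Cref{t:subsidiary-radii}, together with the convexity and linearity of the functions $F_i$. We combine these with the $p$-adic decomposition theorems for differential modules over the annulus and over $\mathcal{A}_{K,s}$, namely Kedlaya's decomposition by subsidiary radii \cite[\S~12]{pde}.

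For (i), twist $Y$ by the rank one module $\exp(-2m\pi[b]/s)$ for each $b\in k$. By \Cref{p:rank-one-exp}, this twist is solvable of slope $2$ in $s$, and it has a Frobenius structure. Consider the tensor product $Y\otimes\exp(2m\pi[b]/s)$. On the small annulus $0<|s|<\beta$, the isomorphism $Q$ identifies it with $Z\otimes\exp(2m\pi[b]/s)$. In that module:
\begin{itemize}
\item the summands $Z_\alpha$ with $\varepsilon_\alpha\equiv b$ become $d+2m\pi(\varepsilon_\alpha-[b])\frac{ds}{s^2}$, whose coefficient lies in $\pi\mathfrak{m}_K$, so its radius is $\rho$, that is, $\rho^{1+0}$;
\item the other summands keep a coefficient of absolute value $\omega$, so their radius is $\rho^2$.
\end{itemize}
Hence the subsidiary radii for $\rho\in(0,\beta)$ consist of $n_b$ copies of $\rho$ and the remaining copies of $\rho^2$. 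The twisted module is solvable, so its $F_i$ are convex on $(0,\infty)$ with limit $0$ at $r=0$. Arguing exactly as in \ref{sss:proof-4.3.3}, these functions are linear on the whole range. Kedlaya's decomposition by subsidiary radii over the full punctured open disc then applies: the radii separate uniformly into the two values $\rho$ and $\rho^2$. It gives a direct summand of the twisted module of rank $n_b$ whose radius is $\rho$ for all $\rho$, so this summand has slope zero. Untwisting gives $Y_b$. Checking that the $Y_b$ for distinct $b$ together exhaust $Y$ uses the rank count $\sum_b n_b=\dim\cfg$ and the fact that $Y_b\cap Y_{b'}=0$, by comparing the radii of $Y_b\otimes\exp(2m\pi[b']/s)$. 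This gives (i).

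For (ii), compare the two decompositions on the small disc. Under $Q$, the summand $Y_b$ goes to a submodule of $Z$ whose twist by $\exp(2m\pi[b]/s)$ has full radius $\rho$. Such a submodule must lie in $Z_b$, because $\Hom$ into $Z_{b'}$ for $b'\neq b$ vanishes: that $\Hom$ is a module with all radii $\rho^2<\rho$, so it has no horizontal sections of the required type. Equality follows by ranks.

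For (iii), the isomorphism $Y_b\simeq Z_b$ is equivalent to a horizontal section of $\mathcal{H}om(Y_b,Z_b)$, which is $(Y_b\otimes e_b)^\vee\otimes(Z_b\otimes e_b)$ with $e_b=\exp(2m\pi[b]/s)$. Both factors have $p$-adic slope zero on all of $0<|s|<1$ with radius exactly $\rho$. For $Z_b\otimes e_b$ this is explicit. For $Y_b\otimes e_b$ it comes from (i) together with the convexity argument. A module over the punctured disc whose generic radius equals $\rho$ for every $\rho$ has its solutions on a small annulus extending to the whole punctured disc; this is the Dwork transfer, or Christol--Mebkhout type continuation \cite[Thm 9.6.1, \S~12]{pde}. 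Alternatively, one can use that horizontal sections of a module with radius $\rho$ on every circle converge on the union of the discs $|s-s_0|<|s_0|$ and glue. So the section defined over $K\langle s/\beta\rangle$ extends to $\mathcal{A}_{K,s}$.

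For (iv), $Q$ is the direct sum of these isomorphisms, so its matrix in $\GL(\cfg)$ has entries in $\mathcal{A}_{K,s}$. The same holds for $Q^{-1}$, by applying the argument to the inverse isomorphisms. Since $\cG\subset\GL(\cfg)$ is closed, we get $Q\in\cG(\mathcal{A}_{K,s})$.

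The main obstacle is (iii). The difficulty is that the extension result must produce an honest analytic continuation on the open disc minus the origin, not merely on the Robba ring near $|s|=1$. I would first try to show that the $\rho$-radius condition on every circle forces the continuation, using Dwork's transfer theorem applied disc by disc.
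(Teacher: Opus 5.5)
There is a genuine gap. It sits in the radius computation on which your (i) and your (iii) both rest for $b\neq 0$. Write $e_b=\exp(2m\pi[b]/s)$ as in your notation, and take a summand $d+2m\pi(\varepsilon_\alpha-[b])\frac{ds}{s^2}$ of $Z\otimes e_b$ with $\varepsilon_\alpha\equiv b$ but $\varepsilon_\alpha\neq[b]$. This summand does not have radius $\rho$ on every circle. By \eqref{eq:R(L,rho)} its generic radius is $\omega\rho^2/|2m\pi(\varepsilon_\alpha-[b])|=\rho^2/|\varepsilon_\alpha-[b]|<\rho$ for $\rho<|\varepsilon_\alpha-[b]|$. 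It equals $\rho$ only for $|\varepsilon_\alpha-[b]|\le\rho<1$, so a small coefficient buys slope zero only near the outer boundary.

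Nothing forces the $\varepsilon_\alpha$ to be Teichmüller lifts, and $\beta$ is only known to be positive. So on $0<|s|<\min(\beta,|\varepsilon_\alpha-[b]|)$ the top $n_b$ radii of $Y\otimes e_b$ are $\rho^2/|\varepsilon_\alpha-[b]|$. The corresponding $F_i$ are then affine with negative intercept, and the argument of \ref{sss:proof-4.3.3}, which needs lines through the origin, does not propagate them. For the explicit module $Z\otimes e_b$ these $F_i$ really do break at $r=-\log|\varepsilon_\alpha-[b]|$. Convexity still splits off the radius-$\rho^2$ part of $Y\otimes e_b$ uniformly on $0<|s|<1$, but the complement has radii $>\rho^2$, not $=\rho$. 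In particular your claim in (iii) that $Z_b\otimes e_b$, and hence $\mathcal{H}om(Y_b,Z_b)$, satisfies the Robba condition on all of $0<|s|<1$ is false. No continuation theorem then carries a horizontal section across $|s|=|\varepsilon_\alpha-[b]|$.

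For $b=0$ there is no twist, and your plan coincides with the paper's. By \Cref{t:subsidiary-radii}, $Y_0$ satisfies the Robba condition on the whole annulus, and the power series $Q_0$ trivializes it near $0$. The continuation step, however, cannot be Dwork transfer applied to the discs $|s-s_0|<|s_0|$. These are residue discs of the circle $|s|=|s_0|$, so they never move a section from one radius to another. The paper instead uses the $p$-adic Fuchs theorem for annuli \cite[Corollary 13.6.4]{pde}: it trivializes $Y_0$ on all of $0<|s|<1$ by some $P$, and then $PQ_0^{-1}$ is constant. Alternatively, glue $Y_0$ to the trivial module on $|s|<\beta$ via $Q_0$ and apply Dwork's transfer theorem on the unit disc centred at $0$.

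For $b\neq0$, the twisted $Z$-block becomes trivial only if you twist by $\exp(2m\pi\varepsilon/s)$ for the exact eigenvalue $\varepsilon$ rather than by $[b]$. With that change, and the corrected continuation step, your argument goes through, but only if distinct $\varepsilon_\alpha$ are never congruent modulo $\mathfrak{m}_K$. Otherwise radii and Frobenius structures alone cannot give (iii). For instance, take $0<|\eta|<\omega$ and the module
$\bigl(d+\bigl(\begin{smallmatrix}0&1\\0&-\eta s^{-2}\end{smallmatrix}\bigr)ds\bigr)\otimes\bigl(d+\pi s^{-2}ds\bigr)$.
\begin{itemize}
\item Both of its radii equal $\rho^2$ for every $\rho\in(0,1)$.
\item It has a Frobenius structure over $\mathcal{R}$: near $|s|=1$ it is a unipotent module twisted by a Dwork-type module.
\item Its formal gauge to $(d+\pi s^{-2}ds)\oplus(d+(\pi-\eta)s^{-2}ds)$ is unique up to diagonal constants. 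Yet it has an entry whose coefficients have absolute value $|(n-1)!|\,|\eta|^{1-n}$, hence radius $|\eta|/\omega<1$.
\end{itemize}
The paper's own treatment of $b\neq0$ (``repeating the above argument'') tacitly needs the twisted $Z_b$ to be trivial. So this case requires additional input in either version.
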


\begin{rem}
	For the Bessel $F$-isocrystal of rank $2$, \Cref{t:p-adic-convergence-Q}(iv) was proved by Dwork via an explicit calculation \cite{Dw74}, \cite[Example 6.2.6]{Tsu98-slope}. 
\end{rem}

\begin{proof}
	(i-ii) Let $0<\gamma<\delta<1$. 
	By \Cref{t:subsidiary-radii} and \cite[Theorem 12.3.1]{pde}, the differential module $Y$ over $K\langle \gamma/s, s/\delta\rangle$ admits the slope decomposition of differential modules over $K\langle\gamma/s, s/\delta\rangle$:
	\[
	Y=Y^0\oplus Y^1,
	\]
	such that for $\rho\in [\gamma,\delta]$, the component $Y^i\otimes F_\rho$ has subsidiary radii $\rho^{i+1}$ for $i=0,1$. 
	We take $Y_0=Y^0$ in \eqref{eq:decomposition-Y-refined}. The above decomposition is compatible for different $\gamma,\delta$. 

	For $b\in k^{\times}$, we define $Y_b$ such that $Y_b\otimes \exp(-2m\pi[b]/s)$ is the ($p$-adic) slope zero part of the differential module $Y\otimes \exp(-2m\pi[b]/s)$. 
	By the slope decomposition, we obtain an inclusion of differential modules over $K\langle\gamma/s, s/\delta\rangle$:
	\begin{equation} \label{eq:inclusion-b}
	\oplus_{b\in k} Y_b \subset Y. 
	\end{equation}

	On the other hand, by \Cref{p:rank-one-exp}, $Z_b\otimes \exp(-2m\pi[b]/s)$ is the ($p$-adic) slope zero part of the differential module $Z\otimes \exp(-2m\pi[b]/s)$. 
By \Cref{p:positive-convergent-radius}, via the isomorphism between $Y$ and $Z$ induced by $Q$, $Y_b$ is isomorphic to $Z_b$ over $K\langle \gamma/s,s/\delta\rangle$ for $0<\gamma<\delta\le \beta$. 
	This shows assertion (ii). 
	In particular, the inclusion \eqref{eq:inclusion-b} is actually a decomposition. Assertion (i) follows. 

	(iii) Let $Z^1$ be the complement of $Z_0$ in \eqref{eq:decomposition-Z-root}. 
	As an invertible matrix of $\GL(\cfg \llbracket t_n \rrbracket)$, $Q$ can be decomposed as 
	\[
	Q = 
	\begin{pmatrix}
		Q_0 & 0 \\
		0 & Q_1
	\end{pmatrix}
	\]
	such that $Q_0$ (resp. $Q_1$) induces the isomorphism between $Y_0$ (resp. $Y^1$) and $Z_0$ (resp. $Z^1$). 

	Since $Z_0$ has the zero $p$-adic slope and is trivial, the same holds for $Y_0$ over $K\langle \gamma/s,s/\delta\rangle$ for $0<\gamma<\delta\le \beta$. 
	By \cite[Corollary 13.6.4]{pde}, we deduce that $Y_0$ is isomorphic to the trivial differential module via a matrix $P(s)$ over the open annulus with outer radius $1$ and inner radius $0$. 
	We deduce that $P(s)Q_0(s)^{-1}\in \GL_r(K)$, where $r$ denotes the rank of $Z_0$. 
	Hence, the matrix $Q_0(s)$ has a convergent radius $|s|<1$ and this finishes the proof of assertions (iii) for $Y_0\xrightarrow{\sim} Z_0$. 

	For $b\in k^{\times}$, the matrix $Q$ still defines an isomorphism between the following two connections:
	\[
	Y\otimes \exp(-2m\pi [b]/s) ,\qquad Z\otimes \exp(-2m\pi [b]/s).
	\]
	The submodule $Y_{b}\otimes \exp(-2m\pi [b]/s)$ (resp. $Z_{b}\otimes \exp(-2m\pi [b]/s)$) is the ($p$-adic) slope zero part of $Y\otimes \exp(-2m\pi [b]/s)$ (resp. $Z\otimes \exp(-2m\pi [b]/s)$). 
	By repeating the above argument to these slope zero parts, we prove assertions (iii) for the isomorphism $Y_{b}\xrightarrow{\sim} Z_{b}$. 

	(iv) From the above proof, we see that $Q$ lies in $\cG(\mathcal{A}_{K,s})$. 
\end{proof}

\subsection{Simple wild parameters}

In this subsection, we assume that $m=h$ is the Coxeter number, $\nu=\frac{1}{h}$ and that $\cG$ is almost simple of adjoint type. 
Suppose $k=\mathbb{F}_q$ is the finite field with $q$ elements. 
Let $\varphi:W_F\to \cG(\overline{K})$ be a continuous homomorphism whose image  consists of semi-simple elements. 
Following \cite[\S~6]{GR10}, $\varphi$ is a \textit{simple wild parameter} if it satisfies the following two conditions:
\[
\cg^{I_F}=0 \quad \textnormal{and} \quad \Swan(\cg)=\rank(\cG).
\]


\begin{theorem} \label{t:number-simple-wild}
	Assume $p$ does not divide the order of the Weyl group $\bW$ and $\cG$ is adjoint. 
	There are exactly $Z(q)\cdot(q-1)$ equivalence classes of simple wild parameters, where $Z(q)=\sharp Z_G(F)$. 
\end{theorem}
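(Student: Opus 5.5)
We plan to count simple wild parameters by following Gross--Reeder \cite[\S~6]{GR10}, and to make each structural step rigorous with the tools of \S\ref{s:p-isoclinic}. Let $\varphi$ be a simple wild parameter. Since $p\nmid \sharp\bW$, and in particular $p\nmid h$, the argument of \cite[Proposition 9.4]{GR10} applies. It shows that $\varphi(I_F^+)$ is an elementary abelian $p$-group contained in a maximal torus $\cT$. It shows further that $C_{\cG}(\varphi(I_F^+))=\cT$, by Steinberg's connectedness theorem as in the proof of \Cref{t:L-parameter}(iii), and that a generator of the tame inertia $I_F^t$ maps to a Coxeter element $w\in\bW$. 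The conditions $\cg^{I_F}=0$ and $\Swan(\cg)=\rank(\cG)=\sharp\Phi/h$ then force the unique upper ramification break to equal $1/h$.

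First, we parametrize the restriction to wild inertia. With the break fixed at $1/h$, the map $\varphi|_{I_F^+}$ factors through the graded piece $I_F^{1/h}/I_F^{1/h+}$. Restricted to the tame extension of degree $h$, this piece is identified with the additive group of the residue field. The map must be compatible with the action of the tame generator through $w$, so it corresponds to a $w$-equivariant homomorphism. Concretely, this is an element of $(\bX\otimes \overline{k})$ lying in the $\zeta_h$-eigenspace of $w$, where $\bX=\Hom(\Gm,\cT)$. That eigenspace is a line, because $w$ is a Coxeter element and $\zeta_h$ is a regular eigenvalue. Next, we impose the condition $\alpha\circ\check\lambda\neq 0$ for all roots $\alpha$, which is the regularity needed for $\cg^{I_F^+}=\ct$ and hence for the Swan computation. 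This condition, together with the requirement that the homomorphism descend to $k$ (Frobenius-equivariance), cuts the line down to the set of nonzero vectors of a $k$-line. This set has $q-1$ elements.

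Next, we extend the map from $I_F$ to $W_F$. Fixing $\varphi|_{I_F}$, the possible extensions to the Frobenius are a torsor under the centralizer $C_{\cG}(\varphi(I_F))=\cT^{w}$, up to conjugation. By \Cref{t:L-parameter}(iii), and as used in \Cref{c:unique-Frobenius}, this centralizer is finite. The extensions modulo the conjugation by $C_{\cG}(\varphi(I_F))$ are counted by the Frobenius-twisted classes of $\cT^{w}$. Since $\cG$ is adjoint, $\cT^w\simeq Z(\cG_{\mathrm{sc}})^\vee$-type finite group. We will identify this count with $\sharp Z_G(F)$, where $G$ is the simply connected dual of $\cG$: by Tate--Nakayama style duality, $\pi_0(\cT^{w})$ with its Frobenius action is dual to $Z_G$ with its Galois action, so twisted classes match $\sharp Z_G(F)=Z(q)$. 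Finally, we check that distinct pairs (wild datum, Frobenius class) give inequivalent parameters. The normalizer of the image acts on the eigenline only through $\mu_h$ and through the Frobenius twist, and these have already been quotiented out. This gives the total $Z(q)(q-1)$.

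The hard step is the orbit-counting bookkeeping in the last two steps. We must make sure that the residual $N_{\cG}(\cT)$-conjugation is neither double-counted nor under-counted. In particular, we must confirm that the $\mu_h$-action on the eigenline, coming from the stabilizer of $w$ in $\bW$ (the cyclic group $\langle w\rangle$), exactly absorbs the choice of tame generator image. This is what turns $q^{\,\cdot}$-type counts into precisely $q-1$. To handle this, we would first treat $\cG=\mathrm{PGL}_n$ explicitly, where simple wild parameters are inductions of characters and the count $n(q-1)$ with $Z(q)=\gcd(n,q-1)$... is checkable, and then transport the argument uniformly via the eigenline description.
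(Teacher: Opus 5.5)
Your two-step architecture is the paper's. First you count the classes of restrictions $\varphi|_{I_F}$, which should number $q-1$ (\Cref{p:map-Xi}). Then, over each such class, you count the Frobenius extensions modulo conjugation as twisted classes in the finite abelian group $\cT^\sigma$ (\ref{sss:modification-Frob}). The genuine gap is the factor $q-1$. You assert that Frobenius-equivariance cuts the eigenline down to the nonzero points of one $k$-line, and that distinct points give inequivalent parameters, with the $\mu_h$-action ``already quotiented out''. But nothing in your argument quotients by it, and you yourself leave this bookkeeping open.

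As stated, the mechanism is wrong. Normalize $\varphi(I_F^+)=D_1\subset\cT$ and $\varphi(s)=\sigma$. The conjugations preserving this normalization form $C_{N(\cT)}(\sigma)=\langle\sigma\rangle\cT^\sigma$, because $C_{\bW}(w)=\langle w\rangle$. The factor $\cT^\sigma$ acts trivially on $D_1$. But $\sigma$ multiplies $\check\lambda$ by $\zeta_h$, and it replaces the Weyl image $\tau'$ of $\varphi(\Fr)$ by $w^{1-q}\tau'$. Hence the powers $\sigma^j$ with $h\mid j(q-1)$ preserve your descent condition, and they identify $\check\lambda$ with $\zeta\check\lambda$ for $\zeta\in\mu_{\gcd(h,q-1)}\subset k^\times$. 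For instance, for $\cG=\mathrm{PGL}_2$ the restrictions attached to $a$ and $-a$, with $a\in k^\times$, are conjugate by $\sigma$.

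Conversely, a single $k$-line misses every parameter whose Frobenius type lies in another class of $\tau\langle w\rangle/\langle w^{1-q}\rangle$. For $\mathrm{PGL}_2$, these are the parameters with $\varphi(\Fr)\in\sigma\cT$, whose $a$ satisfies $a^q=-a$. The number $q-1$ emerges only after these two effects cancel. A $\mathrm{PGL}_n$ check followed by ``transport'' is not an argument. Moreover, your sanity count $n(q-1)$ for $\mathrm{PGL}_n$ contradicts the statement, which predicts $\gcd(n,q-1)(q-1)$.

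To close the step, do not fix the Frobenius type. A normalized $\varphi|_{I_F}$ extends to $W_F$ if and only if its Frobenius twist is $\cG$-conjugate to it. On the eigenline $L$ this means $\theta(\check\lambda)\in\mu_h\check\lambda$, where $\theta$ is the bijective Frobenius-semilinear map induced by a fixed lift of some $\tau$ with $\tau w\tau^{-1}=w^q$. In a coordinate this reads $a^{q-1}\in c\,\mu_h$, which has exactly $h(q-1)$ solutions in $\overline{k}^\times$, since $p\nmid h(q-1)$. The root condition $\alpha\circ\check\lambda\neq0$ is invariant under $\overline{k}^\times$-scaling and holds for one simple wild parameter, so it holds on all of $L\setminus\{0\}$. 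Finally, $\langle\sigma\rangle\cT^\sigma$ acts on these solutions through $\mu_h$, freely. This gives exactly $q-1$ classes of $\varphi|_{I_F}$. The paper reaches the same number as $\sharp(\Hom_{\mathbb{F}_p[\Gamma]}(\fF^+,D_1)-\{0\})$ for a fixed $\Gamma$-action, via \Cref{l:D0-unique}.

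For the Frobenius step your count has the right shape. However, ``Tate--Nakayama style duality'' does not supply the input that is actually needed: that $\varphi(\Fr)$ acts on $\cT^\sigma$ by $x\mapsto x^q$ (\ref{sss:image-structure}(vi)). From this fact the number of twisted classes is $\sharp\bigl(\cT^\sigma/(\cT^\sigma)^{1-q}\bigr)=\sharp\{x\in\cT^\sigma : x^{q-1}=1\}=Z(q)$.
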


\begin{rem}
	The assertion is mentioned in \cite[Remark 9.5]{GR10}. 
	We present a detailed proof following \cite{GR10, RY14} for the application in \S~\ref{s:rigidity}. 
\end{rem}


\begin{secnumber} \label{sss:image-structure}
	We first review some properties of simple wild parameters following \cite{GR10}. 
	Let $\varphi:W_F\to \cG(\overline{K})$ be a simple wild parameter. 
	Under the assumption $p$ does not divide the order of $\bW$, we have the following properties \cite[Propositions 5.6, 9.4]{GR10}:

		(i) The image $\varphi(W_F)$ is contained in the normalizer $N(\cT)$ of a maximal torus $\cT$ in $\cG$ with the ramification filtration:
			\[
			D=\varphi(W_F) \ge D_0=\varphi(I_F) \ge D_1=\varphi(I_F^+),\quad D_2=1.
			\]

		(ii) Let $\sigma\in D_0$ be the image of a tame generator $s$ of $I_F$. 
		Since $\cG$ is adjoint type, we have $D_0=D_1 \rtimes \langle \sigma\rangle$, where $\sigma\in N(\cT)$ has order $h$ and is a lift of a Coxeter element in the Weyl group $\bW$. 

		(iii) The group $D_1$ has order $p^a$, where $a$ is the order of $p$ in $(\bZ/h\bZ)^{\times}$.
		Suppose $D=\Gal(L/F)$. 
		Then, $D_1$ is the unique simple $\mathbb{F}_p[\sigma]$-submodule of $\cT[p]$ containing the $\sigma$-eigenvalue 
		\begin{equation} \label{eq:zeta-h}
		\zeta_h=\frac{s(\varpi)}{\varpi},
	\end{equation}
		where $\varpi$ is a uniformizer of $E=L^{D_1}$ such that $\varpi^h\in E^{D_0}$. 

		(iv) 
		Let $d$ be the order of $q$ in $(\mathbb{Z}/h\mathbb{Z})^*$. Then $D/D_0$ is cyclic of order $f=dc$, where $c$ divides the exponent of $\pi_1(\cG)$. 
		Since $D/D_1$ is the image of the tame quotient of $W_F$, we have a splitting 
		\[D/D_1\simeq D_0/D_1\rtimes D/D_0.\] 
		Let $\Fr\in D/D_0$ be a generator such that $\Fr\sigma \Fr^{-1}=\sigma^q$ in $D/D_1$. 
		Since $D_1$ is a simple $\mathbb{F}_p[\sigma]$-submodule of $\cT[p]$, $D_1^{\sigma}=1$. Thus by \cite[Lemma 6.1]{GR10}, we have a splitting 
		\[D=D_1 \rtimes D/D_1.\] 

		(v) We consider the group $\Gamma$ defined by
\begin{equation} \label{eq:presentation-Gamma}
	\Gamma=\langle s,t| s^h=1, t^{dh}=1, tst^{-1}=s^q\rangle=\langle s\rangle \rtimes \langle t\rangle.
\end{equation}
		The center of $\Gamma$ is $\langle t^d\rangle$. 
		There exists a quotient $\Gamma\to D/D_1$ sending $s,t$ to $\sigma,\Fr$ respectively and the kernel of this quotient is contained in the center. 

		(vi) Via the natural map $D/D_1\to \bW=N(\cT)/\cT$, $\Fr$ projects to an element $\tau$ in $\bW$ such that $\tau\sigma\tau^{-1}=\sigma^q$. 
		The action of $\tau$ on $\cT^{\sigma}$ is the same as the $q$-power map. And the centralizer of $\varphi(W_F)$ in $\cG$ is the finite group $\cT^{\sigma,q=1}$, which has the cardinality $Z(q)$. 
\end{secnumber}

\begin{lemma}
	\textnormal{(i)} The image $\varphi(I_F)=D_0\subset \cG$ of simple wild parameters is uniquely determined up to conjugations of $\cG$. 
	Namely, for any other simple wild parameter $\varphi'$, the image of inertia subgroup $D'_0=\varphi'(I_F)$ is conjugate to $D_0$. 

	\textnormal{(ii)} 
		The action of $\Gamma$ on $D_1$ via the quotient $\Gamma\to D/D_1$ is independent of the choice of a simple wild parameter. 
	\label{l:D0-unique}
\end{lemma}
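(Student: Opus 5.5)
We will use the structural description of a simple wild parameter recalled in \ref{sss:image-structure} and reduce everything to conjugacy statements about elements of $N(\cT)$ and submodules of $\cT[p]$.

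For (i), let $\varphi,\varphi'$ be two simple wild parameters with images $D\supset D_0\supset D_1$ and $D'\supset D'_0\supset D'_1$, contained in $N(\cT)$ and $N(\cT')$ respectively by \ref{sss:image-structure}(i). First, conjugating by an element of $\cG$, we assume $\cT=\cT'$, since all maximal tori are conjugate. By \ref{sss:image-structure}(ii), $D_0=D_1\rtimes\langle\sigma\rangle$ and $D'_0=D'_1\rtimes\langle\sigma'\rangle$, where $\sigma,\sigma'\in N(\cT)$ are lifts of Coxeter elements $w,w'\in\bW$. Coxeter elements are conjugate in $\bW$, so after conjugating by $N(\cT)$ we may take $w=w'$. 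Then $\sigma'=t\sigma$ for some $t\in\cT$.

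The next step is to make $\sigma'$ conjugate to $\sigma$ by an element of $\cT$. Conjugating $\sigma$ by $u\in\cT$ gives $u\,w(u)^{-1}\sigma$, so it suffices that the map $\cT\to\cT$, $u\mapsto u\,w(u)^{-1}$ is surjective. This holds because $w$ is elliptic: $1-w$ is an isogeny of $\cT$, whose image is a connected subtorus of full dimension. Thus we may assume $\sigma=\sigma'$. Since $\cG$ is adjoint, all lifts of a Coxeter element are in fact $\cT$-conjugate, so this choice is canonical.

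With $\sigma$ fixed, by \ref{sss:image-structure}(iii) the groups $D_1$ and $D'_1$ are the simple $\mathbb{F}_p[\sigma]$-submodules of $\cT[p]$ containing the $\sigma$-eigenvalue $\zeta_h,\zeta'_h$ respectively, defined in \eqref{eq:zeta-h}. Since $p\nmid\sharp\bW$, and in particular $p\nmid h$, the module $\cT[p]\otimes\overline{\mathbb{F}}_p$ is semisimple. Its eigenvalues are the primitive $h$-th roots of unity, each with multiplicity one, because the Coxeter element acts on $\mathbb{X}_*(\cT)\otimes\mathbb{C}$ with eigenvalues $\zeta^{e_i}$, where the exponents $e_i$ are distinct and prime to $h$ for adjoint type. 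So the simple submodules correspond to Frobenius orbits $\{\zeta,\zeta^p,\dots\}$ of primitive $h$-th roots of unity.

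We expect the main obstacle to be that $\zeta_h$ and $\zeta'_h$ may lie in different $p$-orbits. To handle this we will use the normalizer: the centralizer of $w$ in $\bW$ is $\langle w\rangle$, but $N_{\bW}(\langle w\rangle)$ contains elements acting as $w\mapsto w^j$ for every $j$ prime to $h$. This is Springer's theory of regular elements: the Galois twist of the eigenvalue $\zeta\mapsto\zeta^j$ is realized by some $y\in\bW$ with $ywy^{-1}=w^j$. Replacing $\sigma$ by $\sigma^j$ changes which eigenvalue is called $\zeta_h$ by the appropriate power. Conjugating by a lift of $y$, combined with the previous step applied to $\sigma^j$, therefore moves $D_1$ onto $D'_1$ while keeping $\langle\sigma\rangle$ fixed. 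Hence $D_0$ and $D'_0$ are conjugate.

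For (ii), we transport both parameters so that $D_0=D'_0$ and $\sigma=\sigma'$. By \ref{sss:image-structure}(v) and (vi), $s$ acts on $D_1$ by conjugation by $\sigma$, and $t$ acts via $\tau$, which is characterized by $\tau\sigma\tau^{-1}=\sigma^q$. Any two such $\tau$ differ by an element of $C_{\bW}(w)=\langle w\rangle$. The residual ambiguity is therefore conjugation by a power of $\sigma$. This ambiguity only shifts the identification of $s,t$ by an inner automorphism of $\Gamma$, so the $\Gamma$-module structure on $D_1$ is well defined, and it agrees for $\varphi$ and $\varphi'$.
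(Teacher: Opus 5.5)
\textbf{The gap is in (ii), at the last step.} You find, as the paper does via $C_{\bW}(H)=H$ and $N_{\bW}(H)/H\simeq(\mathbb{Z}/h\mathbb{Z})^{\times}$, that the Weyl image $\tau$ of the Frobenius is determined only modulo $\langle w\rangle$. You then claim this ambiguity amounts to an inner automorphism of $\Gamma$, and that claim is false. Replacing $\tau$ by $\tau w^{i}$ changes the $\Gamma$-action on $D_1$ by the automorphism $\alpha\colon s\mapsto s,\ t\mapsto ts^{i}$. An inner automorphism fixing $s$ is conjugation by an element of $C_\Gamma(s)=\langle s\rangle\times\langle t^{d}\rangle$. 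Since $t^{d}$ is central, such a conjugation sends $t\mapsto s^{a}ts^{-a}=s^{a(1-q)}t$. Hence $\alpha$ is inner only when $i\in(1-q)\mathbb{Z}/h\mathbb{Z}$.

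A concrete counterexample is $\cG=\mathrm{PGL}_2$. Here $h=2$, $q$ is odd, $d=1$, and $\Gamma\simeq(\mathbb{Z}/2\mathbb{Z})^{2}$ is abelian. Also $D_1\simeq\mathbb{F}_p$ with $s$ acting by $-1$. Both $\tau=1$ and $\tau=w$ are admissible, and they make $t$ act by $+1$, resp.\ $-1$. These are non-isomorphic $\mathbb{F}_p[\Gamma]$-modules, and no inner automorphism relates them. So the $\Gamma$-module structure on $D_1$ is \emph{not} well defined independently of the lift of Frobenius.

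The paper's proof only determines the class of $t$ in $N_{\bW}(H)/H$. What makes this enough is that in \ref{sss:image-structure}(iv)--(v) the element $\Fr\in D/D_1$ is only required to satisfy $\Fr\sigma\Fr^{-1}=\sigma^{q}$. It is therefore determined only modulo $D_0/D_1=\langle\sigma\rangle$. The missing step is to use this freedom: replace $\Fr'$ by $\Fr'\sigma^{-i}$. This is still a legitimate choice, since $(\Fr'\sigma^{-i})^{dh}=(\Fr')^{dh}$ because $\sum_{k<dh}q^{-k}\equiv 0 \bmod h$. After this replacement the two actions literally coincide.

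\textbf{Part (i) is correct but more roundabout than the paper.} The paper takes $\sigma=\varphi(s)$ and $\sigma'=\varphi'(s)$ for one and the same tame generator $s$. It then makes the same reductions you do: $\cT=\cT'$, equal Coxeter images, and $\cT$-conjugacy of $\sigma\cT$. This gives $\sigma=\sigma'$. Since $\zeta_h$ in \eqref{eq:zeta-h} depends only on $s$ (it is the value of the canonical tame character), \ref{sss:image-structure}(iii) forces $D_1=D_1'$.

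Because you do not tie $\sigma,\sigma'$ to the same $s$, you need Springer's theorem that $w\sim w^{j}$ in $N_{\bW}(\langle w\rangle)$ to move between $p$-orbits of eigenvalues. This does work: an element $g$ with $g\sigma g^{-1}=\sigma^{j}$ normalizes $\langle\sigma\rangle$ and carries the simple submodule containing $\zeta$ to the one containing $\zeta^{j^{-1}}$. But the detour is avoidable, and in (ii) you tacitly need $\varphi(s)=\varphi'(s)$ anyway, which is exactly the paper's normalization.

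Finally, your description of the spectrum of $\sigma$ on $\cT[p]$ is wrong. Exponents need not be distinct or prime to $h$: type $A_3$ has eigenvalue $-1$, and type $D_4$ has exponent $3$ twice. Adjointness plays no role here. All you need is the true statement that each primitive $h$-th root of unity occurs with multiplicity one.
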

\begin{proof}
	(i) Let $\varphi'$ be another simple wild parameter. Up to conjugation, we may assume $D_1'=\varphi(I_F^+)$ is contained in $\cT$. 
	Now, $D_0'=D_1'\rtimes \langle \sigma'\rangle$, where $\sigma'\in N(\cT)$ is a lift of a Coxeter element. 
	Up to conjugation, we may assume that $\sigma$ and $\sigma'$ have the same image in the Weyl group $\bW$ and that they are the image of a tame generator $s$ via $\varphi,\varphi'$. 
	Since elements in $\sigma\cT$ are conjugate under $\cT$, we may assume $\sigma=\sigma'$. 
	Since $\zeta_h$ \eqref{eq:zeta-h} is uniquely determined by $s$, we deduce $D_0=D_0'$ by \ref{sss:image-structure}(iii). 
	Then the lemma follows. 

	(ii)
	The subgroup $\langle s\rangle$ of $\Gamma$ is isomorphic to the subgroup $H$ generated by a Coxeter element $\sigma$ in $\bW$ via $\Gamma\to D/D_1 \to \bW$, and $t$ is sent to an element of the normalizer $N_{\bW}(H)$. 
	By \cite[Corollary 4.3]{Springer}, the centralizer of $H$ in $\bW$ is itself. 
	By \cite[Proposition 4.7]{Springer}, we have isomorphisms
	\[
	(\mathbb{Z}/h\mathbb{Z})^*\simeq N_{\bW}(H)/H \simeq \Aut(H), \qquad a\!\! \mod h \mapsto (\sigma\mapsto \sigma^a).
	\]
	Via this isomorphism, $t$ is sent to $q \mod h$ in $N_{\bW}(H)/H$ and is independent of the choice of a simple wild parameter.  
	Then the assertion follows. 
\end{proof}

\begin{secnumber} \label{sss:Gal-extension}
	Let $K$ be the unramified extension of $F$ of degree $dh$ and $\fF$ the residue field of $K$, which is isomorphic to $\mathbb{F}_{q^{dh}}$. 
	Let $M/K$ be the tamely ramified extension of degree $h$. 
	We choose a uniformizer $\varpi$ of $M$ such that $\varpi^h$ is a uniformizer of $K$. 
	Then the Galois group $\Gal(M/F)$ is isomorphic to $\Gamma$ \eqref{eq:presentation-Gamma} with Galois action:
	\[
	s(\varpi)=\zeta_h \varpi, \quad t(\varpi)=\varpi,
	\]
	and $s$ (resp. $t$) is the identity (resp. $q$-th Frobenius) automorphism on $K$. 

	Let $N/M$ be the abelian extension with the norm group
	\[
	\Nm(N/M)= \langle\varpi\rangle \times \fF^{\times} \times U_M^2,
	\]
	so that $\Gal(N/M)\simeq U_M^1/U_M^2$. 
	Since $\Nm(N/M)$ is preserved by $\Gamma$, the extension $N/F$ is Galois. 
	
	The choice of $\varpi$ gives an $\mathbb{F}_p$-linear isomorphism:
	\[
	U_M^1/U_M^2\xrightarrow{\sim} \fF,\quad 1+\varpi a\mapsto a \mod \varpi.
	\]
	Via the above isomorphism, the natural action of $\Gamma$ on $U_M^1/U_M^2$ induces an action on $\fF$.

	Since $\Gal(M/K)=\langle s\rangle$ has trivial invariants in $U_M^1/U_M^2$, it follows from \cite[Lemma 6.1]{GR10} that there is a splitting:
	\[\Gal(N/F)\simeq (U_M^1/U_M^2)\rtimes \Gamma \xrightarrow{\varpi} \fF^+\rtimes \Gamma.\]
\end{secnumber}

\begin{secnumber}\label{sss:construction-psi}
	Let $\varphi:W_F\to \cG$ be a simple wild parameter and we take again the notaion of \ref{sss:image-structure}(iii). 
	Then $E$ is a subextension of $M$ over $F$ that induces the quotient (\ref{sss:image-structure}(v)): 
	\[\Gamma\simeq \Gal(M/F)\to D/D_1\simeq \Gal(E/F),\quad s\mapsto \sigma, ~t\mapsto \Fr. \]
	Let $\fF'$ be the residue field of $E$. Then the trace map
	\[
	\Tr:\fF\to \fF'
	\]
	is compatible with the natural actions of $\Gamma$. 

	As in \ref{sss:Gal-extension}, we define a purely wildly ramified extension $N'$ of $E$ with the norm group 
	\[
	\Nm(N'/E)= \langle \varpi\rangle\times \fF'^\times\times U_{E}^2.
	\]

	The simple wild parameter $\varphi:W_F\to D$ induces an isomorphism $\Gal(E/F)\simeq D/D_1$. 
	By local class field theory, $\varphi$ factors through $\Gal(N'/F)$ constructed above:
	\[
	\varphi:\Gal(N'/F)\simeq \fF'\rtimes \Gal(E/F)\to D_1\rtimes D/D_1.
	\]
	By restricting $\varphi$ to $\fF'$ and composing with $\Tr$, we obtain an $\mathbb{F}_p[\Gamma]$-linear map:
	\begin{equation} \label{eq:psi-Fp}
	\psi: \fF^+\to D_1.
\end{equation}
	
	Note that $D_1$ is isomorphic to $\mathbb{F}_p[\zeta_h]^+$ as abelian group. 
	Since $\psi$ is equivariant with respect to the action of $s$, $\psi$ is $\mathbb{F}_p[\zeta_h]$-linear. 
	In particular, we deduce that $\psi=\Tr_{\mathfrak{F}/\mathbb{F}_p(\zeta_h)}(a\cdot -)$ for some element $a\in \mathfrak{F}$. 
\end{secnumber}

\begin{prop} \label{p:map-Xi} 
	\textnormal{(i)} The restriction to the wild inertia subgroup induces a natural map 
	\begin{equation} \label{eq:parameters-Hom}
	\Xi:\{\textnormal{simple wild parameters}\}/ \sim \quad \to \Hom_{\mathbb{F}_p[\Gamma]}(\mathfrak{F}^+,D_1). 
	\end{equation}
	Its image consists of non-trivial homomorphisms and has cardinality $q-1$.

	\textnormal{(ii)} The map $\Xi$ induces a bijection
	\begin{equation} \label{eq:local-Xi}
	\{\varphi|_{I_F}:I_F\to \cG \textnormal{~with simple wild parameters~} \varphi\}/ \sim ~ \xrightarrow{\sim} ~ \Hom_{\mathbb{F}_p[\Gamma]}(\mathfrak{F}^+,D_1)-\{0\}. 
\end{equation}
\end{prop}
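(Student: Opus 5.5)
We first make the map $\Xi$ well defined. By \Cref{l:D0-unique}(i), after conjugating we may assume every simple wild parameter $\varphi$ satisfies $\varphi(I_F)=D_0$ and $\varphi(I_F^+)=D_1$, for fixed subgroups $D_0,D_1\subset \cG$. By \Cref{l:D0-unique}(ii), the $\Gamma$-action on $D_1$ does not depend on $\varphi$. To each $\varphi$ we then attach the $\mathbb{F}_p[\Gamma]$-linear map $\psi:\fF^+\to D_1$ of \eqref{eq:psi-Fp}. This map is obtained by restricting $\varphi$ to $\Gal(N'/E)\simeq\fF'$ and precomposing with $\Tr:\fF\to\fF'$. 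We still have to check that $\psi$ depends only on the conjugacy class of $\varphi$. Two normalized parameters that are conjugate differ by conjugation by an element of $N_{\cG}(D_0)$. After the normalization $\sigma=\sigma'$, this element lies in the centralizer of $\sigma$ together with a part acting on $D_1$ through $\Gamma$. We will check that such an element acts on $\Hom_{\mathbb{F}_p[\Gamma]}(\fF^+,D_1)$ trivially, up to the identifications already fixed. The nontriviality of $\psi$ is straightforward: if $\psi=0$, then by surjectivity of the trace $\varphi(I_F^+)=1$, which contradicts $\Swan(\cg)=\rank\cG>0$.

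Next we compute the target. As noted in \ref{sss:construction-psi}, any $\mathbb{F}_p[\zeta_h]$-linear map has the form $\psi=\Tr_{\fF/\mathbb{F}_p(\zeta_h)}(a\cdot -)$ with $a\in\fF$. We impose equivariance under $t$, which acts on $\fF$ as the $q$-Frobenius twisted by the action on $\varpi$, and acts on $D_1$ as the fixed element $\tau$ (with $\tau\sigma\tau^{-1}=\sigma^q$). This should cut the allowed $a$ down to a one-dimensional $\mathbb{F}_q$-space, that is, $a$ lies in a coset $a_0\mathbb{F}_q$ with $a_0\neq0$. Hence $\Hom_{\mathbb{F}_p[\Gamma]}(\fF^+,D_1)\simeq\mathbb{F}_q$, and there are exactly $q-1$ nonzero homomorphisms. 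To conclude that the image of $\Xi$ is all of them, we run the construction in reverse. Given a nonzero $\psi$, it factors through $\Tr$ to a map $\fF'\to D_1$. Composing with the splitting $\Gal(N'/F)\simeq\fF'\rtimes\Gal(E/F)\to D_1\rtimes D/D_1$ defines a homomorphism $\varphi$. We then verify that $\varphi$ satisfies $\cg^{I_F}=0$ (using the ellipticity of $\sigma$ and the fact that $\psi\ne0$ makes the $D_1$-weights on every root space nontrivial, as in the proof of \Cref{t:L-parameter}(iii)) and $\Swan(\cg)=\rank\cG$ (the root spaces form $\sharp\Phi/h=\rank\cG$ free $\langle\sigma\rangle$-orbits, each of upper break $1/h$). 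This proves (i).

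For (ii), we show that restriction to $I_F$ loses exactly the information of the Frobenius image. Two parameters with the same $\Xi$-value agree on $I_F^+$ after normalization, and $\sigma$ is fixed, so their restrictions to $I_F=I_F^+\rtimes\langle s\rangle$ coincide; this gives injectivity on restrictions. Surjectivity follows from the reverse construction in (i). We expect the main obstacle to be the well-definedness and injectivity modulo conjugation. We must show that conjugating by the normalizer of $D_0$ does not move $\psi$, that is, that $C_{\cG}(\sigma)\cap N_{\cG}(D_1)$ acts on $D_1$ only through scalars already absorbed into $\Gamma$-equivariance. We plan to handle this using $C_{\cG}(D_1)=\cT$ from \Cref{t:L-parameter}(iii) together with the description $N_{\bW}(H)/H\simeq(\mathbb{Z}/h)^*$ of \cite{Springer}.
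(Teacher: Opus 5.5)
Your outline is essentially the paper's proof, with more details filled in. You normalize via \Cref{l:D0-unique}, write $\psi=\Tr_{\fF/\mathbb{F}_p(\zeta_h)}(a\cdot -)$ and use $t$-equivariance to get $a\in a_0\mathbb{F}_q$, glue a new $\psi'$ with $\eta$ for surjectivity, and fix $\varphi(s)=\sigma$ for injectivity on $I_F$.

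The genuine gap is the step you flag yourself, and it cannot be closed as planned because the invariance you want is false. Carry out your own plan. From $C_{\cG}(D_1)=\cT$ you get $N_{\cG}(D_1)\subset N_{\cG}(\cT)$, and then $C_{\bW}(H)=H$ gives $C_{\cG}(\sigma)\cap N_{\cG}(D_1)=\langle\sigma\rangle\cdot\cT^{\sigma}$. The torus part acts trivially on $D_1$. But $\sigma^i$ acts on $D_1\simeq\mathbb{F}_p(\zeta_h)$ by $\zeta_h^i$, so $\Ad_{\sigma^i}\circ\psi=\psi\circ s^i=\zeta_h^i\psi$. Equivariance gives $\sigma^i\circ\psi=\psi\circ s^i$, not $\sigma^i\circ\psi=\psi$, so nothing is ``absorbed''.

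Now take $h\mid i(q-1)$. Then $\sigma^i\Fr\sigma^{-i}\in\sigma^{i(1-q)}\Fr D_1=\Fr D_1$. So $\Ad_{\sigma^i}\varphi$ satisfies every normalization you impose: same $D_0$, same $\sigma$, same $\eta:\Gamma\to D/D_1$, hence the same $\Gamma$-action on $D_1$. Yet its invariant is $\zeta_h^i\psi\neq\psi$, which is another nonzero element of the same $\Hom_{\mathbb{F}_p[\Gamma]}(\fF^+,D_1)$, since $\zeta_h^i\in\mathbb{F}_q^{\times}$. Because $p\nmid\sharp\bW$ forces $p$ odd, this happens whenever $h$ is even: take $i=h/2$, which sends $\psi\mapsto-\psi$. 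For $\cG=\mathrm{PGL}_2$ it is just conjugation by $\sigma$. So $\Xi$, as formulated, does not descend to conjugacy classes. The paper's proof settles well-definedness only by citing \Cref{l:D0-unique}, which does not address this.

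A related problem: the $\Gamma$-action on $D_1$ does depend on $\varphi$. \Cref{l:D0-unique}(ii) only determines $\tau$ modulo $H=\langle c\rangle$, where $c$ is the image of $\sigma$ in $\bW$. The $h$ lifts $\tau c^j$ act differently on $D_1$, and all of them occur. For $\mathrm{PGL}_2$, $\tau=1$ forces $a\in\mathbb{F}_q^{\times}$, while $\tau=c$ forces $a^{q-1}=-1$.

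What your argument does prove is the following. Write $\Hom_{\tau'}(\fF^+,D_1)$ for the equivariant maps when $t$ acts on $D_1$ through $\tau'$.
\begin{itemize}
\item The restriction $\varphi|_{I_F}$ determines $\psi$ up to $\psi\mapsto\zeta_h^i\psi$.
\item A nonzero $\psi$ determines its lift $\tau$ through $\tau\circ\psi=\psi\circ t$.
\item Hence the complete invariant of $\varphi|_{I_F}$ is the $\langle\sigma\rangle$-orbit of $\psi$ in $\bigsqcup_{j\in\mathbb{Z}/h}\bigl(\Hom_{\tau c^j}(\fF^+,D_1)-\{0\}\bigr)$. Here $\sigma^i$ carries $\Hom_{\tau'}$ to $\Hom_{c^{i(1-q)}\tau'}$.
\item Each piece has $q-1$ elements. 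The equivariance condition reads $a^{q-1}=\zeta_h^{-qj}\beta$ for a fixed $\beta\in\fF^{\times}$, and it is solvable for one $j$ if and only if for all $j$, because $\Nm_{\fF/\mathbb{F}_q}(\zeta_h)=1$.
\item The action of $\langle\sigma\rangle$ on these pairs is free.
\end{itemize}
This gives $h(q-1)/h=q-1$ inertial classes, which is the count \Cref{t:number-simple-wild} needs. It is not, however, a bijection onto a single fixed $\Hom_{\mathbb{F}_p[\Gamma]}(\fF^+,D_1)-\{0\}$ induced by restriction. So (i) and (ii) have to be reformulated along these lines before they can be proved.
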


\begin{proof}
	(i) By \Cref{l:D0-unique}, the map is well-defined. 
	Next we show the surjectivity. A wild parameter $\varphi$ induces a surjection $\eta:\Gamma\to D/D_1$ and an $\mathbb{F}_p[\Gamma]$-equivariant map $\psi:\fF^+\to D_1$. 
	Let $\psi':\fF^+\to D_1$ be another non-trivial $\mathbb{F}_p[\Gamma]$-equivariant map. 
	Then the homomorphism
	\[
	\varphi'=\psi'\rtimes \eta: \fF\rtimes \Gamma\simeq \Gal(N/F)\to D_1\rtimes D/D_1\simeq D
	\]
	is still a simple wild parameter. 

	By discussion in \ref{sss:construction-psi}, any other $\mathbb{F}_p[\Gamma]$-equivariant map $\psi':\mathfrak{F}^+\to D_1$ is $\mathbb{F}_p[\zeta_h]$-linear and is given by 
	\begin{equation} \label{eq:different-psi}
	\psi'(-):=\psi(a \cdot -):\fF \to D_1,\quad \textnormal{for }a\in \mathfrak{F}^{\times}.
\end{equation}
	Moreover, by considering the $t$-equivariant property of $\psi'$, we deduce that $a\in \mathbb{F}_q^{\times}$. 
	This shows that the cardinality of $\Hom_{\mathbb{F}_p[\Gamma]}(\mathfrak{F}^+,D_1)$ equals $q$. 

	(ii) It suffices to show that the map is injective. 
	Let $\varphi,\varphi':I_F\to \cG$ be two homomorphisms in the left hand side. We may assume that they have the same image $D_0$ by \Cref{l:D0-unique} and then a tame generator is sent to $\sigma$ by $\varphi$ and $\varphi'$. 
	If $\varphi|_{I_F^+}=\varphi'|_{I_F^+}$ in the right hand side, 
	then we conclude that $\varphi=\varphi'$ as representations of $I_F$ to $\cG$. 
\end{proof}

\begin{secnumber} \label{sss:modification-Frob}
	\textit{Proof of \Cref{t:number-simple-wild}}. 
	Let $\psi:\mathfrak{F}^+\to D_1$ be a non-trivial $\Gamma$-equivariant homomorphism. It suffcies to show that $\sharp \Xi^{-1}(\psi)=Z(q)$. 

	Let $\varphi, \varphi':W_F\to \cG$ be two simple wild parameters in $\Xi^{-1}(\psi)$. 
	Then $\varphi|_{I_F}=\varphi'|_{I_F}$ as representations $I_F\to \cG$. 
	There exists an element $x\in C_{\cG}(\varphi(I_F))=\cT^{\sigma}$ such that $\varphi'(\Fr)=x\varphi(\Fr)$. 

	Suppose there exists an element $g\in \cG$ such that $\varphi'=\Ad_g(\varphi)$. 
	Then $g\in C_{\cG}(\varphi(I_F))=\cT^{\sigma}$. 
	Since the action of $\varphi(\Fr)$ on $\cT^{\sigma}$ is the $q$-th power map \cite[\S~9.5]{GR10}, we deduce that 
	\[
	x\varphi(\Fr)=g\varphi(\Fr)g^{-1}=gg^{-q}\varphi(\Fr) \Rightarrow x=g^{1-q}. 
	\]
	
	Hence the cardinality of $\Xi^{-1}(\psi)$ equals to the cardinality of $\cT^{\sigma}/\sim$, where the equivalent relationship $x\sim y$ is given by $x=g^{1-q}y$ for some $g\in \cT^{\sigma}$. 
	The latter equals to the number of orbits of the transitive action 
	\[
	\cT^{\sigma}\times \cT^{\sigma}\to \cT^{\sigma},\quad (g,x)\mapsto g^{1-q} x.
	\]
	The stabilizer of this action is the subgroup $\cT^{\sigma,q=1}$ of elements whose $q$-th power is trivial. 
	Hence the number of orbits equals to $\sharp \cT^{\sigma,q=1}=Z(q)$ \cite[\S~9.5]{GR10}. 
	This finishes the proof. 
\end{secnumber}
\begin{secnumber}

	Finally, we show that different simple wild parameters can be related by a scalar action. 
	We first prove a general statement. 

	Let $K = \cF(\!(t)\!)$ be a local field. 
	Let $K^{\mathrm{sep}}$ be a fixed separable closure of $K$, and let $G_K = \operatorname{Gal}(K^{\mathrm{sep}}/K)$. 
	Let $\rho:G_K\to \GL(V)$ be a representation factoring through the inertial subgroup $P\subset I_K$ with an abelian image. 

	An element $a\in \cF$ induces an automorphism $\sigma_a:K\to K,t\mapsto a^{-1}t$. 
	We fix an extension of $\sigma_a$ to an automorphism of the separable closure, denoted $\tilde{\sigma}_a \in \Aut(K^{\mathrm{sep}})$. 
	Then $\sigma_a$ induces a map $\sigma_{a,*}: G_K\to G_K$:
\[
\sigma_{a,*}(g) = \tilde{\sigma}_a^{-1} \circ g \circ \tilde{\sigma}_a. 
\]

	Let $\rho_a$ be the composition of $\rho$ with the automorphism $\sigma_{a,*}$. 
\end{secnumber}

\begin{prop} \label{p:scalar-action}
	Suppose $\rho$ factors through the $r$-th graded piece of the upper ramification filtration $P^{(r)} / P^{(r+1)}$. 
	Then we have $\rho_a(g)=\rho(a^r\cdot g)$ for $g\in P^{(r)}$ via $P^{(r)} / P^{(r+1)}\simeq \cF$.
\end{prop}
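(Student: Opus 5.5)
The plan is to reduce to local class field theory and compute how the automorphism $\sigma_a$ acts on the graded pieces of the upper numbering filtration. Since $\rho$ factors through $P^{(r)}/P^{(r+1)}$, which is abelian, the question only concerns the action of $\sigma_{a,*}$ on the abelianized wild inertia. First, by functoriality of the upper numbering filtration under field automorphisms (upper ramification breaks are intrinsic to $K$ and its valuation, and $\sigma_a$ preserves the valuation), $\sigma_{a,*}$ preserves each $P^{(r)}$. The induced map on $P^{(r)}/P^{(r+1)}$ does not depend on the choice of extension $\tilde\sigma_a$: two choices differ by an element of $G_K$, and conjugation by that element acts trivially on the abelian quotient $P^{(r)}/P^{(r+1)}$, because the conjugation action of $G_K$ on this quotient factors through the tame quotient, and the relevant graded piece is identified via class field theory below.

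Next, for integer $r$ (for non-integral $r$ one first passes to a tame extension $K(t^{1/e})$ to make the break integral, using Herbrand's function to relate the filtrations), use the reciprocity map of local class field theory. It identifies $G_K^{\mathrm{ab},(r)}/G_K^{\mathrm{ab},(r+1)}$ with $U_K^r/U_K^{r+1}\simeq \cF$ via $1+ct^r\mapsto c$. Since $\rho$ has abelian image and factors through $P^{(r)}/P^{(r+1)}$, it factors through this abelianized graded piece. The reciprocity map is functorial for field isomorphisms: the pair $(\sigma_a,\tilde\sigma_a)$ transports $\mathrm{rec}_K(u)$ to $\mathrm{rec}_K(\sigma_a^{\pm1}(u))$. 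So the induced action on $\cF$ is computed by $\sigma_a(1+ct^r)=1+c a^{-r}t^r$, which multiplies $c$ by $a^{\mp r}$.

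The main obstacle is pinning down the direction of the conventions. One has to check that $\sigma_{a,*}(g)=\tilde\sigma_a^{-1} g\tilde\sigma_a$ corresponds to $\sigma_a^{-1}$ acting on units, so that $t\mapsto at$ sends $1+ct^r$ to $1+ca^rt^r$. That yields $\rho_a(g)=\rho(\sigma_{a,*}g)=\rho(a^r\cdot g)$, as claimed. To settle the sign, I will verify this in the simplest example, the Artin--Schreier extension $y^p-y=t^{-r}$ with $(r,p)=1$, where the character is explicitly $c\mapsto \Tr(c\cdot -)$ and the substitution $t\mapsto a^{-1}t$ visibly rescales by $a^r$.
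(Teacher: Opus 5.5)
Your proposal matches the paper's proof. Both pass to the abelianized graded piece via local class field theory and use the naturality $\tilde\sigma_a^{-1}\theta_K(u)\tilde\sigma_a=\theta_K(\sigma_a^{-1}(u))$ of the reciprocity map, which the paper cites from Serre, \emph{Local Fields} XIII \S4, Prop.~11, where you instead fix the direction with an Artin--Schreier check. Both then compute $\sigma_a^{-1}(1+ct^r)=1+ca^rt^r$ on $U_r/U_{r+1}$.

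One side remark needs correcting: the choice of $\tilde\sigma_a$ is harmless only because you work in the abelianized quotient, where conjugation is trivial. It is not because $G_K$ acts trivially by conjugation on the non-abelianized $P^{(r)}/P^{(r+1)}$; it generally does not, since a Frobenius lift acts nontrivially through the residue field, so ``factors through the tame quotient'' does not imply triviality.
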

\begin{rem}\label{r:transitive action}
	Let $\varphi:G_F\to \cG$ be a simple wild parameter and $a\in \mathbb{F}_q^{\times}$. 
	By \eqref{eq:different-psi} and \Cref{p:scalar-action}, $\varphi_a$ is still a simple wild parameter and the bijection \eqref{eq:local-Xi} is compatible with the natural action of $a\in \mathbb{F}_q^{\times}$. 
	In particular, this action is transitive on both sides of \eqref{eq:local-Xi}. 
\end{rem}
\begin{proof}
	The local Artin reciprocity map $\theta_K \colon K^\times \to G_K^{\mathrm{ab}}$ is functorial with respect to continuous field isomorphisms. 
	By \cite[XIII \S~4 proposition 11]{Serre-LF}, we have:
\[
    \tilde{\sigma}_a^{-1} \circ \theta_K(u) \circ \tilde{\sigma}_a = \theta_K(\sigma_a^{-1}(u)) \quad \text{for any } u \in K^\times.
\]

	By local class field theory, $\theta_K$ maps the filtration of principal units $U_r = 1 + \mm_K^r = 1 + t^r \cF[\![t]\!]$ onto the upper ramification subgroups $P^{(r)}$. This induces canonical group isomorphisms on the successive quotients:
\[
\cF\xrightarrow{\sim} U_r / U_{r+1} \xrightarrow{\sim} P^{(r)} / P^{(r+1)}.
\]
The first isomorphism is given by $c\mapsto 1+c t^r$. 
The action of the inverse automorphism $\sigma_a^{-1}$ on the unit $u = 1 + c t^r \in U_r$ is given by $\sigma_a^{-1}(1+c t^r)=1+ c a^r t^r$. 

Let $g \in P^{(r)} / P^{(r+1)}$ be an element corresponding via $\theta_K$ to $u = 1 + c t^r \in U_r / U_{r+1}$. 
The pulled-back character $\rho_a$ evaluates $g$ as:
\[
    \rho_a(g) = \rho_a\big(\theta_K(1 + ct^r)\big) = \rho\big(\theta_K(\sigma_a^{-1}(1 + ct^r))\big) = \rho\big(\theta_K(1 + c a^r t^r)\big).
\]
This shows that $\rho_a(g)=\rho(a^r\cdot g)$. 
\end{proof}


\subsection{Epipelagic Langlands parameters: mixed characteristic case}

In this subsection, we first briefly review the Deligne--Kazhdan correspondence \cite{Del84,Gan15} and its conjectural relationship with the local Langlands correspondence of Genestier--Lafforgue \cite{GL} and Fargues--Scholze \cite{FS}. 
Then we explain how this conjecture is applicable to the epipelagic L-parameters in mixed characteristic. 

Let $F$ be a local field of characteristic $p$ with $\mathcal{O}_F$ its ring of integers, $\mathfrak{m}_F$ its maximal ideal, $\pi$ its uniformizer and the finite residue field $k=\mathcal{O}_F/\mathfrak{m}_F$. 
Let $F^{\sharp}$ be another local field of mixed characteristic with $\mathcal{O}_{F^{\sharp}}$ its ring of integers and $\mathfrak{m}_{F^{\sharp}}$ its maximal ideal. 

\begin{definition}
	Let $m$ be a positive integer. We say $F$ and $F^{\sharp}$ are \textit{$m$-close} if there is a ring isomorphism 
	$\mathcal{O}_F/\mathfrak{m}_F^m \xrightarrow{\sim} \mathcal{O}_{F^{\sharp}}/\mathfrak{m}_{F^{\sharp}}^m$.
\end{definition}

For example, the fields $\mathbb{F}_p(\!(t)\!)$ and $\mathbb{Q}_p(p^{1/m})$ are $m$-close. 

\begin{secnumber}
	Let $\Gal_F$ be the absolute Galois group of $F$, $I_F$ the inertia subgroup of $\Gal_F$, equipped with the higher ramification subgroups with upper numbering $\{I_F^r\}_{r\in \mathbb{Q}_{\ge 0}}$. 
	Suppose $F,F^{\sharp}$ are $m$-close. 
	Deligne \cite{Del84} showed that there exists an isomorphism 
	\begin{equation}
		\Del_m: \Gal_F/I_F^m \xrightarrow{\sim} \Gal_{F^{\sharp}}/I_{F^{\sharp}}^m,
		\label{eq:Del-m}
	\end{equation}
	which is unique up to inner automorphisms (see \cite[Equation 3.5.1]{Del84}). 
	In particular, $\Del_m$ induces a bijection between isomorphism classes of representations of $\Gal_F$ trivial on $I_F^m$ and those of $\Gal_{F^{\sharp}}$ trivial on $I_{F^{\sharp}}^m$. 
	The above properties hold when $\Gal_F$ is replaced by the Weil group $W_F$. 
\end{secnumber}

\begin{secnumber}
	Let $\sG$ be a split, connected reductive group defined over $\mathbb{Z}$, $G$ its base change to $F$ or $F^{\sharp}$ and $\sB$ a Borel subgroup of $\sG$. 
	Let $\ell$ be a prime different from $p$ and $\cG$ the Langlands dual group of $\sG$ over $\bQl$. 
	We denote by $\mathcal{G}^{\sss}(G,F)$ the set of equivalence classes of semisimple $\ell$-adic homomorphisms:
	\[
	\rho: W_F\to \cG.
	\]
	Let $\mathcal{G}^{\sss,m}(G,F)\subset \mathcal{G}^{\sss}(G,F)$ denote the subset of parameters $\phi$ of depth at most $m$, i.e. $\phi$ is trivial on the subgroup $I_F^m$. 

	Then Deligne's isomorphism $\Del_m$ induces a bijection:
	\begin{equation}
		\mathcal{G}^{\sss,m}(G,F)\xrightarrow{\sim} \mathcal{G}^{\sss,m}(G,F^{\sharp}). 
		\label{eq:Deligne-bij}
	\end{equation}
\end{secnumber}

\begin{secnumber}
	Let $\bI$ be the standard Iwahori subgroup of $G$, defined as the inverse image under $\sG(\mathcal{O}_F)\to \sG(k)$ of $\sB(k)$. 
	There is a smooth affine group scheme $I$ over $\mathcal{O}_F$ with generic fiber $G$ such that $I(\mathcal{O}_F)=\bI$. 
	For every positive integer $m$, we define $\bI_m:=\Ker(I(\mathcal{O}_F)\to I(\mathcal{O}_F/t^m))$. 
	Explicitly, we have $\bI_{m}=\{U_{\alpha,t^m},t^mT(\mathcal{O}_F),U_{-\alpha,t^{m+1}}|\alpha\in \Phi^+\}$. 
	Let $H(G(F),\bI_m)$ denote the Hecke algebra of $\bI_m$-biinvariant functions on $G(F)$ with coefficients in $\bQl$. 

	We denote by $\mathcal{A}(G,F)$ the set of equivalence classes of irreducible admissible representations of $G(F)$. 
	Let $\mathcal{A}^m(G,F)\subset \mathcal{A}(G,F)$  denote
the subset of equivalence classes of irreducible admissible representations generated by vectors fixed
under $\bI_m$. 
	Ganapathy refined a theorem of Kazhdan:
\end{secnumber}

\begin{theorem}[Ganapathy--Kazhdan, \cite{Gan15}]
	If $F,F^{\sharp}$ are $m$-close, then there is a natural isomorphism
	\[
	H(G(F),\bI_m)\xrightarrow{\sim} H(G(F^{\sharp}),\bI_m)
	\]
	of finitely-generated $\bQl$-algebras. Moreover, there is a bijection
	\begin{equation}
		\mathcal{A}^m(G,F)\simeq \mathcal{A}^m(G,F^{\sharp})
		\label{eq:Ganapathy-Kazhdan}
	\end{equation}
	with the property that, if $\pi\in \mathcal{A}^m(G,F)$ corresponds to $\pi^{\sharp}\in\mathcal{A}^m(G,F^{\sharp})$, then the invariant subspaces $\pi^{\bI_m}$ and $\pi^{\sharp, \bI_m}$ are isomorphic as modules with respect to the above isomorphism.  
\end{theorem}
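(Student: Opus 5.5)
The plan is to obtain the Ganapathy--Kazhdan isomorphism from Kazhdan's presentation of the Hecke algebra $H(G(F),\bI_m)$. The point is that this presentation depends only on the truncated ring $\mathcal{O}_F/\mathfrak{m}_F^m$ and on combinatorial data of the affine Weyl group, both of which are shared by $m$-close fields.

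\textbf{Step 1: a common index set for the basis.} By the Iwahori--Bruhat decomposition, $G(F)=\bigsqcup_{w\in \widetilde{\bW}} \bI w \bI$, where $\widetilde{\bW}$ is the extended affine Weyl group and $w$ runs over fixed lifts $\dot w\in N(\mathcal{O}_F[t,t^{-1}])$ taken from the split $\mathbb{Z}$-model $\sG$. Refining this, the double cosets $\bI_m\backslash G(F)/\bI_m$ are indexed by the set
\[
\coprod_{w} (\bI/\bI_m)\times_{\bI_w}(\bI/\bI_m),
\]
where $\bI_w$ denotes the appropriate stabilizer. Since $\bI/\bI_m=I(\mathcal{O}_F/t^m)$, a chosen isomorphism $\mathcal{O}_F/\mathfrak{m}_F^m\simeq \mathcal{O}_{F^\sharp}/\mathfrak{m}_{F^\sharp}^m$ identifies the finite groups $\bI/\bI_m$ on the two sides, compatibly with the action of the lifts $\dot w$ (conjugation by $\dot w$ sends root subgroups $U_{\alpha,t^k}$ to $U_{w\alpha,t^{k'}}$, and this is defined over $\mathbb{Z}$). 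The result is a canonical bijection of double cosets, hence a $\bQl$-linear isomorphism of Hecke algebras that sends characteristic functions to characteristic functions. Volumes must match as well; normalizing Haar measure so that $\bI_m$ has volume $1$, this reduces to equality of the indices $[\bI_m:\bI_m\cap \dot w\bI_m\dot w^{-1}]$, which are computed root by root and agree.

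\textbf{Step 2: multiplicativity (the main obstacle).} The products of basis elements must correspond. Following Kazhdan, I would exhibit a finite set of generators and relations for $H(G(F),\bI_m)$, namely the functions supported on $\bI_m\dot s\bI_m$ for simple affine reflections and length-zero elements, together with $H(\bI,\bI_m)=\bQl[\bI/\bI_m]$. I would then check that every structure constant of these generators is computed inside $\bI/\bI_m$ and the products $\dot s\bI\dot s$. For the quadratic relation of $\dot s$, this amounts to a computation in the rank-one group $U_{\pm\alpha}$ modulo $t^m$. The delicate part is to show that no relation involves information beyond level $m$; I expect this to follow from Ganapathy's argument that $\bI_m$ admits an Iwahori factorization whose pieces are normalized appropriately by the $\dot w$. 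Granting this, finite presentation transfers verbatim to $F^\sharp$, and the generator-to-generator map extends to an algebra isomorphism that agrees with the linear isomorphism of Step 1.

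\textbf{Step 3: the bijection of representations.} An irreducible admissible representation $\pi$ generated by its $\bI_m$-fixed vectors corresponds to the simple $H(G(F),\bI_m)$-module $\pi^{\bI_m}$. By the Bernstein--Borel type equivalence, such $\pi$ match exactly the simple modules of this Hecke algebra; this uses the fact that $\bI_m$ is a normal subgroup of a good maximal compact subgroup up to conjugation, so that the subcategory generated by $\bI_m$-fixed vectors is a Serre subcategory. Transporting simple modules through the isomorphism of Step 2 then gives the bijection \eqref{eq:Ganapathy-Kazhdan}, together with the stated compatibility of $\pi^{\bI_m}$ and $\pi^{\sharp,\bI_m}$.
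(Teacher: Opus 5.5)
The paper gives no proof of this statement; it is quoted from Ganapathy \cite{Gan15}. Your outline follows her route. Writing $f_g=\mathbf{1}_{\bI_m g\bI_m}$, one presents $H(G(F),\bI_m)$ in Iwahori--Matsumoto style by three kinds of generators: $\bQl[\bI/\bI_m]$, the $f_{\dot s}$ for simple affine reflections $s$, and the $f_{\dot\omega}$ for length-zero $\omega$. The relations involve only the finite group $\bI/\bI_m=I(\mathcal{O}_F/\mathfrak{m}_F^m)$, affine Weyl group combinatorics and powers of $q$. The generator-to-generator map is then an algebra isomorphism matching the double-coset bases. Your Step~1 is fine, but two points need correction.

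First, Step~2, which you explicitly grant, is the entire content of the theorem, and it is not due to Kazhdan. Kazhdan's original argument used only abstract finite presentability, and so needed the fields to be $l$-close for some $l$ much larger than the level. The explicit presentation at level $\bI_m$ is exactly what lets Ganapathy get by with $m$-closeness. To carry Step~2 out you must show two things.
(a) The relations really present the algebra. This rests on the level-$\bI_m$ analogue $\bI_m\dot w_1\bI_m\dot w_2\bI_m=\bI_m\dot w_1\dot w_2\bI_m$ of the Iwahori identity when $\ell(w_1w_2)=\ell(w_1)+\ell(w_2)$. It gives that every $f_g$ equals some $f_a*f_{\dot s_1}*\cdots*f_{\dot s_k}*f_{\dot\omega}*f_b$ with $a,b\in\bI$ and $s_1\cdots s_k\omega$ reduced.
(b) Consider the rank-one relations for $f_{\dot s}*f_u*f_{\dot s}$, with $u$ in the root subgroup of the affine simple root $\alpha_s$ but not in $\bI_m$. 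You must check that they are computed in the group generated by $U_{\pm\alpha_s}$ and depend only on $u$ modulo $\bI_m$ (one inverts a unit modulo $\mathfrak{m}_F^m$).

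Second, the justification in Step~3 is wrong. $\bI_m$ is the Moy--Prasad group $G_{x,m}$ at the barycenter $x$ of the fundamental alcove. It is normal in $\bI$, but not in $\sG(\mathcal{O}_F)$ or any conjugate of it: already for $\SL_2$, the Weyl element swaps the levels $m$ and $m+1$ of the off-diagonal entries. No such property is needed, though. For any compact open $K$, $\pi\mapsto\pi^K$ is a bijection between irreducible smooth representations with $\pi^K\neq0$ and simple $H(G(F),K)$-modules. An irreducible $\pi$ is generated by $\bI_m$-fixed vectors if and only if $\pi^{\bI_m}\neq0$. A Serre-subcategory statement would only matter for an equivalence of categories, which the theorem does not claim.
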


\begin{secnumber}
	Over the local field $F$ of characteristic $p$ (resp. $F^{\sharp}$ of mixed characteristic), Genestier--Lafforgue (resp. Fargues--Scholze) constructed a semisimple parametrization of $\mathcal{A}(G,F^{?})$ for $?\in \{\empty,\sharp\}$:
	\[
	\mathcal{L}^{\sss}:\mathcal{A}(G,F^{?}) \to \mathcal{G}^{\sss}(G,F^{?}).
	\]
\end{secnumber}

\begin{conjecture}[\cite{GHS} Conjecture 11.7] \label{c:compatibility-DK-FS}
	For any positive integer $m$, the following diagram commutes:
	\[
	\xymatrix{
	\mathcal{A}^m(G,F) \ar[rr]^{\mathcal{L}^{\sss}}\ar[d]_{\eqref{eq:Ganapathy-Kazhdan}} && \mathcal{G}^{\sss,m}(G,F) \ar[d]^{\eqref{eq:Deligne-bij}}\\
	\mathcal{A}^m(G,F^{\sharp}) \ar[rr]^{\mathcal{L}^{\sss}} && \mathcal{G}^{\sss,m}(G,F^{\sharp}) 
	}
	\]
\end{conjecture}

Li-Huerta \cite{LH23} showed that, over local fields of positive characteristic, the constructions of Fargues--Scholze and of Genestier--Lafforgue are compatible. 
Thus, \Cref{c:compatibility-DK-FS} also asserts that the two Fargues--Scholze parametrizations are compatible with the Deligne-Kazhdan correspondence.
Recently, Li-Huerta \cite{LH24} proved such a compatibility in a different setting. 

\begin{secnumber}
Now, we go back to the setting of epipelagic representations (\S~\ref{sss:parahoric}). 
When the Kac coordinate $s_0=1$, it is easy to check that $\bI_1\subset \bP(2)$. 
By \cite[Theorem B.9]{Mis25}, Ganapathy--Kazhdan's bijection \eqref{eq:Ganapathy-Kazhdan} sends epipelagic representations over $F$ to epipelagic representations over $F^{\sharp}$ for large enough $m$. 

Under the expectation \Cref{r:companion-local-global} and \Cref{c:compatibility-DK-FS}, one can deduce the properties of \Cref{c:Swan-conductor} for the epipelagic Langlands parameters in mixed characteristics, defined by Fargues-Scholze, from the equal characteristics case.
\end{secnumber}

\subsection{Local monodromy of $\Ai^{\rig}_{\cG}$}
	We keep the notation of \S~\ref{ss:Ai-rig} and we show an analogue of \Cref{t:subsidiary-radii} for Airy local systems, which verifies \cite[Conjecture 29(i)]{JKY}. 
	We also prove \cite[Conjecture 29(ii)]{JKY} for a special Airy local system, which implies its cohomological rigidity. 
    
	\begin{prop} \label{p:Airy-Swan}
		The multi-set of the $p$-adic slopes of $\Ai_{\cG}^{\rig}(\chi)(\cfg)$ (resp. the slopes of $\Ai_{\cG}^{\ell}(\chi)(\cfg)$) 
		consists of $\sharp \Phi$ copies of $1+\frac{1}{h}$ and $r$ copies of $0$. 
		In particular, its $p$-adic irregularity (resp. Swan conductor) equals to $r(h+1)$. 
	\end{prop}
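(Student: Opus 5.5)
The plan is to follow the strategy used for the $\theta$-connection in \S\ref{ss:5.3}--\ref{ss:5.4}: show that the restriction of $\Ai_{\cG}^{\rig}(\chi)$ to the punctured disc at $\infty$ is a $p$-adic isoclinic connection of slope $1+\frac{1}{h}$, and then read off the slopes from \Cref{p:slopes}(ii). The $\ell$-adic statement then follows from the $p$-adic one. Indeed, the $p$-adic realization $\Ai^{\rig}_{\cG}(\cfg)$ is the $p$-adic companion of $\Ai^{\ell}_{\cG}(\cfg)$ (\ref{sss:companion-GLn}), and by \Cref{p:companion-WD-rep} the Frobenius-semisimplified Weil--Deligne representations at $\infty$ agree. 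Hence the breaks of the inertia action, and so the slopes and Swan conductors, coincide. Tsuzuki's theorem identifies $\Irr$ with $\Swan$ (\ref{sss:slopes}). The final count is then
\[
\sharp\Phi\,\Bigl(1+\tfrac1h\Bigr)=rh\cdot\tfrac{h+1}{h}=r(h+1),
\]
using $\sharp\Phi=rh$.

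Concretely, take $\lambda=-\pi$ and use \Cref{t:Ai connection} and \Cref{t:compare-Ai-rig-dr}. With $t=x^{-1}$, the connection \eqref{eq:Ai-conn} becomes, near $\infty$,
\[
d-\Bigl(p_{-1}+a(-\pi)^hp_r t^{-1}+\textstyle\sum_i a_i(-\pi)^{d_i}p_i\Bigr)t^{-2}dt,
\]
with $a$ a $p$-adic unit. This is a formal isoclinic connection of slope $\frac{h+1}{h}$, so here $N=h+1$ and $m=h$, with leading term $X=p_{-1}+a\lambda^h p_r$. By \Cref{t:Ai connection}, $X$ corresponds to the stable $\phi$ under \Cref{t:compare-Kl-theta-connection}(i). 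The argument of \Cref{l:eigenvalues-X} (stability of both $\phi_k$ and $\phi_K$) then shows that the eigenvalues of $\ad X$ are $0$ or $p$-adic units, so $X$ is regular semisimple both over $\overline K$ and over $\overline k$.

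Next, pass to $s=t^{1/2h}$ and apply the gauge transformation by a power of $\check\rho(s)$ times a suitable power of $\pi$, as in the proof of \Cref{l:isoclinic gauge}. The aim is a connection $Y$ whose most polar term is a unit multiple of $\pi X s^{-2N-1}ds$, and whose lower-order terms are integral, carrying the appropriate powers of $\pi$ so that they lie in $\mathfrak m_{\overline K}$. Let $Q\in\cG(\overline K\llbracket s\rrbracket)$ be the Babbitt--Varadarajan gauge to the canonical form $Z$. Then I repeat \S\ref{ss:5.3}--\ref{ss:5.4} verbatim:
\begin{itemize}
\item Clark's theorem (\Cref{t:Clark}) gives $Q$ a positive radius of convergence, since the coefficients are algebraic.
\item On a small annulus the subsidiary radii of $Y$ coincide with those of $Z$, which decomposes into rank-one exponential modules of slope $2N$ and a trivial part.
\item $Y$ carries a Frobenius structure, hence is solvable, so convexity of the $F_i(Y,r)$ forces these partial-sum functions to be linear all the way to $r\to0$.
\item The slope decomposition and the twisting by $\exp(-c\pi[b]s^{-2N})$ used in \Cref{t:p-adic-convergence-Q} extend $Q$ to $\cG(\mathcal A_{K,s})$.
\end{itemize}
Strictly, the slope count already follows from the third step, as in \Cref{t:subsidiary-radii}, without the full convergence of $Q$.

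The main obstacle I expect is the difference from the $\theta$ case: the canonical form now has several terms, $X_{-N}s^{-2N},\dots$, rather than a single one. The rank-one components $Z_\alpha$ are then of the form $d+(\varepsilon_{N,\alpha}s^{-2N}+\cdots)\frac{ds}{s}$. To obtain slope exactly $2N$ for each root, I must check $|\varepsilon_{N,\alpha}|=\omega$, using that $\pi\alpha(X)$ has $\alpha(X)$ a unit, and that the lower coefficients have absolute value $<1$. I then invoke \Cref{p:rank-one-exp}(iii) rather than the pure exponential computation of \Cref{l:subsidiary}. I would verify the integrality of the lower terms using the weights $\ell_{i,j}$ from \cite[Proposition 23]{Yi25} together with the factors $\lambda^{d_i}=(-\pi)^{d_i}$. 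If this bookkeeping fails, the fallback is purely generic-radius. One computes $R(Y,\rho)$ for small $\rho$ via \eqref{eq:R(L,rho)} applied componentwise, which gives the line of slope $2N+1$ through the origin for $\sharp\Phi$ components and slope $1$ for the remaining $r$. Convexity plus solvability then pins down the slopes without knowing the lower coefficients.
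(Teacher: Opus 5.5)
Your argument is correct and is essentially the paper's proof. You reduce to the $p$-adic case via \Cref{p:companion-WD-rep} and gauge $[2h]^*\mathcal{E}$ to $Y$ with most polar term $2h\pi(p_{-1}+ap_r)s^{-2h-2}\frac{ds}{s}$ (the element with unit $\ad$-eigenvalues is $p_{-1}+ap_r$, not $p_{-1}+a\lambda^h p_r$). Clark's theorem then matches the subsidiary radii of $Y$ and $Z$ for small $\rho$, where only this leading term matters, and solvability of $Y$ together with convexity of the $F_i(Y,r)$ forces linearity down to $r\to 0$. This is exactly your ``fallback''.

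The steps you put first are unnecessary here and not available for general $\chi$, because the minor terms $Z_j$ are unknown. These are extending $Q$ to $\cG(\mathcal{A}_{K,s})$, proving $p$-adic isoclinicity, and applying \Cref{p:rank-one-exp}(iii) to each $Z_\alpha$, which would need each $Z_\alpha$ to be solvable. The paper carries them out only for simple Airy systems in \Cref{p:Airy-coh-rig}, so the fallback should be your main line.
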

	\begin{proof}
		By \Cref{p:companion-WD-rep}, it suffices to prove the proposition for $\Ai_{\cG}^{\rig}(\chi)(\cfg)$. 
		Let $\mathcal{E}$ be the restriction of $\Ai(\underline{a},-\pi)$ \eqref{eq:Ai-conn} at infinity, which is a connection on the ring $\mathcal{A}_t$ where $t=x^{-1}$. 
		Recall that $a$ is a $p$-adic unit. 
		To simplify the notations, we may assume $a=1$ in \eqref{eq:Ai-conn}. 
		Recall $s=t^{1/2h}$. Apply the gauge transformation of $(-\pi^{-1}s^2)^{\check{\rho}}$ to $[2h]^*\mathcal{E}$, 
		we obtain 
		\begin{equation} \label{eq:Ai-Y}
		Y:= d+2h\pi(p_{-1}+p_r)s^{-2-2h}\frac{ds}{s}+\sum_{i=1}^{r-1} 2h\pi a_i p_is^{-2(h-d_i+1)}\frac{ds}{s}-\check{\rho}\frac{ds}{s},	
	\end{equation}
	denoted by $Y$. 
	Since $p_{-1}+p_r$ is regular semisimple, 
	by Lemma \ref{l:isoclinic gauge}, after enlarging $K$, 
	there exists an element $Q\in \cG (K\llbracket s \rrbracket)$ such that $Y$ can be gauge transformed to be the canonical form $Z$ of $Y$ via $Q$: 
	\begin{equation} \label{eq:Ai-Z}
	Z:= d+h\pi(p_{-1}+p_r)\frac{ds}{s^{2+h}}+\sum_{j\le h+1} Z_j \frac{ds}{s^j}, 
\end{equation}
	where $Z_j$ lies in the centralizer of $X=p_{-1}+p_r$ in $\cfg$. 

	By Clark's theorem \ref{t:Clark}, $Q$ has a positive $p$-adically convergent radius as in \Cref{p:positive-convergent-radius}. 
	Moreover, $Y$ has a Frobenius structure and is therefore solvable. 
	Since $p>h$, we can show that the subsidiary radii functions $F_i(Y,r)$ of $Y$ are linear functions on $r\in (0,1)$ as in the proof of \Cref{t:subsidiary-radii}, \Cref{l:subsidiary}. 
	Hence, the multi-set of $p$-adic slopes of $Y$ coincides with the multi-set of formal slopes of $Y$, viewed as a formal connection. 
	The latter consists of $\sharp \Phi$ copies of $1+\frac{1}{h}$ and $r$ copies of $0$, and the proposition follows.  
	\end{proof}
    
    Now we concentrate on a special class of Airy local systems.
    Recall the construction of $\Ai^{\rig}_{\cG}(\chi)$ in \ref{sss:Ai-rig}.
    The character $\chi:J\rightarrow\mathbb{A}^1$ extends $\phi$ onto $S(1)$.
    The extension is determined by an induced character of
    $S(1)/S(1+h/2)$ when $h$ is even, 
    or of $S(1)/S(1)\cap P(1+n)$ when $h=2n+1$ is odd.
    We call $\Ai^{\rig}_{\cG}(\chi)$ \emph{simple}
    if $\chi$ induces the trivial character on 
    $S(1)/S(1+h/2)$(resp. $S(1)/S(1)\cap P(1+n)$).
    
    \begin{lemma}\label{l:simple Ai}
    	A simple Airy local system $\Ai^{\rig}_{\cG}(\lambda\chi)$, 
    	$\lambda\in K-\{0\}$ is isomorphic to
    	\begin{equation} \label{eq:naive Ai-conn}
    		\Ai(\underline{a},\lambda)=d+(p_{-1}+a\lambda^h x p_r)dx,\quad a\in K^\times.
    	\end{equation}
    \end{lemma}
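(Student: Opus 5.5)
We start from \Cref{t:Ai connection}, which gives $\Ai^{\dR}_{\cG}(\lambda\chi)\simeq \Ai(\underline{a},\lambda)=d+(p_{-1}+a\lambda^h x p_r+\sum_{i=1}^{r-1}a_i\lambda^{d_i}p_i)dx$. The lemma then amounts to showing that the middle coefficients $a_1,\dots,a_{r-1}$ vanish exactly when $\chi$ is trivial on $S(1)/S(1+h/2)$ (resp. $S(1)/S(1)\cap P(1+n)$). We will extract them from the formal type at $\infty$. By construction $\Ai(\underline{a},\lambda)|_{D_\infty^\times}$ is isoclinic of slope $1+\frac1h$. Its canonical form \eqref{eq:isoclinic-canonical} is computed as in the proof of \Cref{p:Airy-Swan}: pull back by $[2h]$ and gauge by a power of $s^{\check\rho}$ to reach \eqref{eq:Ai-Y}, then diagonalize as in \eqref{eq:Ai-Z}. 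The lower-order terms $Z_j$ of the canonical form are polynomial in the $a_i$. The graded structure is the key point: $p_i$ has weight $d_i$, so $a_i$ contributes to the $s^{-2(h-d_i+1)}$ term, which sits strictly between the leading term and the residue. We will argue that these terms are triangular in the $a_i$, i.e. $Z$ at order $s^{-2(h-d_i+1)}$ equals $c\,a_i\cdot(\text{nonzero element of } \ct_X)$ plus expressions in $a_{i'}$ with $d_{i'}>d_i$. Hence $(a_1,\dots,a_{r-1})$ is determined by, and determines, the non-leading part of the formal type.

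On the automorphic side, the formal type of the eigenvalue at $\infty$ should be read off from the character $\chi$ on $J$. The leading term $X=p_{-1}+a\lambda^hp_r$ matches $\phi=\chi|_{\bI(1+h)}$ by \Cref{t:Ai connection}. The remaining data of $\chi$ is a character of $S(1)/S(1+h/2)$, or of $S(1)/S(1)\cap P(1+n)$; via the Lie algebra of the elliptic torus $S$ this is the same as an element of $\ct_X$ in graded degrees $1,\dots,h/2$. This matches the space of the $Z_j$, $j<h+2$. We will invoke \cite[Theorem 36, Proposition 38, Theorem 41]{Yi25}, where the isomorphism in \Cref{t:Ai connection} is obtained, to state precisely how $\chi|_{S(1)}$ determines the subleading canonical coefficients. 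The trivial extension then gives canonical form $d+h\pi X s^{-2-h}ds$ with all intermediate $Z_j=0$ (a residue term may remain, fixed by $\check\rho$). By the triangularity, this forces $a_i=0$ for all $i$.

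Finally, the reduction from a general $\cG$ to the adjoint case is done as in the proof of \Cref{t:Ai connection}: a $\check Z$-connection on $\A1$ is trivial. The expected main obstacle is the matching in the middle step, namely that the trivial extension of $\phi$ to $S(1)$ really corresponds to vanishing subleading canonical terms, rather than some fixed nonzero normalization. To check this we would first run the computation in $\cG=\GL_n$ (or $\SL_2$, $\SL_3$). There the simple Airy sheaf is the Fourier transform of $\mathscr{A}_\psi(x^{n+1}/(n+1))$, which visibly corresponds to $\Ai_n$ with no middle terms. We would then use the uniform dependence in Yi's construction, since the dictionary is linear in the graded components, to conclude in general.
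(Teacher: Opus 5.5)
Your reduction to showing $a_1=\dots=a_{r-1}=0$ via \Cref{t:Ai connection} is the right start. The core of your argument, however, has a genuine gap. You route the vanishing through the formal type at $\infty$, and neither step of that route is established.

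First, you claim that the trivial extension of $\phi$ to $S(1)$ yields a canonical form with no intermediate terms $Z_j$. This is a statement about the formal type of the Hecke eigenvalue that you do not prove. You defer it to ``invoking \cite{Yi25}'', to a $\GL_n$ sanity check, and to a ``linear dictionary''. But the dependence of the connection coefficients on $\chi$ is not linear: it goes through Segal--Sugawara operators of degree $d_i$. So a type-$A$ verification does not propagate to other types.

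Second, the triangularity of the $Z_j$ in the $a_i$ is asserted without computation. The paper itself remarks that it has no explicit description of the minor terms $Z_j$ in \eqref{eq:Ai-Z}. Without both steps, nothing forces $a_i=0$.

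The missing idea is that you never need the formal type at $\infty$. \cite[Theorem 36, Proposition 38]{Yi25} give directly $a_i=v_{i,2d_i-1}=\chi(S_{i,2d_i-1})$, where $\overline{S_{i,2d_i-1}}$ is the image of a Segal--Sugawara operator in the enveloping algebra of $\mathrm{Lie}(J/I(2+h))$. By the proof of \cite[Proposition 29]{Yi25} with $(N,m)=(h+1,h)$, this image is a sum of products $X_{b_1}\otimes\cdots\otimes X_{b_{d_i}}$. Here each $X_{b_j}$ lies in $\mathrm{Lie}(J/I(2+h))$, with $b_1\le\dots\le b_{d_i}\le h+1$ and $\sum_j b_j=hd_i$.

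For a simple $\chi$, $\chi$ factors through the projection to the summand $\mathrm{Lie}(I(1+h)/I(2+h))$ in the affine-root splitting. An element $X_b$ projects nontrivially there only when $b=h+1$. A nonzero contribution would therefore need all $b_j=h+1$, giving $\sum_j b_j=(h+1)d_i>hd_i$, which is impossible. Hence $a_i=0$.

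Your instinct that a grading argument is the key is correct. It should be applied to the Segal--Sugawara operators on the automorphic side, where it becomes a one-line homogeneity count, rather than to the canonical form at $\infty$.
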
 
    \begin{proof}
    	In view of Theorem \ref{t:Ai connection}.(i),
    	it suffices to show that $a_i=0$ in \eqref{eq:Ai-conn}.
    	By \cite[Theorem 36, Proposition 38]{Yi25},
    	$a_i=v_{i,2d_i-1}=\chi(S_{i,2d_i-1})$,
    	where $\overline{S_{i,2d_i-1}}$ is the image of a Segal-Sugawara operator
    	in the enveloping algebra of $\mathrm{Lie}(J/I(2+h))$ 
    	on which $\chi$ can evaluate.
    	
    	By the proof of Proposition 29 in the \emph{loc. cit.} 
    	where we let $(N,m)=(h+1,h)$,
    	$\overline{S_{i,2d_i-1}}$ can be written as a sum of tensor products
    	$X_{b_1}\otimes\cdots\otimes X_{b_{d_i}}$
    	where $X_{b_r}\in\mathrm{Lie}(J/I(2+h))$, $\sum_{r=1}^{d_i}b_r=hd_i$, 
    	and $b_1\leq\cdots\leq b_{d_i}\leq h+1$.
    	Also, using the splitting of $\mathrm{Lie}(I(1+h)/I(2+h))$
    	inside $\mathrm{Lie}(J/I(2+h))$ with respect to affine root subspaces,
    	$X_{b_r}$ has nonzero projection to $\mathrm{Lie}(I(1+h)/I(2+h))$
    	only if $b_r=1+h$.
    	Thus for $\chi(X_{b_1}\otimes\cdots\otimes X_{b_{d_i}})\neq 0$,
    	we must have $b_r=1+h$ for all $r$.
    	Then $\sum_{r=1}^{d_i}b_r=(1+h)d_i>hd_i$, impossible.
    	We obtain that $a_i=\chi(S_{i,2d_i-1})=0$.
    \end{proof}
    
    \begin{prop} \label{p:Airy-coh-rig}
    	Let $\Ai^{\rig}_{\cG}(-\pi\chi)$ be a simple Airy local system.
	Let $\mathscr{E}=\Ai^{\rig}_{\cG}(-\pi\chi)|_{\infty}$ and $(\rho:I_F\to \cG(\overline{K}),N)$ be the representation associated to $\mathscr{E}$ \eqref{eq:MCFR}. 

		\textnormal{(i)} $\mathscr{E}$ is a $p$-adic isoclinic connection of slope $1+\frac{1}{h}$ in the sense of \ref{d:p-adic-isoclinic}. 

		\textnormal{(ii)} 
	The nilpotent operator $N$ vanishes. 
	Let $\cT_X$ be the centralizer $C_{\cG}(X)$ of $X$ in $\cG$, which is a maximal torus. 
	The representation $\rho$ fits into a commutative diagram
	\[
	\xymatrix{
	1\ar[r] & I^+_F \ar[r] \ar[d] & I_F \ar[r] \ar[d] & I_F^t \ar[r] \ar[d] & 1 \\
	1\ar[r] & \cT_X \ar[r]& N_{\cG}(\cT_X) \ar[r] & \bW \ar[r] & 1
	}
	\]
	such that a generator of the tame inertia $I_F^t$ is sent to a Coxeter element of the Weyl group $\bW$. 
	In particular, we have $\cfg^{I_F}=0$.
	
	\textnormal{(iii)} The local system $\Ai^{\rig}_{\cG}(\chi)$ (resp. $\Ai_{\cG}^{\ell}(\chi)(\cfg)$) is cohomologically rigid.
	\end{prop}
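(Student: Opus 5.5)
The plan is to follow the $\theta$-case (\Cref{t:p-adic-theta}, \S\ref{ss:5.3}, \S\ref{ss:5.4}) almost verbatim, using \Cref{l:simple Ai} to reduce to the simple form \eqref{eq:naive Ai-conn} with $\lambda=-\pi$ and $a$ a $p$-adic unit (which we normalize to $a=1$ as in the proof of \Cref{p:Airy-Swan}). Near $\infty$ with $t=x^{-1}$ and $s=t^{1/2h}$, the gauge transform by $(-\pi^{-1}s^2)^{\check{\rho}}$ gives the connection $Y$ of \eqref{eq:Ai-Y} with all $a_i=0$:
\[
Y=d+2h\pi(p_{-1}+p_r)s^{-2-2h}\frac{ds}{s}-\check{\rho}\frac{ds}{s},
\]
whose formal canonical form $Z$ in \eqref{eq:Ai-Z} has leading term $X=p_{-1}+p_r$. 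Lemma \ref{l:isoclinic gauge} provides a formal gauge transform $Q\in\cG(K\llbracket s\rrbracket)$ with $QYQ^{-1}+sQ'Q^{-1}=Z$.

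\textbf{Step (i).} We must show three things:
\begin{itemize}
\item $X$ is regular semisimple both over $\overline{K}$ and over $\overline{k}$;
\item the lower coefficients $Z_j$ of $Z$ have entries in $\mathfrak{m}_{\overline{K}}$;
\item $Q$ converges on $\mathcal{A}_{K,s}$.
\end{itemize}
For the first point, $p_{-1}+p_r$ is a cyclic (Kostant) element; its eigenvalues on $\cfg$ are the numbers $\zeta_h^{j}c$ times root-data constants, and these stay nonzero mod $p$ because $p>h$ (equivalently, $\phi_k$ stable, as in \Cref{l:eigenvalues-X}). For the second point, the $Z_j$ arise from the Babbitt--Varadarajan recursion with coefficients in $\mathbb{Z}_{(p)}[\pi]$, and $p>h$ keeps the denominators prime to $p$. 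This also follows a posteriori from solvability and \Cref{p:rank-one-exp}(ii), applied to the rank one pieces of the decomposition of $Z$. For the third point, I would repeat \Cref{p:positive-convergent-radius} using Clark's theorem \ref{t:Clark}, which gives convergence for $|s|<\beta$. Then, as in \Cref{p:Airy-Swan}, the slope computation gives the analogue of \Cref{t:subsidiary-radii}: the $p$-adic slopes equal the formal slopes. Finally, the argument of \Cref{t:p-adic-convergence-Q} applies: take the slope decomposition of $Y\otimes\exp(-c_b s^{-2h-2})$ over annuli, where $c_b$ runs over Teichmüller-type lifts of the reductions of the eigenvalues. Isolating the slope zero parts and using \cite[Corollary 13.6.4]{pde} extends $Q$ to $\mathcal{A}_{K,s}$.

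\textbf{Main obstacle.} The exponential twists now have pole order $2h+2$ rather than $2$, so the eigenvalue classes must be separated modulo $\mathfrak{m}$ by the full polar part, not by a single coefficient. I would handle this by grouping the roots $\alpha$ by the reduction of $\alpha(X)$, which is legitimate because $X$ is regular mod $p$. I would then use \Cref{p:rank-one-exp}(iii) to see that two rank one modules $d+(\text{polar part})\frac{ds}{s}$ with leading coefficients differing by a unit have relative slope exactly $2h+2$, while those with congruent leading coefficients have relative slope $<2h+2$. To push the latter down to slope zero, I would twist by the exact $Z_\alpha$ exponential; this twist has Frobenius structure by \Cref{p:rank-one-exp}(i).

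\textbf{Steps (ii) and (iii).} Once (i) is established, (ii) is exactly \Cref{t:L-parameter} with $m=h$, $N=h+1$, and $\nu=1+\frac1h$. The regular elliptic element of order $h$ is a Coxeter element, and $\cfg^{I_F}=0$ follows from (iv) of that theorem. For (iii), \Cref{p:Airy-Swan} gives $\Swan(\cfg)=\sharp\Phi(1+\frac1h)=r(h+1)$. Combined with $\cfg^{I_F}=0$ and the Euler characteristic count on $\A1$ as in \cite[Proposition 5.2]{Yun16}, this gives
\[
\chi\bigl(\P1,j_{!*}\cfg\bigr)=\dim\cfg\cdot\chi(\A1)-\Swan=\dim\cfg-r(h+1)=0 .
\]
The vanishing of $\rH^0$ and $\rH^2$ follows from $\cfg^{I_F}=0$, so $\rH^*=0$. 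The $\ell$-adic case then follows by transporting the Swan conductor and the invariants through \Cref{p:companion-WD-rep}.
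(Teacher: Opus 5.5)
Your overall route is the paper's: reduce to the simple form, gauge to $Y$, get convergence on a small disc from Clark's theorem, use convexity of the radii, run the Teichm\"uller-twist argument of \Cref{t:p-adic-convergence-Q}, then conclude with \Cref{t:L-parameter} and the Euler characteristic count. However, step (i) has a gap exactly where you place the ``main obstacle''. The twist argument needs the canonical form $Z$ to be controlled on the whole punctured disc $0<|s|<1$. Each rank one summand of $Z$ must be solvable, and after twisting by a Teichm\"uller lift of its residue class $b$ it must become trivial. You treat the lower coefficients $Z_j$ as unknown and never establish either fact:
\begin{itemize}
\item The Babbitt--Varadarajan claim is only asserted.
\item The ``a posteriori'' argument uses solvability of $Z$ over $\mathcal{A}_{K,s}$, which is available only after $Q$ has been extended, so it is circular.
\item Applying \Cref{p:rank-one-exp}(i),(iii) to $Z_\alpha$ or to $Z_\alpha\otimes Z_\beta^{\vee}$ presupposes exactly the solvability in question.
\end{itemize}
Moreover, $Z_j\in\mathfrak{m}_{\overline{K}}$ would not suffice even if proved. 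A lower term $\pi u\,s^{-1}\frac{ds}{s}$ with $u$ a unit has coefficient in $\mathfrak{m}_{\overline{K}}$ but contributes $p$-adic slope $1$. So the twisted class-$b$ piece need not have slope zero, and the trivialisation step breaks down. Unknown lower terms are precisely what the paper says prevents it from treating non-simple Airy systems.

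What you are missing is that in the simple case $Z$ has no lower-order terms at all. The paper uses this when it writes $Z=d+h\pi(p_{-1}+p_r)s^{-2-2h}\frac{ds}{s}$. To see it, write $Y=d+\bigl(As^{-2h-2}-\check{\rho}\bigr)\frac{ds}{s}$ with $A=c\pi X$, $c$ a unit, and $X=p_{-1}+p_r$. Decompose $\check{\rho}=\check{\rho}_{\ct}+\check{\rho}_{\perp}$ along $\cfg=\ct_X\oplus[X,\cfg]$. The gauge transformation $\exp(s^{2h+2}C)$ with $[C,A]=\check{\rho}_{\perp}$ replaces the constant term by $-\check{\rho}_{\ct}$ and creates only terms of positive order in $s$, which can be gauged away formally. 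So the canonical form is $d+\bigl(As^{-2h-2}-\check{\rho}_{\ct}\bigr)\frac{ds}{s}$.

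Moreover $\check{\rho}_{\ct}=0$, for three reasons:
\begin{itemize}
\item the projection onto $\ct_X$ is equivariant for $\theta=\Ad_{\check{\rho}(\zeta_h)}$, since $\theta(X)=\zeta_h^{-1}X$;
\item $\check{\rho}$ is $\theta$-fixed;
\item $\theta$ acts on $\ct_X$ through a Coxeter element, so $\ct_X^{\theta}=0$.
\end{itemize}
Hence $Z$ is a sum of the explicit exponentials $d+c\pi\varepsilon_\alpha s^{-2h-2}\frac{ds}{s}$. These carry Frobenius structures by \Cref{p:Ld-solvable-Frobenius} and \Cref{l:eigenvalues-X}. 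Those in a fixed residue class become trivial after the Teichm\"uller twist by \Cref{p:Ld-solvable-Frobenius}(ii). With this input the proof of \Cref{t:p-adic-convergence-Q} applies verbatim, separating classes by the single leading coefficient. Your steps (ii) and (iii) then agree with the paper's.
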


	\begin{proof}
		(i) By Lemma \ref{l:simple Ai}, 
		the underlying connection $E_{\cG}(\underline{a},-\pi)$ of $\Ai^{\rig}_{\cG}(-\pi\chi)$ satisfies $a_i=0$ for $i=1,\dots,r-1$. Let $\mathcal{E}$ be the formal $\cG$-connection defined by $E_{\cG}(\underline{a},-\pi)$ at infinity. 
		As in the proof of \Cref{p:Airy-Swan}, $[2m]^*\mathcal{E}$ is gauge equivalent to \eqref{eq:Ai-Y}
		\[
		Y=d+h\pi(p_{-1}+p_r) s^{-2-2h}\frac{ds}{s}-\check{\rho}\frac{ds}{s}.
		\]
		Then $Y$ can be gauge transformed to its canonical form $Z$ \eqref{eq:Ai-Z} via $Q\in \cG (K\llbracket s \rrbracket)$:
	\[
	Z= d+h\pi(p_{-1}+p_r)s^{-2-2h}\frac{ds}{s}. 
	\]
	Since $p>h$, $Z$ has a Frobenius structure by \Cref{p:Ld-solvable-Frobenius} and \Cref{l:eigenvalues-X}. 
	Then by the same argument of \Cref{t:p-adic-convergence-Q}, we can show that $Q\in \cG(\mathcal{A}_{K,s})$. 
	Then assertion (i) follows. 

	Assertion (ii) follows from \Cref{t:L-parameter}. 

	Assertion (iii) follows from assertion (ii), \Cref{p:Airy-Swan}
	and the discussion in \cite[6.3]{JKY}.
	\end{proof}

	\begin{rem}
		We expect \Cref{p:Airy-coh-rig} holds for all Airy local systems considered in \ref{sss:Ai-rig}. 
		However, we don't have an explicit description of minor terms $Z_j$ in \eqref{eq:Ai-Z} and can't prove \Cref{p:Airy-coh-rig} in the general setting. 
	\end{rem}

\subsection{Global monodromy of $\Ai^{\rig}_{\cG}$}
	We keep the assumption of \Cref{p:Airy-coh-rig}. 
We apply the local monodromy result in the above subsection to study the global monodromy of Airy local system. 

\begin{secnumber} \label{sss:Airy-Gdiff}
	The underlying connection \eqref{eq:naive Ai-conn} of $\Ai^{\rig}_{\cG}(\chi)$ satisfies $a_i=0$ for $i=1,\dots,r-1$:
	\[
	\Ai(\underline{a},\lambda)=d+(p_{-1}+(-\pi)^h x p_r)dx. 
	\]

	Let $G_{\diff}$ be the differential Galois group of the above $\cG$-connection. 
	In \cite{KS}, the authors showed that $G_{\diff}$ is a connected reductive subgroup of $\cG$ of \textit{maximal degree}, i.e. it has a fundamental degree equal to the Coxeter number of $\cG$. 
	More precisely, if we denote by $\Sigma$ the outer automorphism group of $\cG$ and by $\Out(\check{\gg})$ the outer automorphism group of $\check{\gg}$, then $G_{\diff}$ is given by \cite[Theorem 12]{KS}:
	\begin{itemize}
		\item $G_{\diff}\xrightarrow{\sim} \cG^{\Sigma,\circ}$, if $\cG$ is not type $A_{2n}$ ($n\ge 2$) or $B_{3}$ or $D_{2n}$ ($n\ge 2$) with $\Sigma\neq \Out(\check{\gg})$.  
	        \item $G_{\diff}=\cG$, if $\cG$ is of type $A_{2n}$, 
	        \item $G_{\diff}\xrightarrow{\sim} G_2$, if $\cG$ is of type $B_{3}$ or of type $D_4$. 
		\item $G_{\diff}\xrightarrow{\sim}\textnormal{Spin}_{4n-1}$ if $\cG$ is of type $D_{2n}$ with $\Sigma\simeq \{1\}$ ($n\ge 3$). 
	\end{itemize}
	Note that the above result coincides with the differential Galois group of the Bessel $\cG$-connection introduced by Frenkel--Gross \cite{FG09}. 

	On the other hand, let $H$ be the geometric monodromy group of $\Ai^{\rig}_{\cG}(\chi)$ in $\cG$. 
	As explained in \cite[\S~4.5.1]{XZ22}(replacing Bessel $\cG$-connection by Airy $\cG$-connection), there exists a natural embedding:
	\begin{equation}
		H\to G_{\diff}.
		\label{eq:embedding-mono}
	\end{equation}
\end{secnumber}

\begin{secnumber} \label{sss-min-dim}
	Let $\cfg$ be the Lie algebra of $\cG$. We take $n$ to be the minimal dimension of faithful representations among almost simple groups with Lie algebra $\cfg$. 
	For each simple type (of rank $\ell$), the minimal dimension is:
\[
\begin{array}{c|c|l}
A_\ell\;(\mathfrak{sl}_{\ell+1}) & \ell+1 & \text{standard representation} \\
B_\ell\;(\mathfrak{so}_{2\ell+1}),\ \ell\ge 2 & 2\ell+1 & \text{vector representation (faithful on }\SO_{2\ell+1}\text{)} \\
C_\ell\;(\mathfrak{sp}_{2\ell}),\ \ell\ge 2 & 2\ell & \text{standard representation} \\
D_\ell\;(\mathfrak{so}_{2\ell}),\ \ell\ge 4 & 2\ell & \text{vector representation (faithful on }\SO_{2\ell}\text{)} \\
G_2 & 7 & \text{minimal representation} \\
F_4 & 26 & \text{minimal representation} \\
E_6 & 27 & \text{minimal representation} \\
E_7 & 56 & \text{minimal representation} \\
E_8 & 248 & \text{adjoint representation} \\
\end{array}
\]	
Note that $n$ is larger or equal to the Coxeter number $h$ and is larger than the rank $\ell$. 
\end{secnumber}

\begin{theorem}
	Keep the assumption of \Cref{p:Airy-coh-rig} and $p>2n+1$. 
	Then the natural morphism $H\to G_{\diff}$ \eqref{eq:embedding-mono} is an isomorphism. 
	\label{t:monodromy-Ai}
\end{theorem}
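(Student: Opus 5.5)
We follow the strategy sketched in the introduction: first show that $H$ is connected, then show that $H$ contains the local monodromy at $\infty$ and hence has a fundamental degree equal to $h$, and finally use the classification of connected reductive subgroups of maximal degree (as in \cite{KS}, \cite[\S~4.5]{XZ22}) to conclude $H=G_{\diff}$.

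\textbf{Step 1: connectedness.} Via the trivial functoriality \ref{sss:trivial-functoriality-Ai}, we may pass to an almost simple form of $\cG$ admitting a faithful representation $V$ of dimension $n$. Consider $\Ai^{\rig}_{\cG}(\chi)(V)$, a rank $n$ overconvergent $F$-isocrystal on $\A1_k$. The component group $H/H^{\circ}$ corresponds to a finite étale Galois cover of $\A1_{\overline{k}}$, through which some finite quotient of $\pi_1$ acts. Since $\A1$ is simply connected for tame covers, this cover is wildly ramified at $\infty$, with upper ramification breaks bounded by the $p$-adic slopes of $\Ai^{\rig}_{\cG}(\chi)(V)$. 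By the argument of \Cref{p:Airy-Swan}, these slopes are all $\le 1+\frac1h$, hence $<2$.

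A $p$-group quotient of $\pi_1(\A1_{\overline k})$ with all breaks $<2$ is trivial: Artin--Schreier covers $y^p-y=f(x)$ have break $\deg f$, and only $\deg f\ge 1$ with $p\nmid \deg f$ gives a nontrivial cover, so break $1$ must be excluded. So I would instead bound $|H/H^{\circ}|$. The finite group $H/H^{\circ}$ embeds into a finite subgroup of $\GL_n$ normalizing $H^{\circ}$. If its order were divisible by a prime $p>2n+1$, then by Feit--Thompson/Larsen-type results on $p$-Sylows of finite linear groups of degree $n<p$, it would have a normal abelian $p$-Sylow subgroup. One then compares with the local monodromy of \Cref{p:Airy-coh-rig}: the image of $I_F^+$ lies in a torus, and a generator of the tame inertia acts through a Coxeter element. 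Since $\pi_1(\A1)$ is generated by conjugates of inertia at $\infty$, this forces $H/H^{\circ}$ to be generated by images of inertia groups. I expect the delicate point to be combining this with the condition $p>2n+1$ to kill $H/H^{\circ}$. This is the step I expect to be the main obstacle. The first thing I would try is Katz's argument \cite{Katz87} for $\Ai_n$, which uses $p>2n+1$ for exactly this purpose, adapted to $V$.

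\textbf{Step 2: maximal degree.} With $H$ connected and reductive (being the monodromy of a semisimple pure object), \Cref{p:Airy-coh-rig}(ii) shows that $\rho(I_F)\subset H$. Hence $H$ contains the torus $\rho(I_F^+)^{\circ}$-hull together with an element $\sigma$ normalizing $\cT_X$ whose image in $\bW$ is a Coxeter element. Moreover, $C_{\cG}(\rho(I_F^+))=\cT_X$ by \Cref{t:L-parameter}(iii). Therefore $\rho(I_F^+)$ lies in a maximal torus $T_H$ of $H$, with $T_H$ containing regular elements of $\cG$. The element $\sigma$ then induces an element of $\bW(H)$ acting on $\mathrm{Lie}\,T_H$ with an eigenvalue a primitive $h$-th root of unity. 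By Springer theory, $H$ has a fundamental degree divisible by $h$, and since $H\subset\cG$, it has degree exactly $h$.

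\textbf{Step 3: conclusion.} By the classification used in \cite{KS} and \cite[Theorem 4.5.2]{XZ22}, a connected reductive subgroup of $\cG$ of maximal degree that contains a regular semisimple element of $\cG$ (or acts irreducibly on $V$) is one of the groups in the list of \ref{sss:Airy-Gdiff}, up to outer automorphism. Combined with the inclusion $H\subset G_{\diff}$, this gives $H=G_{\diff}$. The remaining ambiguities, such as $G_2\subset \SO_7$ and $B_n\subset D_{n+1}$, are resolved by the containment in $G_{\diff}$ together with dimension considerations.
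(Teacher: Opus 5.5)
Your proposal has a genuine gap at Step 1, which you flag yourself. You correctly discard the ramification-break argument (Artin--Schreier covers $y^p-y=ax$ have break $1<2$), but the Feit--Thompson sketch that replaces it never reaches a contradiction.

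The decisive input you do not use is the centralizer bound coming from the local monodromy at $\infty$. By \Cref{p:Airy-coh-rig}(ii) and \Cref{t:L-parameter}(iii), $C_{\cG}(H)\subset C_{\cG}(\rho(I_F))=\cT_X^{\sigma}$, which is finite of order prime to $p$. The paper's \Cref{p:connected-H} combines this bound with the following steps.
\begin{enumerate}
\item Since $\pi_0(H)$ is a quasi-$p$-group (a finite quotient of $\pi_1(\mathbb{A}^1_{\overline{k}})$) and $H^{\circ}$ is semisimple, the map $\pi_0(H)\to\Out(H^{\circ})$ is trivial. Indeed, $|\Out(H^{\circ})|$ only has prime factors $\le\max\{3,\rank\cG\}<p$. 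So each $h\in H$ acts on $H^{\circ}$ by an inner automorphism, and $\pi_0(H)$ embeds into $\Delta:=L/Z(H^{\circ})$, where $L=C_{\cG}(H^{\circ})$.
\item Apply Feit--Thompson to the finite preimage $\widehat{K}\subset L\subset\GL_n$ of the image $K$ of $\pi_0(H)$. Together with the quasi-$p$ property, this shows that $K$ is an abelian $p$-group.
\item Since $H=\widehat{K}H^{\circ}$, one has $C_{\cG}(H)=C_L(\widehat{K})$.
\item The commutator map $\pi^{-1}(C_{\Delta}(K))\to\Hom(K,Z(H^{\circ}))$ has kernel $C_L(\widehat{K})$. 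Its target is trivial because $|Z(H^{\circ})|$ is prime to $p$. Hence $|C_{\Delta}(K)|$ divides $|C_{\cG}(H)|$ and is prime to $p$.
\item But $K\subset C_{\Delta}(K)$ is a $p$-group, forcing $K=1$.
\end{enumerate}
Without the prime-to-$p$ finiteness of $C_{\cG}(H)$, nothing in your sketch rules out a nontrivial $p$-group of components centralizing $H^{\circ}$. The Coxeter information enters precisely through this centralizer bound, not through generation of $\pi_1$ by inertia.

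There is also a smaller point in Step 2. The inference ``$C_{\cG}(\rho(I_F^+))=\cT_X$, therefore $\rho(I_F^+)$ lies in a maximal torus of $H$'' is a non sequitur: commuting semisimple elements of a connected reductive group need not lie in a common torus. The paper argues via Borel--Serre instead. The finite $p$-group $\rho(I_F^+)$ normalizes some maximal torus $T_H$ of $H$, and it lies in $T_H$ because $p\nmid|\bW_H|$. Then $T_H\subset C_H(\rho(I_F^+))\subset\cT_X$, so $T_H=(\cT_X\cap H)^{\circ}$ is normalized by the tame generator. With that repaired, your Steps 2 and 3 follow the paper's route: a fundamental degree equal to $h$, then the classification of reductive subgroups of maximal degree from \cite{KS}.
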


\begin{rem}
	The geometric monodromy of a family of rank $n$ Airy local systems was calculated by Katz and Šuch \cite{Katz87,Such}, including the above theorem in the $\GL_n$-case. 
	Our approach is inspired by Katz's paper. 
\end{rem}

\begin{secnumber} \label{sss:basic-facts}
	We assume that $\cG$ admits a faithful representation of minimal dimension $n$ as in \ref{sss-min-dim}. 
	We list some basic properties of the geometric monodromy group $H$:

	\begin{itemize}
		\item For $V\in \Rep(\cG)$, $\Ai^{\rig}_{\cG}(\chi)(V)$ is a pure local system and is geometrically semisimple. Hence, $H^{\circ}$ is a semisimple subgroup of $\cG$ \cite{Cr92}. 

		\item The group of connected components $\pi_0(H)$ is a finite quotient of $\pi_1(\mathbb{A}^1_{\overline{k}})$ \cite[(7.8)]{Dri18} and is therefore a \textit{quasi-$p$-group}, i.e. it has no non-trivial finite quotients of order prime to $p$ \cite[Exposé XIII, Corollaire 2.12]{SGAI}. 

		\item In view of the local monodromy at infinity in \Cref{p:Airy-coh-rig}, $C_{\cG}(H)$ is a subgroup of the finite group $\cG^{I_{\infty}}=\cT^{\sigma}$. 
	By the assumption on $p$, the latter has order prime to $p$. 
	\end{itemize}

	In the following, we show a connectedness criterion for a subgroup $H$ of $\cG$
\end{secnumber}

\begin{prop} \label{p:connected-H}
	Let $\cG$ be an almost simple group with a faithful representation $\cG\to \GL_n$ and $H\subset \cG$ an algebraic subgroup. 
	Let $p>2n+1$ be a prime. 
	Assume that
	(i) $\pi_0(H)$ is a quasi-$p$-group, (ii) $H^{\circ}$ is semisimple, (iii) $C_{\cG}(H)$ is finite of order prime to $p$. 
	Then $H$ is connected. 
\end{prop}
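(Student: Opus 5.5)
We argue by contradiction and study the action of the finite group $\pi_0(H)=H/H^{\circ}$ on the semisimple group $H^{\circ}$. Suppose $H\neq H^{\circ}$. Since $\pi_0(H)$ is a quasi-$p$-group, it is generated by its $p$-Sylow subgroups, so it contains a nontrivial element of $p$-power order. Lifting such an element to $H$ and taking its Jordan decomposition, we obtain an element $g\in H$ of order exactly $p$ (semisimple, since we are in characteristic zero) whose image in $\pi_0(H)$ is nontrivial. The goal is to show that $g\in H^{\circ}\cdot C_{\cG}(H^{\circ})$ and then use (iii) to reach a contradiction.

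\textbf{Step 1: $g$ acts on $H^{\circ}$ by an inner automorphism.} Conjugation by $g$ defines an element of $\Aut(H^{\circ})$, and its image in $\Out(H^{\circ})$ has order dividing $p$. The group $\Out(H^{\circ})$ is contained in the automorphism group of the Dynkin diagram of $H^{\circ}$, extended by permutations of isomorphic simple factors. Because $H^{\circ}\subset \cG\subset \GL_n$, the rank of $H^{\circ}$ is at most $n$, so there are at most $n$ simple factors. A diagram automorphism of order $p$ would therefore require either some connected Dynkin diagram with an automorphism of order $p\ge 5$, which never happens, or a permutation of $p$ isomorphic simple factors, which needs $p\le n$. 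The bound $p>2n+1$ rules out both, so the image in $\Out(H^{\circ})$ is trivial. Hence $g=h c$ with $h\in H^{\circ}$ and $c\in C_{\cG}(H^{\circ})$. Since $H^{\circ}$ is connected, $c$ lies in $H$, and it still has nontrivial image in $\pi_0(H)$.

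\textbf{Step 2: produce a nontrivial $p$-element in $C_{\cG}(H)$.} The group $C:=C_H(H^{\circ})$ surjects onto a subgroup of $\pi_0(H)$ with nontrivial image, and $C\cap H^{\circ}=Z(H^{\circ})$ is finite. So $C$ is a finite group whose image in $\pi_0(H)$ is normal, because $C$ is normal in $H$. Every quotient of a quasi-$p$-group is again quasi-$p$, and a normal subgroup of $\pi_0(H)$ containing the image of an element of order $p$ has order divisible by $p$. One therefore wants an element of $C$ of $p$-power order commuting with all of $H$. This amounts to showing that $\pi_0(H)$ acts trivially on a suitable $p$-subgroup of $C$.

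Here is the intended argument. Consider $Z:=C_{\cG}(H^{\circ})$, which is reductive. The finite group $\pi_0(H)$ acts on $Z/Z(H^{\circ})$-type objects through automorphisms whose sizes are controlled by $n$: the relevant groups embed in $\GL_{n'}$ with $n'\le n$, or in groups of permutations of at most $n$ objects. A quasi-$p$-group acting on a $p$-group of small rank, or on an abelian group of exponent $p$ and rank $\le 2n$, has every non-$p$ part of the action killed. Since $p>2n+1$, the order $|\GL_{2n}(\mathbb{F}_p)|$ modulo its $p$-part is not helpful, so we instead use that $p$-elements acting on an elementary abelian $p$-group fix a nonzero vector. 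This yields a nonzero fixed vector, i.e. an element of order $p$ in $C_{\cG}(H)$, contradicting (iii).

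\textbf{Main obstacle.} Step 2 is where I expect the real difficulty: passing from the centralizer of $H^{\circ}$ to the centralizer of all of $H$. I would first try the case where $H^{\circ}$ is trivial or a torus-free semisimple group with $C_{\cG}(H^{\circ})$ finite, and bound that centralizer via $n$ so that its $p$-part is abelian and fixed-point arguments apply. The bound $p>2n+1$ should be exactly what guarantees that the relevant outer-automorphism and component groups have order prime to $p$.
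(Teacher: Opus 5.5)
Your Step 1 is correct and matches the paper's first step. The prime divisors of $|\Out(H^{\circ})|$ are at most $\max\{3,\rank H^{\circ}\}<p$, and $\pi_0(H)$ is generated by $p$-elements, so it acts on $H^{\circ}$ by inner automorphisms. Hence $H=C\cdot H^{\circ}$ with $C:=C_H(H^{\circ})$ finite and $C/Z(H^{\circ})\simeq\pi_0(H)$. (A small slip: the Jordan decomposition gives a semisimple lift, not one of order exactly $p$. For $p=2$, every element of the non-identity component of the normalizer of the diagonal torus in $\SL_2$ has order $4$. A lift of $p$-power order suffices for your purposes.)

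The genuine gap is Step 2. The fixed-point fact you invoke (a $p$-group acting on a nontrivial $p$-group fixes a nontrivial element) needs the \emph{acting} group to be a $p$-group. Nothing in your argument shows that $\pi_0(H)$ is one, and quasi-$p$-groups in general are not. In fact everything your argument establishes remains true under the weaker bound $p>\max\{3,n\}$, and there the statement is false. Take $\cG=\SO_3$, $n=3$, $p=5$, and $H\simeq A_5$ the icosahedral subgroup. Then $H^{\circ}=1$, $\pi_0(H)=A_5$ is quasi-$5$, and $C_{\cG}(H)=1$ by Schur's lemma, yet $H$ is disconnected. So $p>2n+1$ has to enter through a genuine theorem about finite linear groups, and your sketch (``non-$p$ parts of the action killed'', $\GL_{2n}(\mathbb{F}_p)$) does not supply one.

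The missing input, used in the paper, is the Feit--Thompson sharpening of Jordan's theorem: for $p>2n+1$, every finite subgroup of $\GL_n$ has a normal abelian Sylow $p$-subgroup. Apply it to $C\subset\GL_n$. The image of its Sylow $p$-subgroup is a normal Sylow subgroup of $\pi_0(H)$ whose quotient has order prime to $p$. The quasi-$p$ hypothesis then forces $\pi_0(H)$ to be an abelian $p$-group.

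You must then still pass from ``central modulo $Z(H^{\circ})$'' to ``central'', and this uses $p\nmid|Z(H^{\circ})|$, which your proposal never checks. Set $\Delta=C_{\cG}(H^{\circ})/Z(H^{\circ})$ and let $K\simeq\pi_0(H)$ be the image of $C$ in $\Delta$. The paper uses the commutator homomorphism from the preimage of $C_{\Delta}(K)$ in $C_{\cG}(H^{\circ})$ to $\Hom(K,Z(H^{\circ}))$. This map is trivial, so $C_{\Delta}(K)\supset K$ has order dividing $|C_{\cG}(H)|$, which is prime to $p$, forcing $K=1$. Equivalently, the Sylow $p$-subgroup $P$ of $C$ gives $C=P\times Z(H^{\circ})$, so $C$ is abelian, $P\subset C_{\cG}(H)$, and (iii) gives $P=1$.

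Note also that the finite group to work with is $C_H(H^{\circ})$. Your proposed special case assumes $C_{\cG}(H^{\circ})$ is finite, but it is reductive and typically positive-dimensional.
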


\begin{secnumber}
	We need Jordan's theorem, which says that there exists a function $c(n)$ on $n$ such that if $\Gamma$ is a finite subgroup of $\GL_n$, then there exists a normal abelian subgroup $\Gamma_0$ of $\Gamma$ such that the quotient $\Gamma/\Gamma_0$ has order $\le c(n)$. 
	Feit--Thompson's sharpening on Jordan's theorem \cite{FT} says that if $p>2n+1$, then the Sylow $p$-subgroup of $\Gamma$ is an abelian normal subgroup of $\Gamma$. 
\end{secnumber}

\begin{proof}[Proof of \Cref{p:connected-H}]
	(i) The conjugate action of $H$ on $H^{\circ}$ induces a homomorphism
	\[
	\pi_0(H) \to \Out(H^{\circ}). 
	\]
	All prime divisors of $|\Out(H^{\circ})|$ are $\le \max\{3, \rank(H)\}\le \max\{3,\rank(\cG)\}$. 
	Then the order of $\Out(H^{\circ})$ is prime to $p$. 
	Since $\pi_0(H)$ is a quasi-$p$-group, the above homomorphism is trivial. 
	
	Let $L=C_{\cG}(H^{\circ})$. 
	Every element $h\in H$ acts on $H^{\circ}$ by inner automorphism $\Ad_{s(h)}$ for some $s(h)\in H^{\circ}$. 
	Then $\ell(h)=hs(h)^{-1}$ is an element of $L$. 
	This construction defines an injective homomorphism
	\[
	\pi_0(H)\hookrightarrow \Delta:=L/Z(H^{\circ}), \quad h\mapsto \ell(h).  
	\]
	We write $K$ to be the image of $\pi_0(H)$ in $\Delta$. Note that $Z(H^{\circ})$ is a finite group whose order has prime divisors smaller than $p$. 

	(ii) Next we show that $K$ is abelian. 
	Let $\widehat{K}\subset L$ be the preimage of $K$ under $\pi:L\to \Delta$. Then $\widehat{K}$ is also finite. 
	We take a faithful representation $\cG\to \GL_n$ as in \ref{sss-min-dim}. 
	By Feit--Thompson's result, a Sylow $p$-subgroup of $\widehat{K}$ is abelian and normal. 
	Pushing down to $K$, the same holds for $K$. Since $K$ is a quasi-$p$-group, we deduce that $K$ is an abelian $p$-group. 

	(iii) We show that $C_{\cG}(H)=C_L(\widehat{K})$. 
	We first note that $H$ is generated by $H^{\circ}$ and $\widehat{K}$, i.e. $H=\widehat{K}\cdot H^{\circ}$. 
	For $g\in C_{\cG}(H)$, $g$ centralizes $H^\circ\subset H$, hence $g\in C_{\cG}(H^\circ)=L$. 
	Inside $L$, every element centralizes $H^\circ$, so $g\in L$ centralizes $H= \widehat K \cdot H^\circ$
if and only if $g$ centralizes $\widehat K$. Then we have $C_{\cG}(H)= C_L(\widehat K)$.

	(iv) Consider the preimage of the centralizer of $K$ in $\Delta$:
	\[
	M:=\pi^{-1}(C_{\Delta}(K))\subset L,\quad \pi: L\to \Delta.
	\]
	By definition of $M$, an element $\ell\in L$ lies in $M$ if and only if
$\pi(\ell)$ commutes with every element of $K$, i.e. for every $\hat k\in \widehat K$, $[\ell,\hat k]\in \ker(\pi)=Z(H^\circ)$. 
	Thus for $\ell\in M$ we may define a map
\[
\delta(\ell):K\to Z(H^\circ),\qquad
\delta(\ell)\bigl(\pi(\hat k)\bigr):=[\ell,\hat k].
\]
This map is independent of the choice of the lift of $k\in K$ in $\widehat{K}$ and is a homomorphism. 
Moreover, we obtain a homomorphism:
\[
\delta:M\to \Hom(K,Z(H^{\circ})),\quad \ell\mapsto [\ell,\hat k]. 
\]

Its kernel consists of those $\ell\in M$ commuting with every $\hat k\in\widehat K$, i.e.
$\ker(\delta)=C_L(\widehat K)$. 
Therefore $\delta$ induces an injection
\[
M/C_L(\widehat K)\hookrightarrow \Hom(K,Z(H^{\circ})).
\]

By assumption on $p$, the orders of $Z(H^{\circ})$ and of $C_L(\widehat{K})=C_{\cG}(H)$ are prime to $p$. Then $\Hom(K,Z(H^{\circ}))=1$ and we deduce that $|C_{\Delta}(K)|$ divides $|C_L(\widehat{K})|$ and is also prime to $p$. 
On the other hand, since $K$ is an abelian $p$-group, $K\subset C_{\Delta}(K)$. 
Then $K$ must be the trivial group.  
\end{proof}

\begin{proof}[Proof of \Cref{t:monodromy-Ai}]
	We first assume that $\cG$ admits a faithful representation of minimal dimension $n$ as in \ref{sss-min-dim}. 
	The general case would follow from this case and the trivial functoriality of $\Ai_{\cG}$ (see \ref{sss:trivial-functoriality-Ai}). 

	The geometric monodromy group $H$ of $\Ai^{\rig}_{\cG}(\chi)$ is connected and semisimple by \Cref{p:connected-H}.
	By a theorem of Serre and Borel, the image of the wild inertial at infinity $\rho(I_{F}^+)$ in $H$ is a finite $p$-group and is therefore contained in the normalizer $N_{H}(T_H)$ of a maximal torus $T_H$ of $H$. 
	Since $p>h$, $p$ does not divide the order of the Weyl group $N_{H}(T_H)/T_H$ and $\rho(I_F^+)$ is therefore contained in $T_H$. 
	We have inclusions in $\cG$:
	\[
		T_H\subset C_{H}(\rho(I_F^+))\subset C_{\cG}(\rho(I_F^+))=\cT_X.
	\]
	Then we deduce that $T_H=\cT_X\cap H$. 

	By \Cref{p:Airy-coh-rig}, the local monodromy at $\infty$ fits into the following diagram:
	\[
	\xymatrix{
	1\ar[r] & I^+_F \ar[r] \ar[d] & I_F \ar[r] \ar[d] & I_F^t \ar[r] \ar[d] & 1 \\
	1\ar[r] & T_H \ar[r]& N_{H}(T_H) \ar[r] & \bW_H \ar[r] & 1
	}
	\]
	such that a generator of the tame inertia $I_F^t$ is sent to an element of order $h$ in the Weyl group $\bW_H$. 
	
	Then we deduce that $h$ divides a fundamental degree of $H$. 
	Hence $H$ is a connected reductive subgroup of $\cG$ of maximal degree. 
	In view of the calculation of $G_{\diff}$ (\S~\ref{sss:Airy-Gdiff}) and the classification of reductive subgroups of maximal degree \cite[Theorem 3']{KS}, we conclude that the inclusion $H\to G_{\diff}$ is an isomorphism. 
\end{proof}

\section{Physical rigidity}
\label{s:rigidity}

In this section, we assume that $k=\mathbb{F}_q$ and $\sigma=\id_K:K\to K$. 
We show the physical rigidity of an overconvergent isocrystal on a curve, whose underlying connection is physically rigid. 
In particular, we deduce the physical rigidity of Bessel $F$-isocrystals (and that of Kloosterman sheaves) for almost simple groups as conjectured in \cite[Conjecture 7.1]{HNY}. 

\subsection{$P$-adic case}
\begin{secnumber} \label{sss:phy-rigid}
	Let $S$ be a proper closed subset of $\mathbb{P}^1_k$ and $U=\mathbb{P}^1_k-S$. 
	Let $\cG$ be a connected almost simple group split over $K$. 
	Let $\mathcal{E}$ be a $\cG$-overconvergent isocrystal on $U/\overline{K}$ (\ref{sss:def-isod}). 
	We say that $\mathcal{E}$ is \textit{physically rigid} if for any $\cG$-overconvergent isocrystal $\mathcal{E}'$ such that $\mathcal{E}|_x\simeq \mathcal{E}'|_x$ as differential modules over the Robba ring $\mathcal{R}$ for every closed point $x\in |S|$, then we have $\mathcal{E}\simeq \mathcal{E}'$. 
\end{secnumber}

\begin{theorem}
	Let $U \hookrightarrow X=\mathbb{P}^1$ be an open immersion of schemes over $\mathcal{O}_K$ with the complement $S\to X$ flat over $\mathcal{O}_K$. 
	Let $\mathcal{E}$ be a $\cG$-overconvergent isocrystal on $U_k/\overline{K}$. 
	Suppose that there exists a physically rigid $\cG$-connection $E$ on $U_{\overline{K}}$ such that $E^{\dagger}\simeq \mathcal{E}$ \eqref{eq:dagger-functor}. 
	Then $\mathcal{E}$ is physically rigid. 
	\label{t:physical-rigidity}
\end{theorem}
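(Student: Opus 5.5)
Given $\mathcal{E}'$ with $\mathcal{E}'|_x\simeq\mathcal{E}|_x$ over $\mathcal{R}$ for all $x\in|S|$, I would produce an algebraic $\cG$-connection $E'$ on $U_{\overline{K}}$ with $(E')^{\dagger}\simeq\mathcal{E}'$ and with the same formal local types as $E$ at $S$. Physical rigidity of $E$ then gives $E\simeq E'$, and applying $(-)^{\dagger}$ gives $\mathcal{E}\simeq\mathcal{E}'$.

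\emph{Step 1: algebraize $\mathcal{E}'$.} Over $\P1$, an overconvergent isocrystal on $U_k$ with Frobenius structure, viewed as a $j^{\dagger}\mathscr{O}$-module with connection, extends to a meromorphic connection on $\P1_K^{\an}$ once we know it is ``algebraic'' near each $x\in S$. For this I would use the $p$-adic GAGA / Christol--Mebkhout type results: a differential module over the Robba ring arising from an $F$-isocrystal is algebraizable on a punctured neighbourhood if it comes by restriction from a meromorphic formal connection, and a module on $\P1$ that is meromorphically algebraizable at all boundary points is algebraic. The local data $\mathcal{E}'|_x\simeq\mathcal{E}|_x=(E|_x)^{\dagger}$ provide exactly this, so I glue $\mathcal{E}'$ on the tube $]U_k[$ with $E$ near each residue disc of $x\in S$ via the given local isomorphisms. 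This produces a $\cG$-bundle with connection on a strict neighbourhood, meromorphic at $S$, hence algebraic $E'$ on $U_{\overline{K}}$ by rigid GAGA on $\P1$ (after descending to a finite extension $L$ of $K$). The $\cG$-structure is handled Tannakianly: work functorially in $V\in\Rep(\cG)$, or glue $\cG$-torsors directly, since gluing preserves tensor structures.

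\emph{Step 2: compare formal types.} By construction $E'$ agrees with $E$ on a punctured analytic neighbourhood of each $x\in S$, hence after passing to $\overline{K}(\!(t_x)\!)$ via \eqref{eq:inclusion-rings} their formal types coincide. Here one must check the isomorphism $\mathcal{E}'|_x\simeq\mathcal{E}|_x$ over $\mathcal{R}$ can be realized over a thin annulus $\rho_0<|t|<1$ where $E$ is defined, and that the glued object is still meromorphic. I expect this to be the main obstacle: the Robba ring isomorphism only lives near the boundary $|t|\to1$, while $E|_x$ comes from $\mathcal{A}_t$ on the whole punctured disc, so I first transport structure using that $(E|_x)^{\dagger}$ is defined over $\mathcal{A}_t$ and patch the punctured open disc of $E$ with $\mathcal{E}'$ along the annulus. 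The resulting object on $\P1^{\an}$ minus finitely many points has meromorphic (in fact $E$-type) singularities there.

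\emph{Step 3: conclude.} With $E'$ algebraic and locally formally isomorphic to $E$ at every $x\in S$, physical rigidity of $E$ yields $E'\simeq E$ over $\overline{K}$. Then $\mathcal{E}'\simeq(E')^{\dagger}\simeq E^{\dagger}\simeq\mathcal{E}$ as $j^{\dagger}\mathscr{O}$-modules with connection, and full faithfulness of $\Iso^{\dagger}(U_k/K)\hookrightarrow\Conn(j^{\dagger}\mathscr{O}_{\XX^{\rig}})$ in \eqref{eq:dagger-functor} upgrades this to an isomorphism of $\cG$-overconvergent isocrystals.
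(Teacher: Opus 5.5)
Your proposal is correct and follows essentially the paper's argument. As in the paper, you realize $\mathcal{E}'$ on a strict neighbourhood and glue it with the algebraic $E|_x$ on the punctured residue discs, along the annuli $\rho_0<|t_x|<1$ where the Robba-ring isomorphisms converge; rigid GAGA on $\mathbb{P}^1$ then gives an algebraic $\cG$-connection $L$ with $L^{\dagger}\simeq\mathcal{E}'$ and $L|_x\simeq E|_x$ formally, and you conclude by physical rigidity of $E$ and the functor $(-)^{\dagger}$. The appeal to Christol--Mebkhout-type algebraization results in Step 1 is unnecessary, since the hypothesis $\mathcal{E}'|_x\simeq (E|_x)^{\dagger}$ already supplies the local algebraic model used in the gluing.
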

\begin{rem}
	Crew proved a similar result \cite[Theorem 2]{Cr17} for a rank $n$ overconvergent isocrystal which is the analytification of a rigid connection with regular singularities. 
	His proof reduces the problem to the cohomological rigidity (which is equivalent to the physical rigidity only in the $\GL_n$-case) and is based on comparing de Rham and rigid cohomology groups, which is only available in the regular singularity case. 
\end{rem}
\begin{secnumber}
	We review a local picture of the construction in \ref{sss:dagger-connection}. 
	Let $\MC(\mathcal{R}_K)$ (resp. $\MC(\mathcal{A}_{K,t})$, resp. $\MC(K (\!(t)\!) )$) be the category of free $\mathcal{R}_K$-modules of finite rank with connection (resp. free $\mathcal{A}_{K,t}$-modules of finite rank with connection of finite order poles, resp. free $K (\!(t)\!)$-modules with formal connection). 
	We have natural tensor functors, defined by extension of scalars \eqref{eq:inclusion-rings}:
	\begin{equation}
		\MC(\mathcal{A}_{K,t}) \to \MC(\mathcal{R}_K),\quad \MC(\mathcal{A}_{K,t})\to \MC(K (\!(t)\!)).
		\label{eq:MC-functors}
	\end{equation}
\end{secnumber}

\begin{proof}[Proof of \Cref{t:physical-rigidity}]
	After enlarging $K$, we may assume that $S$ is isomorphic to a finite disjoint union $\sqcup \Spec(\mathcal{O}_K)$. 

	Let $\mathcal{L}$ be a $\cG$-overconvergent isocrystal on $U_k$ whose local monodromy at each point $x\in S_k$ is isomorphic to that of $\mathcal{E}$. 
	After enlarging $K$, we may assume that both $\mathcal{L}, \mathcal{E}$ and the local isomorphisms between them are defined over $K$. 

	For $x\in S_K$, the restriction $E|_x$, considered as an object of $\MC(K (\!(t_x)\!))$ with a local coordinate $t_x$ at $x$, is algebraic and therefore comes from an object of $\MC(\mathcal{A}_{K,t_x})$, which we abusively denote by $E|_x$. 
	Moreover, the extension of scalars of $E|_x$ to $\mathcal{R}_K$ is isomorphic to $\mathcal{E}|_x$ and therefore $\mathcal{L}|_x$. 
    
    Let $\UU,\XX$ be the $p$-adic completion of $U,X$.
    Let $V$ be a strict neighborhood of $\UU^{\rig}$ in $\XX^{\rig}$, on which $\mathcal{L}$ can be realised as a $\cG$-bundle with connection. 
    For each $x\in S_K$, we have an isomorphism between the restriction of $\mathcal{L}$ and of $E|_x$ on the intersection of $V$ with the open unit ball around $x$ in $\XX^{\rig}$ (\ref{sss:dagger-connection}). 
    Then the data $(\mathcal{L},\{E|_x\}_{x\in S_K})$ can be glued to an analytic $\cG$-bundle $L^{\an}$ on $\XX^{\rig}$ and a connection with poles of finite order at $x\in S_K$. 
	By the GAGA theorem, we obtain an algebraic $\cG$-bundle with connection $L$ over $U_K$ such that $L^{\dagger}\simeq \mathcal{L}$. 
	Moreover, we have $L|_{x}\simeq E|_x$ as formal connections. 
	By the physical rigidity of $E$, we deduce that $L\simeq E$ in $\Conn(U_K)$ after enlarging $K$. 
	Then we obtain $\mathcal{L}\simeq \mathcal{E}$ via the functor $(-)^{\dagger}$.
\end{proof}

We will apply the above theorem to the following results of the second author: 
\begin{theorem}[\cite{Yi24} Theorem 1, \cite{CY24} Theorem 1, \cite{Yi25} Theorem 41]
	Under the assumption of \Cref{t:compare-Kl-theta-connection}, the $\cG$-connections $\Kl_{\cG}^{\dR}(\phi)$ \eqref{sss:Kl-dR}, $\Ai_{\cG}^{\dR}(\chi)$ \eqref{sss:Ai-rig} are physically rigid. 
	\label{t:rigidity-FG}
\end{theorem}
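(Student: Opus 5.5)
This theorem is a citation of the second author's earlier work, so the plan is to reduce to the cited statements over $\overline{K}$ rather than prove anything new. Physical rigidity is insensitive to extension of the algebraically closed base field of characteristic zero. Indeed, both the $\cG$-connections and the local formal types are defined over $K$, and isomorphism classes of $\cG$-connections on $U$ with given formal types form a set that can be compared over $\mathbb{C}$ via an embedding $\overline{K}\hookrightarrow\mathbb{C}$. I would therefore first fix such an embedding and reduce to the complex setting in which \cite{Yi24,CY24,Yi25} work.

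Next I would identify the objects. By \Cref{t:compare-Kl-theta-connection}(ii), $\Kl_{\cG}^{\dR}(\phi)\simeq\Theta(X)$, where $X\in\cfg_1^{\st}$ corresponds to $\phi$ under \eqref{eq:isomorphism-VP-cg1-st}. \cite[Theorem 1]{CY24} gives physical rigidity of $\theta$-connections under the hypothesis $s_0=1$ for both $\cg$ and $\fg$; when $m=h$, \cite[Theorem 1]{Yi24} covers the Frenkel--Gross Bessel connection. Similarly, by \Cref{t:Ai connection}, $\Ai_{\cG}^{\dR}(\chi)\simeq\Ai(\underline{a},\lambda)$, and \cite[Theorem 41]{Yi25} gives its physical rigidity, at least for adjoint $\cG$.

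The remaining step is to pass from $\cG_{\ad}$ to general almost simple $\cG$. Suppose $E'$ has the same local formal types as $E$. Then the $\cG_{\ad}$-connections they induce are isomorphic by the adjoint case. I would then argue as in the proof of \Cref{t:Ai connection}: two $\cG$-lifts differ by a $\check Z$-torsor with connection, where $\check Z$ is the finite center of $\cG$. For Airy, this torsor is trivial on $\A1$, since its wild part at $\infty$ must vanish and $\A1$ is simply connected. For $\Kl$ on $\Gm$, the matching local type at $0$ (unipotent monodromy) forces the $\check Z$-torsor to be trivial, as in \cite[\S4.3.6]{XZ22}.

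The main obstacle is this center-twisting step in the $\Kl$ case. Both the compatibility of formal types after twisting and the vanishing of the twist need care, and I would rely on the unipotence of the monodromy at $0$.
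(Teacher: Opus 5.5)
Your proposal is correct and takes essentially the paper's route. The adjoint case is cited from \cite{Yi24,CY24,Yi25}, and the general almost simple case follows as in \cite[\S~4.3.6]{XZ22}: two $\cG$-lifts of the same $\cG_{\ad}$-connection differ by a $\check{Z}$-twist, which is trivial on $\mathbb{A}^1$ in the Airy case and, in the Kloosterman case, is excluded by the unipotent monodromy $u$ at $0$ (the twisted monodromy $uz$ has semisimple part $z$, so it is conjugate to $u$ only if $z=1$), while your explicit reduction from $\overline{K}$ to $\mathbb{C}$ is a harmless step the paper leaves implicit.
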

\begin{proof}
	When $\cG$ is of adjoint type, the theorem is proved in \textit{loc.cit}. 
	In general, as observed in \cite[4.3.6]{XZ22}, up to isomorphisms, there exists a unique de Rham $\cG$-local system on $\mathbb{G}_{m,\overline{K}}$ (resp. $\mathbb{A}^1_{\overline{K}}$), which induces $\Kl_{\cG}^{\dR}(\phi)$ (resp. $\Ai_{\cG}^{\dR}(\chi)$), and has unipotent monodromy at $0$. 
	This allows us to deduce the general case from the adjoint case. 
\end{proof}

By \Cref{t:physical-rigidity}, we deduce that:

\begin{coro} \label{c:strong-phy-rigidity}
	Under the assumption of \Cref{t:compare-Kl-theta-connection}, the $\cG$-overconvergent isocrystals $\Kl_{\cG}^{\rig}(\phi)$ \eqref{sss:Kl-dR}, $\Ai_{\cG}^{\rig}(\chi)$ \eqref{sss:Ai-rig} are physically rigid. 
\end{coro}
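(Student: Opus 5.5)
The corollary follows by applying \Cref{t:physical-rigidity} to $\mathcal{E}=\Kl_{\cG}^{\rig}(\phi_k)$ (resp. $\Ai_{\cG}^{\rig}(\chi_k)$). Take $X=\mathbb{P}^1$ over $\mathcal{O}_K$ and $U=\mathbb{G}_m$ (resp. $\mathbb{A}^1$). The complement $S=\{0,\infty\}$ (resp. $\{\infty\}$) is a disjoint union of copies of $\Spec\mathcal{O}_K$, so it is flat over $\mathcal{O}_K$, as the theorem requires.

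For the candidate algebraic connection we take $E=\Kl_{\cG}^{\dR}(-\pi\phi_K)$ (resp. $E=\Ai_{\cG}^{\dR}(-\pi\chi_K)$). Over $\overline{K}$ the comparison theorems \Cref{t:compare-Kl-rig-dr} and \Cref{t:compare-Ai-rig-dr} give canonical isomorphisms $E^{\dagger}\simeq \mathcal{E}$ of $\cG$-valued $j^{\dagger}\mathscr{O}_{\XX^{\rig}}$-modules with connection. The target is an overconvergent isocrystal, so this is exactly the hypothesis $E^{\dagger}\simeq\mathcal{E}$ in \Cref{t:physical-rigidity}.

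It remains to check that $E$ is physically rigid. By \Cref{t:rigidity-FG}, the connections $\Kl_{\cG}^{\dR}(\phi)$ and $\Ai_{\cG}^{\dR}(\chi)$ are physically rigid for any stable $\phi$ (resp. any $\chi$ extending a stable $\phi$), under the hypotheses of \Cref{t:compare-Kl-theta-connection}. We need this for the rescaled datum $-\pi\phi_K$ (resp. $-\pi\chi_K$).
\begin{itemize}
\item In the Kloosterman case, $-\pi\phi_K$ is again a stable linear functional on $V_{\bP}$ over $K$, because stability is invariant under scaling by $K^{\times}$. Alternatively, \eqref{eq:theta-connection-Frob} identifies $E$ with the $\theta$-connection $\Theta(X_{-\pi})$ for a stable $X_{-\pi}$.
\item In the Airy case, $\Ai_{\cG}^{\dR}(\lambda\chi)$ with $\lambda=-\pi\neq0$ falls directly under \Cref{t:Ai connection} and the rigidity results of \cite{Yi25}.
\end{itemize}
The rigidity statement is a claim over an algebraically closed field of characteristic zero. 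It is therefore applied after base change to $\overline{K}$, which is the form \Cref{t:physical-rigidity} uses.

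The main obstacle is bookkeeping rather than mathematics. One must make sure that the physical rigidity in \Cref{t:rigidity-FG} is available for exactly these data, namely the rescaled parameter and non-adjoint $\cG$. The passage from adjoint $\cG$ to general $\cG$ is already handled in the proof of \Cref{t:rigidity-FG} via uniqueness of the lift with unipotent monodromy at $0$. With both hypotheses in place, \Cref{t:physical-rigidity} yields that $\mathcal{E}$ is physically rigid.
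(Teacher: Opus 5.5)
Your proposal is correct and follows the paper's route. The paper deduces the corollary in one line from \Cref{t:physical-rigidity}: $E$ is the de Rham connection $\Kl_{\cG}^{\dR}(-\pi\phi_K)$ (resp. $\Ai_{\cG}^{\dR}(-\pi\chi_K)$), the hypothesis $E^{\dagger}\simeq\mathcal{E}$ comes from \Cref{t:compare-Kl-rig-dr,t:compare-Ai-rig-dr}, and physical rigidity of $E$ comes from \Cref{t:rigidity-FG}. Your additional checks (flatness of $S$, stability of the rescaled datum, base change to $\overline{K}$) just make explicit what the paper leaves implicit.
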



\subsection{Global-local relationship of $\Kl_{\cG}^{\rig}(\phi)$}

\begin{secnumber}
	In this subsection, we assume $G$ is almost simple split over $\mathcal{O}_K$, $m=h$ is the Coxeter number and that $p$ does not divide the order of the Weyl group. 

	For a stable function $\phi\in \mathfrak{g}_1^*(k)$, we can associate the overconvergent isocrystal $\Kl_{\cG}^{\rig}(\phi)$. 
	If $\widetilde{\phi}$ is a lift of $\phi$ to $\mathcal{O}_K$,
	then it has nonzero restriction to every root subspace in $\mathfrak{g}_1$, 
	so that $\widetilde{\phi}_K$ is also stable.
	The underlying connection of $\Kl_{\cG}^{\rig}(\phi)$ is isomorphic to $\Theta(X_{-\pi})$ such that $-\pi\widetilde{\phi}_K$ and $X_{-\pi}$ match under the isomorphism $\mathfrak{g}_1^*/\!\!/ G_0\simeq \cg_1/\!\!/\cG_0$ \eqref{eq:theta-connection-Frob}. 
\end{secnumber}
\begin{lemma}
	Let $\phi,\phi'\in \fg_1^*(k)$ be  two stable functions. 
	If $\phi=\phi'$ in the GIT quotient $(\fg_1^*/\!\!/G_0)(k)$, then $\Kl_{\cG}^{\rig}(\phi)$ (resp. $\Kl_{\cG}^{\ell}(\phi)$) is isomorphic to $\Kl_{\cG}^{\rig}(\phi')$ (resp. $\Kl_{\cG}^{\ell}(\phi')$) as $\cG$-overconvergent isocrystals on $\mathbb{G}_{m,k}/\overline{K}$ (resp. $\ell$-adic $\cG$-local systems on $\mathbb{G}_{m,\overline{k}}$). 
	\label{l:GIT-quotient-classes}
\end{lemma}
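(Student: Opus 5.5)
The plan is to reduce to the case where $\phi'$ lies in the $G_0(k)$-orbit of $\phi$, in which case the isomorphism is induced by an automorphism of the automorphic data.

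\textbf{Step 1 (reduction to rational orbits).} Since $\phi,\phi'$ are stable with equal image in $(\fg_1^*/\!\!/G_0)(k)$, they lie in the same geometric stable orbit. So there is $g\in G_0(\bar k)$ with $\phi'=g\cdot\phi$. The set of $G_0(k)$-orbits inside this stable orbit is classified by $\ker\bigl(H^1(k,\mathrm{Stab}_\phi)\to H^1(k,G_0)\bigr)$. By Lang's theorem, $H^1(k,G_0)$ is trivial only when $G_0$ is connected. Here $G_0=L_{\bP}$ is the Levi of the Iwahori when $m=h$, i.e. the torus $T$, which is connected, so this set is $H^1(k,\mathrm{Stab}_\phi)$ and may be nontrivial. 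Hence I will not argue orbitwise over $k$. Instead I pass to $\mathbb{G}_{m,\bar k}$ (resp. to $\overline{K}$-coefficients), which is all the statement asks for.

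\textbf{Step 2 (geometric isomorphism).} Over $\bar k$, take $g\in T(\bar k)$ with $\phi'=g\cdot\phi$. Conjugation by $g$, viewed as a constant element of $\bP$, normalizes $\bP(1)$ and $\bP(2)$. It therefore acts on $\Bun_{\mathcal{G}}$ (base changed to $\bar k$) and on $V_{\bP}$ compatibly with the open immersion $j$ of \eqref{eq:immersion-j}, and it commutes with the Hecke operators at points of $\mathbb{G}_m$. Pulling back gives $\mathrm{Ad}_g^*\mathcal{A}_{\phi'}\simeq\mathcal{A}_{\phi}$, since $\phi'\circ\mathrm{Ad}_g=\phi$ on $V_\bP$. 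Because the Hecke action is equivariant, the Hecke eigenvalues agree as $\cG$-local systems on $\mathbb{G}_{m,\bar k}$. In the $\ell$-adic case this is immediate.

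\textbf{Step 3 ($p$-adic case).} For isocrystals over $\overline{K}$, $g$ is defined over a finite extension $k'$ of $k$. The argument above gives an isomorphism of the underlying $\cG$-overconvergent isocrystals after base change to $k'$, without Frobenius. Overconvergent isocrystals on $\mathbb{G}_{m,k'}$ and on $\mathbb{G}_{m,k}$ coincide after extending $K$ to the unramified extension. Forgetting Frobenius, the underlying objects in $\Isod(\mathbb{G}_{m,k}/\overline{K})$ are therefore isomorphic.

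As an alternative to Steps 2 and 3, I would use \Cref{t:compare-Kl-rig-dr}. The underlying connections are $\Theta(X_{-\pi})$ and $\Theta(X'_{-\pi})$, where $X_{-\pi},X'_{-\pi}$ have the same image in $\cg_1/\!\!/\cG_0$, using lifts $\widetilde\phi,\widetilde\phi'$ chosen with equal invariants. By \eqref{eq:isomorphism-VP-cg1-st} they are $\cG_0(\overline{K})$-conjugate, so the $\theta$-connections are gauge equivalent by a constant element. Applying $(-)^\dagger$ and the full faithfulness of $\Isod\hookrightarrow\Conn(j^\dagger\mathscr{O})$ gives the isomorphism. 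The $\ell$-adic statement would then follow from the $p$-adic one via \Cref{p:companion-Kl}.

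The main obstacle is the lifting in this alternative. One must choose $\widetilde\phi,\widetilde\phi'$ over $\mathcal{O}_K$ with the same invariants, which is possible because the invariants are polynomial and $\phi'$ lifts inside a fiber. For that reason I would primarily rely on the geometric argument of Step 2, which avoids lifting.
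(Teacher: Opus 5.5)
Your fallback argument is essentially the paper's proof. The paper lifts $\phi,\phi'$ to $\mathcal{O}_K$ with the same image in $(\fg_1^*/\!\!/G_0)(\mathcal{O}_K)$. It then identifies both de Rham Kloosterman connections with $\Theta$ of a single representative via \Cref{t:compare-Kl-theta-connection}, passes to isocrystals by $(-)^{\dagger}$ and \Cref{t:compare-Kl-rig-dr}, and deduces the $\ell$-adic case from \Cref{p:companion-Kl}.

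Your primary route is genuinely different, and as far as I can see it is sound. Since $L_{\bP}=T$ is connected and stable orbits are closed, you have $\phi'=g\cdot\phi$ for some $g\in T(\bar k)$. Constant conjugation by $g$ preserves $\bP^{\op}$ at $0$ (you should say this alongside $\bP(1),\bP(2)$) and $\bP(2)$ at $\infty$. So it is an automorphism of $\Bun_{\mathcal{G}}$ restricting to $\mathrm{Ad}_g$ on the open cell $V_{\bP}$. It commutes with Hecke modifications on $\Gm$ because conjugation by $g\in L^+G$ acts canonically and tensor-compatibly as the identity on the Satake category. The eigenvalue is then recovered from the stalk of $\mathrm{Hk}_V(\mathcal{A}_\phi)$ at a point of $V_{\bP}$, where $\mathcal{A}_\phi$ has rank one.

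What your route buys is a uniform, purely automorphic argument for both coefficient theories. It needs no companions (so it does not depend on $m=h$ or on the monodromy computations behind \Cref{p:companion-Kl}), no identification with $\theta$-connections, and no de Rham--rigid comparison. It also works verbatim for any admissible $\bP$, and over $k'$ it even respects the $q'$-power Frobenius. The paper's route instead reuses machinery already built and exhibits the isomorphism on explicit connections.

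Two points need tightening.

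\textbf{Step 3.} Isocrystals on $\mathbb{G}_{m,k}$ and on $\mathbb{G}_{m,k'}$ do not ``coincide''. What you need is the following: for the unramified $K'/K$ with residue field $k'$,
$$\Hom_{\mathbb{G}_{m,k'}/K'}(\mathcal{E}_{k'},\mathcal{E}'_{k'})=\Hom_{\mathbb{G}_{m,k}/K}(\mathcal{E},\mathcal{E}')\otimes_K K',$$
and likewise for the scheme of tensor isomorphisms. Then a $\cG$-isomorphism over $k'$ gives one on $\mathbb{G}_{m,k}$ after extending scalars to $\overline{K}$.

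\textbf{The fallback.} Equal-invariant lifts exist, but not because ``the invariants are polynomial''. Write $a_0,\dots,a_r$ for the coordinates on the affine simple root spaces and $\beta=\sum_{i\ge1}n_i\alpha_i$ for the highest root. For $m=h$ the invariant is $a_0\prod_{i\ge1}a_i^{n_i}$, which is linear in $a_0$ with unit coefficient on the stable locus. So you may lift $a_1',\dots,a_r'$ arbitrarily and solve for $a_0'$ in $\mathcal{O}_K$.
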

\begin{proof}
	If $\phi=\phi'$ in the GIT quotient, then we can take liftings $\widetilde{\phi},\widetilde{\phi}'$ over $\mathcal{O}_K$ with the same image in $(\fg_1^*/\!\!/G_0)(\mathcal{O}_K)$. 
	By \Cref{t:compare-Kl-rig-dr,t:compare-Kl-theta-connection}, $\Kl_{\cG}^{\rig}(\phi)$ and $\Kl_{\cG}^{\rig}(\phi')$ are isomorphic as $\cG$-overconvergent isocrystals. 

	The $\ell$-adic case follows from \Cref{p:companion-Kl} and the $p$-adic case. 	
\end{proof}
\begin{secnumber}
Via the $p$-adic local monodromy theorem, we can associate to $\Kl_{\cG}^{\rig}(\phi)|_{\infty}$ a simple wild parameter $\rho_{\phi}:W_F\to \cG$ by \Cref{t:L-parameter}. 
By composing with $\cG\to \cG_{\ad}$ and with $\Xi$ \eqref{eq:parameters-Hom}, we obtain maps of sets in view of the above lemma:
	\begin{eqnarray}
		\label{eq:global-local-Kl}	
		\Pi:\{\textnormal{overconvergent isocrystals}~ \Kl_{\cG}^{\rig}(\phi)| \textnormal{ stable } \phi\in \mathfrak{g}_1^*/\!\!/ G_0(k)\} &\to& \Hom_{\mathbb{F}_p[\Gamma]}(\mathfrak{F}^+,D_1)-\{0\},\\
		\Pi^{\ell}:\{\textnormal{$\ell$-adic $\cG$-local systems}~ \Kl_{\cG}^{\ell}(\phi)| \textnormal{ stable } \phi\in \mathfrak{g}_1^*/\!\!/ G_0(k)\} &\to& \Hom_{\mathbb{F}_p[\Gamma]}(\mathfrak{F}^+,D_1)-\{0\}. \nonumber
	\end{eqnarray}
\end{secnumber}

\begin{prop} \label{p:isomorphisms-class-phi}
	\textnormal{(i)} 
	The above map $\Pi$ (resp. $\Pi^{\ell}$) is a bijection.	
	
	\textnormal{(ii)}
	Let $\phi,\phi'\in \fg_1^*(k)$ be two stable functions. 	
	Then $\Kl_{\cG}^{\rig}(\phi)$ (resp. $\Kl_{\cG}^{\ell}(\phi)$) is isomorphic to $\Kl_{\cG}^{\rig}(\phi')$ (resp. $\Kl_{\cG}^{\ell}(\phi')$) as $\cG$-overconvergent isocrystals on $\mathbb{G}_{m,k}/\overline{K}$ (resp. $\ell$-adic $\cG$-local systems on $\mathbb{G}_{m,\overline{k}}$) if and only if $\phi=\phi'$ in the GIT quotient $(\fg_1^*/\!\!/G_0)(k)$. 
\end{prop}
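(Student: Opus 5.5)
The plan is to show that $\Pi$ is well defined, equivariant for a natural $\mathbb{F}_q^{\times}$-action, and injective; then a counting argument gives the bijection, and (ii) follows formally.

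\textbf{Well-definedness.} By \Cref{l:GIT-quotient-classes}, $\Kl_{\cG}^{\rig}(\phi)$ depends only on the class of $\phi$ in $(\fg_1^*/\!\!/G_0)(k)$. By \Cref{t:L-parameter} and \Cref{c:Swan-conductor} with $m=h$, its local parameter at $\infty$ satisfies $\cfg^{I_F}=0$ and $\Swan(\cfg)=\sharp\Phi/h=\rank(\cG)$. After composing with $\cG\to\cG_{\ad}$ it is therefore a simple wild parameter, so $\Xi$ applies. By \Cref{p:map-Xi}(ii), the value of $\Pi$ depends only on $\rho_\phi|_{I_F}$ up to conjugacy, and it is a nonzero homomorphism.

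\textbf{Equivariance.} Let $a\in\mathbb{F}_q^{\times}$ act on stable classes by $\phi\mapsto a\cdot\phi$, and on $\Gm$ by $x\mapsto a x$. For the Kloosterman automorphic datum, rescaling $\phi$ corresponds to pulling back the eigenvalue along the dilation $x\mapsto a^{\pm1}x$. This is a standard feature of Yun's construction via the $\Gm$-rotation/torus action on $\Bun_{\mathcal{G}}$; de Rham-wise it is visible from $\Theta(X_{-\pi})$, since scaling $X$ amounts to scaling $x$ up to gauge by $\check\lambda$. Under this dilation, \Cref{p:scalar-action} with $r$ the single break shows that the local parameter at $\infty$ changes by the scalar action of \Cref{r:transitive action}. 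Hence $\Pi$ intertwines the $\mathbb{F}_q^{\times}$-action on stable classes with the transitive action on $\Hom_{\mathbb{F}_p[\Gamma]}(\mathfrak{F}^+,D_1)-\{0\}$. In particular $\Pi$ is surjective.

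\textbf{Counting and injectivity.} The target has $q-1$ elements by \Cref{p:map-Xi}. It remains to see that there are exactly $q-1$ distinct stable classes, i.e. that the stable locus of $(\fg_1^*/\!\!/G_0)(k)$ is a torsor under $\mathbb{F}_q^{\times}$. For $m=h$, the quotient $\fg_1^*/\!\!/G_0\simeq\A1$, with coordinate the invariant of degree $h$, and the stable locus is $\Gm$. The dilation acts on this coordinate by $a^h$ or $a$, depending on the normalization. One must verify that this action is simply transitive on $k^{\times}$ after accounting for the identification; I would check it directly from the explicit form of $\Theta$.

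This matching of normalizations is the step I expect to be the main obstacle. If a naive count fails, say because of $a^h$ versus $a$, I would instead argue injectivity directly. Suppose $\Pi(\phi)=\Pi(\phi')$. Then the local parameters at $\infty$ agree on $I_F$. Restricted to $\mathbb{G}_{m,\overline{k}}$, both systems have unipotent monodromy at $0$, which is the principal unipotent class. Hence the geometric local monodromies at $0$ and $\infty$ of $\Kl^{\rig}_{\cG}(\phi)$ and $\Kl^{\rig}_{\cG}(\phi')$ agree, as differential modules over $\mathcal{R}$ via \eqref{eq:MCFR}. Physical rigidity, \Cref{c:strong-phy-rigidity}, then gives an isomorphism over $\overline{K}$ of $\cG$-isocrystals. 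To recover $\phi=\phi'$ in the GIT quotient from such an isomorphism, use the underlying connections: $\Theta(X_{-\pi})\simeq\Theta(X'_{-\pi})$ forces the formal types at $\infty$ to agree. By \eqref{eq:canonical-theta-conn}, this forces $X$ and $X'$ to be $\cG$-conjugate. Via \Cref{t:compare-Kl-theta-connection}(i), $\widetilde\phi_K$ and $\widetilde\phi'_K$ then have the same invariants, hence so do their reductions. This gives (ii) for $\Kl^{\rig}$, and the bijectivity in (i) follows.

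\textbf{The $\ell$-adic case.} I would transport the argument via \Cref{p:companion-Kl}. Companions are determined by Frobenius traces, so $\Kl^{\ell}_{\cG}(\phi)\simeq\Kl^{\ell}_{\cG}(\phi')$ if and only if the $p$-adic companions are isomorphic. Moreover, $\Pi^{\ell}=\Pi$ under companionship, which I would justify using \Cref{p:companion-WD-rep} together with \Cref{p:map-Xi}(ii). The point is that a simple wild parameter restricted to $I_F$ is determined up to conjugacy by its image in $\GL(\cfg)$, by the rigid structure of $D_0$ in \Cref{l:D0-unique}.
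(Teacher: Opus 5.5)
Your main line (dilation equivariance, transitivity of the $\mathbb{F}_q^{\times}$-action on $\Hom_{\mathbb{F}_p[\Gamma]}(\fF^+,D_1)-\{0\}$ from Remark~\ref{r:transitive action}, hence surjectivity of $\Pi$, then counting) is the paper's route. However, the step you flag as the obstacle is exactly where your write-up goes wrong, and you leave it unresolved.

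\textbf{The normalization error.} Rescaling $\phi\mapsto a\phi$ corresponds to $X\mapsto aX$. The constant gauge by $\check\lambda(a)^{-1}$ turns $aX_1+a(-\pi)^hX_{1-h}x$ into $X_1+a^h(-\pi)^hX_{1-h}x$. So $\Kl_{\cG}^{\rig}(a\phi)\simeq[a^h]^*\Kl_{\cG}^{\rig}(\phi)$, not $[a^{\pm1}]^*\Kl_{\cG}^{\rig}(\phi)$. With your parametrization you only learn that the image of $\Pi$ is stable under $h$-th powers of the scalar action. That does not give surjectivity once $\gcd(h,q-1)>1$, for instance whenever $h$ is even.

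\textbf{The missing observation.} The paper lets $a$ act by $[a]^*$ directly. Let $\widetilde a$ be the Teichm\"uller lift of $a$. The underlying connection of $[a]^*\Kl_{\cG}^{\rig}(\phi)$ is $d+(X_1+\widetilde a(-\pi)^hX_{1-h}x)\frac{dx}{x}$. By \eqref{eq:theta-connection-Frob} and \Cref{t:compare-Kl-rig-dr}, applied to the lift matching $X_1+\widetilde aX_{1-h}$, one gets $[a]^*\Kl_{\cG}^{\rig}(\phi)\simeq\Kl_{\cG}^{\rig}(\phi_a)$ with $\phi_a\leftrightarrow X_1+aX_{1-h}$. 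Only the $X_{1-h}$-component is rescaled, so the degree-$h$ invariant is multiplied by $a$, not $a^h$. Hence every dilation preserves the set of Kloosterman isocrystals, and with Remark~\ref{r:transitive action} surjectivity follows.

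\textbf{The count.} You do not need any (simple) transitivity on the source. The domain of $\Pi$ has at most $\sharp(\fg_1^*/\!\!/G_0)^{\st}(k)=q-1$ elements. A surjection onto a set of $q-1$ elements is therefore bijective. Consequently the surjection from stable classes to isomorphism classes is also injective, which is (ii).

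\textbf{The fallback does not close the gap.} Physical rigidity (\Cref{c:strong-phy-rigidity}) does give injectivity of $\Pi$ on isomorphism classes of isocrystals. But the isomorphism it produces is one of overconvergent isocrystals. You cannot descend it to $\Theta(X_{-\pi})\simeq\Theta(X'_{-\pi})$, or to formal types over $\overline{K}(\!(t)\!)$, because $(-)^{\dagger}$ is not fully faithful. Concretely, by \Cref{t:compare-Kl-rig-dr}, two lifts of the same $\phi_k$ with different invariants over $K$ give non-conjugate $X,X'$ whose connections both have $\dagger$-image $\Kl_{\cG}^{\rig}(\phi_k)$. So ``$X$ and $X'$ are $\cG$-conjugate'' is false in general. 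Over $\mathcal{R}$ only information on $X$ modulo $\mathfrak{m}_K$ survives (compare the pieces $Y_b$), and recovering $\phi_k$ from that is precisely the content of (ii). The fallback therefore gives neither (ii) nor the count.

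\textbf{The $\ell$-adic transfer.} Your justification also fails here. A simple wild parameter on $I_F$ is not determined up to $\cG$-conjugacy by its image in $\GL(\cfg)$: since $\Phi=-\Phi$, the characters of $\psi$ and $\psi(-\,\cdot\,)$ on $\cfg$ coincide. For instance, $\Kl_3$ and $[-1]^*\Kl_3\simeq\Kl_3^{\vee}$ (up to twist) have isomorphic adjoint local systems. Yet their local monodromies at $\infty$ are not conjugate in $\mathrm{PGL}_3$: the slope-$\frac13$ characters differ by $\zeta\mapsto-\zeta$, and no twist by a character of $I_F$ can fix this. The paper instead transports through the $\cG$-companion statement \Cref{p:companion-Kl}. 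Alternatively, you can rerun the dilation/counting argument directly $\ell$-adically.
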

\begin{proof}
	We prove the theorem in the $p$-adic case. 
	The $\ell$-adic case follows from \Cref{p:companion-Kl} and the $p$-adic case. 	
	
	(i) By the trivial functoriality \cite[\S~4.1.7]{XZ22}, we may assume $\cG$ is of adjoint type. 
	
	Let $a\in \mathbb{F}_q^{\times}$, $\widetilde{a}$ its Teichmüller lift in $K$ and $[a]:\Gm \to \Gm,~x\mapsto a\cdot x$. 

	Let $X=X_1+X_{1-h}\in \cfg(k)$ correspond to $\phi$ under the isomorphism \eqref{eq:isomorphism-VP-cg1}. 
	In view of the underlying connection \eqref{eq:theta-connection-Frob} of $\Kl_{\cG}^{\rig}(\phi)$, $[a]^*\Kl_{\cG}^{\rig}(\phi)$ admits an underlying connection:
	\[
	d+(X_1+ \widetilde{a}(-\pi)^h X_{1-h} x)\frac{dx}{x}.
	\]

	If we set $X_a=X_1+a X_{1-h}$ and $\phi_a\in (\fg_1^*/\!\!/G_0)(k)$ be the image of $X_a$ under \eqref{eq:isomorphism-VP-cg1}, 
	then we have $[a]^*\Kl_{\cG}^{\rig}(\phi)\simeq \Kl_{\cG}^{\rig}(\phi_a)$. 

	On the other hand, the corresponding action of $a\in \mathbb{F}_q^{\times}$ on $\Hom_{\mathbb{F}_p[\Gamma]}(\mathfrak{F}^+,D_1)-\{0\}$ is studied in \eqref{eq:different-psi} and is transitive. 
	Recall from Remark \ref{r:transitive action} that $\Pi$ is equivariant for this action.
	Thus $\Pi$ is a surjective map. 
	Since both sides have cardinality $q-1$, the assertion follows. 

	(ii) Suppose $\Kl_{\cG}^{\rig}(\phi)$ is isomorphic to $\Kl_{\cG}^{\rig}(\phi')$. 
	Via $\Pi$, the associated local monodromy representations are isomorphic. 
	Note that assertion (i) also shows 
	the domain of $\Pi$ is in natural bijection with
	stable elements of $\mathfrak{g}_1^*/\!\!/G_0(k)$.
	We deduce that $\phi=\phi'$ in the GIT quotient $(\fg_1^*/\!\!/G_0)(k)$. 
\end{proof}

\begin{coro} \label{p:bijection-simple-wild}
	Assume $p$ does not divide the order of the Weyl group $\bW$ and $\cG$ is of adjoint type.  

	\textnormal{(i)} The map $\Pi$ induces a bijection: 
	\[
	\{\textnormal{$p$-adic isoclinic connections of slope $\frac{1}{h}$}\}/ \sim \xrightarrow{\sim} 
	\{\textnormal{non-trivial $\mathbb{F}_p[\Gamma]$-linear maps}\quad \psi:\mathfrak{F}^+\to D_1\}.
	\]	
	
	\textnormal{(ii)} The above map induces a bijection between the sets of isomorphism classes:
	\[
	\{\textnormal{$p$-adic isoclinic connections of slope $\frac{1}{h}$ with Frobenius}\}/\sim \quad \xrightarrow{\sim} \quad \{\textnormal{simple wild parameters}\}/\sim.
	\]
\end{coro}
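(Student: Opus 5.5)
The plan is to reduce both assertions to the local Tannakian dictionary of \ref{sss:MCF} and to the counting results already established for simple wild parameters: \Cref{p:map-Xi}, \Cref{t:number-simple-wild}, and the global input \Cref{p:isomorphisms-class-phi}.

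\textbf{Step 1 (the map is well defined and lands in simple wild parameters).} Let $\mathscr{E}$ be a $p$-adic isoclinic connection of slope $\frac{1}{h}$. By \Cref{p:slopes}(i), $\mathscr{E}$ admits a $\cG$-valued Frobenius structure, so each $\mathscr{E}(V)$ lies in $\MCF(\mathcal{R})$. Through \eqref{eq:MCFR}, $\mathscr{E}$ becomes a tensor functor $\Rep(\cG)\to \Rep_{\overline{K}}(I_F\times\Ga)$. Since torsors under $\cG$ over $\overline{K}$ are trivial, the isomorphism classes of such $\cG$-valued objects correspond bijectively to $\cG(\overline{K})$-conjugacy classes of pairs $(\rho|_{I_F},N)$.

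By \Cref{t:L-parameter}, $N=0$ and $\cfg^{I_F}=0$. The Swan conductor is $\sharp\Phi\cdot\frac{1}{h}=\rank(\cG)$. Hence $\rho|_{I_F}$ is the inertial restriction of a simple wild parameter as soon as a Frobenius is added, which is done in Step 3. We then define the map in (i) as the composite of $\mathscr{E}\mapsto[\rho|_{I_F}]$ with the bijection \eqref{eq:local-Xi} of \Cref{p:map-Xi}(ii). This composite agrees with $\Pi$ on the objects $\Kl_{\cG}^{\rig}(\phi)|_\infty$.

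\textbf{Step 2 (bijectivity in (i)).} Injectivity follows directly from Step 1. Two isoclinic connections with the same image have conjugate $\rho|_{I_F}$ by \eqref{eq:local-Xi}, and therefore they are isomorphic over $\mathcal{R}$ by the Tannakian dictionary.

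For surjectivity, \Cref{t:p-adic-theta} (with $m=h$) shows that each $\Kl_{\cG}^{\rig}(\phi)|_{\infty}$ is a $p$-adic isoclinic connection of slope $\frac{1}{h}$. \Cref{p:isomorphisms-class-phi}(i) shows that these objects already exhaust all $q-1$ nontrivial elements of $\Hom_{\mathbb{F}_p[\Gamma]}(\mathfrak{F}^+,D_1)$.

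\textbf{Step 3 (assertion (ii)).} Using \cite[\S~3.2, 3.4]{Mar08}, a pair $(\mathscr{E},\varphi)$ gives a Weil--Deligne representation $(\rho,0)$ with $\rho:W_F\to\cG(\overline{K})$. This $\rho$ is a simple wild parameter by Step 1 together with \Cref{c:WD-rep}. I would use that Marmora's functor is a fully faithful tensor functor whose essential image consists of all Weil--Deligne representations with finite inertial image. Applied to $\cG$-valued objects, this gives injectivity on isomorphism classes.

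Surjectivity comes in two parts:
\begin{itemize}
\item Given a simple wild parameter $\varphi'$, Step 2 produces an isoclinic $\mathscr{E}$ with $\rho_{\mathscr{E}}|_{I_F}\sim\varphi'|_{I_F}$.
\item The Frobenius structures on $\mathscr{E}$ then correspond exactly to the extensions of $\rho_{\mathscr{E}}|_{I_F}$ to $W_F$. The set of these is a torsor under $C_{\cG}(\rho(I_F))=\cT_X^{\sigma}$, by \Cref{c:unique-Frobenius}. Meanwhile, isomorphisms of $(\mathscr{E},\varphi)$ act through $g\mapsto g^{1-q}$.
\end{itemize}
This is exactly the count in \ref{sss:modification-Frob}. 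Both sides therefore have fibres of cardinality $Z(q)$ over each nontrivial $\psi$, consistent with \Cref{t:number-simple-wild}.

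The main obstacle is the essential-surjectivity and coefficient bookkeeping in Step 3. Marmora's equivalence is stated for $\GL_n$ and for $\sigma=\id_K$ with coefficients in $\overline{K}$. One has to check that a $W_F$-representation realised in $\cG(\overline{K})$ descends to a $\cG$-valued Frobenius structure over some finite extension $K'$. I would handle this by applying the equivalence to every $V\in\Rep(\cG)$ simultaneously. The resulting structure is compatible with tensor products, so it yields a tensor isomorphism $F^*\mathscr{E}\simeq\mathscr{E}$. Rationality over a finite extension follows because $\rho(W_F)$ is finite by \Cref{c:WD-rep}(ii).
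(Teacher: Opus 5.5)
Your proposal is essentially the paper's proof. For (i) you use the dictionary $\MCF(\mathcal{R})\simeq\Rep(I_F\times\Ga)$, the bijection \eqref{eq:local-Xi}, and the Kloosterman restrictions $\Kl_{\cG}^{\rig}(\phi)|_{\infty}$ through \Cref{p:isomorphisms-class-phi}; the paper phrases that last step as a cardinality count. For (ii) you use the same $\cT_X^{\sigma}$-torsor and $g\mapsto g^{1-q}$ modification of Frobenius structures as in \ref{sss:modification-Frob}.

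There is one caveat, shared with the paper: the left side of (i) must tacitly be restricted to isoclinic connections whose inertial representation extends to $W_F$ for the fixed $q$ (for instance, those admitting a Frobenius structure relative to $\mathbb{F}_q$ and $\sigma=\id_K$). \Cref{p:slopes}(i) only produces a Frobenius structure after enlarging $K$. Moreover, restrictions at $\infty$ of Kloosterman isocrystals over $\mathbb{F}_{q^b}$ are isoclinic of slope $\frac{1}{h}$ but give more than $q-1$ inertial classes. So your Step 1 remark ``as soon as a Frobenius is added'' does not cover all of them.
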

\begin{proof}
	(i) The equivalence of categories $\MCF(\mathcal{R}_{\overline{K}})\simeq \Rep(I_F\times \mathbb{G}_a)$ (\ref{sss:MCF}) sends $p$-adic isoclinic connections of slope $\frac{1}{h}$ to $\varphi|_{I_F}:I_F\to \cG(\overline{K})$ coming from a simple wild parameter $\varphi$. 
	By \Cref{p:isomorphisms-class-phi}(ii), the left hand side has the cardinality at least $q-1$. 
	Then the assertion follows from \Cref{p:map-Xi}. 

	(ii) In both sides, the Frobenius structures can be modified in the same way as in \ref{sss:modification-Frob}. Then we deduce the assertion from (i).  
\end{proof}

\subsection{$\ell$-adic case}

Suppose $m=h$ is the Coxeter number. 
Let $\ell$ be a prime different from $p$. We deduce the ($\ell$-adic) physical rigidity of $\Kl_{\cG}^{\ell}$ as conjectured in \cite[Conjecture 7.1]{HNY} from its $p$-adic variant (\Cref{t:physical-rigidity}) under suitable assumption:

\begin{theorem}
	Assume that $p$ does not divide the order of the Weyl group $\bW$.  
	Let $\phi$ be a stable function of $\fg^*_1(k)$ and $\cE$ an $\ell$-adic $\cG$-local system on $\mathbb{G}_{m,k}$ such that $\cE|_x\simeq \Kl_{\cG}^{\ell}(\phi)|_x$ as representations of the inertia subgroup $I_x$ for $x\in \{0,\infty\}$. 
	Then $\cE\simeq \Kl_{\cG}^{\ell}(\phi)$ over $\mathbb{G}_{m,\overline{k}}$. 
	\label{t:physical-rig-l}
\end{theorem}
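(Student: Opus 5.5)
We transfer $\cE$ to the $p$-adic side, apply the $p$-adic physical rigidity (\Cref{c:strong-phy-rigidity}), and transfer back. The essential inputs are the count of simple wild parameters (\Cref{p:map-Xi}, \Cref{t:number-simple-wild}) and the classification \Cref{p:isomorphisms-class-phi}.

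\emph{Reduction.} By the trivial functoriality \cite[\S~4.1.7]{XZ22}, we first reduce to $\cG$ of adjoint type. This uses the uniqueness of a lift with unipotent monodromy at $0$, as in the proof of \Cref{t:rigidity-FG}. The local systems with given local behaviour then differ by a $\check Z$-torsor that is tame at $0$ and at $\infty$. Since the inertial data are fixed, we expect these differences to be controlled; if not, we will argue directly with the $\cG$-version of each step below.

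\emph{Step 1: identify the local data of $\cE$.} At $\infty$ we have $\cE|_\infty\simeq \Kl_{\cG}^{\ell}(\phi)|_\infty$ as $I_\infty$-representations. By \Cref{p:companion-WD-rep} together with \Cref{p:companion-Kl}, this inertia representation corresponds to the one attached to $\Kl_{\cG}^{\rig}(\phi)$ (for $\cG$-representations we pass through Larsen/elementwise conjugacy, or work only with the data needed below). It is therefore the restriction to $I_F$ of a simple wild parameter: $\cfg^{I}=0$ and $\Swan(\cfg)=\rank\cG$.

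\emph{Step 2: produce the $p$-adic companion.} Choose a Frobenius structure on $\cE$, i.e.\ descend $\cE$ to $\mathbb{G}_{m,k'}$ after a finite extension of $k$; this is harmless because the claim is geometric. The global geometric monodromy of $\cE$ contains the image of $I_\infty$, which has finite centralizer, so $\cE$ is irreducible with finite-order determinant data. Hence Drinfeld's theorem (\Cref{d:G-companion}) gives a $p$-adic companion $\cE^{\rig}$.

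\emph{Step 3: compare local monodromy on the $p$-adic side.} At $0$ the monodromy of $\cE$ is tame with a regular unipotent generator. Its companion at $0$ is then unipotent with the same Frobenius-semisimplified Weil--Deligne data, via \Cref{p:companion-WD-rep} applied to a faithful representation. At $\infty$, the restriction $\cE^{\rig}|_\infty$ is an object of $\MCF(\mathcal R)$. Its inertia representation has the same characteristic polynomials in all representations and is a simple wild parameter. By \Cref{p:map-Xi}(ii), it determines a nonzero element of $\Hom_{\mathbb F_p[\Gamma]}(\mathfrak F^+,D_1)$. We then want to show that this element equals $\Pi(\Kl_{\cG}^{\rig}(\phi))$. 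Since $p\nmid\sharp\bW$, inertia representations of simple wild parameters are determined by $\Xi$. Moreover, $\Xi$ can be read off from the restriction to $D_1$ of the traces in all representations: the $\Gamma$-equivariant maps $\psi$ differ by scalars $a\in\mathbb F_q^\times$, and these are distinguished by characters. Hence $\cE^{\rig}|_\infty\simeq\Kl_{\cG}^{\rig}(\phi)|_\infty$ as differential modules over $\mathcal R$. At $0$, unipotent regular monodromy in $\MCF(\mathcal R)$ is likewise determined up to isomorphism.

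\emph{Step 4: conclude.} By \Cref{c:strong-phy-rigidity}, $\cE^{\rig}\simeq\Kl_{\cG}^{\rig}(\phi)$ geometrically. Taking $\ell$-companions and using \Cref{p:companion-Kl} together with the argument of \cite[Theorem 5.1.4]{XZ22} for $\cG$-valued conjugacy within the geometric monodromy group, we obtain $\cE\simeq\Kl_{\cG}^{\ell}(\phi)$ over $\mathbb G_{m,\overline k}$.

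The main obstacle is Step 3. Passing local $\cG$-valued inertia data through companions is only expected in general (\Cref{r:companion-local-global}). Our fallback is to avoid it by counting. Let $\cE^{\rig}$ be any companion; its local parameter at $\infty$ is a simple wild parameter, whose class lies in the image set, which has $q-1$ elements. By \Cref{p:isomorphisms-class-phi}(i), this class equals $\Pi(\Kl_{\cG}^{\rig}(\phi''))$ for some stable $\phi''$. Then \Cref{c:strong-phy-rigidity} gives $\cE^{\rig}\simeq\Kl_{\cG}^{\rig}(\phi'')$, so $\cE\simeq\Kl^{\ell}_{\cG}(\phi'')$. It remains to match $\phi''$ with $\phi$ via $\Pi^\ell$, using that $\cE$ and $\Kl^\ell_\cG(\phi)$ have isomorphic inertia representations at $\infty$. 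The bijectivity of $\Pi^\ell$ then forces $\phi''=\phi$ in the GIT quotient, and \Cref{l:GIT-quotient-classes} finishes the proof.
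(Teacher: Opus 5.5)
Your fallback is essentially the paper's proof: pass to the $p$-adic companion, transfer simple-wildness at $\infty$ (via $\cfg^{I}=0$, $\Swan(\cfg)=\rank\cG$) and unipotence at $0$ through \Cref{p:companion-WD-rep} on the adjoint representation, use the counting bijection (\Cref{p:bijection-simple-wild}, \Cref{p:isomorphisms-class-phi}(i)) and \Cref{c:strong-phy-rigidity} to get $\cE^{\dagger}\simeq\Kl_{\cG}^{\rig}(\phi')$ for some stable $\phi'$, return via \Cref{p:companion-Kl}, and conclude $\Kl_{\cG}^{\ell}(\phi')\simeq\Kl_{\cG}^{\ell}(\phi)$ from bijectivity of $\Pi^{\ell}$; the trace-based matching in your Step~3 is therefore unnecessary, and it is not justified as written. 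For the passage from $\cG_{\ad}$ to $\cG$, the paper shows the discrepancy is a $Z_{\cG}$-torsor that is unramified at $0$, because both sides have unipotent monodromy there (being merely tame at $0$ would not suffice, e.g.\ Kummer torsors), and tame at $\infty$ since $p\nmid\sharp Z_{\cG}$, hence geometrically trivial.
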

\begin{proof}
	We first show that 
	there is a stable function $\phi'$ of $\fg^*_1(k)$ such that $\cE\simeq \Kl_{\cG}^{\ell}(\phi')$ over $\mathbb{G}_{m,\overline{k}}$.

	We take isomorphisms $\overline{\mathbb{Q}}_{\ell}\simeq \mathbb{C}\simeq \overline{K}$. 
	Let $\cE^{\dagger}$ (resp. $\Kl_{\cG}^{\rig}(\phi)$) be the $p$-adic companion of $\cE$ (resp. $\Kl_{\cG}^{\ell}(\phi)$ \Cref{p:companion-Kl}). 
	Since $\cE|_{\infty}$ is a simple wild representation and $\cE|_0$ is a tame representation with unipotent monodromy, the same holds for the Weil--Deligne representation associated to $\cE^{\dagger}|_x$ for $x\in \{0,\infty\}$. 
	By \Cref{p:bijection-simple-wild}, as $\cG_{\ad}$-differential modules over the Robba ring, $\cE^\dagger_{\cG_{\ad}}|_{\infty}$ is isomorphic to a $p$-adic isoclinic connection of slope $\nu=\frac{1}{h}$. 
	Then, we can apply \Cref{c:strong-phy-rigidity,p:bijection-simple-wild} to deduce that $\cE^{\dagger}\simeq \Kl_{\cG_{\ad}}^{\rig}(\phi')$ as $\cG_{\ad}$-valued overconvergent isocrystals for some stable function $\phi'\in \fg_1^*(k)$. 
	As $\cG$-overconvergent isocrystals, $\cE^{\dagger}$ is different from $\Kl_{\cG}^{\rig}(\phi')$ by a $Z_{\cG}$-overconvergent isocrystal $\chi$ on $\mathbb{A}^1_k$. 
	Since $Z_{\cG}$ is finite, $\chi$, equipped with a Frobenius structure, corresponds to a character of $\pi_1(\mathbb{A}^1_k)$ to $Z_{\cG}$. 
	Since $p$ does not divide the order of $Z_{\cG}$, 
	this character induces a tame character at $\infty$ and $\chi$ is therefore trivial as a $Z_{\cG}$-overconvergent isocrystal. 
	Hence, we deduce that $\cE^{\dagger}\simeq \Kl_{\cG}^{\rig}(\phi')$ as $\cG$-overconvergent isocrystals. 
	By \Cref{p:companion-Kl}, $\cE\simeq \Kl_{\cG}^{\ell}(\phi')$ as $\ell$-adic $\cG$-local systems. 

	The local monodromy representations of $\Kl_{\cG}^{\ell}(\phi), \Kl_{\cG}^{\ell}(\phi')$ are isomorphic. 
	By \Cref{p:isomorphisms-class-phi}(i), we deduce that $\Kl_{\cG}^{\ell}(\phi)\simeq \Kl_{\cG}^{\ell}(\phi')$. This finishes the proof. 
\end{proof}

\begin{rem}
	There is an alternative argument for $\ell$-adic physical rigidity 
	when $\cG=\SO_{2n+1}$ for $n\geq 4$ or $\Sp_{2n}$,
	using the functoriality of $\Kl_{\cG}^{\ell}(\phi)$
	proved in \cite{XZ22}.
	
	Let $\cE$ be an $\ell$-adic $\check{G}$-local system
	with the same local monodromies as $\Kl_{\cG}^{\ell}(\phi)$
	at $0,\infty$.
	Consider their push-out along 
	the standard representation $\mathrm{Std}$ of $\check{G}$.
	By \cite[Theorem 5.1.4, \S5.2.2, Proposition 5.2.10]{XZ22},
	$\Kl_{\cG}^{\ell}(\phi)(\mathrm{Std})$ is an irreducible hypergeometric sheaf,
	which is known to be physically rigid.
	Thus over $\overline{k}$,
	\[
	\cE_{\overline{k}}(\mathrm{Std})
	\simeq\Kl_{\cG}^{\ell}(\phi)_{\overline{k}}(\mathrm{Std}).
	\]
	Moreover, for the above $\check{G}$, 
	the geometric monodromy group of $\Kl_{\cG}^{\ell}(\phi)$ equals to $\check{G}$
	\cite[Theorem 3]{HNY} \cite[Theorem 1.2.7]{XZ22}.
	The above isomorphism says that the geometric monodromy representations 
	of $\cE$ and $\Kl_{\cG}^{\ell}(\phi)$
	are conjugate by some element $g\in\mathrm{GL}(\mathrm{Std})$.
	In particular, $g$ normalizes the geometric monodromy group
	$\check{G}\subset\mathrm{GL}(\mathrm{Std})$.
	Since $\mathrm{SO}_{2n+1}$ and $\mathrm{Sp}_{2n}$
	are self-normalizing up to center inside $\mathrm{GL}(\mathrm{Std})$,
	we can replace $g$ by an element of $\check{G}$,
	which gives the desired isomorphism $\cE_{\overline{k}}\simeq\Kl_{\cG}^{\ell}(\phi)_{\overline{k}}$.
\end{rem}

\appendix

\section{Comparison between algebraic and arithmetic $\mathscr{D}$-modules}

\subsection{From algebraic $\mathscr{D}$-modules to arithmetic $\mathscr{D}$-modules}
\begin{secnumber}
	We follow the construction of Huyghe--Schmidt in \cite[\S~3]{HS}. 
	Let $X$ be a smooth $\OK$-scheme and $\XX$ the $p$-adic completion of $X$. There exist canonical morphisms of ringed spaces:
\begin{displaymath}
	\XX\xrightarrow{\alpha} X \xleftarrow{j} X_K.
\end{displaymath}
	
	We can define a functor 
\begin{equation} \label{eq:functor-}
	\overline{(-)}: \Qcoh(X_K)\to \Mod(\overline{\mathscr{O}}_{X_K}),\quad \mathscr{E}\mapsto \overline{\mathscr{E}}:=\alpha^{-1}(j_*(\mathscr{E})).
\end{equation}

Note that we have $\mathscr{O}_{X}[\frac{1}{p}]\simeq j_{*}(\mathscr{O}_{X_K})$ and $\mathscr{D}_{X}^{(m)}[\frac{1}{p}]\simeq j_{*}(\mathscr{D}_{X_K})$, where $\mathscr{D}_X^{(m)}$ is 
the sheaf of (arithmetic) differential operators of finite level $m\in \mathbb{Z}_{\ge 0}$ on $X$ \cite{Ber96II}. 
Let $\mathscr{D}^{\dagger}_{\XX}$ be the sheaf of overconvergent arithmetic differential operators on $\XX$. 
Then we obtain injective flat homomorphisms \cite[Lemma 3.2.3]{HS}:
\[
	\overline{\mathscr{O}}_{X_K}\to \mathscr{O}_{\XX}[\frac{1}{p}],\qquad 
	\overline{\mathscr{D}}_{X_K}\to \mathscr{D}^{\dagger}_{\XX,\mathbb{Q}}.
\]
By applying \eqref{eq:functor-} and the tensor product functors with the above homomorphisms, we obtain exact functors \cite[Lemma 3.2.4]{HS}:
\begin{equation} \label{eq:hat-functor-D-mods}
	\Coh(\mathscr{O}_{X_K})\to \Coh(\mathscr{O}_{\XX}[\frac{1}{p}]),\quad 
	\widehat{(-)}: \Coh(\mathscr{D}_{X_K})\to \Coh(\mathscr{D}_{\XX,\mathbb{Q}}^{\dagger}).
\end{equation}
\end{secnumber}

\begin{prop}[\cite{HS} 3.3.2,  3.2.6] \label{p:pushforward compare}
	Let $f:X\to Y$ be an $\OK$-morphism of smooth $\OK$-schemes, $\widehat{f}:\XX\to \YY$ its $p$-adic completion and $\mathscr{M}\in \rD^{b}_{\coh}(\mathscr{D}_{X_K})$. 

	\textnormal{(i)} There exists a canonical morphism in $\rD^+(\mathscr{D}_{\YY,\bQ}^{\dagger})$:
	\begin{equation}
		\mathscr{D}_{\YY,\bQ}^{\dagger}\otimes_{\overline{\mathscr{D}}_{Y_K}} \overline{f_{K,+}(\mathscr{M})}\to
		\widehat{f}_+ (\mathscr{D}_{\XX,\bQ}^{\dagger}\otimes_{\overline{\mathscr{D}}_{X_K}}\overline{\mathscr{M}})
	\end{equation}

	\textnormal{(ii)} If $f$ is moreover projective, then the above map is an isomorphism in $\rD_{\coh}^b(\mathscr{D}_{\YY,\bQ}^{\dagger})$.
\end{prop}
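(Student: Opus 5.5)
We plan to follow the strategy of Huyghe--Schmidt. The first step is to construct the morphism in (i) by adjunction. Write the algebraic direct image as
\[
f_{K,+}(\mathscr{M}) = Rf_{K,*}\bigl(\mathscr{D}_{Y_K\leftarrow X_K}\otimes^{\mathbb{L}}_{\mathscr{D}_{X_K}}\mathscr{M}\bigr),
\]
and the arithmetic one as
\[
\widehat{f}_+(\mathscr{N})=R\widehat{f}_*\bigl(\mathscr{D}^{\dagger}_{\YY\leftarrow\XX,\mathbb{Q}}\otimes^{\mathbb{L}}_{\mathscr{D}^{\dagger}_{\XX,\mathbb{Q}}}\mathscr{N}\bigr).
\]

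The functor $\overline{(-)}=\alpha^{-1}j_*$ commutes with $Rf_*$. This holds because $j$ is an open immersion of the generic fibre into an $\OK$-scheme with affine pieces, and $\alpha^{-1}$ is exact with $\XX$ and $X$ sharing the same underlying topological space as the special fibre. Hence $\overline{f_{K,+}\mathscr{M}}\simeq Rf_*(\overline{\mathscr{D}}_{Y_K\leftarrow X_K}\otimes^{\mathbb{L}}\overline{\mathscr{M}})$.

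There is a natural map of transfer bimodules $\overline{\mathscr{D}}_{Y_K\leftarrow X_K}\to \mathscr{D}^{\dagger}_{\YY\leftarrow\XX,\mathbb{Q}}$, compatible with the ring maps $\overline{\mathscr{D}}\to\mathscr{D}^{\dagger}_{\mathbb{Q}}$ of \cite[Lemma 3.2.3]{HS}. Combining it with the projection formula map $\mathscr{D}^{\dagger}_{\YY,\bQ}\otimes Rf_*(-)\to Rf_*(\mathscr{D}^{\dagger}_{\YY,\bQ}\otimes -)$ yields the arrow in (i). Flatness of $\overline{\mathscr{D}}\to\mathscr{D}^{\dagger}_{\mathbb{Q}}$ lets us compute every tensor product underived.

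For (ii), the question is local on $Y$, so we take $Y$ affine. A projective $f$ factors as a closed immersion $X\hookrightarrow\mathbb{P}^N_Y$ followed by the projection $\mathbb{P}^N_Y\to Y$. Both sides are compatible with composition, so it suffices to treat each factor.

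\emph{Closed immersion.} Here $f_+$ is computed locally by $\mathscr{M}\otimes K[\partial_{t}]$ in transversal coordinates, and the comparison reduces to $\mathscr{D}^{\dagger}\otimes_{\overline{\mathscr{D}}}$ commuting with this explicit induction. That is a direct check.

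\emph{Projection from $\mathbb{P}^N_Y$.} The morphism is proper, so we may reduce by dévissage on $\mathscr{M}$ in $\rD^b_{\coh}$ to induced modules $\mathscr{D}_{X_K}\otimes_{\mathscr{O}}\mathscr{L}$ with $\mathscr{L}$ a coherent (twisted) $\mathscr{O}$-module. Every coherent $\mathscr{D}$-module on an $X$ projective over affine $Y$ has a left resolution by such modules. For induced modules both pushforwards become $\mathscr{D}_Y\otimes Rf_*(\mathscr{L}\otimes\omega)$ and $\mathscr{D}^\dagger_{\YY,\mathbb{Q}}\otimes R\widehat f_*(\widehat{\mathscr{L}}\otimes\omega)$ respectively. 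The claim then reduces to the comparison $Rf_*\mathscr{L}\otimes\mathscr{O}_{\YY}\simeq R\widehat f_*\widehat{\mathscr{L}}$, which is formal GAGA / Grothendieck's comparison theorem for proper $f$, after inverting $p$.

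The main obstacle is the dévissage step. The resolution is infinite to the left, so we must check boundedness via way-out functor arguments; these use finite cohomological dimension of $\mathscr{D}^\dagger_{\mathbb{Q}}$ (Berthelot) and of $Rf_*$. We must also verify that the map of (i) is compatible with the identification for induced modules. Coherence of the output in $\rD^b_{\coh}(\mathscr{D}^{\dagger}_{\YY,\bQ})$ then follows from Berthelot's theorem that proper direct image preserves coherence.
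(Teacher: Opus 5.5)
The paper gives no argument of its own here: it cites Huyghe--Schmidt and remarks that their proof, written for projective $X,Y$, carries over. Your plan is a reasonable reconstruction of that route: build the arrow from the transfer bimodules, then for projective $f$ localize on $Y$, resolve by induced modules $\mathscr{D}_{X_K}\otimes\mathscr{O}(-n)^{\oplus r}$, and conclude with Grothendieck's comparison theorem. Localizing on $Y$ and using a relatively ample sheaf is what lets one drop the projectivity of $X$ and $Y$, as the paper's remark asserts. Note that the factorization through $\mathbb{P}^N_Y$ is then superfluous, since the dévissage applies directly to $X$ projective over an affine $Y$. If you keep the factorization, you also owe a proof that the comparison map is compatible with composition.

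There is, however, a false step: $\overline{(-)}=\alpha^{-1}j_*$ does \emph{not} commute with $Rf_*$. Only $\XX$ has underlying space $X_k$. The map $\alpha$ is the inclusion of the closed subset $X_k\subset X$, so $\alpha^{-1}$ is restriction to a closed subset, and commuting it with $Rf_*$ is a base-change statement that fails in general. Take $f:\mathbb{A}^1_{\OK}\to\Spec\OK$. The closed point has $\Spec\OK$ as its only open neighbourhood, so $\overline{f_{K,*}\mathscr{O}_{\mathbb{A}^1_K}}=K[x]$. On the other hand, $\widehat f_*\overline{\mathscr{O}}_{\mathbb{A}^1_K}=\Gamma(\mathbb{A}^1_k,\alpha^{-1}j_*\mathscr{O})$ contains $(1-\varpi x)^{-1}$, because $\Spec\OK[x,(1-\varpi x)^{-1}]$ is an open neighbourhood of $\mathbb{A}^1_k$ in $\mathbb{A}^1_{\OK}$. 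The same failure occurs for the relative de Rham complex: $dx/(1-\varpi x)$ has no rational primitive.

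So the arrow in (i) cannot be built from that identification. Replace it by the base-change morphism $\alpha_Y^{-1}Rf_*\to R\widehat f_*\alpha_X^{-1}$, obtained by adjunction from the unit $\mathrm{id}\to\alpha_{X*}\alpha_X^{-1}$. Follow it with your transfer-bimodule map and the projection-formula map; this gives (i) for arbitrary $f$.

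For (ii) the commutation is never needed. On an induced module the source of the map is $\mathscr{D}^{\dagger}_{\YY,\bQ}\otimes_{\alpha^{-1}\mathscr{O}_Y}\alpha^{-1}Rf_*(\mathscr{L}\otimes\omega)$. What you must verify (the compatibility you flagged) is that the map of (i) becomes $\mathscr{D}^{\dagger}_{\YY,\bQ}\otimes$ the EGA III comparison map $\mathscr{O}_{\YY}\otimes_{\alpha^{-1}\mathscr{O}_Y}\alpha^{-1}R^if_*\mathscr{G}\to R^i\widehat f_*\widehat{\mathscr{G}}$. That map is an isomorphism for $f$ proper and $\mathscr{G}$ coherent, which is exactly the input the argument needs.
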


\begin{rem}
	In \cite[3.2.6]{HS}, the authors prove assertion (ii) assuming that $X,Y$ are smooth projective and $f$ is proper. However, the same argument works under the above assumption. 
\end{rem}

\subsection{Comparison between overholonomic arithmetic $\mathscr{D}$-modules and algebraic $\mathscr{D}$-modules}

	For a variety $Z$ over $k$, we denote by $\Hol(Z)$ the abelian category of overholonomic arithmetic $\mathscr{D}$-modules, which can be equipped with a Frobenius structure (these objects are also called $F$-able overholonomic $\mathscr{D}$-modules in \cite{CD25}). 
	Recently, Caro-D'Addezio generalized Kedlaya's fully faithful result for $F$-isocrystals to arithmetic $\mathscr{D}$-modules \cite[Theorem 8.2.4]{CD25}. 
	The following proposition is a variant of their result. 

\begin{prop} \label{p:fully-faithful-Ddagger}
	Let $Y$ be a smooth quasi-projective scheme over $\mathcal{O}_K$. 
	There exists a fully faithful functor 
	\begin{equation} \label{eq:Hol-to-Ddagger-mods}
	\Hol(Y_k)\to \Coh(\mathscr{D}_{\YY,\mathbb{Q}}^{\dagger}).
	\end{equation}
\end{prop}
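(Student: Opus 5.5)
The plan is to build the functor from the realization of overholonomic modules as $\mathscr{D}^{\dagger}$-modules on a proper frame, and to get full faithfulness by reducing to the Caro--D'Addezio theorem \cite[Theorem 8.2.4]{CD25}.

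\textbf{Step 1 (construction).} Since $Y$ is quasi-projective over $\mathcal{O}_K$, fix an immersion $Y\hookrightarrow P$ into a projective space $P=\mathbb{P}^N_{\mathcal{O}_K}$, and write $\overline{Y}$ for the closure of $Y$ in $P$. Its special fiber gives a frame $(Y_k,\overline{Y}_k,\mathfrak{P})$, where $\mathfrak{P}$ is the $p$-adic completion of $P$. By Caro's description (as used in \cite{AC18}), an object of $\Hol(Y_k)$ is an overholonomic $F$-able $\mathscr{D}^{\dagger}_{\mathfrak{P},\mathbb{Q}}(^{\dagger}(\overline{Y}_k\setminus Y_k))$-module supported on $\overline{Y}_k$, and this description is independent of the choice of frame up to canonical equivalence. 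Let $\YY'$ be the open formal subscheme of $\mathfrak{P}$ with $\YY'\cap\overline{Y}=\YY$; the inclusion $\YY\hookrightarrow\YY'$ is a closed immersion of smooth formal schemes. Restrict the module to $\YY'$, which removes the overconvergence condition at the boundary, and then apply Berthelot--Kashiwara $\mathbb{R}\Gamma^{\dagger}_{Y_k}$ followed by $u^!$ for $u:\YY\hookrightarrow\YY'$. This produces a coherent $\mathscr{D}^{\dagger}_{\YY,\mathbb{Q}}$-module. Equivalently, the functor is $\mathscr{M}\mapsto \mathscr{M}|_{\YY}$, i.e.\ forgetting the overconvergence at infinity. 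I would check that the result is independent of the embedding by comparing two embeddings through the diagonal into $P\times P$.

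\textbf{Step 2 (faithfulness).} Restricting to the open $\YY$ is exact. It is faithful because an overholonomic module on $Y_k$ with $\mathscr{M}|_{\YY}=0$ has empty support in $Y_k$, and support is read on $\YY$.

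\textbf{Step 3 (fullness).} This is the hard part: a morphism of restrictions $\mathscr{M}|_{\YY}\to\mathscr{N}|_{\YY}$ must extend overconvergently. I would use dévissage on overholonomic modules. Choose a stratification of $Y_k$ by smooth locally closed pieces on which both $\mathscr{M}$ and $\mathscr{N}$ are (shifted) pushforwards of overconvergent $F$-able isocrystals. Reduce, via the local cohomology triangles $\mathbb{R}\Gamma^{\dagger}_Z\to\mathrm{id}\to(^{\dagger}Z)$ and five-lemma arguments on $\Hom$ and $\mathrm{Ext}^1$, to the case of isocrystals on a smooth $Z\subset Y_k$.

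In that case, the functor is the comparison between overconvergent isocrystals and convergent isocrystals on the tube, that is, restriction from a strict neighbourhood to $]Z[$. Full faithfulness on $F$-able objects is Kedlaya's theorem in the form generalized by \cite[Theorem 8.2.4]{CD25}. The $\mathrm{Ext}^1$ injectivity needed for the five lemma also follows from \cite{CD25}, which treats the derived version for $F$-able overholonomic complexes.

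The main obstacle is making the dévissage compatible with the functor: both the restriction to $\YY$ and the local cohomology functors $\mathbb{R}\Gamma^{\dagger}_Z$ must commute in the coherent-$\mathscr{D}^{\dagger}_{\YY,\mathbb{Q}}$ category. I would first try to invoke \cite{CD25} directly with the frame $(Y_k,Y_k,\YY)$, where $Y_k$ is closed in $\YY$. Their statement concerns exactly the functor forgetting overconvergence along $\overline{Y}_k\setminus Y_k$, so the proposition would follow once the identification of Step 1 is made.
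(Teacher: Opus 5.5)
Your proposal is essentially the paper's proof. The paper realizes $\Hol(Y_k)$ as $\rH_F(\overline{Y}_k,\overline{\PP},Z)$, restricts to the open $\PP$ in which $Y$ is closed, and applies Berthelot--Kashiwara to land in $\Coh(\mathscr{D}^{\dagger}_{\YY,\mathbb{Q}})$. It checks independence via the product embedding and gets full faithfulness directly from \cite[Theorem 8.2.4]{CD25}, as in your last paragraph, so the d\'evissage of Step 3 and the separate faithfulness argument of Step 2 are not needed (and as sketched the d\'evissage would land on isocrystals still overconvergent along the part of a stratum's boundary inside $Y_k$, not on convergent isocrystals on the tube).
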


\begin{proof}
	Let $Y\hookrightarrow P$ be a closed embedding into a smooth $\mathcal{O}_K$-scheme and $P\to \overline{P}$ an open embedding into a smooth projective $\mathcal{O}_K$-scheme. 
	Let $\overline{Y}_k$ be the closure of $Y_k$ in $\overline{P}_k$ and $Z=\overline{Y}_k-Y_k$. 
	With the notation of \cite[\S~8.2]{CD25}, 
	there is an equivalence
	\[
		\Hol(Y_k)\simeq \rH_F(\overline{Y}_k,\overline{\PP},Z).
	\]
	By \cite[Theorem 8.2.4]{CD25}, the restriction to $\PP$ defines a fully faithful functor:
	\[
	\rH_F(\overline{Y}_k,\overline{\PP},Z) \to \rH_F(Y_k,\PP).
	\]
	By the Berthelot--Kashiwara equivalence, the category $\rH_F(Y_k,\PP)$ is equivalent to a full subcategory of $\Coh(\Ddd_{\YY,\bQ})$. 
	Composing these functors gives \eqref{eq:Hol-to-Ddagger-mods} and it is fully faithful.
	
	Finally, we show that \eqref{eq:Hol-to-Ddagger-mods} is independent (up to canonical isomorphism) of the choice of $Y\hookrightarrow P\hookrightarrow \overline{P}$.
	Let $Y\hookrightarrow P'\hookrightarrow \overline{P}'$ be another choice and set
	\[
		\overline{P}'':=\overline{P}\times_{\OK}\overline{P}',\qquad P'':=P\times_{\OK}P'\subset \overline{P}''.
	\]
	Then $\overline{P}''$ is smooth projective over $\OK$ and $P''$ is a smooth open subscheme.
	The diagonal map induced by the two immersions gives a closed immersion $Y\hookrightarrow P''$.
	Let $\overline{Y}''_k$ be the closure of $Y_k$ in $\overline{P}''_k$ and set $Z'':=\overline{Y}''_k\setminus Y_k$.
	The projections $\overline{P}''\to \overline{P}$ and $\overline{P}''\to \overline{P}'$ induce canonical equivalences of frame categories
	\[
		\rH_F(\overline{Y}_k,\overline{\PP},Z)\ \xleftarrow{\ \sim\ }\ \rH_F(\overline{Y}''_k,\overline{\PP}'',Z'')\ \xrightarrow{\ \sim\ }\ \rH_F(\overline{Y}'_k,\overline{\PP}',Z'),
	\]
	compatible with restriction to $\PP,\PP',\PP''$ and with the Berthelot--Kashiwara equivalences on the closed subscheme $Y$.
	Hence the resulting functors $\Hol(Y_k)\to \Coh(\Ddd_{\YY,\bQ})$ are canonically isomorphic.
\end{proof}

\begin{definition} \label{d:associated}
	Let $Y$ be a smooth quasi-projective scheme over $\mathcal{O}_K$. 
	Let $E$ be an algebraic $\mathscr{D}$-module in $\Coh(\mathscr{D}_{Y_K})$ and $\mathcal{E}$ an arithmetic $\mathscr{D}$-module in $\Hol(Y_k)$. We say $(E,\mathcal{E})$ are \textit{associated} if, via the functors \eqref{eq:hat-functor-D-mods} and \eqref{eq:Hol-to-Ddagger-mods}, their images in $\Coh(\mathscr{D}^{\dagger}_{\YY,\mathbb{Q}})$ are isomorphic. 
\end{definition}

\begin{secnumber}
	Suppose $X$ is a smooth $\OK$-scheme with a compactification $j:X\to \overline{X}$ into a smooth projective $\OK$-scheme $\overline{X}$ such that the boundary $Z=\overline{X}-X$ is a transversal divisor over $\OK$. 
	Let $\widehat{j}:\XX\to \overline{\XX}$ be the open immersion. 
	We have the natural homomorphisms
\begin{displaymath}
	\widehat{j}_*(\overline{\mathscr{O}}_{X_K})\to \mathscr{O}_{\overline{\XX},\mathbb{Q}}^{\dagger}(Z_k) \to \widehat{j}_*(\mathscr{O}_{\XX}), \qquad 
	\widehat{j}_*\overline{\mathscr{D}}_{X_K}\to \mathscr{D}_{\overline{\XX},\mathbb{Q}}^{\dagger}(Z_k) \to \widehat{j}_*(\mathscr{D}_{\XX}^{\dagger}).
\end{displaymath}
	
	By \cite[Theorem 4.3.10(ii)]{Ber96II}, $\mathscr{D}_{\overline{\XX},\mathbb{Q}}^{\dagger}(Z_k) \to \widehat{j}_*(\mathscr{D}_{\XX}^{\dagger})$ is faithfully flat and we have an exact functor
\begin{equation} \label{dagger functor D-mods}
	(-)^{\dagger}: \Coh(\mathscr{D}_{X_K})\to \Coh(\mathscr{D}_{\overline{\XX},\mathbb{Q}}^{\dagger}(Z_k)),\quad M\mapsto \mathscr{D}_{\overline{\XX},\mathbb{Q}}^{\dagger}(Z_k) \otimes_{\widehat{j}_*\overline{\mathscr{D}}_{X_K}} \widehat{j}_*\overline{M}.
\end{equation}
	
	The category $\Hol(X_k)$ can be realized as a full subcategory of $\Coh(\mathscr{D}_{\overline{\XX},\mathbb{Q}}^{\dagger}(Z_k))$. 
	In the above setting, by \Cref{p:fully-faithful-Ddagger}, a pair $(E,\mathcal{E})$ is associated in the sense of \Cref{d:associated} if and only if $E^{\dagger}\simeq \mathcal{E}$ in the category $\Coh(\mathscr{D}_{\overline{\XX},\mathbb{Q}}^{\dagger}(Z_k))$. 
\end{secnumber}

\begin{secnumber}\label{sss:specialization-map}
	Let $f:X\to S$ be an $\OK$-morphism of quasi-projective smooth $\OK$-schemes. 
	Suppose that $f$ admits a \textit{good compactification}, i.e. $f$ can be extended to an $\OK$-morphism $\overline{f}:\overline{X}\to \overline{S}$ of smooth projective $\OK$-schemes such that $\overline{X}-X$ and $\overline{S}-S$ are transversal divisors over $\OK$. 
	Let $(E,\mathcal{E})$ be a pair of associated algebraic (resp. arithmetic) $\mathscr{D}$-modules on $X_K$ (resp. $X_k$). 

	By $\mathscr{D}$-affinity (resp. $\mathscr{D}^{\dagger}$-affinity), the global section functor induces an equivalence of categories $\Coh(\mathscr{D}_{X_K})\simeq \Coh(D_{X_K})$ (resp. $\Coh(\mathscr{D}_{\overline{\XX},\mathbb{Q}}^{\dagger}(Z_k))\simeq \Coh(D_{\overline{\XX},\mathbb{Q}}^{\dagger}(Z_k))$), where $D$ denotes the global section of the sheaf $\mathscr{D}$. 
	We abusively denote $E$ (resp. $\mathcal{E}$) by its global section. 
	Since $\mathcal{E}\simeq D^{\dagger}_{\overline{\XX},\bQ}(\infty) \otimes_{D_{X_K}} E$, we have a natural morphism of complexes of $D_{S_K}$-modules:
	\[
	D_{S_K\leftarrow X_K} \otimes_{D_{X_K}}^L E \to D^{\dagger}_{\overline{\mathfrak{S}}\leftarrow \overline{\XX},\bQ}(\infty) \otimes_{D^{\dagger}_{\overline{\XX},\infty}(\infty)}^L\mathcal{E}. 
	\]
	By taking the inverse of the global section functors, we obtain the (relative) specialization map of $\Coh(\Ddd_{\overline{\mathfrak{S}}}(\infty))$:
	\begin{equation}
		(\rR^i f_{K,+}E)^{\dagger}\to \rR^i f_{k,+} \mathcal{E}. 
		\label{eq:relative-specialization}
	\end{equation}

	The above construction generalizes the relative specialization map in \cite[2.4.7]{XZ22}. 
\end{secnumber}

Finally, we show that being associated is preserved by proper pushforward. 

\begin{prop} \label{p:proper-pushforward}
	Let $f:X\to Y$ be a proper morphism of quasi-projective smooth $\mathcal{O}_K$-schemes. 
	Suppose that $E,\mathcal{E}$ are associated over $X$. 
	Then $\rR^if_{k,+}(\mathcal{E})$ and $\rR^if_{K,+}(E)$ are associated over $Y$. 
\end{prop}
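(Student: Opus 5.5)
We reduce, as in the projective case of Proposition~\ref{p:pushforward compare}, to comparing the two push-forwards inside $\Coh(\Ddd_{\YY,\bQ})$. By Definition~\ref{d:associated}, we are given an isomorphism $\widehat{E}\simeq \mathcal{E}$ in $\Coh(\Ddd_{\XX,\bQ})$, where the right side uses the fully faithful functor \eqref{eq:Hol-to-Ddagger-mods}. We must produce an isomorphism between $\widehat{\rR^if_{K,+}E}$ and the image of $\rR^if_{k,+}\mathcal{E}$ in $\Coh(\Ddd_{\YY,\bQ})$.

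First step: reduce to $f$ projective. Since $X,Y$ are quasi-projective and $f$ is proper, $f$ is projective. Concretely, $X\hookrightarrow \mathbb{P}^N_Y$ is a closed immersion followed by the proper projection, so $f$ factors as $X\hookrightarrow \mathbb{P}^N_Y\to Y$. Proposition~\ref{p:pushforward compare}(ii) then gives a canonical isomorphism
\[
\Ddd_{\YY,\bQ}\otimes_{\overline{\mathscr{D}}_{Y_K}}\overline{f_{K,+}E}\xrightarrow{\sim}\widehat{f}_+\bigl(\Ddd_{\XX,\bQ}\otimes_{\overline{\mathscr{D}}_{X_K}}\overline{E}\bigr),
\]
which reads $\widehat{f_{K,+}E}\simeq \widehat{f}_+\widehat{E}$. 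The functor $\widehat{(-)}$ is exact by \eqref{eq:hat-functor-D-mods}, so taking cohomology gives $\widehat{\rR^if_{K,+}E}\simeq \mathcal{H}^i(\widehat{f}_+\widehat{E})\simeq \mathcal{H}^i(\widehat{f}_+\mathcal{E}')$. Here $\mathcal{E}'$ denotes the image of $\mathcal{E}$ in $\Coh(\Ddd_{\XX,\bQ})$.

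Second step: show that the functor \eqref{eq:Hol-to-Ddagger-mods} commutes with proper push-forward. That is, the image of $\rR^if_{k,+}\mathcal{E}$ in $\Coh(\Ddd_{\YY,\bQ})$ should be $\mathcal{H}^i(\widehat{f}_+\mathcal{E}')$. To do this, choose compatible frames. Take $Y\hookrightarrow P\subset\overline{P}$ as in the proof of Proposition~\ref{p:fully-faithful-Ddagger}, and frame $X$ by $X\hookrightarrow \mathbb{P}^N_P\subset \mathbb{P}^N_{\overline{P}}$. The projection $\mathbb{P}^N_{\overline{P}}\to\overline{P}$ is a proper smooth morphism of frames.

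In Caro's formalism, $f_{k,+}$ on $\Hol$ is computed by the push-forward of overholonomic modules along this morphism of frames. That push-forward is compatible with restriction to the open parts $\mathbb{P}^N_{\PP}\to\PP$, since restriction to an open formal subscheme commutes with push-forward via flat base change. It is also compatible with the Berthelot--Kashiwara equivalences, because push-forward commutes with push-forward along closed immersions: $X\hookrightarrow \mathbb{P}^N_Y$ is a closed immersion, and $\mathbb{P}^N_Y\hookrightarrow\mathbb{P}^N_P$ lies over $Y\hookrightarrow P$. Combining these identifications gives the claimed commutation, and $\widehat{f}$ is identified with the push-forward through the closed immersion into $\mathbb{P}^N_{\YY}$ followed by the projection. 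The independence of choices, established at the end of the proof of Proposition~\ref{p:fully-faithful-Ddagger}, ensures the result does not depend on the frame.

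The main obstacle is this second step. One must check that pushing forward along the frame and then restricting to the open part $\PP$ agrees with pushing forward the restricted module on $\XX$, with nothing lost at the boundary divisor. This uses that push-forward for frames is computed by $\Ddd(Z)$-modules and is local on the base, as in \cite{CD25}.

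Finally, combine the two steps: $\widehat{\rR^if_{K,+}E}\simeq\mathcal{H}^i(\widehat{f}_+\mathcal{E}')$, which is the image of $\rR^if_{k,+}\mathcal{E}$. Hence $\rR^if_{K,+}(E)$ and $\rR^if_{k,+}(\mathcal{E})$ are associated over $Y$.
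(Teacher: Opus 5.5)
Your proposal is correct and follows essentially the same route as the paper. You first show $f$ is projective, then use Proposition~\ref{p:pushforward compare}(ii) together with exactness of $\widehat{(-)}$ for the square comparing algebraic $\mathscr{D}$-modules with $\Ddd$-modules. For the square comparing $\Hol$ with $\Ddd$-modules you use frame compatibility: restriction to the open part of the frame and Berthelot--Kashiwara.

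The only difference is cosmetic. The paper treats the closed immersion $X\hookrightarrow \mathbb{P}^N_Y$ and the projection $\mathbb{P}^N_Y\to Y$ as two separate cases, while you handle both at once with the single frame $X\hookrightarrow\mathbb{P}^N_P\subset\mathbb{P}^N_{\overline{P}}$.
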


\begin{proof}
	We first note that $f$ is projective. Indeed, we take an immersion $X\to \Pn_{\OK}$ and compose it with the graph $\Gamma_f:X\to X\times_{\OK}Y$:
	\[
	X \to \Pn_Y \to Y.
	\]
	The above composition is proper and $\Pn_Y\to Y$ is separated. 
	Then we deduce that $X\to \Pn_Y$ is proper and is therefore a closed immersion. 

	We consider the following diagram: 
	\[
	\xymatrix{
	\Hol(X_k) \ar[r] \ar[d]_{\rR^i f_{k,+}} & \Coh(\Ddd_{\XX,\bQ}) \ar[d]_{\rR^i \widehat{f}_+} & \Coh(\cD_{X_K}) \ar[d]_{\rR^i f_{K,+}} \ar[l] \\
	\Hol(Y_k) \ar[r] & \Coh(\Ddd_{\YY,\bQ}) & \Coh(\cD_{Y_K}) \ar[l]
	}
	\]
	The right square is commutative by \Cref{p:pushforward compare}(ii). 
	And it suffices to show that the left square is commutative.

	By the above factorization $f=\pi\circ i$ and functoriality of pushforward, it is enough to treat the case where
	(i) $f$ is a closed immersion and (ii) $f:\Pn_Y\to Y$ is the projection.

	(i) If $f$ is a closed immersion, the assertion follows from the Berthelot--Kashiwara equivalence
	and the construction of the fully faithful functor \eqref{eq:Hol-to-Ddagger-mods} in the proof of \Cref{p:fully-faithful-Ddagger}:
both arithmetic and algebraic pushforwards are described by the same transfer bimodule after passing to $\Ddd$,
hence the left square commutes.

	(ii) For $f=\pi:\Pn_Y\to Y$, choose (as in the proof of \Cref{p:fully-faithful-Ddagger}) a closed immersion $Y\hookrightarrow P$ into a smooth $\OK$-scheme
and a smooth proper compactification $P\hookrightarrow \overline{P}$.
Then $\Pn_P\to P$ is a smooth projective morphism and the arithmetic pushforward $f_{k,+}$ is computed by the transfer bimodule $\Ddd_{\overline{\PP}\leftarrow \Pn_{\overline{\PP}}}$.
Upon restriction from $\overline{P}$ to $P$, the functor $f_{k,+}$ is compatible with the realization of
$\Hol(\Pn_{Y,k})$ inside $\Coh(\Ddd_{\Pn_{\PP},\bQ})$ and with $\widehat{\pi}_+$ on $\Ddd_{\PP,\bQ}$-modules. 
This proves commutativity of the left square in this case.
\end{proof}

\end{document}